\newtheorem{theorem}{Theorem}[section]
\newtheorem{proposition}[theorem]{Proposition}
\newtheorem{lemma}[theorem]{Lemma}
\newtheorem{corollary}[theorem]{Corollary}
\theoremstyle{definition}
\newtheorem{example}[theorem]{Example}
\newtheorem{question}[theorem]{Question}
\theoremstyle{remark}
\theoremstyle{remark}
\newtheorem{remark}[theorem]{Remark}
\def\({{\rm (}}
\def\){{\rm )}}
\let\Mathrm\operator@font
\let\Cal\mathcal
\let\Bbb\mathbb
\newcommand{\fm}{\ensuremath{\mathfrak m}}
\newcommand{\fp}{\ensuremath{\mathfrak p}}
\def\standop#1{\mathop{\Mathrm #1}\nolimits}
\def\difstop#1#2{\expandafter\def\csname #1\endcsname{\standop{#2}}}
\def\defstop#1{\difstop{#1}{#1}}
\def\GL{\text{\sl{GL}}}
\def\gr{_{\Mathrm{gr}}}
\def\id{\mathord{\Mathrm{id}}}
\def\MM#1{{}_{#1}\Bbb M}
\def\nz{^\star}
\def\sep{_{\Mathrm{sep}}}
\def\red{_{\Mathrm{red}}}
\let\specialarrow\xrightarrow
\def\I{\Cal I}
\def\R{\Cal R}
\def\O{\Cal O}
\def\P{\Cal P}
\def\fm{\mathfrak{m}}
\def\fp{\mathfrak{p}}
\let\indlim\varinjlim
\let\projlim\varprojlim
\def\sdarrow#1{\downarrow\hbox to 0pt{\scriptsize$#1$\hss}}
\def\suarrow#1{\uparrow\hbox to 0pt{\scriptsize$#1$\hss}}
\def\ssearrow#1{\searrow\hbox to 0pt{\scriptsize$#1$\hss}}
\def\ru#1{\lceil #1 \rceil}
\def\section{\@startsection{section}{1}{\z@ }%
{-3.5ex plus -1ex minus -.2ex}{2.3ex plus .2ex}{\bf }}
\long\def\refname{\par\kern -3ex
\begin{center}\rm R\sc{eferences}\end{center}\par\kern 
-2ex}
\def\@seccntformat#1{\csname the#1\endcsname.\quad}
\def\@@@sect#1#2#3#4#5#6[#7]#8{%
   \ifnum #2>\c@secnumdepth 
      \def \@svsec {}\else \refstepcounter {#1}%
      \def\@svsec{}
   \fi 
   \@tempskipa #5\relax 
   \ifdim \@tempskipa >\z@ 
     \begingroup #6\relax \@hangfrom {\hskip #3\relax 
     \@svsec}{\interlinepenalty \@M #8\par }\endgroup 
     \csname #1mark\endcsname {#7}
   \else 
   \def \@svsechd {#6\hskip #3\@svsec #8\csname #1mark\endcsname {#7}}
   \fi \@xsect {#5}}
\def\@@@startsection#1#2#3#4#5#6{%
 \if@noskipsec \leavevmode \fi \par \@tempskipa #4\relax \@afterindenttrue 
 \ifdim \@tempskipa <\z@ \@tempskipa -\@tempskipa \@afterindentfalse 
 \fi \if@nobreak \everypar {}\else \addpenalty {\@secpenalty }\addvspace 
  {\@tempskipa }\fi \@ifstar {\@ssect {#3}{#4}{#5}{#6}}{\@dblarg 
  {\@@@sect {#1}{#2}{#3}{#4}{#5}{#6}}}}
\def\theparagraph{\thesection.\arabic{paragraph}}
\def\aparagraph{\@@@startsection{paragraph}{2}{\z@ }%
              {1.75ex plus .2ex minus .15ex}{-1em}{\bf(\theparagraph) } }
\def\paragraph{\@@@startsection{paragraph}{2}{\z@ }%
              {1.75ex plus .2ex minus .15ex}{-1em}{}{\bf(\theparagraph)} }
\let\c@theorem\c@paragraph
\title{Equivariant total ring of fractions and 
factoriality of rings generated by semiinvariants}
\author{M{\sc itsuyasu} H{\sc ashimoto}}
\date{\normalsize
Graduate School of Mathematics, Nagoya University\\
Chikusa-ku,  Nagoya 464--8602 JAPAN\\
{\small \tt hasimoto@math.nagoya-u.ac.jp}}
\begin{document}

\maketitle
\footnote[0]
    {2010 \textit{Mathematics Subject Classification}. 
    Primary 13A50, 
    Secondary 13C20.
    Key Words and Phrases.
    invariant subring, UFD, character group.
}

\begin{abstract}
Let $F$ be an affine flat group scheme over a commutative ring $R$,
and $S$ an $F$-algebra (an $R$-algebra on which $F$ acts).
We define an equivariant analogue $Q_F(S)$ of the total ring of fractions
$Q(S)$ of $S$.
It is the largest $F$-algebra $T$ such that $S\subset T\subset Q(S)$, and
$S$ is an $F$-subalgebra of $T$.
We study some basic properties.

Utilizing this machinery, we give some new criteria for factoriality
(UFD property) of (semi-)invariant 
subrings under the action of affine algebraic groups,
generalizing a result of Popov.
We also prove some variations of classical results on factoriality of 
(semi-)invariant subrings.
Some results over an algebraically closed base field are generalized to those
over an arbitrary base field.
\end{abstract}

\section{Introduction}

Throughout this paper, $k$ denotes a field, and $G$ denotes an
affine smooth algebraic group over $k$.

Let $S$ be a $G$-algebra.
That is, a $k$-algebra with a $G$-action.
Study of ring theoretic properties of the invariant subring $S^G$ is
an important part of invariant theory.
In this paper, we discuss the factoriality of $S^G$.

Popov \cite[p.~376]{Popov2} remarked the following:

\begin{theorem}[Popov]\label{popov.thm}
Let $k$ be algebraically closed.
If 
\begin{description}
\item[(i)] $S$ is a UFD;
\item[(ii)] the character group $X(G)$ of $G$ is trivial; and
\item[(iii)] One of the following hold:
\begin{description}
\item[(a)] $S$ is finitely generated and $G$ is connected; or
\item[(b)] $S^\times\subset S^G$.
\end{description}
\end{description}
Then $S^G$ is a UFD.
\end{theorem}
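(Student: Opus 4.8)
The plan is to prove directly that every nonzero nonunit of $S^G$ factors into primes of $S^G$, and to deduce uniqueness from the ambient UFD $S$; this avoids having to know in advance that $S^G$ is Noetherian or Krull. Fix $f\in S^G$ and write its factorization in $S$ as $f=u\prod_i p_i^{e_i}$, with $u\in S^\times$ and the $p_i$ pairwise non-associate primes. Since $f$ is $G$-invariant, $f=g\cdot f=(g\cdot u)\prod_i(g\cdot p_i)^{e_i}$, and as the $g\cdot p_i$ are again primes, uniqueness of factorization forces each $g\in G$ to permute the principal primes $(p_i)$ while preserving the exponents $e_i$. Thus $G$ acts on the finite set $\{(p_1),\dots,(p_r)\}$, and for each $G$-orbit $O$ I would form $q_O:=\prod_{(p_j)\in O}p_j$ (all $p_j$ in $O$ sharing a common exponent $e_O$). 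Each ideal $(q_O)$ is $G$-stable, so $g\cdot q_O=v_g\,q_O$ for a multiplier $v_g\in S^\times$.

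The heart of the argument is to show that each $q_O$ can be chosen $G$-invariant, i.e.\ that $g\mapsto v_g$ is the trivial character. First I would check that $v$ is scalar-valued on the identity component $G^0$. The rational $G$-action places the elements $g\cdot q_O$ in a finite-dimensional subspace, so $g\mapsto v_g$ is a morphism into a finitely generated subalgebra $A\subset S$, and $V(y,g):=v_g(y)$ is an everywhere-nonvanishing regular function on the integral variety $Y\times G^0$ with $Y=\Spec A$. Over the algebraically closed field $k$ the units on such a product split, $V=c\,\alpha(y)\beta(g)$; evaluating at $g=e$, where $v_e=1$, forces $\alpha\in k^\times$ and hence $v_g\in k^\times$ for all $g\in G^0$. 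Once $v$ is scalar-valued the cocycle identity $v_{gh}=v_g\,(g\cdot v_h)$ collapses to $v_{gh}=v_gv_h$, so $v$ is an algebraic character; as $X(G)=0$ it is trivial and $q_O\in S^G$. Under hypothesis (iii)(a) one has $G=G^0$, so this already treats every $g\in G$. Under hypothesis (iii)(b) the units lie in $S^G$, so $v$ is a genuine homomorphism $G\to S^\times$; its restriction to $G^0$ is scalar by the above, the induced map $G/G^0\to S^\times/k^\times$ runs from a finite group into a torsion-free group (torsion-freeness of $S^\times/k^\times$ again uses that $k$ is algebraically closed), hence is trivial, and once more $v\in X(G)=0$.

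With every $q_O$ invariant, $f=u\prod_O q_O^{e_O}$, and $u=f/\prod_O q_O^{e_O}$ is a quotient of invariants, hence an invariant unit; so up to $u\in S^\times\cap S^G$, $f$ is a product of the $q_O$ inside $S^G$. Each $q_O$ is moreover prime in $S^G$: if $q_O\mid ab$ with $a,b\in S^G$, the subset of $p_j\in O$ dividing $a$ is $G$-stable, hence by transitivity of $G$ on the single orbit $O$ it is empty or all of $O$, giving $q_O\mid a$ or $q_O\mid b$ in $S^G$ (here one uses the one-line remark that if a prime $p$ of $S$ lies in $S^G$ and $p\mid a$ in $S$ with $a\in S^G$, then the cofactor is again invariant). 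Thus every nonzero nonunit of $S^G$ is a product of primes of $S^G$, and $S^G$ is a UFD.

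I expect the decisive step to be the passage from ``$(q_O)$ is $G$-stable'' to ``$q_O$ is a semiinvariant with a \emph{character} weight'': a priori the multiplier $v_g$ is only a unit of $S$, and nothing in $X(G)=0$ constrains unit-valued multipliers. The two alternatives in (iii) are exactly what force $v$ into $k^\times$ so that $X(G)=0$ can annihilate it---connectedness disposing of the whole group at once, and $S^\times\subset S^G$ reducing the disconnected case to the torsion-freeness of $S^\times/k^\times$. The subsidiary inputs (the Rosenlicht-type splitting of units on a product of integral varieties, and torsion-freeness of $S^\times/k^\times$) are precisely where algebraic closedness of $k$ is genuinely used.
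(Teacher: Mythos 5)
Your proof is correct, and its overall skeleton (factor $f$ in $S$, let $G$ permute the principal primes, form the orbit products $q_O$, show each $q_O$ is an invariant, conclude primality of $q_O$ in $S^G$) is the same as the paper's treatment of this theorem, which is dispersed among Proposition~\ref{ufd-cor2.thm} (case \textbf{(a)}) and Lemmas~\ref{ufd-easier.thm} and \ref{pop-lemma.thm} (case \textbf{(b)}); your $q_O$ is exactly the $h=\prod_j f_{i_j}$ of Lemma~\ref{ufd-easier.thm}, and your cofactor arguments match Lemma~\ref{principal-invariant.thm}. Where you genuinely diverge is at the pivotal step, ``$Sq_O$ $G$-stable $\Rightarrow$ the multiplier $v_g$ is scalar on $G^\circ$'': the paper proves this (in the stronger form of Lemma~\ref{principal.thm}, over a not necessarily closed field with only $X(G)\to X(K\otimes_k G)$ surjective, and for semiinvariants) by normalizing, embedding $\Spec S$ as a $G$-stable open subset of a normal projective $G$-variety $Z$, and checking $g(f)f^{-1}\in\O_{Z,V}$ valuation by valuation via Lemma~\ref{dvr.thm}; you instead rigorize the folklore ``character continuity'' argument by exhibiting $V(y,g)=v_g(y)$ as a nowhere-vanishing regular function on $Y\times G^\circ$ and invoking Rosenlicht's splitting of units on a product of irreducible varieties. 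This is a legitimate alternative (and precisely the delicate point the folklore argument usually leaves unproved: the paper's author deliberately avoids it by the compactification route), but it is the one place where you should be more explicit: a priori you only know $v_g\in S^\times$ for each $k$-point $g$ separately, and to get a \emph{regular} unit you must check that $u:=\omega(q_O)/(q_O\otimes 1)$ is a unit of $S\otimes k[G]$ --- e.g.\ because the translation automorphism $\gamma(s\otimes a)=\omega(s)(1\otimes a)$ of $S\otimes k[G]$ maps the principal ideal $(q_O\otimes 1)$ onto itself, so $\omega(q_O)$ is another generator; then choosing a finitely generated $A$ containing the coefficients of $u$ and $u^{-1}$ makes your $Y\times G^\circ$ argument literal. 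Two further small remarks: your torsion-freeness of $S^\times/k^\times$ (if $u^n\in k^\times$ then $u\in k^\times$, $k$ being algebraically closed and $S$ a domain) is even more elementary than the paper's use of Lemma~\ref{Rosenlicht.thm} plus Lemma~\ref{fg-sub.thm} at the same spot in Lemma~\ref{pop-lemma.thm}; and your identifications of $G$-stability and invariance via $k$-points implicitly use density of $G(k)$ and Lemma~\ref{dense.thm}, which is harmless here since $k=\bar k$ and $G$ is smooth. The trade-off is clear: the paper's heavier Lemma~\ref{principal.thm} buys the generalizations of Sections~4--5 (non-closed $k$, semiinvariants, no finite generation), while your route is shorter and self-contained for the algebraically closed case actually at issue.
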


Some variation of the theorem for the case that {\bf (b)} is assumed is 
treated in \cite{Hochster}.
The case that $G$ is a finite group and $S$ is a polynomial ring is
found in \cite[(1.5.7)]{Smith}.

Our main objective is to generalize this theorem, focusing the case that
{\bf (a)} is assumed.
Our main theorem is the following.

\begin{trivlist}
\item[\bf Theorem~\ref{main.thm}]\it
Let $S$ be a finitely generated $G$-algebra which is a normal domain.
Assume that $G$ is connected.
Assume that $X(G)\rightarrow X(K\otimes_k G)$ is surjective, where
$K$ is the integral closure of $k$ in $S$.
Let $X^1_G(S)$ be the set of height one $G$-stable prime ideals of $S$.
Let $M(G)$ be the subgroup of the class group $\Cl(S)$ of $S$ generated
by the image of $X^1_G(S)$.
Let $\Gamma$ be a subset of $X^1_G(S)$ whose image in $M(G)$ generates
$M(G)$.
Set $A:=S_G$.
Assume that $Q_G(S)_G\subset Q(A)$.
Assume that if $P\in\Gamma$, then either the height of $P\cap A$ is not one
or $P\cap A$ is principal.
Then for any 
$G$-stable height one prime ideal $Q$ of $S$, either the height of 
$Q\cap A$ is not one or $Q\cap A$ is a principal ideal.
In particular, $A$ is a UFD.
If, moreover, $X(G)$ is trivial, then $S^G=A$ is a UFD.
\end{trivlist}

Here $Q_G(S)$ is the largest $G$-algebra 
contained in the field of fractions $Q(S)$ such that $S$ is a 
$G$-subalgebra of $Q_G(S)$.
$S_G$ (resp.\ $Q_G(S)_G$) is the $k$-subalgebra of $S$ (resp.\ $Q_G(S)$) 
generated by the semiinvariants of $S$ (resp.\ $Q_G(S)$) 
under the action of $G$.
If $X(G)$ is trivial, then $S_G=S^G$ and $Q_G(S)_G=Q(S)^G$.

Note that under the assumption of Theorem~\ref{popov.thm}, 
if $P$ is a height one $G$-stable prime ideal, then
$P=Sf$ for some semiinvariant $f$ of $S$, and $P\cap A=Af$ is principal, 
see Lemma~\ref{principal.thm}, Lemma~\ref{pop-lemma.thm}, 
and Lemma~\ref{principal-invariant.thm}.
Theorem~\ref{main.thm} is a generalization of Theorem~\ref{popov.thm}, 
{\bf (a)}, in the sense that the assumption of Theorem~\ref{popov.thm}, 
{\bf (a)} is stronger than that of Theorem~\ref{main.thm}, and the
conclusion of Theorem~\ref{popov.thm}, {\bf (a)} is weaker than that of
Theorem~\ref{main.thm}.

There are three directions of generalizations.
First, we need not assume that $k$ is algebraically closed.
Second, we need not assume that the 
all $G$-stable height one prime ideals of $S$ are principal.
Third, we treat not only the ring of invariants, but also 
semiinvariants.

We point out that $M(G)$ is the whole class group $\Cl(S)$, if
$k$ is algebraically closed and $G$ is unipotent \cite{FMSS}.
An example such that $M(G)$ is nontrivial but the theorem is applicable and
$S^G$ is a UFD is shown in (\ref{ex-1.par}).
Another generalization is stated as Theorem~\ref{ufd.thm}.
In the theorem, we do not assume that $S$ is a UFD, either
(we do not even assume that $S$ is normal).

We also state and prove some classical results
and their variations on factoriality of (semi-)invariant subrings.
For example, in Theorem~\ref{popov.thm} for the case that {\bf (a)} is
assumed, the finite generation of $S$ is unnecessary, 
the assumption that $k$ is algebraically closed can be weakened,
and we can treat the ring of semiinvariants
(Proposition~\ref{ufd-cor2.thm}).
Theorem~\ref{popov.thm} for the case that {\bf (b)} is assumed is 
also stated as a result on the ring of invariants over $k$ which is
not necessarily algebraically closed 
(Lemma~\ref{ufd-easier.thm}).

Because of the lack of sufficiently many rational points in $G$,
some Hopf algebra technique is required in the discussion over a 
field not necessarily algebraically closed, and our generalization
in this direction is nontrivial.

If $G(k)$ is dense in $G$ (e.g., $k$ is separably closed 
\cite[(AG13.3)]{Borel}, or 
$G$ is connected and $k$ is perfect and infinite \cite{Rosenlicht2}), 
then we can discuss using the action of $G(k)$ on $S$.
This action is extended to that on $Q(S)$, the field of fractions of $S$.
When $G(k)$ is not dense in $G$, then there is some difficulty in treating
$Q(S)$.
We define a substitute $Q_G(S)$ of $Q(S)$, and call it
the {\em $G$-total ring of fractions} of $S$, see (\ref{Q_F(S).par}).
It is the maximal subring of $Q(S)$ on which $G$ acts
such that $S\rightarrow Q_G(S)$ is a $G$-algebra map.
We prove some basic results on $Q_G(S)$.
If $S$ is a Noetherian Nagata domain, then $Q_G(S)$ is a Krull domain
(Corollary~\ref{nagata.thm}).
This machinery plays an important role in proving Theorem~\ref{main.thm}
in the generality of the stated form.

Section~2 is preliminaries.
We give some remarks on divisorial ideals on Krull domains.
We also give some basic results on group actions on rings.

In section~3, we introduce $Q_G(S)$, and study basic properties of it.

In section~4, we give the main results on factoriality of the rings 
generated by the semiinvariants or invariants.

In section~5, we discuss when $Q(S^G)=Q(S)^G$ holds for a
$G$-algebra domain $S$.
This problem was called the {\em Italian problem} in \cite[(6.1)]{Mukai}.
Proposition~\ref{Kamke.thm} 
shows that under the assumption of Theorem~\ref{popov.thm}, 
$Q(S^G)=Q(S)^G$ holds.
We also give a criterion of $Q(S^G)=Q(S)^G$, using the comparison of the
maximum dimension of orbits and $\tdeg_k Q(S)-\tdeg_k Q(S^G)$ 
(Corollary~\ref{Italian2.thm}).
This criterion will be useful in section~6.

In section~6, we give four examples.
The first one is an example of Theorem~\ref{main.thm}.
The second one shows that there is a finitely generated UFD $S$ over
an algebraically closed field $k$ and a finite group $G$ acting on $S$
such that there is no nontrivial group homomorphism $G\rightarrow S^\times$,
but $S^G$ is not a UFD.
So in Theorem~\ref{popov.thm}, the assumption {\bf (iii)} 
cannot be removed.
The third 
example shows that the condition
on the character group imposed in
the statement of Lemma~\ref{principal.thm} is really necessary.
The fourth example shows that the surjectivity of $X(G)\rightarrow
X(K\otimes_k G)$
in Corollary~\ref{ufd-cor.thm} cannot be removed.

It is natural to ask what we can say about \cite[Remark~3, p.~376]{Popov2}
when we consider non-algebraically closed base 
field.
Although we gave some partial results in this paper,
the author does not know the complete answer.
In particular, the author cannot answer the following question.

\begin{question}
Let $k$ be a field (not necessarily algebraically closed), and $G$ an
affine algebraic group over $k$.
Assume that the character group $X(\bar k\otimes_k G)$ is trivial.
Let $S$ be a $G$-algebra UFD with $S^\times\subset S^G$.
Then is $S^G$ a UFD?
\end{question}

\medskip
Acknowledgement: The author is grateful to 
Professor G.~Kemper,
Professor S.~Mukai, 
Dr.\ M.~Ohtani,
and 
Professor V.~L.~Popov
for valuable advice.

\section{Preliminaries}

\paragraph Let $B$ be a commutative ring.
The set of height one prime ideals of $B$ is denoted by $X^1(B)$.

\paragraph
Let $k$ be a field, $S$ a finitely generated $k$-algebra which is a 
normal domain.
Let $k\subset K\subset Q(S)$ be an intermediate field, where $Q(S)$ is
the field of fractions of $S$.
Set $A:=K\cap S$.

\begin{lemma}\label{exceptional.thm}
Let $k$, $S$, $K$, and $A$ be as above.
There are only finitely many height one prime ideals $P$ of $S$
such that $\height(P\cap A)\geq 2$.
\end{lemma}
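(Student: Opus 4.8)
The plan is to compare $A$ with a finitely generated model and then deduce finiteness from upper semicontinuity of fibre dimension. Put $d=\dim S=\tdeg_k Q(S)$ and $L=Q(A)\subset Q(S)$; as an intermediate field of the finitely generated extension $Q(S)/k$, the field $L$ is finitely generated over $k$, so $r:=\tdeg_k L<\infty$. First I would pick finitely many elements of $A$ generating $L$ as a field over $k$ and let $A_0$ be the $k$-subalgebra they generate. Then $A_0$ is a finitely generated $k$-domain with $A_0\subset A\subset S$ and $Q(A_0)=L$, hence $\dim A_0=r$, and $\Spec S\to\Spec A_0$ is a dominant morphism of finite type.

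Next I would record the relevant dimension data. For the finitely generated domain $A_0$ the dimension formula gives $\height_{A_0}\fp_0+\tdeg_k\kappa(\fp_0)=r$ for every prime $\fp_0$, and likewise $\tdeg_k\kappa(P)=d-1$ for every $P\in X^1(S)$. The delicate ingredient is the possibly non-Noetherian ring $A$: I claim $\height_A\fp+\tdeg_k\kappa(\fp)\le r$ for every prime $\fp$ of $A$. To prove this, given a chain of primes of length $h=\height_A\fp$ below $\fp$ together with a transcendence basis of $\kappa(\fp)/k$, I would absorb a separating element of each link of the chain and lifts of the transcendence basis into one finitely generated subalgebra $A_\beta\subset A$. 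After contraction to $A_\beta$ the chain stays strict and the transcendence basis stays algebraically independent, so $h+\tdeg_k\kappa(\fp)\le\dim A_\beta\le r$ by the dimension formula for $A_\beta$.

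Now take $P\in X^1(S)$ with $\height_A(P\cap A)\ge 2$ and set $\fp_0=P\cap A_0$. The inequality for $A$ gives $\tdeg_k\kappa(P\cap A)\le r-2$, and since $\kappa(\fp_0)\hookrightarrow\kappa(P\cap A)$ this forces $\tdeg_k\kappa(\fp_0)\le r-2$, whence $\height_{A_0}\fp_0\ge 2$. Consequently $\tdeg_{\kappa(\fp_0)}\kappa(P)=(d-1)-\tdeg_k\kappa(\fp_0)\ge d-r+1$, so the closure of $P$ in the fibre of $\Spec S\to\Spec A_0$ over $\fp_0$ has dimension at least $d-r+1$, exceeding the generic fibre dimension $d-r$. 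By Chevalley's upper semicontinuity of fibre dimension, the set $E=\{x\in\Spec S:\dim_x(\text{fibre})\ge d-r+1\}$ is closed, and it is proper since the generic point of $\Spec S$ is excluded; hence $\dim E\le d-1$. Each such $P$ has $\overline{\{P\}}$ of dimension $d-1$ contained in $E$, so $\overline{\{P\}}$ is a top-dimensional irreducible component of $E$, and there are only finitely many of these.

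The main obstacle is exactly the bridge between the non-Noetherian $A$ and its finite-type model $A_0$: contracting a prime chain from $A$ down to $A_0$ may collapse it, so one cannot compare heights directly and must instead route everything through transcendence degrees of residue fields, which behave well along $\kappa(\fp_0)\hookrightarrow\kappa(P\cap A)\hookrightarrow\kappa(P)$. Proving the dimension inequality $\height_A\fp+\tdeg_k\kappa(\fp)\le r$ for $A$ by the single-subalgebra trick is what legitimizes this routing and turns the height-two condition into the excess-fibre-dimension condition to which Chevalley's theorem applies.
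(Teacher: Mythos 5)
Your argument is correct, but it takes a genuinely different route from the paper's. The paper also reduces to a finitely generated model of $Q(A)$ inside $A$, but it chooses the model $B$ \emph{normal} (this uses normality of $S$ to ensure the normalization of a model still lands in $A=K\cap S$) and then argues via flatness: the non-flat locus $W$ of $\Spec S\rightarrow\Spec B$ is closed, its image $\bar Z$ has codimension $\geq 2$ in $\Spec B$ because every local ring of $B$ in codimension $\leq 1$ is a field or a DVR (over which modules are automatically flat), and for a height one prime $P$ avoiding the preimage of $\bar Z$, flatness gives going-down, so $\height(P\cap B)\leq 1$; the case $\height(P\cap B)=1$ is then settled by the valuation-theoretic fact that the DVR $B_{P\cap B}$, being maximal under domination inside $K$, must coincide with the local ring $A_{P\cap A}$, giving $\height(P\cap A)\leq 1$ directly. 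You instead route everything through transcendence degrees: your inequality $\height_A\fp+\tdeg_k\kappa(\fp)\leq r$ for the possibly non-Noetherian $A$, proved by absorbing a separating element for each link of the chain and lifts of a transcendence basis into one finitely generated subalgebra, is exactly the right bridge, and Chevalley's upper semicontinuity of fibre dimension replaces the paper's flat-locus argument. Two small points deserve a word but are not gaps: you only ever need a chain of length two below $P\cap A$ (so no finiteness of $\height_A$ is presupposed), and the transcendence basis of $\kappa(P\cap A)$ is finite because $\kappa(P\cap A)\hookrightarrow\kappa(P)$. Both proofs end identically, by observing that the relevant height one primes give $(d-1)$-dimensional irreducible closed subsets of a proper closed subset of $\Spec S$, hence are among its finitely many components. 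What each approach buys: the paper's argument is sharper locally, as it actually identifies $A_{P\cap A}$ with the DVR $B_{P\cap B}$ for all good $P$; yours is softer and slightly more general, since it nowhere uses normality of $S$ or of the model $A_0$ and avoids the openness of the flat locus and going-down, at the cost of invoking Chevalley's semicontinuity theorem.
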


\begin{proof} 
Replacing $K$ by $Q(A)$ if necessary, we may assume that $K=Q(A)$.
There is a finitely generated normal $k$-subalgebra $B$ of $A$ such that
$Q(B)=Q(A)=K$.
Let $W:=\{P\in\Spec S\mid S_P \text{ is not flat over $B$}\}$,
and $Z$ the image of $W$ by the canonical map $\Spec S\rightarrow\Spec B$.
Then $W$ is closed in $\Spec S$, and $Z$ is constructible in $\Spec B$ 
\cite[Theorem~6]{Matsumura}.
As a vector space over a field is flat and a torsion free module over a
DVR is flat, the closure $\bar Z$ of $Z$ has codimension at least two 
in $\Spec B$.
There are only finitely many subvarieties of codimension one in $\Spec S$
which are mapped to $\bar Z$.
So it suffices to prove that if $P$ is a height one prime ideal of $S$
such that $V(P\cap B)\not\subset\bar Z$, then $\height(P\cap A)\leq 1$.
Consider the local homomorphism
$
B_{P\cap B}\hookrightarrow A_{P\cap A}\subset K.
$
If $\height(P\cap B)=0$, then $B_{P\cap B}=K$ and this forces
$A_{P\cap A}=K$, and $\height(P\cap A)=0$.
If $\height(P\cap B)=1$, then $B_{P\cap B}$ is a DVR with $Q(B_{P\cap B})=K$.
Since the map $B_{P\cap B}\hookrightarrow A_{P\cap A}$ is local, 
$B_{P\cap B}=A_{P\cap A}$, and hence $\height(P\cap A)=1$.
$\height(P\cap B)\geq 2$ cannot happen by flatness.
\end{proof}

\paragraph
Let $k$, $S$, $K$, and $A$ be as above.
For $\fp\in X^1(A)$, we define
\[
X^1(\fp)=\{P\in X^1(S)\mid P\cap A=\fp\}.
\]
Note that $X^1(\fp)$ is finite.
For $P\in X^1(\fp)$, define $m(P)$ by $\fp S_P=P^{m(P)}S_P$.
Let $v_P$ be the normalized discrete valuation associated with $P\in X^1(S)$.
For a subset $I$ of $S$, 
define $v_P(I)=\inf\{v_P(x)\mid x\in I\}$.
Note that $IS_P=(PS_P)^{v_P(I)}$, where $(PS_P)^{-\infty}=Q(S)$, and
$(PS_P)^{\infty}=0$.
Thus $m(P)=v_P(\fp)$ for $\fp\in X^1(A)$ and $P\in X^1(\fp)$.
An $S$-submodule $I$ of $Q(S)$ is a divisorial fractional ideal 
if and only if $v_P(I)\in\Bbb Z$ for any $P\in X^1(S)$, $v_P(I)\neq 0$
for only finitely many $P$, and 
$I=\bigcap_{P\in X^1(S)} (PS_P)^{v_P(I)}$ (in case $S$ is a field,
the right hand side should be understood to be $S=Q(S)$.
We use this convention in the sequel).

\paragraph\label{class-group.par}
For a Krull domain $B$, the class group of $B$, denoted by 
$\Cl(B)$, is the $\Bbb Z$-free module 
$\Div(B):=\bigoplus_{P\in X^1(B)}\Bbb Z\cdot \langle P\rangle$
with the free basis $X^1(B)$ (the basis element corresponding to $P
\in X^1(B)$ is denoted by $\langle P\rangle$), modulo the subgroup
$\Prin(B):=\{\divisor(a)\mid a\in Q(B)\setminus\{0\}\}$,
where $\divisor a=\sum_{P\in X^1(B)}v_P(a)\langle P\rangle$.
That is, $\Cl(B):=\Div(B)/\Prin(B)$.
The class of $D\in \Div(B)$ in $\Cl(B)$ is denoted by $\bar D$.
For $D=\sum_P c_P \langle P\rangle \in \Div(B)$, we can associate
a divisorial fractional ideal
\[
\I(D):=\bigcap_{P}(PA_P)^{-c_P}=\{a\in Q(B)\setminus\{0\}
\mid D+\divisor a\geq 0\}\cup\{0\},
\]
where for $D'=\sum_P c_P'\langle P\rangle\in \Div (B)$, 
we say that $D'\geq 0$ if $c_P'\geq 0$ for all $P\in X^1(B)$.
The map $D\mapsto \I(D)$ induces an isomorphism between $\Div(B)$ and
the group of divisorial fractional ideals $\DF(B)$ of $B$,
where for $I,J\in \DF(B)$, the sum of $I$ and $J$ in $\DF(B)$ 
is defined to be $B:_{Q(B)}(B:_{Q(B)}IJ)$.
Note that $\I$ induces an isomorphism $\bar \I$ between $\Cl(B)$ and the
group of the isomorphism classes of the divisorial fractional ideals 
$\Cl'(B)$ of $B$.
We identify $\Cl(B)$ and $\Cl'(B)$ via $\bar \I$.
For a divisorial fractional ideal $I$ of $B$, the class of $I$ in 
$\Cl'(B)$ is denoted by $[I]$.
If $I=\bigcap_P (PA_P)^{c_P}$, then $[I]=\sum_P c_P[P]$.
Note that $\bar \I(\overline{\langle P\rangle})=-[P]$.

\begin{lemma}\label{divisorial.thm}
Let $I$ be a divisorial fractional ideal of $S$.
Assume that $I\cap Q(A)$ is a divisorial fractional ideal of $A$.
\begin{description}
\item[(i)] If $P\in X^1(S)$ and $P\cap A=0$, then $v_P(I)\leq 0$.
\item[(ii)] $I\cap Q(A)=\bigcap_{\fp\in X^1(A)}(\fp A_{\fp})^{n_\fp}$,
where $n_\fp=n_\fp(I)=\max\{\ru{v_P(I)/m(P)}\mid P\in X^1(\fp)\}$,
where $\ru{?}$ is the ceiling function.
\end{description}
\end{lemma}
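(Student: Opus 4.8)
The plan is to handle the two parts separately: part~(i) is a one-line valuation observation, while part~(ii) comes down to computing $I\cap Q(A)$ locally at each height-one prime of $A$, where the only real difficulty is controlling the height one primes of $S$ lying over height-$\geq 2$ primes of $A$.

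For~(i), I would use that a divisorial fractional ideal is by definition nonzero, so $I\cap Q(A)$ contains some $x\neq 0$. Since $P\cap A=0$, every nonzero element of $A$ lies outside $P$, so $v_P$ vanishes on $A\setminus\{0\}$ and hence on $Q(A)^\times$; in particular $v_P(x)=0$. As $x\in I$, this gives $v_P(I)\leq v_P(x)=0$, which is the assertion.

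For~(ii), I would first record the comparison of valuations. Note $A$ is a Krull domain (it is $Q(A)\cap S$ with $S$ normal Noetherian), and for $\fp\in X^1(A)$ write $w_\fp$ for its normalized valuation. Fix $\fp\in X^1(A)$ and $P\in X^1(\fp)$. The inclusion $A_\fp\hookrightarrow S_P$ is a local homomorphism, hence carries units of $A_\fp$ to units of $S_P$; taking a generator $t$ of $\fp A_\fp$ inside $\fp$ (so $v_P(t)=v_P(\fp)=m(P)$) and writing $a=u\,t^{w_\fp(a)}$ with $u\in A_\fp^\times$, one gets $v_P(a)=m(P)\,w_\fp(a)$ for all $a\in A_\fp$, and therefore $v_P|_{Q(A)}=m(P)\,w_\fp$. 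Now set $J:=I\cap Q(A)$. The inclusion $J\subseteq\bigcap_{\fp}(\fp A_\fp)^{n_\fp}$ is immediate: for $0\neq x\in J\subseteq I$ and each $P\in X^1(\fp)$ we have $m(P)\,w_\fp(x)=v_P(x)\geq v_P(I)$, so $w_\fp(x)\geq\ru{v_P(I)/m(P)}$, and maximizing over $P\in X^1(\fp)$ yields $w_\fp(x)\geq n_\fp$.

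For the reverse inclusion it suffices, since $J$ is divisorial by hypothesis, to show $w_\fp(J)=n_\fp$ for each $\fp\in X^1(A)$, i.e.\ to compute the localization $J A_\fp$ and check it is $(\fp A_\fp)^{n_\fp}$. The main obstacle is that membership $x\in I$ also imposes conditions at the $P\in X^1(S)$ with $\height(P\cap A)\geq 2$ (finitely many, by Lemma~\ref{exceptional.thm}), over which I have no control. The device I would use to remove them is to localize at $A\setminus\fp$: put $B:=(A\setminus\fp)^{-1}S$, a Krull domain with $Q(B)=Q(S)$ on which $v_P$ is unchanged for surviving $P$. A prime $P\in X^1(S)$ survives in $B$ precisely when $P\cap A\subseteq\fp$; as $\fp$ has height one, this excludes every $P$ with $\height(P\cap A)\geq 2$, leaving exactly the $P$ with $P\cap A=0$ together with $X^1(\fp)$. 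Because localization commutes with finite intersection and fixes the field $Q(A)$, one has $J A_\fp=(IB)\cap Q(A)$, and, $IB$ being divisorial in $B$, an element $x\in Q(A)$ lies in $IB$ iff $v_P(x)\geq v_P(I)$ for every surviving $P$. The primes over $0$ impose no condition by part~(i), while those in $X^1(\fp)$ impose $w_\fp(x)\geq\ru{v_P(I)/m(P)}$; taking the maximum, $(IB)\cap Q(A)=(\fp A_\fp)^{n_\fp}$, so $w_\fp(J)=n_\fp$. Since $J$ is divisorial, $J=\bigcap_{\fp}(\fp A_\fp)^{w_\fp(J)}=\bigcap_{\fp}(\fp A_\fp)^{n_\fp}$, giving~(ii). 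I expect the localization step eliminating the height-$\geq 2$ primes to be the one genuinely delicate point; the rest is the routine valuation comparison together with part~(i).
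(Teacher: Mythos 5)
Your proof is correct, and it takes a genuinely different route at the one point where the lemma has real content. Both arguments share the same skeleton: part (i) is the same one-line valuation observation, and for (ii) both use the divisoriality hypothesis on $J:=I\cap Q(A)$ to reduce everything to the local computation $J_\fp=(\fp A_\fp)^{n_\fp}$ at each $\fp\in X^1(A)$, via the comparison $v_P|_{Q(A)^\times}=m(P)\,w_\fp$ for $P\in X^1(\fp)$ (which the paper encodes as the statement that every $A_\fp$-submodule $M$ of $Q(A)$ equals $(\fp A_\fp)^{v_P(M)/m(P)}$). The difference is the treatment of the primes $P\in X^1(S)$ with $\height(P\cap A)\geq 2$. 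The paper collects their contribution into a separate intersection $J_2$, needs Lemma~\ref{exceptional.thm} to know that intersection is finite (so that localization at $\fp$ commutes with it), shows $(J_2)_\fp\supseteq A_\fp$ from $(P\cap A)^n\not\subseteq\fp$, and requires a further reduction (multiplying by $a\in A$ with $aJ_1\subseteq A$) to handle the case where some $n_\fp<0$. You instead pass to $B=(A\setminus\fp)^{-1}S$, where the bad primes vanish outright because $\height(P\cap A)\geq 2$ forces $P\cap A\not\subseteq\fp$, and invoke the standard Krull-domain fact that a divisorial fractional ideal localizes to a divisorial fractional ideal with unchanged exponents at the surviving primes; this fact holds for an arbitrary multiplicative set (by finite support of divisors: given $x\in Q(A)$ with $v_P(x)\geq v_P(I)$ at all surviving $P$, only finitely many killed primes violate the inequality, and a suitable $t\in A\setminus\fp$ clears them), so --- notably --- your argument uses neither Lemma~\ref{exceptional.thm} nor the two-case reduction. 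What each buys: the paper's proof is self-contained modulo its own preliminary lemma, while yours is shorter and more conceptual but leans on the localization lemma for divisorial ideals (cf.\ \cite{Fossum}), which you rightly flagged as the delicate step and which does go through as you expect. One point both you and the paper leave implicit is that $X^1(\fp)\neq\emptyset$ for each $\fp\in X^1(A)$, so that $n_\fp$ is defined; this holds because the essential valuations of the Krull domain $A=Q(A)\cap S$ occur among the restrictions $v_P|_{Q(A)}$, $P\in X^1(S)$.
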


\begin{proof} 
{\bf (i)} If $P\cap A=0$, then $v_P(x)=0$ for $x\in A\setminus\{0\}$.
This shows that $v_P(\alpha)=0$ for $\alpha\in Q(A)\setminus\{0\}$.
So if, moreover, $v_P(I)>0$, then $I\cap Q(A)=0$.
But as we assume that $I\cap Q(A)$ is a fractional ideal 
(in particular, nonzero), this is absurd.

{\bf (ii)} 
Note that
\[
I\cap Q(A)=\bigcap_{P\in X^1(S)} ((PS_P)^{v_P(I)}\cap Q(A)).
\]
For $P\in X^1(S)$ such that $P\cap A=0$, 
$v_P(I)\leq 0$ by {\bf (i)}, and hence $(PS_P)^{v_P(I)}\cap Q(A)=Q(A)$,
and such a $P$ can be removable from the intersection.
Set 
\[
J_1=J_1(I)=
\bigcap_{\fp\in X^1(A)}\bigcap_{P\in X^1(\fp)}((PS_P)^{v_P(I)}\cap Q(A))
\]
and
\begin{equation}\label{J_2.eq}
J_2=
\bigcap_{P\in X^1(S),\;\height(P\cap A)\geq 2}((PS_P)^{v_P(I)}\cap Q(A)).
\end{equation}
Then $I\cap Q(A)=J_1\cap J_2$.
For $\fp\in X^1(A)$ and $P\in X^1(\fp)$, $(PS_P)^{v_P(I)}\cap Q(A)$ is
an $A_\fp$-submodule of $Q(A)$.
On the other hand, for any $A_\fp$-submodule $M$ of $Q(A)$, 
$M= (\fp A_\fp)^{v_P(M)/m(P)}$.
As 
\[
v_P((PS_P)^{v_P(I)}\cap Q(A))\geq m(P)\cdot\ru{v_P(I)/m(P)}, 
\]
$(PS_P)^{v_P(I)}\cap Q(A)\subset (\fp A_\fp)^{\ru{v_P(I)/m(P)}}$.
The opposite inclusion is trivial, and hence
\[
(PS_P)^{v_P(I)}\cap Q(A)=(\fp A_\fp)^{\ru{v_P(I)/m(P)}}.
\]
This immediately leads to
\begin{equation}\label{J_1.eq}
J_1=\bigcap_{\fp\in X^1(A)}(\fp A_{\fp})^{n_\fp}.
\end{equation}
As $v_P(I)\in\Bbb Z$ for $P\in X^1(S)$, and $v_P(I)\neq 0$ for 
only finitely many $P$, it follows that
$n_\fp\in\Bbb Z$ for $\fp\in X^1(A)$, and $n_\fp\neq 0$ for
only finitely many $\fp$.
In particular, $J_1$ is a divisorial fractional ideal, and
$v_\fp(J_1)=n_\fp$ for $\fp\in X^1(A)$.

It suffices to show that $I\cap Q(A)=J_1$.
As the both hand sides are divisorial fractional ideals, 
it suffices to show that
$(I\cap Q(A))_\fp=(J_1)_\fp$ for any $\fp\in X^1(A)$.
As $I\cap Q(A)=J_1\cap J_2$, this is equivalent to say that
$(J_2)_\fp\supset (J_1)_\fp$ for any $\fp$.

Now first consider the case that $J_1\subset A$.
That is to say, $n_\fp\geq 0$ for any $\fp\in X^1(A)$.
Then it suffices to show that $(J_2)_\fp\supset 
A_\fp$, since $(J_1)_\fp\subset A_\fp$.
As the intersection in (\ref{J_2.eq}) is finite by Lemma~\ref{exceptional.thm},
It suffices to show that
$((PS_P)^{v_P(I)}\cap Q(A))_\fp\supset A_\fp$ for any $P\in X^1(S)$ such that
$\height(P\cap A)\geq 2$, and any $\fp\in X^1(A)$.
But this is trivial, since $P^n\cap A\supset (P\cap A)^n\not\subset \fp$
for $n\geq 0$.
Now the lemma is true for $I$ such that $J_1\subset A$.

Next consider the general case.
Take $a\in A\setminus \{0\}$ such that $aJ_1\subset A$.
Let $\divisor a=\sum_\fp c(\fp)\langle\fp\rangle$.
Note that $aI\cap Q(A)=a(I\cap Q(A))$ is divisorial.
It is easy to see that $n_\fp(aI)=n_\fp(I)+c(\fp)$.
Or equivalently, $J_1(aI)=a J_1(I)$.
So the lemma is true for $aI$.
That is, $aI\cap Q(A)=J_1(aI)=a J_1(I)$.
So $I\cap Q(A)=a^{-1}(aI\cap Q(A))=a^{-1}a J_1(I)=J_1(I)$,
and the lemma is also true for this general $I$.
\end{proof}

\begin{lemma}\label{submod.thm}
Let $R$ be a commutative ring, $C$ an $R$-coalgebra, and 
$A\rightarrow C^*$
be a universally dense $R$-algebra map.
That is, an $R$-algebra map such that for any $R$-module $M$, 
$\theta_M: M\otimes C\rightarrow \Hom_R(A,M)$
\($(\theta_M(m\otimes c))(a)=(ac)\cdot m$\) is injective.
A \(right\) $C$-comodule is a \(left\) $C^*$-module, and it is an $A$-module.
Let $V$ be a $C$-comodule and $W$ its $R$-submodule.
Then $W$ is a $C$-subcomodule of $V$ if and only if it is an $A$-submodule.
\end{lemma}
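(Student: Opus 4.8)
The plan is to make both structures completely explicit and then prove the two implications separately; the forward implication is formal, and essentially all the content sits in the converse, where universal density is used. Writing the coaction as $\rho\colon V\to V\otimes C$ and denoting by $\hat a\in C^*$ the image of $a\in A$ (so that $\hat a(c)$ is what the statement writes as $ac$), the left $C^*$-action is $f\cdot v=(\id\otimes f)(\rho(v))$, and hence the $A$-module structure is $a\cdot v=(\id\otimes\hat a)(\rho(v))=\sum \hat a(v_{(1)})\,v_{(0)}$ in Sweedler notation. Let $\iota\colon W\hookrightarrow V$ be the inclusion and $\pi\colon V\to\bar V:=V/W$ the quotient. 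For the direction ``$C$-subcomodule $\Rightarrow$ $A$-submodule'', suppose $\rho(w)=(\iota\otimes\id_C)(\xi)$ with $\xi\in W\otimes C$; then for any $f\in C^*$ we get $f\cdot w=(\id_V\otimes f)\rho(w)=\iota\bigl((\id_W\otimes f)(\xi)\bigr)\in W$, so in particular $W$ is an $A$-submodule. This direction needs nothing beyond the compatibility of $\id\otimes f$ with $\iota$.

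The hard part will be the converse, and this is where I would invoke universal density. Being a $C$-subcomodule means $\rho(W)\subset\Image(\iota\otimes\id_C)=\Ker(\pi\otimes\id_C)$, i.e.\ $(\pi\otimes\id_C)(\rho(w))=0$ in $\bar V\otimes C$ for every $w\in W$. To force this vanishing I specialize the hypothesis to $M=\bar V$, obtaining an injective map $\theta_{\bar V}\colon\bar V\otimes C\to\Hom_R(A,\bar V)$, and compute for $a\in A$
\[
\theta_{\bar V}\bigl((\pi\otimes\id_C)\rho(w)\bigr)(a)
=\sum (a\,w_{(1)})\,\pi(w_{(0)})
=\pi\Bigl(\sum \hat a(w_{(1)})\,w_{(0)}\Bigr)
=\pi(a\cdot w).
\]
Since $W$ is an $A$-submodule, $a\cdot w\in W$ and hence $\pi(a\cdot w)=0$; thus $\theta_{\bar V}$ annihilates $(\pi\otimes\id_C)\rho(w)$, and its injectivity forces $(\pi\otimes\id_C)\rho(w)=0$, as required.

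The one subtle point I would flag is that $C$ need not be flat over $R$, so $\iota\otimes\id_C$ need not be injective; the correct reading of ``subcomodule'' is that $\rho(W)$ lands in the \emph{image} of $W\otimes C$ in $V\otimes C$, equivalently in $\Ker(\pi\otimes\id_C)$. With that formulation the entire lemma collapses to the injectivity of $\theta_{\bar V}$, which is exactly what converts the single comodule identity in $\bar V\otimes C$ into the family of $A$-linear conditions $\pi(a\cdot w)=0$ that the $A$-submodule hypothesis supplies.
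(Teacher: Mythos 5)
Your proof is correct and takes essentially the same route as the paper: both reduce everything to the single composite $\rho=(\pi\otimes 1_C)\circ\omega_V\circ\iota\colon W\rightarrow (V/W)\otimes C$, observe that $W$ is a $C$-subcomodule if and only if $\rho=0$ while $W$ is an $A$-submodule if and only if $\theta_{V/W}\circ\rho=0$ (via the identity $(\theta_{V/W}(\rho(w)))(a)=\overline{aw}$), and conclude from the injectivity of $\theta_{V/W}$. The only divergence is your closing caveat, which is actually moot: universal density forces $C$ to be $R$-flat --- for $M'\subset M$, compose the injection $\theta_{M'}\colon M'\otimes C\rightarrow\Hom_R(A,M')$ with the injection $\Hom_R(A,M')\rightarrow\Hom_R(A,M)$ and use naturality of $\theta$ to see that $M'\otimes C\rightarrow M\otimes C$ is injective --- and this is precisely the paper's opening line (citing \cite[(I.3.8.4)]{Hashimoto}), so the standard reading of subcomodule with $W\otimes C\subset V\otimes C$ is available and your reformulation via $\Ker(\pi\otimes\id_C)$, though equivalent under flatness and harmless, is not needed.
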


\begin{proof} 
Note that $C$ is $R$-flat \cite[(I.3.8.4)]{Hashimoto}.
Consider the map $\rho: W\rightarrow V/W\otimes C$ defined by 
$\rho=(\pi\otimes 1_C)\circ \omega_V\circ \iota$, where $\iota:W\rightarrow 
V$ is the inclusion, and $\pi:V\rightarrow V/W$ is the projection.
$W$ is a $C$-subcomodule of $V$ if and only if $\rho$ is zero.
On the other hand, $W$ is an $A$-submodule of $V$ if and only if
$\theta_{V/W}\circ \rho:W\rightarrow \Hom_R(A,V/W)$ is zero,
since $((\theta_{V/W}\circ \rho)(w))(a)=\overline{aw}$.
Since $\theta_{V/W}$ is injective, $\rho=0$ if and only if
$\theta_{V/W}\circ \rho=0$.
The assertion follows.
\end{proof}

\begin{lemma}\label{subcomod.thm}
Let $R$ be a commutative ring, and 
$C$ an $R$-flat coalgebra.
Let $V$ be a $C$-comodule, and $W$ its $R$-submodule.
Let $R'$ be an $R$-algebra.
Let $M'=R'\otimes_R M$ for an $R$-module $M$.
If $W$ is a $C$-subcomodule of $V$, then $W'$ is a $C'$-subcomodule of $V'$.
If $R'$ is faithfully flat over $R$ and $W'$ is a $C'$-subcomodule of $V'$,
then $W$ is a $C$-subcomodule of $V$.
\end{lemma}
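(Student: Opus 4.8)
The plan is to reduce both implications to the criterion used in the proof of Lemma~\ref{submod.thm}. Since $C$ is $R$-flat, tensoring the exact sequence $0\to W\to V\to V/W\to 0$ with $C$ remains exact, so $W$ is a $C$-subcomodule of $V$ if and only if the composite
\[
\rho:=(\pi\otimes 1_C)\circ\omega_V\circ\iota\colon W\longrightarrow (V/W)\otimes_R C
\]
vanishes, where $\iota\colon W\hookrightarrow V$ and $\pi\colon V\twoheadrightarrow V/W$ are the inclusion and projection and $\omega_V$ is the coaction. The whole argument rests on the single observation that the formation of $\rho$ commutes with the base change $R'\otimes_R(-)$.

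First I would record that $C'=R'\otimes_R C$ is $R'$-flat, being the base change of a flat module, so the same criterion is available over $R'$: a sub-$R'$-module of $V'$ is a $C'$-subcomodule exactly when the corresponding map $\rho'$ vanishes. Applying $R'\otimes_R(-)$ to $0\to W\to V\to V/W\to 0$ and using right exactness identifies $(V/W)'$ with the cokernel of $W'\to V'$; under the canonical isomorphism $V'\otimes_{R'}C'\cong R'\otimes_R(V\otimes_R C)$ the coaction $\omega_{V'}$ is $R'\otimes_R\omega_V$, the map $\iota'=R'\otimes_R\iota$ is $W'\to V'$, and $\pi'=R'\otimes_R\pi$ is the projection onto $(V/W)'$. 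Chasing the definitions then yields the key identity
\[
\rho'=(\pi'\otimes 1_{C'})\circ\omega_{V'}\circ\iota'=R'\otimes_R\rho .
\]

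With this in hand both statements are immediate. For the first, if $W$ is a $C$-subcomodule then $\rho=0$, hence $\rho'=R'\otimes_R\rho=0$, and by the criterion over $R'$ (valid since $C'$ is flat) the image of $W'$ in $V'$ is a $C'$-subcomodule; note that no flatness of $R'$ is needed here, only the flatness of $C$. For the second, assume $R'$ is faithfully flat over $R$. Then $W'=R'\otimes_R W\hookrightarrow V'$ is genuinely a submodule and $\rho'$ is precisely its testing map; the hypothesis that $W'$ is a $C'$-subcomodule gives $\rho'=0$, that is $R'\otimes_R\rho=0$, and faithful flatness forces $\rho=0$, so $W$ is a $C$-subcomodule of $V$.

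The only real work is the middle paragraph: verifying that the canonical identifications intertwine the base-changed coaction with $\omega_{V'}$ and that $V/W$ base-changes to the cokernel of $W'\to V'$, so that $\rho'$ genuinely equals $R'\otimes_R\rho$. This is routine diagram-checking with the standard base-change isomorphisms for tensor products, and it is the place where one must make sure that no flatness of $R'$ is used secretly; once the identity $\rho'=R'\otimes_R\rho$ is in place, the faithfully flat descent in the second part is the only step where flatness of $R'$ actually enters.
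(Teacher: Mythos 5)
Your proof is correct and is essentially the paper's own argument: the paper likewise reduces the statement to the vanishing of the map $\rho\colon W\rightarrow V/W\otimes_R C$ from the proof of Lemma~\ref{submod.thm}, observes that $\rho'$ is the base change of $\rho$, and invokes faithful flatness for the converse direction. Your middle paragraph merely makes explicit the base-change identifications (right exactness identifying $(V/W)'$ with the cokernel of $W'\rightarrow V'$, and $\omega_{V'}=R'\otimes_R\omega_V$) that the paper leaves implicit.
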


\begin{proof} 
If $\rho: W\rightarrow V/W\otimes_R C$ in the proof of
Lemma~\ref{submod.thm} is zero, then so is the base change
$\rho': W'\rightarrow V'/W'\otimes_{R'} C'$.
The converse is true, if $R'$ is faithfully flat over $R$.
\end{proof}

\paragraph Let $R$ be a commutative ring.
A sequence of $R$-group schemes
\[
X\xrightarrow{\varphi}Y\xrightarrow{\psi}Z
\]
is said to be exact, if $\varphi(X)=\Ker\psi$ as $R$-subfaisceaux of $Y$, 
see \cite[(I.5.5)]{Jantzen}.

\begin{lemma}\label{exact.thm}
Let $R$ be a Noetherian commutative ring, and
\begin{equation}\label{exact3.eq}
1\rightarrow N\xrightarrow{\varphi} F\xrightarrow{\psi} T\rightarrow 1
\end{equation}
an exact sequence of affine flat $R$-group schemes.
Then 
\begin{description}
\item[(i)] $\varphi$ is a closed immersion.
\item[(ii)] $\psi$ is faithfully flat, and $R[T]\rightarrow R[F]$ is injective.
\item[(iii)] For any $R$-module $V$, $V\otimes R[T]\rightarrow V\otimes R[F]$
induces $V\otimes R[T]\cong (V\otimes R[F])^N$, where $N$ acts on $R[F]$
right regularly.
\item[(iv)] For any $F$-module $M$, $M^N$ is uniquely a $T$-module so that
$M^N$ as an $F$-module is an $F$-submodule of $M$.
\end{description}
\end{lemma}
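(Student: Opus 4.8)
The plan is to prove the four assertions of Lemma~\ref{exact.thm} in order, since each builds on the previous. Throughout I would work with the Hopf algebras $R[N]$, $R[F]$, $R[T]$ corresponding to the affine group schemes, translating the exactness of (\ref{exact3.eq}) into algebraic statements about these coalgebras. The key dictionary is that $\psi$ corresponds to the Hopf algebra map $\psi^*:R[T]\to R[F]$, and that $N=\Ker\psi$ means $R[N]$ is the quotient of $R[F]$ by the ideal generated by the augmentation ideal of $R[T]$ pulled back along $\psi^*$. Exactness in the sense of $R$-subfaisceaux (the Jantzen reference \cite[(I.5.5)]{Jantzen}) is what lets me pass between the functor-of-points picture and the Hopf-algebra picture.

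For \textbf{(i)} and \textbf{(ii)}, I would invoke the standard theory of faithfully flat descent for affine group schemes over a Noetherian base. The faithful flatness of $\psi$ (hence of $R[T]\hookrightarrow R[F]$) should follow from the exactness hypothesis together with flatness of $F$ over $R$; this is essentially the content that a quotient map in an exact sequence of flat affine group schemes is faithfully flat, and then $\varphi$ being a closed immersion is dual to $R[F]\twoheadrightarrow R[N]$ being surjective, which holds because $N$ is a kernel. I expect these two parts to be citations to or light adaptations of results in \cite{Jantzen}. The heart of the lemma is \textbf{(iii)}: the identification $V\otimes R[T]\cong(V\otimes R[F])^N$ where $N$ acts by right translation on $R[F]$. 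The plan here is to first establish the case $V=R$, i.e.\ $R[T]=R[F]^N$, which says the invariants under right translation by the kernel recover the base of the faithfully flat quotient. Since $R[F]$ is faithfully flat over $R[T]$, I would use the standard cosimplicial/descent resolution
\[
R[T]\to R[F]\rightrightarrows R[F]\otimes_{R[T]}R[F]
\]
and identify $R[F]\otimes_{R[T]}R[F]\cong R[F]\otimes R[N]$ via the map induced by comultiplication and $\psi^*$ — this is precisely where the group-scheme structure enters, converting the descent cokernel into the right coaction of $N$. Equalizing then gives $R[F]^N=R[T]$. Tensoring the whole faithfully flat descent sequence with $V$ over $R$ (legitimate since everything in sight is $R$-flat, using that coalgebras here are $R$-flat as in \cite[(I.3.8.4)]{Hashimoto}) upgrades this to the stated isomorphism for arbitrary $V$; the point is that $-\otimes_R V$ is exact enough to commute with taking the $N$-invariants, which are a kernel of an $R$-split or at least $R$-flat map.

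The main obstacle I anticipate is part \textbf{(iii)} in its module-tensored form: one must be careful that forming $N$-invariants $(-)^N$ commutes with the base change $-\otimes_R V$. This is not automatic for a general left-exact functor, and the argument relies essentially on the $R$-flatness of all the coalgebras involved, so that the defining exact sequence of the invariants stays exact after tensoring with $V$. I would phrase $(W)^N$ as the equalizer of $W\otimes R[F]\rightrightarrows W\otimes R[F]\otimes R[N]$ (for $W=V\otimes R[F]$) and check flatness makes this equalizer commute with $\otimes_R V$. Finally, for \textbf{(iv)}, I would apply \textbf{(iii)} with $V=M$ together with Lemma~\ref{submod.thm} and Lemma~\ref{subcomod.thm}: given an $F$-module $M$, the submodule $M^N$ inherits an $R[T]$-comodule structure from the isomorphism in \textbf{(iii)} (using that $M^N\subset M$ is exactly the $N$-coinvariant part), and Lemma~\ref{submod.thm} guarantees that being an $F$-submodule is detected by the appropriate algebra of operators, while the uniqueness of the $T$-structure follows from the injectivity ($R$-density) built into the comodule-versus-module comparison. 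The compatibility making $M^N$ an $F$-submodule as a $T$-module is then a formal consequence of the commuting actions of $N$ and $T$ arising from the exact sequence.
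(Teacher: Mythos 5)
Your plan diverges from the paper in part \textbf{(iii)}, and the justification you give there contains a genuine error. You claim that ``flatness makes this equalizer commute with $\otimes_R V$,'' i.e.\ that an exact sequence $0\rightarrow A\rightarrow B\rightarrow C$ of flat $R$-modules stays exact after tensoring with $V$. This is false: over $R=\Bbb Z$ the sequence $0\rightarrow \Bbb Z\xrightarrow{x\mapsto (x,0)}\Bbb Z^2\xrightarrow{(x,y)\mapsto 2y}\Bbb Z$ is exact with all three terms flat, yet tensoring with $\Bbb Z/2\Bbb Z$ destroys exactness in the middle. What is really needed is \emph{universal} exactness, and this is exactly where the paper spends its effort: it checks that the augmented cobar complex $0\rightarrow R[T]\rightarrow R[F]\rightarrow R[F]\otimes_R R[N]\rightarrow\cdots$ becomes exact after base change to every algebraically closed field $K$ (using that $K\otimes_R T\cong (K\otimes_R F)/(K\otimes_R N)$ is affine, so the higher $N$-cohomology of $K\otimes_R R[F]$ vanishes), and then invokes the acyclicity criterion \cite[Corollary~3]{Hashimoto2} for complexes of flat modules over a Noetherian ring. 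That said, your descent framework can be repaired without this machinery, and the repair is genuinely different from the paper's route: apply faithfully flat descent for $R[T]\rightarrow R[F]$ (available by \textbf{(ii)}) with coefficients in the $R[T]$-module $V\otimes_R R[T]$ --- the Amitsur sequence $0\rightarrow M\rightarrow M\otimes_{R[T]}R[F]\rightarrow M\otimes_{R[T]}R[F]\otimes_{R[T]}R[F]$ is exact for \emph{every} $R[T]$-module $M$, not just $M=R[T]$ --- and combine it with the torsor isomorphism $R[F]\otimes_{R[T]}R[F]\cong R[F]\otimes_R R[N]$, $(f,n)\mapsto (f,fn)$, under which the two Amitsur maps become $\iota$ and the right coaction $\omega$. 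The flaw, then, is not the strategy but the appeal to flatness where you need descent with arbitrary module coefficients.

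Part \textbf{(iv)} has a second, independent gap. The hardest point in the paper's proof is showing $\omega(M^N)\subset M^N\otimes R[F]$, i.e.\ that $M^N$ is an $F$-submodule at all, and this uses the \emph{normality} of $N$ in an essential way, via the Hopf-algebraic computation with the map $\rho(f)=\sum_{(f)}\bar f_2\otimes (Sf_1)f_3$ factoring through $R[N]$. Your ``formal consequence of the commuting actions of $N$ and $T$'' is not available: the sequence is an extension, not a product, $T$ does not act on $M$ a priori, and for a non-normal subgroup scheme $N\leq F$ the invariants $M^N$ are simply not $F$-stable in general, so no formal argument can work --- normality must enter somewhere, and your sketch never mentions it. You are also missing the final step $M^N\otimes R[F]\cap M\otimes R[T]=M^N\otimes R[T]$ inside $M\otimes R[F]$, which the paper proves by a diagram chase whose columns are exact precisely by \textbf{(iii)}. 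The appeals to Lemma~\ref{submod.thm} and Lemma~\ref{subcomod.thm} do not substitute for either step: those lemmas concern universally dense algebra maps and faithfully flat base change of subcomodules, and neither encodes the normality argument nor the intersection identity. Parts \textbf{(i)} and \textbf{(ii)} of your sketch are fine (the kernel is the fiber of $\psi$ over the unit, hence closed in the affine scheme $F$, and \textbf{(ii)} is the same citation to \cite[(I.5.7)]{Jantzen} the paper uses).
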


\begin{proof}
{\bf (i)} This is because $N=\Ker \psi$ is the equalizer of the two morphisms
$\psi$ and the trivial map, and $T$ is $R$-separated.

{\bf (ii)} This is \cite[(I.5.7)]{Jantzen}.

{\bf (iii)}
Consider the augmented cobar complex
\begin{equation}\label{exact2.eq}
0\rightarrow R[T]\xrightarrow{\psi'} 
R[F]\rightarrow R[F]\otimes_R R[N]\rightarrow
\cdots
\end{equation}
For any algebraically closed field $K$ which is an $R$-algebra,
(\ref{exact3.eq}) base changed to $K$ is a short exact sequence of
affine algebraic $K$-group schemes.
As $(K\otimes_R F)/(K\otimes_R N)\cong K\otimes_R T$ is affine, 
$H^i(K\otimes_R N,K\otimes_R R[F])=0$ for $i>0$, and
$H^0(K\otimes_R N,K\otimes_R R[F])\cong K\otimes_R R[T]$.
It follows that (\ref{exact2.eq}) base changed to $K$ is exact.
By \cite[Corollary~3]{Hashimoto2}, for any $R$-module $V$, 
$V\otimes R[T]\cong (V\otimes R[F])^N$.

{\bf (iv)}
Via the injective map $\psi' :R[T]\rightarrow R[F]$, we identify
$R[T]$ with a subcoalgebra of $R[F]$ (for the definition of a subcoalgebra, 
see \cite[(I.3.6.7)]{Hashimoto}).
Let $\omega:M\rightarrow M\otimes R[F]$ be the coaction.
We are to prove that $\omega(M^N)\subset M^N\otimes_R R[T]$.

First we prove that $\omega(M^N)\subset M\otimes R[T]$.
By {\bf (iii)}, this is equivalent to say that the composite map
\[
M^N\hookrightarrow M\xrightarrow{\omega}M\otimes R[F]
\xrightarrow{1\otimes\Delta}
M\otimes R[F]\otimes R[F]\xrightarrow{1\otimes \varphi'}M\otimes R[F]\otimes
R[N]
\]
maps $m\in M^N$ to $\omega(m)\otimes1$, 
where $\Delta:R[F]\rightarrow R[F]\otimes R[F]$ is the coproduct.
By the coassociativity, this map agrees with
\[
M^N\hookrightarrow M\xrightarrow{\omega}M\otimes R[F]
\xrightarrow{1\otimes\varphi'}M\otimes R[N]
\xrightarrow{\omega\otimes 1}M\otimes R[F]\otimes R[N],
\]
and this map sends $m$ to $\omega(m)\otimes 1$ by the definition of $M^N$.

Next we prove that $\omega(M^N)\subset M^N\otimes R[F]$.
As $N$ is a normal subgroup of $F$, 
the map $\rho:R[F]\rightarrow R[N]\otimes R[F]$ given by
$f\mapsto\sum_{(f)}\bar f_2\otimes (Sf_1)f_3$ factors through 
$R[N]$,
where $S$ denotes the 
antipode of $R[F]$, and we are employing Sweedler's notation, see 
\cite[(I.3.4)]{Hashimoto}, for example.
In order to prove $\omega(M^N)\subset M^N\otimes R[F]$, it suffices to show
that $\sum_{(m)}m_0\otimes \bar m_1\otimes m_2=\sum_{(m)}m_0\otimes 1
\otimes m_1$ for $m\in M^N$.

Note that
\[
\sum_{(m)}m_0\otimes \bar m_1\otimes m_2
=\sum_{(m)}m_0\otimes \bar m_3\otimes m_1(Sm_2)m_4
\]
is the image of $\sum_{(m)}m_0\otimes\rho(m_1)$ by the map
\[
\gamma:M\otimes R[N]\otimes R[F]\rightarrow
M\otimes R[N]\otimes R[F]
\qquad
(m\otimes \bar f\otimes f'\mapsto \sum_{(m)}m_0\otimes \bar f\otimes m_1f').
\]
Since $\rho$ factors through $R[N]$ and decompose like
$\rho=\bar \rho \varphi'$ and $\sum_{(m)}m_0\otimes \bar m_1=m\otimes 1$, 
\begin{multline*}
\sum_{(m)}m_0\otimes \bar m_1\otimes m_2
=
\gamma(\sum_{(m)}m_0\otimes \bar\rho(\bar m_1))
=\gamma(m\otimes\rho(1))
\\
=\gamma(m\otimes 1\otimes 1)=\sum_{(m)}m_0
\otimes 1\otimes m_1,
\end{multline*}
as desired.

To conclude the proof, it suffices to prove that
$M^N\otimes R[F]\cap M\otimes R[T]=M^N\otimes R[T]$, as submodules of
$M\otimes R[F]$.
This follows from a straightforward diagram chasing of the
commutative diagram with exact rows and columns
{\scriptsize
\[
\xymatrix{
 & 0 \ar[d] & 0 \ar[d] & 0 \ar[d] \\
0 \ar[r] & M^N\otimes R[T] \ar[r] \ar[d] &
M\otimes R[T] \ar[r] \ar[d] &
M\otimes R[N]\otimes R[T] \ar[d] \\
0 \ar[r] & M^N\otimes R[F] \ar[r] \ar[d] &
M\otimes R[F] \ar[r] \ar[d] &
M\otimes R[N]\otimes R[F] \ar[d] \\
0 \ar[r] & M^N\otimes R[F]\otimes R[N] \ar[r] &
M\otimes R[F]\otimes R[N] \ar[r] &
M\otimes R[N]\otimes R[F]\otimes R[N].
}
\]
}
The rows are exact, since
\[
0\rightarrow M^N\rightarrow M\xrightarrow{\omega-\iota}M\otimes R[N]
\]
is exact and $R[T]$, $R[F]$, and $R[N]$ are $R$-flat, where
$\iota(m)=m\otimes 1$.
The columns are exact by {\bf (iii)}.
This completes the proof of {\bf (iv)}.
\end{proof}

\paragraph
Until the end of this paper,
let $k$ be a field, and $G$ an affine algebraic $k$-group.
That is, an affine algebraic $k$-group scheme that is smooth over $k$.
Let $H$ be an affine algebraic $k$-group scheme.
For a $k$-algebra $A$, let us denote $H(A)$ the group of $A$-valued 
points of $H$.
However, if $G$ is a finite (constant) 
group (over $k$), then the group $G(k)$ is sometimes
simply denoted by $G$ by an obvious reason.
Let $\MM H$ denote the category of $H$-modules.
The coordinate ring of $H$ is denoted by $k[H]$.
For an abstract group $\Gamma$, let $\MM \Gamma$ denote the
category of $k\Gamma$-modules, where $k\Gamma$ is the group ring of 
$\Gamma$ over $k$.
Let $\bar k$ and $k\sep$ respectively denote the algebraic and the
separable closure of $k$.
The characteristic of $k$ is denoted by $\charac(k)$.

\begin{lemma}\label{dense.thm}
Assume that $G(k)$ is dense in $G$ with respect to the Zariski topology.
Then
\begin{description}
\item[(i)] The canonical functor $\MM G\rightarrow \MM {G(k)}$ is
full and faithful.
\item[(ii)]
For a $G$-module $V$ and its subspace $W$, $W$ is a $G$-submodule of $V$
if and only if it is a $G(k)$-submodule.
\item[(iii)]
For a $G$-module $V$, we have $V^G=V^{G(k)}$.
\end{description}
\end{lemma}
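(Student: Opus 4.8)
The plan is to reduce all three parts to a single consequence of density, then feed it into Lemma~\ref{submod.thm} and a couple of formal arguments. Write $\omega_V\colon V\to V\otimes k[G]$ for the coaction of a $G$-module $V$; a point $g\in G(k)$ is a $k$-algebra map $k[G]\to k$, and the associated $G(k)$-action is $g\cdot v=(1_V\otimes g)\omega_V(v)$. The one fact I would establish first is: for any $k$-vector space $U$, if $x\in U\otimes k[G]$ satisfies $(1_U\otimes g)(x)=0$ for every $g\in G(k)$, then $x=0$. Indeed, writing $x=\sum_i u_i\otimes f_i$ with the $u_i$ linearly independent over $k$, we have $(1_U\otimes g)(x)=\sum_i f_i(g)\,u_i$, so the hypothesis forces $f_i(g)=0$ for all $g$ and all $i$; the $f_i$ then lie in the ideal $\bigcap_{g\in G(k)}\Ker(g)$ of functions vanishing at every $k$-point, which by density equals $\sqrt{0}$, and this is $0$ because $k[G]$ is reduced ($G$ being smooth). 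Hence $f_i=0$ and $x=0$. Phrased in the language of Lemma~\ref{submod.thm}, this says precisely that the $k$-algebra map $kG(k)\to k[G]^*$ sending $g\mapsto\ev_g$ (an algebra map because convolution of evaluations is evaluation at the product) is universally dense.

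Granting this, part \textbf{(ii)} is immediate from Lemma~\ref{submod.thm}, applied with $R=k$, $C=k[G]$ and $A=kG(k)$: a right $k[G]$-comodule is exactly a $G$-module, the induced $kG(k)$-module structure is exactly the $G(k)$-action, and the lemma asserts that a subspace $W\subseteq V$ is a $C$-subcomodule iff an $A$-submodule, i.e.\ a $G$-submodule iff a $G(k)$-submodule.

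For part \textbf{(i)}, faithfulness is automatic since the functor $\MM G\to\MM{G(k)}$ is the identity on underlying $k$-linear maps. For fullness I would use the graph trick. Given $G$-modules $V,V'$ and a $G(k)$-linear map $f\colon V\to V'$, its graph $\Gamma_f\subseteq V\oplus V'$ (the direct sum of $G$-modules) is a $G(k)$-submodule precisely because $f$ is $G(k)$-equivariant; by \textbf{(ii)} it is therefore a $G$-submodule. The first projection $\Gamma_f\to V$ is then a bijective morphism in $\MM G$, hence an isomorphism of $G$-modules, and $f$ is the composite of its inverse with the second projection $\Gamma_f\to V'$, so $f$ lies in $\MM G$.

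For part \textbf{(iii)}, the inclusion $V^G\subseteq V^{G(k)}$ is clear: if $\omega_V(v)=v\otimes 1$ then $g\cdot v=v\,g(1)=v$. Conversely, for $v\in V^{G(k)}$ I would apply the key fact to $x:=\omega_V(v)-v\otimes 1$, since $(1_V\otimes g)(x)=g\cdot v-v=0$ for every $g$ forces $x=0$, i.e.\ $\omega_V(v)=v\otimes 1$ and $v\in V^G$; alternatively this drops out of \textbf{(ii)} applied to the line $kv$. The only genuine content in the whole lemma is the fact in the first paragraph, and its one delicate point is that topological density of $G(k)$ must be upgraded to honest injectivity of evaluation — this is exactly where reducedness of $k[G]$, hence smoothness of $G$, is used; the three stated assertions are then purely formal.
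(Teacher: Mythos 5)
Your proof is correct, and its backbone coincides with the paper's: both convert Zariski density of $G(k)$ into the statement that $kG(k)\rightarrow k[G]^*$ is universally dense and then invoke Lemma~\ref{submod.thm} for \textbf{(ii)}. Where you differ is in what gets proved by hand. The paper deduces universal density from the injectivity of $k[G]\rightarrow (kG(k))^*$ by citing \cite[(I.3.9.1)]{Hashimoto}, obtains \textbf{(i)} from \cite[(I.3.10.3)]{Hashimoto}, and then derives \textbf{(iii)} from \textbf{(i)} via $V^G=\sum_{\phi\in\Hom_G(k,V)}\Image\phi=\sum_{\phi\in\Hom_{kG(k)}(k,V)}\Image\phi=V^{G(k)}$. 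You instead prove the injectivity of each $\theta_M$ directly (write $x=\sum_i u_i\otimes f_i$ with $u_i$ linearly independent, so each $f_i$ vanishes on the dense $G(k)$, hence lies in the nilradical, hence is zero since $k[G]$ is reduced), prove fullness in \textbf{(i)} by the graph trick as a formal consequence of \textbf{(ii)} — note the logical order is reversed relative to the paper, which gets \textbf{(i)} independently of \textbf{(ii)} — and get \textbf{(iii)} by applying the key injectivity to $\omega_V(v)-v\otimes 1$ rather than through fullness. Each route buys something: yours is self-contained modulo Lemma~\ref{submod.thm} and makes explicit that reducedness of $k[G]$, i.e.\ smoothness of $G$, is exactly where density is upgraded to injectivity, a point the paper leaves implicit in ``By assumption, \dots is injective''; the paper's citations are shorter and place the lemma within the general universal-density formalism of \cite{Hashimoto}. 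One small caveat: your parenthetical that \textbf{(iii)} ``drops out of \textbf{(ii)} applied to the line $kv$'' needs one further step — \textbf{(ii)} only gives $\omega_V(v)=v\otimes h$ with $h$ group-like, and one must still argue that $h(g)=1$ for all $g\in G(k)$ forces $h=1$, again by density and reducedness — but your primary argument for \textbf{(iii)} is complete as written.
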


\begin{proof} 
By assumption, $k[G]\rightarrow (k G(k))^*$ ($f\mapsto (g\mapsto 
f(g))$ for $f\in k[G]$ and $g\in G(k)$) is injective.
By \cite[(I.3.9.1)]{Hashimoto}, $k G(k)\rightarrow k[G]^*$ is 
universally dense.
By \cite[(I.3.10.3)]{Hashimoto}, the first assertion {\bf (i)} follows.
The assertion {\bf (ii)} is a consequence of Lemma~\ref{submod.thm}.
We prove {\bf(iii)}.
\[
V^G=\sum_{\phi\in\Hom_G(k,V)}\Image\phi=
\sum_{\phi\in\Hom_{kG(k)}(k,V)}\Image\phi=V^{G(k)}
\]
by {\bf (i)}.
\end{proof}

\begin{lemma}\label{connected-irreducible.thm}
Let $H$ be an affine algebraic $k$-group scheme.
Then the identity component \(that is, the connected component of $H$
containing the image of the unit element $\Spec k\rightarrow H$\) 
$H^\circ$ is a normal subgroup scheme of $H$.
$H^\circ$ is geometrically irreducible.
In particular, if $H$ is connected, then $H$ is geometrically irreducible.
\end{lemma}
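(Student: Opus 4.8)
The plan is to extract everything from a single key input --- that the identity component $H^\circ$ is \emph{geometrically} connected --- and then to deduce the subgroup property, the irreducibility, and the normality by soft arguments with connected components. Throughout write $A=k[H]$, a finitely generated, hence Noetherian, Hopf algebra over $k$, so that $\Spec A=H$ has only finitely many connected components; these correspond to a complete set of orthogonal primitive idempotents of $A$. The counit $\varepsilon\colon A\to k$ is the identity point $e\in H(k)$, and $H^\circ$ is the component containing $e$; it is open and closed, and $k[H^\circ]=Ae_1$ for the primitive idempotent $e_1$ with $\varepsilon(e_1)=1$.

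First I would establish geometric connectedness of $H^\circ$, i.e.\ that $\bar k\otimes_k k[H^\circ]$ has no nontrivial idempotents. This is the heart of the matter and the step I expect to be the main obstacle, precisely because $H$ is not assumed smooth, so one cannot argue through $\bar k$-points or invoke Lemma~\ref{dense.thm}. The clean route is via the component scheme: the largest separable (finite \'etale) quotient Hopf algebra of $A$ defines a finite \'etale $k$-group scheme $\pi_0(H)$ together with a faithfully flat projection $q\colon H\to\pi_0(H)$, and $H^\circ=q^{-1}(e)$ is the fibre over the identity. Since $\pi_0(H)$ is \'etale, $\pi_0(H)_{\bar k}$ is a finite constant group whose points are exactly the geometric connected components of $H$, so the fibre $H^\circ$ over the identity is a single geometric component, hence geometrically connected. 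I would cite the construction of $\pi_0(H)$ and this identification from the standard theory of affine group schemes (e.g.\ \cite{Jantzen}); this is where the Hopf-algebraic input is genuinely used.

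Granting geometric connectedness, the remaining assertions follow formally. For irreducibility it suffices to show $(H^\circ)_{\bar k}$ is irreducible; its reduction is a reduced connected group scheme over the perfect field $\bar k$, hence a smooth connected algebraic group, which is regular and therefore has its connected components irreducible --- so $(H^\circ)_{\bar k}$, having irreducible reduction, is irreducible, and $H^\circ$ is geometrically irreducible. For the subgroup property, geometric connectedness makes $H^\circ\times_k H^\circ$ connected; the multiplication carries it to a connected set containing $e$, hence into $H^\circ$, while the inverse is an automorphism of $H$ fixing $e$ and so preserves the component $H^\circ$. As $H^\circ$ is open and closed, these restricted maps make it a closed subgroup scheme.

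Finally, for normality I would show that the conjugation morphism $c\colon H\times_k H^\circ\to H$, $(g,x)\mapsto gxg^{-1}$, factors through $H^\circ$. Composing with $q\colon H\to\pi_0(H)$ and using that the fibres of the projection $H\times_k H^\circ\to H$ are geometrically connected while $\pi_0(H)$ is \'etale, the rigidity of morphisms from geometrically connected schemes to \'etale schemes forces $q\circ c$ to be constant along these fibres; evaluating at $x=e$, where $geg^{-1}=e$, shows the constant value is the identity component. Hence $c$ lands in $q^{-1}(e)=H^\circ$, which is exactly the assertion that $H^\circ$ is normal. The last sentence of the lemma is the special case $H=H^\circ$ of the irreducibility statement.
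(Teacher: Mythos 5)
Your proof is correct, but it takes a genuinely different route from the paper. The paper reduces everything to the smooth case (cited from Borel): over a perfect field it passes to $E=H\red$, which is a smooth closed subgroup scheme homeomorphic to $H$, transfers the subgroup/normality properties from $E^\circ$ to the open subscheme $H^\circ$ set-theoretically, and then handles a general $k$ by base change to the perfect closure $K$, using that $K\otimes_k H\rightarrow H$ is a homeomorphism and descending the subgroup-scheme structure along the faithfully flat extension $K/k$. You instead make the \'etale component scheme $\pi_0(H)$ the engine: geometric connectedness of $H^\circ=q^{-1}(e)$ comes from the base-change compatibility of the maximal separable subalgebra, and then the subgroup property, normality (via rigidity of maps from geometrically connected schemes to \'etale schemes, evaluated on the section $x=e$), and irreducibility (via smoothness of the reduced connected group over $\bar k$ --- the same input the paper takes from Borel) all follow by soft arguments; note that the set-theoretic landing in the open subscheme $H^\circ$ suffices for the scheme-theoretic factorization, exactly as in the paper. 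Your approach buys a cleaner, more conceptual treatment that avoids the perfect-closure detour and the $H\red$ construction (which itself needs $k$ perfect), at the cost of outsourcing the real content to the $\pi_0$ theory; be aware of two blemishes there: $\pi_0(A)$ is the largest separable \emph{subalgebra} of $A$ (a Hopf subalgebra corresponding to the quotient group scheme $\pi_0(H)$), not a ``quotient Hopf algebra'' as you wrote, and the construction with its base-change compatibility is not really in Jantzen --- the standard references are Waterhouse (\S 6.7), Demazure--Gabriel, or SGA3, so the citation should be corrected accordingly.
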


\begin{proof}
The case that $H$ is smooth is \cite[(I.1.2)]{Borel}.

Next we consider the case that $k$ is perfect.
Set $E=H\red$.
Then $E$ is a closed subgroup scheme of $H$, and is smooth.
Clearly, $E\hookrightarrow H$ is a homeomorphism, and $E^\circ$ is
identified with $(H^\circ)\red$.
The product $E^\circ\times E^\circ\rightarrow E$ factors through $E^\circ$,
and the inverse $E^\circ\rightarrow E$ factors through $E^\circ$.
Moreover, $E\times E^\circ\rightarrow E$ ($(e,n)\mapsto ene^{-1}$) factors
through $E^\circ$ by normality.
As $H^\circ$ is an
open subscheme of $H$ and $E^\circ$ is set theoretically the same as $H^\circ$,
$H^\circ\times H^\circ\rightarrow H$ factors through $H^\circ$, 
and the inverse $H^\circ\rightarrow H$ factors through $H^\circ$.
Moreover, $H\times H^\circ\rightarrow H$ ($(h,m)\mapsto hmh^{-1}$)
factors through $H^\circ$.
Namely, $H^\circ$ is a normal subgroup scheme of $H$.
Finally, as $E^\circ$ is geometrically irreducible, $H^\circ$ is so.

Next consider the general case.
Let $K$ be the smallest perfect field containing $k$.
Then the canonical map $\rho:K\otimes_k H\rightarrow H$ is a homeomorphism.
So $(K\otimes_k H)^\circ=\rho^{-1}(H^\circ)=K\otimes_k H^\circ$.
As it is a normal $K$-subgroup scheme of $K\otimes_k H$ and is geometrically
irreducible, $H^\circ$ is a normal $k$-subgroup scheme of $H$ and is
geometrically irreducible.

The last assertion is trivial.
\end{proof}

\paragraph Let $X_0$ be a scheme, and $F$ a flat $X_0$-group scheme.
Let $X$ be an $F$-scheme.
For an ideal (quasi-coherent or not) $\I$ of $\O_X$, the sum of all the
$F$-stable quasi-coherent ideals of $\I$ is the largest $F$-stable 
quasi-coherent ideal of $\O_X$ contained in $\I$.
We denote this by $\I^*$ as in \cite[section~4]{HM}.
If $Y$ is a closed subscheme of $X$ and $Y=V(\I)$, then we denote
$V(\I^*)=Y^*$.
Note that $Y^*$ is the smallest $F$-stable closed subscheme of $X$ containing
$Y$.
If $S$ is an $F$-algebra and $I$ is an ideal of $S$, there is a unique
largest $F$-stable ideal $I^*$ contained in $I$.

\begin{lemma}\label{star-integral.thm}
Let $X_0$ be a scheme, $F$ a
flat
quasi-separated $X_0$-group scheme of finite type 
with connected \(resp.\ smooth, smooth and connected\) 
fibers.
Let $X$ be an $F$-scheme, and $Y$ a closed subscheme of $X$.
Then the scheme theoretic image of the action $a_Y:F\times Y\rightarrow X$
agrees with $Y^*$.
If, moreover, 
$Y$ is irreducible \(resp.\ reduced, integral\), 
then $Y^*$ is so.
\end{lemma}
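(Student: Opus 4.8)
The plan is to prove the two assertions separately, writing $W$ for the scheme-theoretic image of $a_Y$ and recalling from the preceding paragraph that $Y^*$ is the smallest $F$-stable closed subscheme of $X$ containing $Y$. First I would record that $a_Y$ is quasi-compact and quasi-separated (qcqs), so that $W$ is defined as $V(\Ker(\O_X\to (a_Y)_*\O_{F\times_{X_0}Y}))$ and scheme-theoretic image commutes with flat base change: indeed the action $a\colon F\times_{X_0}X\to X$ is isomorphic, via $(g,x)\mapsto(g,gx)$, to the projection $F\times_{X_0}X\to X$, hence qcqs as a base change of the qcqs morphism $F\to X_0$, and $a_Y=a\circ(1_F\times i)$ with $i\colon Y\hookrightarrow X$ a closed immersion.

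To prove $W=Y^*$ the key point is that $W$ is $F$-stable, and here I would exploit associativity: the two morphisms $a_Y\circ(m\times 1_Y)$ and $a\circ(1_F\times a_Y)$ from $F\times_{X_0}F\times_{X_0}Y$ to $X$ coincide. Since the multiplication $m\colon F\times_{X_0}F\to F$ is isomorphic to a projection and $F\to X_0$ is faithfully flat (flat, and surjective because of the unit section), $m\times 1_Y$ is faithfully flat, hence schematically dominant, so the scheme-theoretic image of the left-hand composite is exactly $W$. On the other hand $1_F\times a_Y$ is the flat base change of $a_Y$ along $F\to X_0$, so its scheme-theoretic image is $F\times_{X_0}W$, whence the image of the right-hand composite is the scheme-theoretic image of $a|_{F\times_{X_0}W}\colon F\times_{X_0}W\to X$. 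Comparing the two, $a$ carries $F\times_{X_0}W$ into $W$, i.e. $W$ is $F$-stable. As $W$ contains $Y$ (the closed immersion $i$ is $a_Y$ precomposed with the unit section $Y\to F\times_{X_0}Y$, hence factors through $W$), minimality gives $Y^*\subseteq W$; conversely $Y^*$ is $F$-stable and contains $Y$, so $a_Y$ factors through $Y^*$ and $W\subseteq Y^*$. Hence $W=Y^*$.

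For the ``moreover'' part I would work with $Z:=F\times_{X_0}Y$, noting that $Y^*=W$ is the scheme-theoretic image of $Z$. Scheme-theoretic image preserves reducedness (the structure sheaf of $W$ injects into $(a_Y)_*\O_Z$) and, being topologically the closure of the image of $Z$, preserves irreducibility; so it suffices to show $Z$ is irreducible (resp. reduced, integral) under the respective hypotheses. Each fiber $F_s$ of $F\to X_0$ is a group scheme over the field $\kappa(s)$, so by Lemma~\ref{connected-irreducible.thm} connectedness of the fibers upgrades to geometric irreducibility, while smoothness of the fibers gives geometric reducedness. Now $Z\to Y$ is flat with geometrically irreducible (resp. geometrically reduced, geometrically integral) fibers over the irreducible (resp. reduced, integral) base $Y$. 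For reducedness I would invoke the standard fact that a scheme flat over a reduced base with geometrically reduced fibers is reduced. For irreducibility I would show $Z=\overline{Z_\eta}$, where $\eta$ is the generic point of $Y$: by going-down for the flat morphism $Z\to Y$ every point of $Z$ generizes to a point over $\eta$, and $Z_\eta=F_s\times_{\kappa(s)}\kappa(\eta)$ is irreducible as a field base change of the geometrically irreducible $F_s$. The integral case is the conjunction of the two.

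The step I expect to be the main obstacle is the $F$-stability of $W$: it is precisely there that the compatibility of scheme-theoretic image with flat base change must be combined with the associativity square, and one must verify that $m\times 1_Y$ is schematically dominant, which is why the faithful flatness of $F\to X_0$ (via surjectivity from the unit section) is genuinely used. The remaining ingredients — preservation of irreducibility and reducedness under flat base change and under scheme-theoretic image — are routine once Lemma~\ref{connected-irreducible.thm} has promoted the fibers to being geometrically irreducible and geometrically reduced.
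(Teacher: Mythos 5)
Your proposal is correct and follows essentially the same route as the paper: identify the scheme-theoretic image $W$ of $a_Y$ with $Y^*$ by showing $W$ is an $F$-stable closed subscheme containing $Y$ (giving $Y^*\subseteq W$) while $a_Y$ factors through any such subscheme, in particular through $Y^*$ (giving $W\subseteq Y^*$), and then transfer irreducibility, reducedness, and integrality from $F\times_{X_0}Y$ to $Y^*$ via the schematically dominant map $F\times_{X_0}Y\rightarrow Y^*$. The only difference is one of detail, not of method: where the paper simply asserts that $W$ is $F$-stable because $a_Y$ is $F$-equivariant and qcqs, and that $F\times Y$ is irreducible (resp.\ reduced) by geometric irreducibility of the fibers (resp.\ smoothness), you supply the underlying verifications — the associativity square combined with flat base change of scheme-theoretic images and the faithful flatness of $m\times 1_Y$, and the going-down/flat-fibre arguments — all of which are sound.
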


\begin{proof}
Let $Y'$ denote the scheme theoretic image of $a_Y$.
If $Z$ is a closed $F$-stable subscheme of $X$ containing $Y$, then
$a_Y$ is the composite
\[
F\times Y \hookrightarrow F\times Z\xrightarrow{a_Z} Z\hookrightarrow X,
\]
and factors through $Z$.
This shows $Y'\subset Z$ by the definition of $Y'$.
On the other hand, $Y'\supset Y$ is trivial, and $Y'$ is $F$-stable,
since $a_Y$ is $F$-stable (where $F$ acts on $F\times Y$ by 
$g\cdot(g',y)=(gg',y)$) and quasi-compact quasi-separated, 
and $Y'$ is defined by the $F$-stable quasi-coherent ideal
$\Ker(\O_X\rightarrow (a_Y)_*(\O_{F\times Y}))$.
Thus $Y'$ is the smallest closed $F$-stable subscheme of $X$ 
containing $Y$, and agrees with $Y^*$.

If $F$ has connected fibers and $Y$ is irreducible, 
then $F\times Y$ is irreducible, since $F$ is geometrically irreducible
over $X_0$ by Lemma~\ref{connected-irreducible.thm}.
If $F$ is smooth and $Y$ is reduced, then $F\times Y$ is reduced.
So if $F$ is smooth with connected fibers and $Y$ is integral,
then $F\times Y$ is integral.

Let $\alpha: F\times Y\rightarrow Y^*$ be the morphism induced by $a_Y$.
As $F\times Y$ is irreducible (resp.\ reduced, integral) and
$\O_{Y^*}\rightarrow \alpha_*\O_{F\times Y}$ is injective, 
$Y^*$ is irreducible (resp.\ reduced, integral).
\end{proof}

\begin{lemma}\label{idempotent.thm}
Let $H$ be a connected affine $k$-group scheme, 
and $S$ an $H$-algebra.
If $e$ is an idempotent of $S$, then $e\in S^H$.
\end{lemma}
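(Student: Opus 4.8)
The plan is to work with the comodule structure underlying the $H$-algebra $S$. Write the coaction as $\omega\colon S\to S\otimes_k k[H]$ and let $\varepsilon\colon k[H]\to k$ denote the counit, so that $S^H=\{s\in S\mid \omega(s)=s\otimes 1\}$ and the counit axiom reads $(\id_S\otimes\varepsilon)\circ\omega=\id_S$. Since $S$ is an $H$-algebra, $\omega$ is a $k$-algebra map, so $\omega(e)$ is an idempotent of $S\otimes_k k[H]$, and $e\otimes 1$ is another idempotent. Proving $e\in S^H$ is exactly proving $\omega(e)=e\otimes 1$.

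First I would reduce this to a statement about idempotents. Set $\delta:=\omega(e)+e\otimes 1-2\,\omega(e)(e\otimes 1)$, the ``symmetric difference'' of the two idempotents; a direct computation shows that $\delta$ is again idempotent and that $\delta=0$ if and only if $\omega(e)=e\otimes 1$. Applying the $k$-algebra map $\id_S\otimes\varepsilon$ and using the counit axiom gives $(\id_S\otimes\varepsilon)(\omega(e))=e$ and $(\id_S\otimes\varepsilon)(e\otimes 1)=e$, whence $(\id_S\otimes\varepsilon)(\delta)=e+e-2e=0$. Thus it suffices to prove that every idempotent of $S\otimes_k k[H]$ killed by $\id_S\otimes\varepsilon$ must vanish.

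The key step is an idempotent-lifting fact, which is where connectedness enters: for a geometrically connected $k$-algebra possessing a $k$-point, base change induces a bijection on idempotents. By Lemma~\ref{connected-irreducible.thm}, $H$, being connected, is geometrically irreducible, hence geometrically connected; therefore $\kappa\otimes_k k[H]$ has connected spectrum for every field extension $\kappa/k$. Consider the projection $p\colon\Spec(S\otimes_k k[H])\to\Spec S$, whose fibre over $x$ is $\Spec(\kappa(x)\otimes_k k[H])$ and hence connected, together with the section $s$ of $p$ determined by $\varepsilon$. An idempotent of $S\otimes_k k[H]$ corresponds to a clopen subset $W$; since each fibre of $p$ is connected, $W$ meets each fibre in $\emptyset$ or in the whole fibre, so $W=p^{-1}(s^{-1}(W))$ is pulled back from $\Spec S$. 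Translating back, this says precisely that the idempotent equals $a\otimes 1$ with $a=(\id_S\otimes\varepsilon)(\text{idempotent})$. Applying this to $\delta$ yields $\delta=\bigl((\id_S\otimes\varepsilon)(\delta)\bigr)\otimes 1=0$, so $\omega(e)=e\otimes 1$ and $e\in S^H$.

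The main obstacle is the idempotent-lifting fact itself, namely forcing every idempotent of $S\otimes_k k[H]$ to descend to $S$. The subtle points are that one needs connectedness of $\kappa\otimes_k k[H]$ for \emph{every} residue field $\kappa$ occurring in $\Spec S$, not merely for $\bar k$ --- this is exactly the strength of geometric irreducibility from Lemma~\ref{connected-irreducible.thm}, which is stable under arbitrary field extension --- and that the $k$-point supplied by $\varepsilon$ is what pins $W$ down to $p^{-1}(s^{-1}(W))$, rather than merely guaranteeing that it is fibrewise constant. I note that a naive argument via rational points of $H$ permuting the idempotents of $S$ would not suffice here, since $H(k)$ need not be dense and $S$ may carry infinitely many idempotents; the comodule formulation above is what handles a possibly non-reduced $H$ over an arbitrary base field $k$.
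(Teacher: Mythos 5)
Your proof is correct, but it takes a genuinely different route from the paper's. The paper first reduces to $S$ of finite type (by passing to the subalgebra generated by a finite-dimensional $H$-submodule containing $e$), then argues geometrically on $\Spec S=X_1\sqcup X_2$ with $X_2=\Spec S(1-e)$: for each irreducible component $Y$ of $X_2$, the smallest $H$-stable closed subscheme $Y^*$ is irreducible by Lemma~\ref{star-integral.thm} (which rests on the geometric irreducibility of $H$, Lemma~\ref{connected-irreducible.thm}), forcing $Y^*=Y$ set-theoretically, so $X_2$ is $H$-stable; hence $Se$ and $S(1-e)$ are $H$-ideals, and the identity $\omega(e)=(e\otimes1)\omega(e)=e\otimes1-(e\otimes1)\omega(1-e)=e\otimes1$ finishes. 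You instead prove a descent statement for idempotents: since $H$ is geometrically connected, the projection $p\colon\Spec(S\otimes_k k[H])\to\Spec S$ has connected nonempty fibres $\Spec(\kappa(x)\otimes_k k[H])$, and the counit section pins any clopen $W$ down to $p^{-1}(s^{-1}(W))$, so every idempotent of $S\otimes_k k[H]$ is extended, i.e.\ equals $a\otimes1$ with $a$ its image under $\id_S\otimes\varepsilon$; applying this and the counit axiom concludes. All the steps check out, including in characteristic $2$ (your $\delta=a+b-2ab$ is idempotent and vanishes iff $a=b$ in any commutative ring), and the fibrewise connectedness you need for every residue field does follow from Lemma~\ref{connected-irreducible.thm} -- in fact connectedness of $H$ plus the rational point given by the unit already yields geometric connectedness. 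What your approach buys: no reduction to finitely generated $S$, no Noetherian or scheme-theoretic-image machinery, and a reusable general principle (idempotents do not move under a coaction of a geometrically connected affine group scheme); what the paper's buys is consistency with its $(-)^*$ formalism from \cite{HM}, which it exploits repeatedly elsewhere. One simplification you could make: the $\delta$ detour is unnecessary -- apply your descent fact directly to the idempotent $\omega(e)$ to get $\omega(e)=a\otimes1$, and the counit axiom gives $a=e$ at once.
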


\begin{proof}
Let $V$ be a finite dimensional $H$-submodule of $S$
containing $e$, and $S_1$ the $k$-subalgebra of $S$ generated by $V$.
Then $S_1$ is of finite type and $e\in S_1$.
So replacing $S$ by $S_1$, we may assume that $S$ is of finite type.

Set $X=\Spec S$, $X_1=\Spec Se$ and $X_2=\Spec S(1-e)$.
Both $X_1$ and $X_2$ are closed open subsets of $X$, and $X=X_1\coprod X_2$.
For any irreducible component $Y$ of $X_2$, $Y^*$ is irreducible
by Lemma~\ref{star-integral.thm}.
As $Y$ is an irreducible component of $X$ and $Y\subset Y^*$, we have
that $Y^*=Y$, set theoretically.
So $H\times X_2\rightarrow X$ factors through $X_2$, set theoretically.
As $X_2$ is an open subscheme, this shows that $X_2$ is $H$-stable.
That is, the action $H\times X_2\rightarrow X$ factors through $X_2$.
On the other hand, $X_2$ is a closed subscheme of $X$ defined by the
ideal $Se$.
So $Se$ is an $H$-stable ideal.
That is, the coaction $\omega:S\rightarrow S\otimes k[H]$ maps $Se$ to
$Se\otimes k[H]$.
Similarly, $\omega(1-e)\in S(1-e)\otimes k[H]$.

So
\[
\omega(e)=(e\otimes 1)\omega(e)=
(e\otimes 1)\omega(1-(1-e))
=e\otimes 1 - (e\otimes 1)(\omega(1-e))=e\otimes 1,
\]
as desired.
\end{proof}

\paragraph\label{separable.par}
Let $A$ be a $k$-algebra.
We say that $A$ is {\em geometrically reduced} over $k$ if 
$K\otimes_k A$ is reduced for any finite
extension field $K$ of $k$.
This is equivalent to say that $A$ is reduced, if $\charac(k)=0$.
If $\charac(k)=p>0$,
this is equivalent to say that $k^{-p}\otimes_k A$ is reduced.
When $A$ is a field, $A$ is separable over $k$ if and only if
it is geometrically reduced over $k$.

Clearly, $A$ is geometrically reduced if and only if any finitely generated
$k$-subalgebra of $A$ is geometrically reduced.
Any localization of a geometrically reduced algebra is again 
geometrically reduced.
In particular, $A$ is geometrically reduced over $k$ if and only if $Q(A)$ is
geometrically reduced over $k$.

Let $A$ be a reduced $k$-algebra which is integral over $k$.
We say that $a\in A$ is separable over $k$ if $f(a)=0$ for some
monic polynomial $f(x)\in k[x]$ without multiple roots.
The set of separable elements of $A$ is the largest geometrically reduced
$k$-subalgebra of $A$.

\begin{lemma}\label{connected-trivial.thm}
Let $K$ be a $G$-algebra which is a finite direct product of 
finite algebraic extension fields of $k$.
If $G$ is connected, then the action of $G$ on $K$ is trivial.
\end{lemma}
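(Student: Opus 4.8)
The plan is to reduce to the case that $K$ is a single finite field extension of $k$, and then to treat separable and (in positive characteristic) inseparable elements by quite different arguments. Since $G$ is connected, Lemma~\ref{idempotent.thm} shows that every idempotent of $K$ lies in $K^G$. In particular the primitive idempotents $e_i$ realizing the product decomposition $K=\prod_i K_i$ are $G$-fixed, so for the coaction $\omega\colon K\to K\otimes_k k[G]$ we get $\omega(K_i)=(e_i\otimes 1)\,\omega(K)\subseteq K_i\otimes_k k[G]$, and each $K_i$ is a $G$-subalgebra. Thus it suffices to prove triviality when $K$ is a finite algebraic field extension of $k$. Note also that $k[G]$ is geometrically integral (it is geometrically irreducible by Lemma~\ref{connected-irreducible.thm}, and reduced since $G$ is smooth) and, being smooth, is geometrically reduced over $k$.

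Next I would show that every $x\in K$ that is separable over $k$ satisfies $\omega(x)=x\otimes 1$. Base change to $k\sep$: since $G$ is geometrically connected (Lemma~\ref{connected-irreducible.thm}) and $k\sep$ is separably closed, $G(k\sep)$ is dense in $G_{k\sep}:=k\sep\otimes_k G$, so Lemma~\ref{dense.thm} applies. Writing $m_x\in k[X]$ for the (separable) minimal polynomial of $x$, we have $k[x]\otimes_k k\sep\cong k\sep[X]/(m_x)\cong\prod_i k\sep$, and under this isomorphism $x\otimes 1=\sum_i\alpha_i e_i$, where the $\alpha_i\in k\sep$ are the distinct roots of $m_x$ and the $e_i$ are the corresponding idempotents. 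These $e_i$ are idempotents of $K\otimes_k k\sep$, hence are fixed by $G_{k\sep}$ by Lemma~\ref{idempotent.thm}; as each $g\in G(k\sep)$ also fixes the scalars $\alpha_i$ ($k\sep$-linearly), it fixes $x\otimes 1$. By Lemma~\ref{dense.thm}\,{\bf (iii)}, $x\otimes 1$ is $G_{k\sep}$-invariant, i.e.\ $(\omega\otimes_k k\sep)(x\otimes 1)=(x\otimes 1)\otimes 1$; since $k\sep/k$ is faithfully flat, this descends to $\omega(x)=x\otimes 1$.

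If $\charac(k)=0$ every element of $K$ is separable, so we are done; assume $\charac(k)=p>0$ and let $x\in K$ be arbitrary. Since $K$ is purely inseparable over the separable closure of $k$ in it, there is $e\ge 0$ with $x^{p^e}$ separable over $k$, whence $\omega(x^{p^e})=x^{p^e}\otimes 1$ by the previous step. As $\omega$ is a $k$-algebra homomorphism, $\omega(x)^{p^e}=\omega(x^{p^e})=(x\otimes 1)^{p^e}$, so $(\omega(x)-x\otimes 1)^{p^e}=0$ in $K\otimes_k k[G]$. The decisive point is that $K\otimes_k k[G]$ is \emph{reduced}: $k[G]$ is geometrically reduced over $k$ (smoothness of $G$) and $K$ is a finite field extension of $k$ (see (\ref{separable.par})). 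Hence $\omega(x)-x\otimes 1=0$, and the action is trivial. I expect the inseparable case over an imperfect field to be the main obstacle: there the automorphism scheme of a purely inseparable extension can be positive dimensional, so one cannot simply argue that the connected $G$ maps trivially into a finite étale automorphism group. The device above sidesteps this by staying over $k$, where $K$ remains a field and $K\otimes_k k[G]$ stays reduced; the argument would break after base change to $\bar k$, where $K\otimes_k\bar k$ acquires nilpotents.
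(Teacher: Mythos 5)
Your proof is correct, and it takes a genuinely different route from the paper's even though the two rest on the same two pillars: Lemma~\ref{idempotent.thm} for the separable layer, and a Frobenius-plus-reducedness trick for the inseparable layer. The paper base-changes once to $k\sep$ at the outset and then works with the abstract group $G(k\sep)$: the subalgebra $L$ of \emph{all} separable elements is isomorphic to $k\sep^n$, hence spanned by idempotents and $G$-trivial by Lemma~\ref{idempotent.thm}, and for a general element $\alpha$ one gets $(g\alpha-\alpha)^{p^m}=0$ inside the reduced ring $K$ itself; density of rational points is invoked twice (to upgrade $L$ from a $G(k\sep)$-subalgebra to a $G$-subalgebra via Lemma~\ref{dense.thm}, and to pass from $K^{G(k\sep)}$ to $K^G$ at the end). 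You instead split off the field factors over $k$ first, treat separable elements one at a time by base change and faithfully flat descent, and then run the Frobenius trick at the level of the coaction: $(\omega(x)-x\otimes 1)^{p^e}=0$ in $K\otimes_k k[G]$, which is reduced because $k[G]$ is geometrically reduced (smoothness of $G$) and $K/k$ is a finite field extension, exactly as in the definition in (\ref{separable.par}). What this buys you is a final step carried out entirely over $k$ at the Hopf-algebraic level, with no appeal to rational points or density at all; and your closing remark correctly identifies why one must not base-change all the way to $\bar k$ (where $K\otimes_k\bar k$ may acquire nilpotents) --- the same reason the paper stops at $k\sep$. One small streamlining: in your separable step the detour through $G(k\sep)$ and Lemma~\ref{dense.thm}\,\textbf{(iii)} is unnecessary, since each $e_i$ already lies in the scheme-theoretic invariants $(K\otimes_k k\sep)^{k\sep\otimes_k G}$, which form a $k\sep$-subspace; hence $x\otimes 1=\sum_i\alpha_i e_i$ is invariant directly, and density is in fact needed nowhere in your argument.
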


\begin{proof} 
Taking the base change and replacing $k$ by its separable closure,
we may assume that $k$ is separably closed.
It is obvious that the set of separable elements $L$ of $K$ over $k$ is a 
$G(k)$-subalgebra (i.e., a $k$-subalgebra which is also a $G(k)$-submodule) 
of $K$.
As $k$ is separably closed, $G(k)$ is dense in $G$
\cite[(AG13.3)]{Borel}, and
$L$ is a $G$-subalgebra of $K$ by (\ref{dense.thm}).

As $L\cong k^n$ as a $k$-algebra for some $n$, 
$L$ is spanned by its idempotents as a $k$-vector space.
As $G$ is connected, $L$ is $G$-trivial by Lemma~\ref{idempotent.thm}.

Next, we prove that $K$ is $G$-trivial.
Of course we may assume that $k$ is of positive characteristic, say $p$.
For any $\alpha\in K$, there exists some $m>0$ such that 
$\alpha^{p^m}\in L$.
Then for any $g\in G(k)$, $(g\alpha-\alpha)^{p^m}=0$.
As $K$ is a reduced ring, $\alpha\in K^{G(k)}=K^G$.
Hence $K$ is $G$-trivial.
\end{proof}

\paragraph Let $B$ be a $\Bbb Z^n$-graded Krull domain.
Let $X^1\gr(B)$ be the set of height one graded prime ideals.
Let $\Div\gr(B)$ be the free abelian group with the free basis $X^1\gr(B)$.
Let $\Prin\gr(B)$ be the subgroup 
$$\{\divisor a\mid \text{$a$ is nonzero 
homogeneous}\}.$$
Note that $\Prin\gr(B)\subset \Div\gr(B)$.
Indeed, if $a\in B\setminus\{0\}$ is homogeneous and $P$ is a minimal
prime of $a$, then $P$ is of height one.
Unless $P$ is homogeneous, $P\supsetneq P^*$, and $P^*$ is a prime
ideal by Lemma~\ref{star-integral.thm} applied to the action of the
split torus $\Bbb G_m^n$.
This shows $P^*=0$ and contradicts $a\in P^*$. 

\begin{lemma}[cf.~{\cite[Theorem~1.1]{KK}}, 
{\cite[Proposition~10.2]{Fossum}}]\label{KK.thm}
The canonical map $\theta:
\Cl\gr(B):=\Div\gr(B)/\Prin\gr(B)\rightarrow \Cl(B)$ is bijective.
\end{lemma}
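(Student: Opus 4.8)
The plan is to route both the surjectivity and the injectivity of $\theta$ through a single localization, namely at the multiplicative set $S$ of all nonzero homogeneous elements of $B$. First I would record that $\theta$ is well defined: every $P\in X^1\gr(B)$ is a height one prime of $B$, so $X^1\gr(B)\subset X^1(B)$ and $\Div\gr(B)\hookrightarrow\Div(B)$, while $\Prin\gr(B)$ is carried into $\Prin(B)$ because the divisor of a nonzero homogeneous element is a principal divisor. Hence $\Ker\theta$ is represented by those $D\in\Div\gr(B)$ that happen to be principal in $\Div(B)$, taken modulo $\Prin\gr(B)$, and $\Image\theta=\langle\,\overline{\langle P\rangle}\mid P\in X^1\gr(B)\,\rangle\subset\Cl(B)$.

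Next I would analyze $B_S$. If $s$ is a nonzero homogeneous element lying in a height one prime $P$, then the graded ideal $Bs$ lies in $P^*$, which is prime by Lemma~\ref{star-integral.thm} applied to the split torus $\Bbb G_m^n$; since $P^*\subset P$ and $P^*\neq 0$, we get $P^*=P$, so $P$ is graded. Thus for $P\in X^1(B)$ one has $P\cap S\neq\emptyset$ exactly when $P\in X^1\gr(B)$, and because $B$ is Krull, $B_S=\bigcap_{P\in X^1(B),\,P\notin X^1\gr(B)}B_P$. The ring $B_S$ is a $\Bbb Z^n$-graded Krull domain every nonzero homogeneous element of which is a unit. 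Writing $K=(B_S)_0$ (a field) and letting $H\subset\Bbb Z^n$ be the subgroup of degrees $d$ with $(B_S)_d\neq 0$, each $(B_S)_d$ is one dimensional over $K$ (if $x,y$ are nonzero of degree $d$ then $x^{-1}y\in K$); choosing a basis of $H\cong\Bbb Z^m$ together with units in the corresponding degrees then identifies $B_S$ with a Laurent polynomial ring $K[t_1^{\pm1},\dots,t_m^{\pm1}]$. In particular $\Cl(B_S)=0$, and every unit of $B_S$ is a nonzero homogeneous element.

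Surjectivity is then immediate from Nagata's theorem on class groups of localizations \cite{Fossum}, which gives $\Cl(B_S)\cong\Cl(B)/\langle[P]\mid P\in X^1(B),\,P\cap S\neq\emptyset\rangle$; by the previous paragraph the denominator is exactly $\Image\theta$, and since $\Cl(B_S)=0$ we conclude $\theta$ is onto. For injectivity I would take $D\in\Div\gr(B)$ principal in $B$, say $D=\divisor(a)$ with $a\in Q(B)\setminus\{0\}$. As $D$ is supported on graded primes, $v_P(a)=0$ for every non-graded $P\in X^1(B)$, so both $a$ and $a^{-1}$ lie in $\bigcap_{P\notin X^1\gr(B)}B_P=B_S$, whence $a\in B_S^\times$. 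By the structure of $B_S$ its units are homogeneous, so $a=b/s$ with $b,s$ nonzero homogeneous in $B$, and therefore $D=\divisor(a)=\divisor(b)-\divisor(s)\in\Prin\gr(B)$, giving $\Ker\theta=0$.

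The step I expect to be the crux is the identification of $B_S$ as a graded field, i.e.\ as a Laurent polynomial ring over $K$: this is where the torsion-freeness of the grading group $\Bbb Z^n$ and Lemma~\ref{star-integral.thm} (to see that homogeneous elements avoid the non-graded height one primes) genuinely enter, and it pays off twice, yielding $\Cl(B_S)=0$ for surjectivity and the homogeneity of units for injectivity. Everything else is bookkeeping with Nagata's localization sequence and the divisor calculus set up in (\ref{class-group.par}).
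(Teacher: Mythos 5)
Your proposal is correct and takes essentially the same route as the paper's proof: both localize at the set of nonzero homogeneous elements, use Lemma~\ref{star-integral.thm} for the $\Bbb G_m^n$-action to see that any height one prime containing a nonzero homogeneous element is graded, identify the localization as a Laurent polynomial ring over a field, and deduce injectivity from the fact that its units are homogeneous. The only cosmetic differences are that you actually prove the graded-field/Laurent structure of $B_S$ (which the paper merely asserts), you package surjectivity via Nagata's localization theorem from \cite{Fossum} where the paper argues directly that $PB_\Gamma$ is generated by a prime element, and your injectivity argument handles a general $a\in Q(B)\setminus\{0\}$ at once rather than reducing to $f\in B$.
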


\begin{proof}
First we show that $\theta$ is injective.
It suffices to show that for $f\in B\setminus\{0\}$, if $\divisor f\in
\Div\gr(B)$, then $f$ is homogeneous.
Let $\Gamma$ be the multiplicatively closed subset of $B$ consisting of
all the nonzero homogeneous elements of $B$.
Note that the canonical map $\Div(B)\rightarrow\Div(B_\Gamma)$
kills all the homogeneous height one prime ideals, while for any inhomogeneous
height one prime $P$, $\langle P\rangle$ goes to the basis element
$\langle PB_\Gamma\rangle$.
So in $\Div(B_\Gamma)$, $\divisor f =0$.
So $f\in B_\Gamma^\times$.
On the other hand, a unit of a $\Bbb Z^n$-graded domain is homogeneous, and
hence $f$ is homogeneous.

We prove that $\theta$ is surjective.
Let $P\in X^1(B)$.
Since $B_\Gamma$ is a Laurent polynomial algebra over a field, $B_\Gamma$
is a UFD.
So $PB_\Gamma=B_\Gamma f$ for some prime element $f$ of $B_\Gamma$.
This shows that $\langle P\rangle -\divisor f\in\Div\gr(B)$.
Hence $\overline{\langle P\rangle}\in\Image \theta$.
So $\theta$ is surjective.
\end{proof}

\begin{corollary}\label{graded-ufd.thm}
Let $B$ be as above.
If any nonzero homogeneous element is either a unit or divisible by a
prime element, then $B$ is a UFD.
\end{corollary}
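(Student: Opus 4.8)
The plan is to deduce everything from the graded class-group isomorphism of Lemma~\ref{KK.thm}. Since a Krull domain is a UFD precisely when its class group vanishes, and $\theta\colon\Cl\gr(B)\to\Cl(B)$ is an isomorphism, it suffices to prove $\Cl\gr(B)=0$, i.e.\ $\Div\gr(B)=\Prin\gr(B)$. Because $\Div\gr(B)$ is free on the set $X^1\gr(B)$ of height one graded primes, the statement reduces to showing that each $\langle P\rangle$ with $P\in X^1\gr(B)$ lies in $\Prin\gr(B)$; concretely, that every height one graded prime is generated by a homogeneous element.

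First I would upgrade the hypothesis from ``divisible by a prime'' to ``divisible by a \emph{homogeneous} prime''. Given a nonzero homogeneous non-unit $a$, the hypothesis yields a prime element $p$ with $p\mid a$. As $(p)$ is principal it has height one, and since $a\in(p)$ it is in fact a minimal prime of the homogeneous element $a$. The torus argument already used to show $\Prin\gr(B)\subset\Div\gr(B)$ — namely that $(p)^*$ is prime by Lemma~\ref{star-integral.thm} applied to $\Bbb G_m^n$, and that $a\in(p)^*$, so the nonzero prime $(p)^*$ inside the height one prime $(p)$ must equal $(p)$ — shows that $(p)$ is graded. Finally I invoke that a principal graded ideal of a graded domain has a homogeneous generator, so $(p)=(q)$ for a homogeneous prime element $q$. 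Then $a\in(q)$, and writing $a=qa'$ the cofactor $a'$ is again homogeneous, with $\sum_P v_P(a')=\sum_P v_P(a)-1$.

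With homogeneous primes in hand, I would show by induction on $n(a):=\sum_{P\in X^1(B)}v_P(a)$ (finite since $B$ is Krull) that every nonzero homogeneous element is a unit times a product of homogeneous prime elements: if $n(a)=0$ then $a$ is a unit, and otherwise the previous step writes $a=qa'$ with $q$ a homogeneous prime and $n(a')=n(a)-1$, so induction applies to the homogeneous element $a'$. Now take any $P\in X^1\gr(B)$ and a nonzero homogeneous non-unit $a\in P$; factoring $a=u\,q_1\cdots q_r$ into homogeneous primes and using that $P$ is prime, some $q_i\in P$, whence $(q_i)\subset P$ with both of height one forces $P=(q_i)$. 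Thus every height one graded prime is homogeneous-principal, $\Cl\gr(B)=0$, and $B$ is a UFD.

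The main obstacle is the homogeneity upgrade in the second paragraph, and within it the lemma that a principal graded ideal $(p)$ of a graded domain is generated by a homogeneous element. I expect to prove this by a leading/trailing-degree argument: coarsen the $\Bbb Z^n$-grading to a $\Bbb Z$-grading via a linear functional that is injective on the finitely many degrees occurring in $p$, and use that in a $\Bbb Z$-graded domain the equation $p_{\max}=p\cdot b$ — valid because $(p)$ is graded, so the top homogeneous component $p_{\max}$ lies in $(p)$ — forces, by comparing top and bottom degrees, that $p$ is homogeneous. Everything else — the passage through $\Cl\gr(B)$, the use of Krull finiteness for termination of the induction, and the collapse $(q_i)=P$ of height one primes — is routine.
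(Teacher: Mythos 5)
Your proof is correct and follows essentially the same route as the paper: induction on $\sum_P v_P(a)$ (the paper's weight $w(\divisor a)$) to factor every nonzero homogeneous element into homogeneous prime elements, then the observation that every homogeneous height one prime is therefore principal, concluded via Lemma~\ref{KK.thm}. The only divergence is your homogeneity upgrade: the paper gets it in one line---in a graded domain any factor of a homogeneous element is automatically homogeneous, by exactly the leading/trailing-degree comparison you postpone to your final paragraph---so your detour through $(p)^*$, Lemma~\ref{star-integral.thm}, and the homogeneous-generator lemma is sound but unnecessary.
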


\begin{proof}
Let $w:\Div(B)\rightarrow \Bbb Z$ be the map given by $w(\sum_Pc_P\langle
P\rangle)=\sum_P c_P$.

Using induction on $w(\divisor b)$, we prove that any nonzero homogeneous
element $b$ of $B$ is either a unit or has a prime factorization.
If $w(\divisor b)=0$, then $b$ is a unit.
If $w(\divisor b)>0$, then $b$ is not a unit, and $b=pb'$ for some 
prime element $p\in B$ and $b'\in B$.
Note that both $p$ and $b'$ are homogeneous, since $b$ is homogeneous.
As $w(b')<w(b)$, $b'$ is either a unit or a product of prime elements,
and we are done.

Now let $P$ be any homogeneous height one prime ideal.
Then take a nonzero homogeneous element $b\in P$.
As $b$ is a product of prime elements, there exists some prime element
$p$ which lies in $P$.
As $P$ is height one, $P=Bp$ is principal.
As any homogeneous height one prime ideal is principal, 
$B$ is a UFD by Lemma~\ref{KK.thm}.
\end{proof}

\begin{lemma}\label{graded-ufd2.thm}
Let $B$ be a $\Bbb Z^n$-graded domain.
If any nonzero homogeneous element of $B$ is either a unit or a product of
prime elements, then $B$ is a UFD.
\end{lemma}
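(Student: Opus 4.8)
The plan is to verify Kaplansky's criterion: an integral domain is a UFD if and only if every nonzero prime ideal contains a prime element (this holds with no chain condition). So let $\fp$ be a nonzero prime ideal of $B$; I must produce a prime element of $B$ lying in $\fp$. Write $\fp^*$ for the largest homogeneous (i.e.\ $\Bbb G_m^n$-stable) ideal contained in $\fp$. Since $V(\fp)$ is integral, $\fp^*$ is again prime by Lemma~\ref{star-integral.thm} applied to the action of the split torus $\Bbb G_m^n$, exactly as in the paragraph preceding Lemma~\ref{KK.thm}. The easy case is $\fp^*\neq 0$: then $\fp$ contains a nonzero homogeneous element $a$, which is not a unit, so by hypothesis $a=p_1\cdots p_m$ is a product of prime elements, and since $\fp$ is prime one of the $p_i$ lies in $\fp$, giving the desired prime element.

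The substantive case is $\fp^*=0$, so $\fp$ meets the multiplicative set $\Gamma$ of nonzero homogeneous elements only at $0$. Localizing, $B_\Gamma$ is a $\Bbb Z^n$-graded domain all of whose nonzero homogeneous elements are units; by the same reasoning as in the proof of Lemma~\ref{KK.thm} (the degrees of the homogeneous units form a subgroup of $\Bbb Z^n$ isomorphic to $\Bbb Z^r$, and choosing a basis trivializes the grading) it is therefore a Laurent polynomial ring over the field $(B_\Gamma)_0$, hence a UFD, and $\fp B_\Gamma$ is a nonzero prime ideal. I would pick a prime element of $B_\Gamma$ inside $\fp B_\Gamma$ and, after scaling by a homogeneous element (a unit of $B_\Gamma$), arrange it to be an element $\pi\in B$; since $\fp B_\Gamma\cap B=\fp$, such a $\pi$ lies in $\fp$. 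It remains to turn $\pi$ into an honest prime element of $B$.

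The key point, and the main obstacle, is to choose $\pi$ divisible by no homogeneous prime element of $B$, and to see that this choice terminates; here the grading is essential. Fixing a total order on $\Bbb Z^n$, every factorization of a homogeneous element has homogeneous factors (in a domain, comparing the extreme-degree components of a product forces both factors to be homogeneous), so the lowest-degree homogeneous component $\pi_{d_0}$ of $\pi$ is, by hypothesis, either a unit or a product of finitely many homogeneous primes, say $k$ of them. Whenever a homogeneous prime $q$ divides $\pi$ it divides every homogeneous component, in particular $\pi_{d_0}$, so replacing $\pi$ by $\pi/q$ strictly decreases $k$ (prime factorizations of elements that are products of primes are unique, so $k$ is well defined); thus after finitely many steps $\pi$ is divisible by no homogeneous prime, while staying in $\fp$ (each such $q$ is homogeneous, hence $q\notin\fp$ because $\fp^*=0$, so $\pi/q\in\fp$).

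For this final $\pi$ I would show $\pi B_\Gamma\cap B=\pi B$: if $sx\in\pi B$ with $s\in\Gamma$, then peeling off the homogeneous prime factors of $s$ one at a time and using that none of them divides $\pi$ forces $x\in\pi B$. Consequently the natural map $B/\pi B\hookrightarrow B_\Gamma/\pi B_\Gamma$ is injective; the target is a domain, so $\pi B$ is prime, i.e.\ $\pi$ is a prime element of $B$ lying in $\fp$. This establishes Kaplansky's criterion in both cases and hence shows $B$ is a UFD. I expect the degree/leading-component termination argument of the third paragraph to be the delicate step, since the localization produces only a prime of $B_\Gamma$ and nothing a priori guarantees a corresponding prime of $B$ without exploiting the finiteness of the homogeneous prime factorization.
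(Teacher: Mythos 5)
Your argument is correct, and it takes a genuinely different route from the paper's. The paper reduces Lemma~\ref{graded-ufd2.thm} to Corollary~\ref{graded-ufd.thm}: it verifies directly that $B$ is a Krull domain, by exhibiting $B=\bigcap_P(B_\Gamma)_P\cap\bigcap_\pi B_{\pi B}$ as a finite-character intersection of DVRs (localizations of the Laurent polynomial ring $B_\Gamma$ at its height one primes, together with the localizations at the homogeneous principal primes $\pi B$), and then concludes through the graded-to-ungraded class group isomorphism of Lemma~\ref{KK.thm} (every homogeneous height one prime turns out to be principal, so $\Cl(B)$ is trivial). You bypass Krull and divisor-class theory entirely and verify Kaplansky's criterion instead, and your two cases are both sound: when $\fp$ contains a nonzero homogeneous element, the hypothesis at once supplies a prime element of $B$ inside $\fp$ (note that the primality of $\fp^*$ via Lemma~\ref{star-integral.thm} is not actually needed here --- $\fp^*\neq 0$ is simply equivalent to $\fp$ meeting $\Gamma$); when it does not, $\fp B_\Gamma$ survives as a nonzero prime of the UFD $B_\Gamma$, your cleaning step terminates because a homogeneous prime dividing $\pi$ divides its lowest-degree component (take a \emph{group} order on $\Bbb Z^n$, e.g.\ lexicographic, so that extreme-degree components multiply), whose factorization into primes has well-defined finite length by uniqueness of such factorizations, and the saturation $\pi B_\Gamma\cap B=\pi B$ follows by peeling homogeneous prime factors off the denominator --- which is in substance the same divisibility manipulation the paper performs when checking its intersection formula, though deployed to a different end. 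As for what each approach buys: the paper's route yields the Krullness of $B$ as a structural byproduct and reuses machinery (Lemma~\ref{KK.thm}, Corollary~\ref{graded-ufd.thm}) developed anyway, whereas yours is more elementary and self-contained modulo Kaplansky's theorem (correctly cited as requiring no chain conditions), and it has the merit of producing, inside any given nonzero prime ideal, an explicit prime element.
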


\begin{proof} 
In view of Corollary~\ref{graded-ufd.thm}, it suffices to show that
$B$ is a Krull domain.

Let $\Gamma$ be the set of nonzero homogeneous elements of $B$.
Then $B_\Gamma$ is a Laurent polynomial ring over a field.
As $B_\Gamma$ is a Noetherian normal domain, $B_\Gamma=\bigcap_P (B_\Gamma)_P$,
where $P$ runs through the set of height one prime ideals of $B_\Gamma$.

We prove that $B=\bigcap_P(B_\Gamma)_P\cap \bigcap B_{\pi B}$, where
$\pi B$ runs through the principal prime ideals of $B$ generated by
homogeneous prime elements.
Let $b/s$ be in the right hand side, where $b\in B$ and $s\in \Gamma$.
We may assume that for each $\pi$, if $\pi$ divides $b$, then $\pi$ does not
divide $s$.
Then $s$ is a unit of $B$, and $b/s\in B$, as desired.

An element $b$ of $B$ lies in only finitely many $P(B_\Gamma)_P$.
On the other hand, if $b$ lies in $\pi B$, then each homogeneous component
of $b$ lies in $\pi B$.
This shows that $b$ lies in only finitely many $\pi B_{\pi B}$.

As $(B_\Gamma)_P$ is obviously a DVR, it remains to show that $B_{\pi B}$ is
a DVR.
To verify this, $\bigcap_{n\geq 0}\pi^n B_{\pi B}=0$ is enough.
So $b/c\in B_{\pi B}\setminus\{0\}$ with $b\in B\setminus\{0\}$ and 
$c\in B\setminus \pi B$.
Note that $b\in \pi^n B$ if and only if each homogeneous component of $b$
lies in $\pi^n B$.
So $b\in \pi^nB\setminus \pi^{n+1}B$ for some $n\geq 0$.
Then it is easy to see that $b/c\in \pi^n B_{\pi B}\setminus \pi^{n+1}
B_{\pi B}$.
Hence $B$ is a Krull domain.
\end{proof}

\section{Equivariant total ring of fractions}

\paragraph 
For a ring $B$, let us denote the set of nonzerodivisors of $B$ by $B\nz$.
The localization of $B$ by the multiplicatively closed subset $B\nz$ is
denoted by $Q(B)$, and called the {\em total ring of fractions} of $B$.

\paragraph\label{RFS.par}
Let $R$ be a commutative ring, $F$ an affine flat $R$-group scheme, and $S$ an
$F$-algebra.
Let $\omega:S\rightarrow S\otimes R[F]$ be the coaction,
where $R[F]$ denotes the coordinate ring of $F$.
Then $\omega$ is a flat ring homomorphism.
So $\omega': Q(S)\rightarrow Q(S\otimes R[F])$ is induced.
On the other hand, $\iota: S\rightarrow S\otimes R[F]$ 
given by $\iota(s)=s\otimes 1$ gives
$\iota':Q(S)\rightarrow Q(S\otimes R[F])$.
The kernel $\Ker(\iota'-\omega')\subset Q(S)$ is a subring of $Q(S)$.
We denote this subring by $Q(S)^F$ (this notation does {\em not} mean
that $F$ acts on $Q(S)$).
It is easy to see that $Q(S)^\times\cap Q(S)^F=(Q(S)^F)^\times$.
In particular, if $S$ is an integral domain, then $Q(S)^F$ is a subfield
of $Q(S)$, see \cite[Definition~6.1]{Mukai}.
Note also that $Q(S)^F\cap S=S^F$.

\begin{lemma}\label{field-abstract-grp.thm}
Let $S$ be a commutative $G$-algebra.
In general, $Q(S)^G\subset Q(S)^{G(k)}$.
If $G(k)$ is dense in $G$, 
then $Q(S)^G=Q(S)^{G(k)}$, where the right hand side is the 
ring of invariants of $Q(S)$ under the action of the abstract group $G(k)$.
\end{lemma}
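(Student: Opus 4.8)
The plan is to fix an element $x\in Q(S)$, write $x=a/b$ with $a\in S$ and $b\in S\nz$, and reduce both memberships to a single relation in $S\otimes k[G]$. Since $\omega$ and $\iota$ are flat, $\omega(b)$ and $\iota(b)=b\otimes 1$ are nonzerodivisors of $S\otimes k[G]$ (see (\ref{RFS.par})), so $\iota'(x)=(a\otimes 1)/(b\otimes 1)$ and $\omega'(x)=\omega(a)/\omega(b)$ are defined, and---cross-multiplying by the nonzerodivisors $\iota(b)$ and $\omega(b)$---the equality $\iota'(x)=\omega'(x)$ in $Q(S\otimes k[G])$ is equivalent to
\[
z:=(a\otimes 1)\,\omega(b)-\omega(a)\,(b\otimes 1)=0\quad\text{in }S\otimes k[G].
\]
Thus $x\in Q(S)^G$ if and only if $z=0$.

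Next I would bring in the abstract group action. For $g\in G(k)$ let $\ev_g\colon k[G]\to k$ be evaluation at $g$ and $\sigma_g:=(1_S\otimes\ev_g)\circ\omega$ the induced $k$-algebra automorphism of $S$, extended to $Q(S)$. Because $1_S\otimes\ev_g\colon S\otimes k[G]\to S$ is a $k$-algebra map with $(1_S\otimes\ev_g)\circ\iota=\id_S$ and $(1_S\otimes\ev_g)\circ\omega=\sigma_g$, applying it to $z$ gives $(1_S\otimes\ev_g)(z)=a\,\sigma_g(b)-\sigma_g(a)\,b$; as $b$ and $\sigma_g(b)$ are nonzerodivisors, this vanishes if and only if $\sigma_g(x)=x$ in $Q(S)$. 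The first inclusion is now immediate: if $x\in Q(S)^G$ then $z=0$, hence $(1_S\otimes\ev_g)(z)=0$ and $\sigma_g(x)=x$ for every $g\in G(k)$, i.e.\ $x\in Q(S)^{G(k)}$; no density hypothesis is used here.

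For the reverse inclusion under density, suppose $x\in Q(S)^{G(k)}$, so that $(1_S\otimes\ev_g)(z)=0$ for all $g\in G(k)$. Writing $z=\sum_i s_i\otimes f_i$ with the $s_i$ linearly independent over $k$, this says $\sum_i f_i(g)\,s_i=0$ in $S$ for each $g$, whence $f_i(g)=0$ for all $i$ and all $g\in G(k)$. The density of $G(k)$ means precisely that $k[G]\to (kG(k))^*$, $f\mapsto(g\mapsto f(g))$, is injective (as used in the proof of Lemma~\ref{dense.thm}), so each $f_i=0$, giving $z=0$ and therefore $x\in Q(S)^G$.

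The one point needing care is that $1_S\otimes\ev_g$, although a ring homomorphism, need not send nonzerodivisors to nonzerodivisors and so does not obviously descend to $Q(S\otimes k[G])$; I avoid extending it by working throughout with the single numerator element $z\in S\otimes k[G]$, to which it applies directly. The density assumption enters only at the last step, to pass from the vanishing of the $f_i$ on all $k$-points to their vanishing in $k[G]$.
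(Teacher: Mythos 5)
Your proof is correct and takes essentially the same route as the paper's: you form the same cross-multiplied element $z\in S\otimes k[G]$ (the paper's $F$, up to sign), test it with the evaluation maps $1_S\otimes\ev_g$ (the paper's $\varphi_g$), and use the density of $G(k)$ exactly through the injectivity of $k[G]\rightarrow (kG(k))^*$. The only cosmetic difference is that where the paper invokes injectivity of the composite map $\xi\colon S\otimes k[G]\rightarrow \Hom_k(kG(k),S)$, you verify the same implication by hand by writing $z=\sum_i s_i\otimes f_i$ with the $s_i$ linearly independent over $k$.
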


\begin{proof} 
Let $a/b\in Q(S)$, where $a\in S$ and $b\in S\nz$.
Set $F=\omega(a)(b\otimes 1)-\omega(b)(a\otimes 1)$,
where $\omega:S\rightarrow S\otimes k[G]$ is the coaction.

Assume that $a/b\in Q(S)^G$, that is, $F=0$.
For $g\in G(k)$, let $\varphi_g:S\otimes k[G]\rightarrow S$ be the
$k$-algebra map given by $\varphi_g(f\otimes h)=h(g)f$.
Then for $f\in S$, we have that $g\cdot f=\varphi_g\omega(f)$.
As $F=0$, $\varphi_gF=(ga)b-(gb)a=0$ for any $g\in G(k)$.
Hence $ga/gb=a/b$ for any $g\in G(k)$, and $a/b\in Q(S)^{G(k)}$.

We prove the second assertion.
Assume that $a/b\in Q(S)^{G(k)}$.
In other words, $\varphi_gF=0$ for any $g\in G(k)$.
As $\Psi:k[G]\rightarrow (kG(k))^*$ given by $\Psi(f)(g)=f(g)$ is
injective by the density of $G(k)$ in $G$, the composite map
\[
\xi: S\otimes k[G]\specialarrow{1_S\otimes \Psi}
S\otimes (kG(k))^* \specialarrow{\zeta}
\Hom_k(kG(k),S)
\]
is injective, where $\zeta$ is the injective $k$-linear map
given by $\zeta(f\otimes \rho)(g)=(\rho(g))f$.
As $\xi(F)(g)=\varphi_g(F)=0$ for any $g\in G(k)$, $\xi(F)=0$.
So $F=0$ by the injectivity of $\xi$.
\end{proof}

\paragraph\label{Q_F(S).par}
Let the notation be as in (\ref{RFS.par}).
Let $\Omega=\Omega(S)$ be the set of $R$-submodules $M$ of $Q(S)$
such that $\omega'(M)\subset M\otimes_R R[F]$.
We define $Q_F(S):=\sum_{M\in\Omega}M\subset Q(S)$,
and we call $Q_F(S)$ the {\em $F$-total ring of fractions}.
Note that $Q_F(S)$ is the largest element of $\Omega$.
It is easy to see that $Q_F(S)$ is an $R$-subalgebra of $Q(S)$.
Note that $\omega'|_{Q_F(S)}:Q_F(S)\rightarrow Q_F(S)\otimes R[F]$ 
makes $Q_F(S)$ an $F$-algebra, and the inclusion $S\hookrightarrow
Q_F(S)$ is $F$-linear.
Note also that $Q_F(S)^F=Q(S)^F$.
If $S'\subset S$ is an $F$-subalgebra such that $Q(S')=Q(S)$,
then $Q_F(S')=Q_F(S)$ almost by definition.
In particular, $Q_F(Q_F(S))=Q_F(S)$.

\paragraph Let $R$ be a commutative ring, $F$ a flat $R$-group scheme, 
and $R'$ an $R$-algebra on which $F$ acts trivially.
Then we can identify an $R'\otimes_R F$-module with an $(F,R')$-module
using the canonical
isomorphism $M\otimes_{R'}(R'\otimes_R R[F])\cong M\otimes_R
R[F]$.
It is easy to see that $M^{R'\otimes_R F}=M^F$.
Similarly, to say that $S$ is an $R'$-algebra $F$-algebra such that 
the canonical map $R'\rightarrow S$ is an $F$-algebra map is the same
as to say that $S$ is an $R'\otimes_RF$-algebra.
If so, we have that $Q_F(S)=Q_{R'\otimes_R F}(S)$.

\begin{lemma}\label{M.thm}
Let $R$, $F$, and $S$ be as in {\rm(\ref{RFS.par})}.
Let $M$ be an $S$-submodule of $Q(S)$ such that $M\supset S$.
Then the following are equivalent.
\begin{description}
\item[(i)] $M\in\Omega$.
That is, $\omega'(M)\subset M\otimes_R R[F]$.
\item[(ii)] There is an $(F,S)$-module structure of $M$ such that
the inclusion $S\hookrightarrow M$ is $F$-linear.
\item[(iii)] There is a unique $(F,S)$-module structure of $M$ such that
the inclusion $S\hookrightarrow M$ is $F$-linear.
\item[(iv)] $M$ is an $(F,S)$-submodule of $Q_F(S)$.
\end{description}
In this case, the unique $(F,S)$-module structure of $M$ 
as in {\bf(iii)} is given by
$\omega'|_M:M\rightarrow M\otimes_R R[F]$.
It is also the induced submodule structure coming from {\bf(iv)}.
\end{lemma}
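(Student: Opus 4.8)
The plan is to prove the equivalence through the cycle (iii) $\Rightarrow$ (ii) $\Rightarrow$ (i) $\Rightarrow$ (iv) $\Rightarrow$ (ii), supplemented by (ii) $\Rightarrow$ (iii). The implication (iii) $\Rightarrow$ (ii) is trivial, since unique existence is stronger than existence. The real content is concentrated in two places: a rigidity statement asserting that \emph{any} admissible $(F,S)$-coaction on $M$ is forced to equal the restriction $\omega'|_M$, and the observation that $M$ is automatically contained in $Q_F(S)$ once (i) holds. The rigidity statement simultaneously yields (ii) $\Rightarrow$ (i), the uniqueness in (iii), and the closing identification of the structure as $\omega'|_M$; I expect it to be the main obstacle.

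For the rigidity, suppose $M$ carries an $(F,S)$-coaction $\omega_M\colon M\rightarrow M\otimes_R R[F]$ for which $S\hookrightarrow M$ is $F$-linear. Given $m\in M\subset Q(S)$, write $m=a/b$ with $a\in S$ and $b\in S\nz$, so $bm=a$ in $M$. Compatibility of $\omega_M$ with the $S$-action together with $F$-linearity of $S\hookrightarrow M$ forces $\omega(b)\,\omega_M(m)=\omega_M(bm)=\omega_M(a)=\omega(a)$ inside $M\otimes_R R[F]$. The crucial, and most delicate, step is to divide by $\omega(b)$. Since $\omega$ is flat, $\omega(b)$ is a nonzerodivisor of $S\otimes_R R[F]$; moreover, as $R[F]$ is $R$-flat, $M\otimes_R R[F]$ injects into $Q(S)\otimes_R R[F]$, which is the localization of $S\otimes_R R[F]$ at the nonzerodivisors $\iota(S\nz)=S\nz\otimes 1$ and hence injects into $Q(S\otimes_R R[F])$. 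In the latter ring $\omega(b)$ is a unit, so the identity above gives $\omega_M(m)=\omega(b)^{-1}\omega(a)=\omega'(a/b)=\omega'(m)$. Thus $\omega_M=\omega'|_M$: this proves uniqueness, shows $\omega'(M)=\omega_M(M)\subset M\otimes_R R[F]$ (so (i) holds), and identifies the structure of (iii) as $\omega'|_M$; in particular (ii) $\Rightarrow$ (i) and (ii) $\Rightarrow$ (iii).

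The remaining implications are comparatively soft and exploit that $Q_F(S)$ is already an $F$-algebra via $\omega'|_{Q_F(S)}$ with $S\hookrightarrow Q_F(S)$ being $F$-linear. For (i) $\Rightarrow$ (iv): if $M\in\Omega$ then $M\subset Q_F(S)$, as $Q_F(S)$ is the largest element of $\Omega$; since the coaction on $Q_F(S)$ restricts on $M$ to $\omega'|_M$ and $\omega'(M)\subset M\otimes_R R[F]$, the $S$-submodule $M$ is an $F$-subcomodule, hence an $(F,S)$-submodule of $Q_F(S)$. For (iv) $\Rightarrow$ (ii): the induced $(F,S)$-structure on $M$ is given by $\omega'$, and because $\omega'$ restricts to $\omega$ on $S$, the inclusion $S\hookrightarrow M$ is $F$-linear; this also shows the induced structure of (iv) coincides with that of (iii). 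With (iii) $\Rightarrow$ (ii) trivial, the cycle closes, and the concluding assertions—that the unique structure is $\omega'|_M$ and equals the one induced from $Q_F(S)$—follow from the preceding steps.
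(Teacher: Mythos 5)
Your proposal is correct and takes essentially the same approach as the paper: the central rigidity step---clearing denominators to get $\omega(b)\,\omega_M(m)=\omega(a)$ and then cancelling the nonzerodivisor $\omega(b)$ (viewing $M\otimes_R R[F]$ inside $Q(S\otimes_R R[F])$, where $\omega(b)$ is a unit) to force $\omega_M=\omega'|_M$---is exactly the paper's argument for {\bf(ii)}$\Rightarrow${\bf(iii)},{\bf(i)}, and your {\bf(i)}$\Leftrightarrow${\bf(iv)} reasoning via $Q_F(S)$ being the largest element of $\Omega$ matches the paper's. The only cosmetic difference is that you obtain {\bf(i)}$\Rightarrow${\bf(ii)} by routing through {\bf(iv)} and the already-established $(F,S)$-structure on $Q_F(S)$, whereas the paper verifies the Hopf-module axioms for $\omega'|_M$ directly by a diagram chase; this is a harmless reorganization, not a different method.
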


\begin{proof} 
{\bf (i)$\Rightarrow$(ii)}.
Consider the diagram
\begin{equation}\label{commutative.eq}
\xymatrix{
S \ar[d]^{\omega} \ar@{^{(}->}[r] \ar@{}[dr]|{\text{(a)}} & 
M \ar@{^{(}->}[r] \ar[d]^{\omega_1} \ar@{}[dr]|{\text{(b)}} & 
Q(S) \ar[d]^{\omega'} \\
S\otimes_R R[F] \ar@{^{(}->}[r] &
M\otimes_R R[F] \ar@{^{(}->}[r] &
Q(S\otimes_R R[F]),
}
\end{equation}
where $\omega_1=\omega'|_M$.
By the definition of $\omega'$, (a)$+$(b) is commutative.
On the other hand, by the definition of $\omega_1$, (b) is commutative.
As $M\otimes_R R[F]\hookrightarrow Q(S\otimes_R R[F])$ is injective, 
(a) is commutative.
In other words, $S\hookrightarrow M$ is $F$-linear.
The commutativity of (a)$+$(b) shows that $\omega'$ is an $S$-algebra map,
where the $S$-algebra structure of $Q(S\otimes_R R[F])$ is given by the
composite map
\[
S\xrightarrow{\omega} S\otimes_R R[F]\hookrightarrow Q(S\otimes_R R[F]).
\]
By the commutativity of (b), it is easy to see that $\omega_1$ is $S$-linear.
This shows that $M$ is an $(R[F],S)$-Hopf module.
In other words, $M$ is an $(F,S)$-module.

{\bf (ii)$\Rightarrow$(iii), (i)}
Let us consider the diagram (\ref{commutative.eq}), where $\omega_1$ is
the given comodule structure of $M$.
Then (a) is commutative.
Let $b/a\in M$, where $b\in S$ and $a\in S\nz$.
Then $\omega(a)\omega_1(b/a)=\omega_1(a\cdot(b/a))=\omega(b)$.
As $\omega(a)$ is a nonzerodivisor on $M\otimes R[F]$,
$\omega_1(b/a)=\omega(b)/\omega(a)=\omega'(b/a)$, and $\omega_1=\omega'|_M$
is unique.
The commutativity of (b) shows that $\omega'(M)\subset M\otimes_R R[F]$.

{\bf (iii)$\Rightarrow$(ii)} is trivial.

{\bf (i)$\Leftrightarrow$(iv)} The coaction $\omega:Q_F(S)\rightarrow Q_F(S)
\otimes_R R[F]$ is the restriction of $\omega'$.
So the condition {\bf(i)} says that $M$ is a subcomodule of $Q_F(S)$.
As $M$ is an $S$-submodule of $Q_F(S)$, it is an $(F,S)$-submodule if
and only if {\bf (i)} holds.

Now the equivalence of {\bf (i)--(iv)} has been proved.
The unique $(F,S)$-module structure of $M$ as in {\bf (iii)} is given by
$\omega'|_M$ by the proof of {\bf (ii)$\Rightarrow$(iii)}.
This agrees with the induced submodule structure coming from {\bf (iv)},
since the coaction of $Q_F(S)$ is also the restriction $\omega'|_{Q_F(S)}$
of $\omega'$.
\end{proof}

\begin{lemma}\label{omega-inverse.thm}
Let $R$, $F$, and $S$ be as in {\rm(\ref{RFS.par})}.
Then $(\omega')^{-1}(Q(S)\otimes_R R[F])=Q_F(S)$.
\end{lemma}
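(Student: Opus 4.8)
The plan is to prove the two inclusions separately, writing $N:=(\omega')^{-1}(Q(S)\otimes_R R[F])$ for the left-hand side. First I would record that the inclusion $Q(S)\otimes_R R[F]\subset Q(S\otimes_R R[F])$ which makes the statement meaningful comes from flatness of $R[F]$: tensoring the injection ``multiplication by $b$'' for $b\in S\nz$ with the flat module $R[F]$ keeps $b\otimes 1$ a nonzerodivisor on $S\otimes_R R[F]$, so $Q(S)\otimes_R R[F]$ is the localization of $S\otimes_R R[F]$ at $S\nz\otimes 1$ and hence embeds into $Q(S\otimes_R R[F])$. The inclusion $Q_F(S)\subset N$ is then immediate: since $Q_F(S)$ is the largest member of $\Omega$, we have $\omega'(Q_F(S))\subset Q_F(S)\otimes_R R[F]\subset Q(S)\otimes_R R[F]$.

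For the reverse inclusion $N\subset Q_F(S)$, I would note that $N$ is an $S$-submodule of $Q(S)$ containing $S$, and that it suffices to show $N\in\Omega$, i.e.\ $\omega'(N)\subset N\otimes_R R[F]$; for then $N\subset Q_F(S)$ because $Q_F(S)$ is the largest element of $\Omega$ (equivalently, one invokes Lemma~\ref{M.thm}). In fact, since $\omega'(M)\subset M\otimes_R R[F]\subset Q(S)\otimes_R R[F]$ forces $M\subset N$ for every $M\in\Omega$, and $N=\sum_{y\in N}Ry$, it is enough to exhibit for each $y\in N$ a member $M_y\in\Omega$ with $y\in M_y$; morally $M_y$ is the subcomodule of $Q(S)$ generated by $y$.

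The technical heart is a coassociativity computation. Localizing the coassociativity identity $(\omega\otimes\id)\omega=(\id\otimes\Delta)\omega$ for $S$ gives the same identity for $\omega'$ as maps $Q(S)\to Q(S\otimes_R R[F]\otimes_R R[F])$. Writing $\omega'(y)=\sum_i y_i\otimes f_i\in Q(S)\otimes_R R[F]$ for $y\in N$, the expression $(\id\otimes\Delta)\omega'(y)=\sum_i y_i\otimes\Delta f_i$ already lies in $Q(S)\otimes_R R[F]\otimes_R R[F]$, so the other side $\sum_i\omega'(y_i)\otimes f_i$ lies there too. Contracting the last tensor factor against any $\phi\in\Hom_R(R[F],R)$ then shows that $z_\phi:=(\id\otimes\phi)(\omega'(y))=\sum_i\phi(f_i)y_i$ satisfies $\omega'(z_\phi)=\sum_i y_i\otimes(\id\otimes\phi)\Delta f_i\in Q(S)\otimes_R R[F]$, that is, $z_\phi\in N$. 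Thus every ``matrix-coefficient'' contraction of $\omega'(y)$ lands back in $N$.

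The remaining, and main, obstacle is to upgrade this to $\omega'(y)\in N\otimes_R R[F]$. When $R=k$ is a field this is routine: choosing the $f_i$ to be $k$-linearly independent and picking coordinate functionals $\phi_j$ with $\phi_j(f_i)=\delta_{ij}$ gives $y_j=z_{\phi_j}\in N$, whence $\omega'(y)=\sum_j y_j\otimes f_j\in N\otimes_k k[F]$ and $N\in\Omega$. Over a general ring the functionals $\phi$ need not detect $N\otimes_R R[F]$ inside $Q(S)\otimes_R R[F]$ (for instance $R[F]$ may carry too few $R$-linear functionals), so this is where genuine care is required; I would resolve it with the comodule theory for flat coalgebras used elsewhere in the paper, producing a universally dense algebra map $A\to R[F]^*$ and applying Lemma~\ref{submod.thm} (together with faithfully flat descent as in Lemma~\ref{subcomod.thm}) to identify the subcomodule generated by $y$ with an $A$-submodule of $Q(S)$ on which the contraction computation above can be carried out. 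This pins down $M_y$, completes $N\subset Q_F(S)$, and hence the asserted equality.
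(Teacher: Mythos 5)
Your first half is sound and is essentially the paper's opening: the inclusion $Q_F(S)\subset N$, the reduction to showing $\omega'(N)\subset N\otimes_R R[F]$ (the per-element search for $M_y\in\Omega$ is an unnecessary detour, as you note yourself), and the localized coassociativity identity are all correct --- with the caveat that $(\omega'\otimes 1)\circ\omega'$ only makes sense on $N$, where $\omega'$ lands in $Q(S)\otimes_R R[F]$; the paper makes this precise by observing that $\rho=(1\otimes\Delta)\circ\omega'$ and $\rho'=(\omega'\otimes 1)\circ\omega'$ are $R$-algebra maps on $N$ agreeing on $S$, hence equal on all fractions. But the step you flag as the main obstacle is a genuine gap, and the repair you sketch does not work in the generality of (\ref{RFS.par}). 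Lemma~\ref{submod.thm} presupposes a universally dense $R$-algebra map $A\rightarrow C^*$, and for an arbitrary flat Hopf algebra $R[F]$ over an arbitrary commutative ring no such map is available; indeed $\Hom_R(R[F],R)$ need not even separate points of $R[F]$, which is precisely why your contractions $z_\phi$ cannot detect membership in $N\otimes_R R[F]$. Moreover ``the subcomodule of $Q(S)$ generated by $y$'' is not a priori defined: $Q(S)$ is not a comodule, and the existence of a subcomodule of $Q(S)$ through $y$ is essentially the statement being proved, so the proposed invocation of Lemma~\ref{submod.thm} and descent is circular as it stands.

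The fix --- and the paper's actual argument --- discards the functionals entirely. Since $N=(\omega')^{-1}(Q(S)\otimes_R R[F])$ is by definition the kernel of the composite $Q(S)\xrightarrow{\omega'}Q(S\otimes_R R[F])\rightarrow Q(S\otimes_R R[F])/(Q(S)\otimes_R R[F])$, tensoring this left-exact sequence with the flat $R$-module $R[F]$ yields exactly
\[
(\omega'\otimes 1)^{-1}\bigl(Q(S)\otimes_R R[F]\otimes_R R[F]\bigr)=N\otimes_R R[F]
\]
inside $Q(S)\otimes_R R[F]$. Coassociativity ($\rho=\rho'$) then gives, for $y\in N$, that $(\omega'\otimes 1)(\omega'(y))=(1\otimes\Delta)(\omega'(y))$ lies in $Q(S)\otimes_R R[F]\otimes_R R[F]$, whence $\omega'(y)\in N\otimes_R R[F]$ directly --- no basis, no contraction, no density hypothesis, valid over any base ring. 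So keep your computation up through the coassociativity step, and replace the contraction step by this flat-base-change identification of the preimage; that closes the proof.
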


\begin{proof}
Set $C$ to be the left hand side.
As $\omega'(Q_F(S))\subset Q_F(S)\otimes_R R[F]$,
$Q_F(S)\subset C$.
In particular, $S\subset C$.

Consider the composite map
\[
\rho:C\xrightarrow{\omega'} Q(S)\otimes_R R[F]\xrightarrow{1\otimes \Delta}
Q(S)\otimes_R R[F]\otimes_R R[F]
\subset Q(S\otimes_R R[F]\otimes_R R[F]),
\]
where $\Delta : R[F]\rightarrow R[F]\otimes_R R[F]$ is the coproduct.
Also consider the map
\[
\rho':C\xrightarrow{\omega'}Q(S)\otimes_R R[F]
\xrightarrow{\omega'\otimes1}
Q(S\otimes_R R[F])\otimes_R R[F]
\subset Q(S\otimes_R R[F]\otimes_R R[F]).
\]
Then $\rho$ and $\rho'$ are $R$-algebra maps, and 
$\rho|_S=\rho'|_S$ by the coassociativity law on the $R[F]$-comodule $S$.
It follows easily that $\rho=\rho'$.
This shows that 
\[
\omega'(C)\subset (\omega'\otimes 1)^{-1}(Q(S)\otimes_R R[F]
\otimes_R R[F])
=C\otimes_R R[F].
\]
It follows immediately that $C\subset Q_F(S)$.
Hence $C=Q_F(S)$, as desired.
\end{proof}

\begin{corollary}
Let $R$, $F$, and $S$ be as in {\rm(\ref{RFS.par})}.
Assume that $S$ is Noetherian and $F$ is finite over $R$.
Then $Q_F(S)=Q(S)$.
\end{corollary}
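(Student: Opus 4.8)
The plan is to reduce the statement to the purely ring-theoretic fact that forming the total ring of fractions commutes with the finite flat base change $R\to R[F]$. Write $T:=S\otimes_R R[F]$ and $W:=S\nz$, and recall that $Q(S)\otimes_R R[F]=W^{-1}T$, where $W$ acts on $T$ through $\iota$. I claim that the natural inclusion $W^{-1}T\subseteq Q(T)=Q(S\otimes_R R[F])$ is an equality. Granting this, the map $\omega':Q(S)\to Q(S\otimes_R R[F])$ automatically has image inside $Q(S)\otimes_R R[F]$, so $(\omega')^{-1}(Q(S)\otimes_R R[F])=Q(S)$; by Lemma~\ref{omega-inverse.thm} the left-hand side is $Q_F(S)$, and the corollary follows.

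It remains to prove the claim. Since $R[F]$ is $R$-flat, $\iota$ is flat, so $\iota(W)\subseteq T\nz$ and the inclusion $W^{-1}T\subseteq Q(T)$ holds; I must show every nonzerodivisor $u$ of $T$ is a unit in $W^{-1}T$. Because $F$ is finite over $R$, $T$ is a finitely generated $S$-module, so $T/uT$ is finitely generated over $S$, and it suffices to exhibit $w\in W$ with $\iota(w)\in uT$: writing $\iota(w)=ut'$ then makes $u$ invertible in $W^{-1}T$. Finding such a $w$ amounts to showing that $\ann_S(T/uT)$ contains a nonzerodivisor of $S$; as $S$ is Noetherian, $\Ass_S S$ is finite, so by prime avoidance this reduces to checking $\fq\notin\Supp_S(T/uT)$, i.e.\ $(T/uT)_\fq=0$, for every $\fq\in\Ass_S S$.

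The content is then a local computation at each $\fq\in\Ass_S S$. Here $S_\fq$ is local of depth zero (its maximal ideal is associated, hence consists of zerodivisors), and $T_\fq=S_\fq\otimes_S T$ is finite and flat, hence finite free, over $S_\fq$. Multiplication by $u$ is an $S_\fq$-linear endomorphism of this finite free module which is injective, since nonzerodivisors remain nonzerodivisors under localization. By the determinant (McCoy) criterion its determinant is a nonzerodivisor of $S_\fq$, and since every element of the maximal ideal of $S_\fq$ is a zerodivisor, this determinant is a unit; hence $u$ acts invertibly on $T_\fq$ and $(T/uT)_\fq=0$, as required.

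I expect the only real content to be this last local step --- upgrading \emph{``$u$ is a nonzerodivisor''} to \emph{``$u$ is a unit''} --- which is precisely where the Noetherian hypothesis on $S$ (finiteness of $\Ass_S S$, and the existence of depth-zero localizations at associated primes) and the finiteness and flatness of $R[F]$ are used; the determinant argument over the depth-zero local ring $S_\fq$ is the crux.
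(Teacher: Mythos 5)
Your proof is correct, and it shares the paper's overall skeleton --- both arguments reduce, via Lemma~\ref{omega-inverse.thm}, to the single identity $Q(S)\otimes_R R[F]=Q(S\otimes_R R[F])$ --- but you establish that identity by a genuinely different argument. The paper's proof is a two-liner: every maximal ideal of $Q(S)$ is an associated prime of zero, and since $R[F]$ is finite flat over $R$, the same holds for every maximal ideal of $Q(S)\otimes_R R[F]$; hence every nonzerodivisor there is a unit and the ring is its own total ring of fractions. That step quietly relies on the behavior of associated primes under finite flat base change (lying over, plus the flat base-change theorem for $\Ass$ over Noetherian rings), which is left to the reader. You instead make the crux --- upgrading ``$u\in T\nz$'' to ``$u$ is a unit in $W^{-1}T$'' where $T=S\otimes_R R[F]$ and $W=S\nz$ --- completely explicit: localize at each $\fq\in\Ass_S S$, note $T_\fq$ is finite free over the Noetherian local ring $S_\fq$ (this is where you need $S$ Noetherian; finite flat alone does not give free in general), apply McCoy's determinant criterion to the injective endomorphism given by multiplication by $u$ (injectivity surviving localization by exactness), observe that the determinant must be a unit since the maximal ideal of $S_\fq$ consists of zerodivisors, and then glue by prime avoidance over the finite set $\Ass_S S$ to produce a single $w\in W$ with $wT\subset uT$, which inverts $u$ in $W^{-1}T$. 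What each approach buys: the paper's is shorter and conceptually clean but citation-dependent; yours is longer but elementary and self-contained, it isolates exactly where Noetherianness and finiteness/flatness of $R[F]$ are used, and it in fact proves the more general ring-theoretic statement that $Q(S)\otimes_S T=Q(T)$ for any finite flat map $S\rightarrow T$ out of a Noetherian ring, with no group-scheme structure needed. All the individual steps you invoke (exactness of localization, McCoy, $\fq S_\fq\in\Ass_{S_\fq}S_\fq$, the zerodivisors of a Noetherian ring being the union of its associated primes) check out, so there is no gap.
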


\begin{proof}
Note that every maximal ideal of $Q(S)$ is an associated prime of zero.
As $R[F]$ is finite flat over $R$, every maximal ideal of $Q(S)\otimes_R
R[F]$ is an associated prime of zero.
So $Q(S)\otimes_R R[F]=Q(S\otimes_R R[F])$.
Hence 
\[
Q_F(S)=(\omega')^{-1}(Q(S)\otimes_R R[F])
=(\omega')^{-1}(Q(S\otimes_R R[F]))=Q(S).
\]
\end{proof}

\begin{lemma}\label{pre-normalization.thm}
Let $X_0$ be a scheme, $F$ a smooth $X_0$-group scheme of finite type,
and $X$ a Noetherian reduced $X_0$-scheme with an action of $F$.
Let $\varphi:X'\rightarrow X$ be the normalization of $X$.
Then there is a unique $F$-action of $F$ on $X'$ such that $\varphi$ is
an $F$-morphism.
\end{lemma}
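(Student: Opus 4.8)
The plan is to build the action on $X'$ by transporting the action on $X$ through the normalization functor, the one geometric input being that normalization is compatible with the smooth projection $F\times_{X_0}X\to X$. Concretely, I would isolate the following standard fact: if $Y\to X$ is smooth with $X$ Noetherian and reduced, then $Y$ is reduced and the normalization of $Y$ is $Y\times_X X'$, with normalization morphism the base change of $\varphi$. (This rests on the fact that a scheme smooth over a reduced, resp.\ normal, base is again reduced, resp.\ normal, so that normalization commutes with smooth base change; note that smoothness of $F$, i.e.\ geometrically regular fibers, is all that is used, and no connectedness of $F$ is needed.) Applying it to the projection $p\colon F\times_{X_0}X\to X$, which is smooth as the base change of $F\to X_0$, identifies the normalization of $F\times_{X_0}X$ with $F\times_{X_0}X'$, the normalization morphism being $1_F\times\varphi$.

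For existence I first observe that the shearing map $\tau=(\operatorname{pr}_1,a)\colon F\times_{X_0}X\to F\times_{X_0}X$, $(g,x)\mapsto(g,gx)$, is an automorphism (its inverse is $(g,y)\mapsto(g,g^{-1}y)$) and that $a=p\circ\tau$. Since $\tau$ is an isomorphism, its normalization is an automorphism $\phi$ of $\Nor(F\times_{X_0}X)=F\times_{X_0}X'$. Because $\tau$ commutes with $\operatorname{pr}_1$, so does $\phi$, whence $\phi=(\operatorname{pr}_1,a')$ for a unique $a'\colon F\times_{X_0}X'\to X'$ with $a'=\operatorname{pr}_2\circ\phi$; the fact that $\phi$ covers $\tau$ reads, on the second components, as
\[
\varphi\circ a'=a\circ(1_F\times\varphi).
\]
This is exactly the statement that $\varphi$ is an $F$-morphism for the candidate action $a'$.

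To check that $a'$ is a group action and is unique, I would use the following lifting principle. Let $U\subset X$ be the normal locus; it is open and contains every generic point (a reduced ring localized at a minimal prime is a field), so $U$ is dense and $\varphi$ is an isomorphism over $U$. Claim: if $Z$ is a reduced scheme and $g_1,g_2\colon Z\to X'$ satisfy $\varphi g_1=\varphi g_2=:h$ with $h^{-1}(U)$ dense in $Z$, then $g_1=g_2$. Indeed $g_1,g_2$ agree on $h^{-1}(U)$ since $\varphi|_{\varphi^{-1}(U)}$ is an isomorphism; as $\varphi$ is separated (it is integral), the equalizer of $g_1,g_2$ is closed in $Z$, and containing the dense open $h^{-1}(U)$ in the reduced $Z$ it must be all of $Z$. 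The density hypothesis always holds here because in each application $h$ factors as the integral birational map $1\times\cdots\times\varphi$ followed by a morphism onto $X$ that, after a shearing automorphism, becomes a smooth projection; hence $h$ carries every generic point of $Z$ into $U$, and an open subset of a scheme containing all the generic points of its irreducible components is dense (open subsets are stable under generization).

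Applying the principle with $Z=F\times_{X_0}F\times_{X_0}X'$, with $Z=X'$, and with $Z=F\times_{X_0}X'$ respectively reduces the associativity, unit, and uniqueness statements for $a'$ to the corresponding known facts for $a$ after composing with $\varphi$: e.g.\ from $a\circ(m\times1_X)=a\circ(1_F\times a)$ one gets $\varphi\circ a'\circ(m\times1_{X'})=\varphi\circ a'\circ(1_F\times a')$, hence $a'\circ(m\times1_{X'})=a'\circ(1_F\times a')$; similarly $a'\circ(e\times1_{X'})=1_{X'}$ from the unit axiom for $a$; and any two actions making $\varphi$ equivariant have equal composite with $\varphi$, hence coincide. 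The main obstacle is purely the geometric input of the first paragraph, that normalization commutes with smooth base change (equivalently, that a scheme smooth over a normal base is normal); once that is granted, the construction of $a'$ and the verification of the axioms are formal, the only bookkeeping being the density of the loci $h^{-1}(U)$, which is handled uniformly by the generic‑point argument above.
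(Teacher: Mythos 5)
Your construction reaches the lemma by a genuinely different route from the paper's. The paper applies the universal property of normalization in one stroke: $F\times_{X_0}X'$ is normal (being smooth over the normal $X'$), and the composite $a\circ(1\times\varphi)\colon F\times_{X_0}X'\to X$ maps each connected component dominatingly to an irreducible component of $X$, so by universality it factors \emph{uniquely} through $\varphi$; the unit and associativity laws are then extracted from that same uniqueness clause, with no discussion of loci in $X$ at all. You instead identify $F\times_{X_0}X'$ as the normalization of $F\times_{X_0}X$ (normalization commutes with smooth base change), lift the shearing automorphism $\tau=(\mathrm{pr}_1,a)$ functorially, and read off $a'$ as the second component of the lift --- a clean packaging that makes the \emph{existence} of $a'$ purely formal. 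What it costs you is that you must supply your own uniqueness mechanism, your ``lifting principle,'' and that is where there is a gap.

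You justify the lifting principle by asserting that the normal locus $U$ of a Noetherian reduced scheme is open. That is false at this level of generality: openness of the normal (or regular) locus is a Nagata/J-2 type property, valid for excellent schemes but not for arbitrary Noetherian ones (this is one of the classical pathologies; cf.\ Hochster's examples of non-open loci in Noetherian rings). In such a pathological $X$ there need be no dense \emph{open} subset over which $\varphi$ is an isomorphism --- any such open would itself be normal --- so the argument ``the equalizer is closed and contains the dense open $h^{-1}(U)$'' breaks down. Fortunately your principle remains true under the hypothesis you actually verify, namely that $h$ carries every generic point of $Z$ to a generic point of $X$, and the proof is easily repaired: over each generic point $\eta$ of $X$ the fibre of $\varphi$ is a single point $\eta'$ with $\kappa(\eta')=\kappa(\eta)$ (the normalization is the integral closure inside the total ring of fractions, hence an isomorphism over generic points), so $g_1$ and $g_2$ agree, residue maps included, at every generic point of $Z$; the equalizer of $g_1,g_2$ is a closed subscheme of $Z$ because the integral morphism $\varphi$ is separated, it contains all generic points of $Z$, hence is topologically all of $Z$, and equals $Z$ since $Z$ is reduced. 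With this repair (or by simply quoting the uniqueness clause of the universal property of normalization, as the paper does) your argument is complete, and the rest of your bookkeeping --- the smooth base change identification, $\mathrm{pr}_1\circ\phi=\mathrm{pr}_1$, and the reduction of the group-action axioms for $a'$ to those for $a$ --- is sound.
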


\begin{proof}
Note that $F\times_{X_0}X'$ is Noetherian normal, and 
the composite 
\[
F\times_{X_0}X'\xrightarrow{1\times \varphi}
F\times_{X_0}X \xrightarrow{a}X
\]
maps each connected component of $F\times_{X_0}X'$ 
dominatingly to an irreducible component of $X$, where $a$ is the action
of $F$ on $X$.
Thus by the universality of the normalization, 
there is a unique map $a':F\times_{X_0}X'\rightarrow X'$ such
that $\varphi\circ a'=a\circ(1\times\varphi)$.

It remains to prove that $a'$ is an action of $F$ on $X'$.
The diagram
\[
\xymatrix{
X' \ar[r]^-{e\times 1} \ar[d]^{\varphi} & 
F\times_{X_0}X' \ar[r]^-{a'} \ar[d]^{1\times\varphi} &
X' \ar[d]^\varphi \\
X \ar[r]^-{e\times 1} &
F\times_{X_0}X \ar[r]^-a &
X
}
\]
is commutative, and $a \circ (e\times 1)\circ\varphi=\varphi$.
Thus $\varphi\circ a'\circ (e\times 1)=\varphi\circ \id_{X'}$.
By the uniqueness, $a'\circ (e\times 1)=\id_{X'}$, the unit law holds.

The morphisms $F\times_{X_0}F\times_{X_0}X'\rightarrow X$ given by
$(f,g,x')\mapsto f(g (\varphi(x')))$ and
$(f,g,x')\mapsto (fg)(\varphi(x'))$ agree.
So $\varphi((fg)x')=\varphi(f(gx'))$.
By the uniqueness, $(fg)x'=f(gx')$.
The associativity also holds, and $a'$ is an action, as desired.
\end{proof}

\begin{corollary}\label{normalization.thm}
Let $R$ be a commutative ring, $F$ an affine smooth $R$-group
scheme of finite type, and $S$ a Noetherian reduced $F$-algebra.
Then the integral closure $S'$ of $S$ in $Q(S)$
has a unique $F$-algebra structure such that the inclusion 
$S\hookrightarrow S'$ 
is an $F$-algebra map.
In particular, $S\subset S'\subset Q_F(S)=Q_F(S')$.
\end{corollary}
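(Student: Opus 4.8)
The plan is to realize Corollary~\ref{normalization.thm} as a direct application of the geometric normalization result in Lemma~\ref{pre-normalization.thm}, simply reinterpreting the scheme-theoretic $F$-action on the normalization as an $F$-algebra structure, and then checking the final containment of rings.

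First I would set $X_0=\Spec R$, $X=\Spec S$, and $F$ itself viewed as an affine smooth $X_0$-group scheme of finite type. Since $S$ is Noetherian and reduced, $X$ is a Noetherian reduced $X_0$-scheme carrying the $F$-action coming from the $F$-algebra structure of $S$. The integral closure $S'$ of $S$ in $Q(S)$ is precisely the coordinate ring of the normalization $\varphi:X'\rightarrow X$ (for $S$ reduced Noetherian, $Q(S)$ is the total ring of fractions and the normalization is $\uSpec$ of the integral closure). Thus Lemma~\ref{pre-normalization.thm} supplies a \emph{unique} $F$-action on $X'$ making $\varphi$ an $F$-morphism. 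Dualizing, this is exactly a unique $F$-algebra structure on $S'$ for which the inclusion $S\hookrightarrow S'$ is an $F$-algebra map; affineness of $F$ and of $X'$ over $X_0$ is what lets me translate the geometric statement into the Hopf-algebraic (comodule) one. This handles both the existence and the uniqueness clauses at once.

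For the final containment, I would argue as follows. The $F$-algebra structure on $S'$ means the coaction $\omega_{S'}:S'\rightarrow S'\otimes_R R[F]$ restricts $\omega$ on $S$, so $S'$ is an $S$-submodule of $Q(S)$ containing $S$ with $\omega'(S')\subset S'\otimes_R R[F]$; by the defining property of $Q_F(S)$ in (\ref{Q_F(S).par}), $S'\in\Omega(S)$, whence $S'\subset Q_F(S)$. This gives $S\subset S'\subset Q_F(S)$. Finally, since $S'$ is an $F$-subalgebra of $Q(S)$ with the same field (total ring) of fractions $Q(S')=Q(S)$, the remark in (\ref{Q_F(S).par}) that $Q_F(S')=Q_F(S)$ whenever $S'\subset S$ (or here $S\subset S'$) share a total ring of fractions yields $Q_F(S')=Q_F(S)$.

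The only genuine obstacle is the bookkeeping needed to match Lemma~\ref{pre-normalization.thm}'s hypotheses and to pass cleanly between the scheme-level action and the algebra-level comodule structure; in particular one must confirm that $F$ being affine and smooth of finite type over $R=X_0$ is exactly what the lemma requires, and that ``normalization of $X$'' in that lemma corresponds to the integral closure of $S$ in its \emph{total} ring of fractions $Q(S)$ (the reduced Noetherian hypothesis guarantees this identification). Once these identifications are in place, everything reduces to the already-proved lemma and to the elementary functoriality of $Q_F(-)$ recorded in (\ref{Q_F(S).par}), so no substantive new computation is expected.
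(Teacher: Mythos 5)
Your proposal is correct and follows essentially the same route as the paper: the paper likewise obtains the first assertion directly from Lemma~\ref{pre-normalization.thm}, and the containments $S\subset S'\subset Q_F(S)=Q_F(S')$ from Lemma~\ref{M.thm}, whose content is precisely your direct verification that the coaction on $S'$ restricts $\omega'$ so that $\omega'(S')\subset S'\otimes_R R[F]$, i.e., $S'\in\Omega(S)$. The only cosmetic difference is that you unwind the definition of $\Omega(S)$ from (\ref{Q_F(S).par}) by hand where the paper simply cites Lemma~\ref{M.thm}; this introduces no gap.
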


\begin{proof}
The first assertion is obvious by Lemma~\ref{pre-normalization.thm}.
The second assertion follows from the first and Lemma~\ref{M.thm}.
\end{proof}

\begin{lemma}\label{colon.thm}
Let $R$ be a commutative ring, $F$ an affine flat $R$-group scheme,
and $S$ a Noetherian $F$-algebra.
Then $Q_F(S)=\bigcup_I S:_{Q(S)}I$, where $I$ runs through the all
$F$-ideals of $S$ containing a nonzerodivisor.
For each $I$, $S:_{Q(S)}I$ is an $(F,S)$-submodule of $Q_F(S)$, and
the canonical map $\varphi:S:_{Q(S)}I\rightarrow\Hom_S(I,S)$ given by
$\varphi(\alpha)(a)=\alpha a$ is an $(F,S)$-linear isomorphism.
\end{lemma}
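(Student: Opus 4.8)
Write $M_I:=S:_{Q(S)}I$. The plan is to first establish the $S$-linear isomorphism $\varphi$, then use flat base change to identify colon modules over $S$ and over $S\otimes_R R[F]$, and finally read off the equivariant statements from these identifications together with Lemma~\ref{M.thm} and Lemma~\ref{omega-inverse.thm}.

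First I would treat $\varphi\colon M_I\to\Hom_S(I,S)$, $\varphi(\alpha)(a)=\alpha a$, which is visibly $S$-linear. Fix a nonzerodivisor $t\in I$. If $\varphi(\alpha)=0$ then $\alpha t=0$, so $\alpha=0$ in $Q(S)$ and $\varphi$ is injective; given $f\in\Hom_S(I,S)$, the element $\alpha:=f(t)/t\in Q(S)$ satisfies $\alpha a=f(t)a/t=f(a)t/t=f(a)\in S$ for all $a\in I$ (using $f(t)a=f(ta)=f(a)t$), so $\varphi$ is surjective. Since $S$ is Noetherian, $I$ is finitely presented, and since $R[F]$ is $R$-flat, $t\otimes 1$ is a nonzerodivisor of $S\otimes_R R[F]$ and the same construction over $S\otimes_R R[F]$, together with flat base change for $\Hom$, yields the identification inside $Q(S\otimes_R R[F])$
\[
M_I\otimes_R R[F]=(S\otimes_R R[F]):_{Q(S\otimes_R R[F])}(I\otimes_R R[F]).
\]

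The crux is $M_I\subset Q_F(S)$ together with $\omega'(M_I)\subset M_I\otimes_R R[F]$. The key point is that, because $I$ is $F$-stable, the ideal of $S\otimes_R R[F]$ generated by $\omega(I)$ equals $I\otimes_R R[F]$. This follows from the shearing automorphism $\gamma$ of $S\otimes_R R[F]$ determined by $\gamma(s\otimes h)=\omega(s)(1\otimes h)$, an $R$-algebra automorphism (with inverse built from the antipode) satisfying $\gamma\circ\iota=\omega$: it carries the ideal $I\otimes_R R[F]=\iota(I)\,(S\otimes_R R[F])$ onto the ideal generated by $\omega(I)$, while $F$-stability of $I$ gives $\gamma(I\otimes_R R[F])\subset I\otimes_R R[F]$ and likewise for $\gamma^{-1}$, whence equality. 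Granting this, for $\alpha\in M_I$ and $a\in I$ we have $\omega'(\alpha)\,\omega(a)=\omega(\alpha a)\in S\otimes_R R[F]$; as the $\omega(a)$ generate $I\otimes_R R[F]$ as an ideal, $\omega'(\alpha)\,(I\otimes_R R[F])\subset S\otimes_R R[F]$, i.e. $\omega'(\alpha)$ lies in the colon module of the display, namely $M_I\otimes_R R[F]$. In particular $\omega'(\alpha)\in Q(S)\otimes_R R[F]$, so $\alpha\in Q_F(S)$ by Lemma~\ref{omega-inverse.thm}, and $\omega'(M_I)\subset M_I\otimes_R R[F]$ shows $M_I\in\Omega$; by Lemma~\ref{M.thm}, $M_I$ is an $(F,S)$-submodule of $Q_F(S)$. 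This proves part~2 and $\bigcup_I M_I\subset Q_F(S)$.

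For the reverse inclusion, take $\alpha\in Q_F(S)$ and write $\omega'(\alpha)=\sum_i\beta_i\otimes h_i\in Q(S)\otimes_R R[F]$ (Lemma~\ref{omega-inverse.thm}) with the $h_i$ chosen $R$-linearly independent. Comparing $(\omega'\otimes 1)\omega'(\alpha)$ and $(1\otimes\Delta)\omega'(\alpha)$ gives $\omega'(\beta_i)\in(\sum_j R\beta_j)\otimes_R R[F]$, so $N:=S+\sum_i S\beta_i$ is a finitely generated $(F,S)$-submodule of $Q_F(S)$ containing $S$ and, via $1\otimes\epsilon$, containing $\alpha$. Then $I:=S:_S N=\ann_S(N/S)$ is the annihilator of an $(F,S)$-module, hence an $F$-stable ideal (again proved by applying the shearing automorphism to $N/S$), and it contains a common denominator of the finitely many $\beta_i$, which is a nonzerodivisor. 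Since $IN\subset S$ we get $\alpha\in N\subset M_I$, so $Q_F(S)\subset\bigcup_I M_I$. Finally, $\varphi$ is $(F,S)$-linear because, under the base-changed identification of the display above, the coaction $\omega'|_{M_I}$ corresponds to the internal-Hom coaction on $\Hom_S(I,S)$ (the one making evaluation $\Hom_S(I,S)\otimes_S I\to S$ a morphism), which is a direct check on $\varphi(\alpha)(a)=\alpha a$. The main obstacle throughout is the ideal identity for $\omega(I)$, for which the shearing automorphism is the essential tool.
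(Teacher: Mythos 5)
Your treatment of the second assertion and of the inclusion $\bigcup_I S:_{Q(S)}I\subset Q_F(S)$ is correct and takes a route genuinely different from the paper's: where the paper transports the $(F,S)$-module structure of $\Hom_S(I,S)$ (from \cite[(I.5.3.4)]{Hashimoto}) to $S:_{Q(S)}I$ through the commutative square involving $\Hom_S(S,S)$ and then invokes Lemma~\ref{M.thm}, you verify $\omega'(S:_{Q(S)}I)\subset (S:_{Q(S)}I)\otimes_R R[F]$ directly, via flat base change of the colon module and the identity $\gamma(I\otimes_R R[F])=\omega(I)\cdot(S\otimes_R R[F])=I\otimes_R R[F]$ for the shearing automorphism $\gamma$ (the same device the paper itself uses in the proof of Lemma~\ref{Q_F-inverse.thm}), and then conclude with Lemma~\ref{omega-inverse.thm} and Lemma~\ref{M.thm}. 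This half is sound and arguably more self-contained than the paper's citation-based argument, and your proof that $\varphi$ is bijective and $(F,S)$-linear is fine.

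The reverse inclusion $Q_F(S)\subset\bigcup_I S:_{Q(S)}I$, however, has a genuine gap. You write $\omega'(\alpha)=\sum_i\beta_i\otimes h_i$ with the $h_i$ chosen ``$R$-linearly independent'' and deduce from coassociativity that $\omega'(\beta_i)\in(\sum_j R\beta_j)\otimes_R R[F]$. That is the fundamental theorem of comodules, valid over a field; but in this lemma $R$ is an arbitrary commutative ring and $R[F]$ is only assumed flat. Linear independence of the $h_i$ then provides no dual functionals, and $\sum_i Rh_i$ need not be a pure submodule of $R[F]$: from $\sum_i x_i\otimes h_i=0$ in $W\otimes_R R[F]$ one cannot conclude $x_i=0$, because the injection $R^n\rightarrow R[F]$, $e_i\mapsto h_i$, need not remain injective after applying $W\otimes_R(-)$ (already multiplication by $2$ on $\Bbb Z$ is injective but is killed by $\otimes_{\Bbb Z}\,\Bbb Z/2\Bbb Z$); it is not even clear that an expression with linearly independent $h_i$ exists. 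So the coassociativity comparison does not isolate the $\omega'(\beta_i)$, and your $N=S+\sum_i S\beta_i$ is not shown to be an $(F,S)$-submodule. This is exactly the point the paper does not reprove but cites: \cite[(I.5.3.11)]{Hashimoto} supplies an $S$-finite $(F,S)$-submodule $M$ of $Q_F(S)$ containing $\alpha$ and $1$; granting that, your conductor $I=S:_SM$, its $F$-stability (the paper realizes $I$ as the kernel of the $F$-linear map $S\rightarrow\Hom_S(M,M/S)$; your shearing/annihilator argument works too), and the conclusion $\alpha\in S:_{Q(S)}I$ all go through. As written, your argument is complete only when $R$ is a field, or more generally when the $h_i$ can be chosen to span an $R$-direct summand of $R[F]$ (e.g.\ $R[F]$ projective); in the stated generality you must either invoke the cited local-finiteness result or supply a substitute for it.
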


\begin{proof} 
Let $I$ be an $F$-ideal of $S$ containing a nonzerodivisor.
The diagram
\[
\xymatrix{
S \ar@{^{(}->}[r] \ar[d]^\varphi_\cong & S:_{Q(S)}I \ar[d]^\varphi_\cong \\
\Hom_S(S,S) \ar[r]^{\iota^*} & \Hom_S(I,S)
}
\]
is commutative, where $\varphi(\alpha)(a)=\alpha a$, and
$\iota^*$ is the pull back with respect to the inclusion map $\iota:
I\hookrightarrow S$.
As in \cite[(I.5.3.4)]{Hashimoto}, $\Hom_S(S,S)$ and $\Hom_S(I,S)$ are
$(F,S)$-modules, and $\iota^*$ is an $(F,S)$-linear map.
Note that the isomorphism
$\varphi: S\rightarrow \Hom_S(S,S)$ is $(F,S)$-linear.
By the commutativity of the diagram, $S:_{Q(S)}I$ possesses an
$(F,S)$-module structure such that the inclusion $S\hookrightarrow
S:_{Q(S)}I$ is $(F,S)$-linear, and $\varphi:S:_{Q(S)}I\rightarrow
\Hom_S(I,S)$ is an $(F,S)$-isomorphism.
By Lemma~\ref{M.thm}, $S:_{Q(S)}I\subset Q_F(S)$ is an $(F,S)$-submodule.

Next we show that $Q_F(S)\subset \bigcup_I S:_{Q(S)}I$.
Let $\alpha\in Q_F(S)$.
Then there is an $S$-finite $(F,S)$-submodule $M$ of 
$Q_F(S)$ containing $\alpha$
and $1$ by \cite[(I.5.3.11)]{Hashimoto}.
Then $I:=S:_S M$ is an ideal of $S$ containing a nonzerodivisor.
Being the kernel of the $F$-linear map $S\rightarrow \Hom_S(M,M/S)$, 
$I$ is an $F$-ideal, and $\alpha\in S:_{Q(S)}I$.
\end{proof}

\begin{corollary}\label{colon-cor.thm}
Let $R$, $F$, and $S$ be as in {\rm Lemma~\ref{colon.thm}}.
Let $I$ and $J$ be $F$-ideals of $S$.
If $J$ contains a nonzerodivisor, then $I:_{Q(S)}J$ is an
$(F,S)$-submodule of $Q_F(S)$.
\end{corollary}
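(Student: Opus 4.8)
The plan is to exhibit $I:_{Q(S)}J$ as the kernel of an $(F,S)$-linear map defined on $S:_{Q(S)}J$, and to use Lemma~\ref{colon.thm} to place everything inside $Q_F(S)$. Since $I\subset S$, every $\alpha\in I:_{Q(S)}J$ satisfies $\alpha J\subset I\subset S$, so $\alpha\in S:_{Q(S)}J$; as $J$ is an $F$-ideal containing a nonzerodivisor, Lemma~\ref{colon.thm} gives that $S:_{Q(S)}J$ is an $(F,S)$-submodule of $Q_F(S)$. Thus $I:_{Q(S)}J\subset S:_{Q(S)}J\subset Q_F(S)$, and by transitivity of the notion of $(F,S)$-submodule it suffices to show that $I:_{Q(S)}J$ is an $(F,S)$-submodule of $S:_{Q(S)}J$.

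By Lemma~\ref{colon.thm} the canonical map $\varphi:S:_{Q(S)}J\rightarrow\Hom_S(J,S)$, $\varphi(\alpha)(a)=\alpha a$, is an $(F,S)$-linear isomorphism. Because $I$ is an $F$-ideal, $S/I$ is an $(F,S)$-module and the projection $\pi:S\rightarrow S/I$ is $(F,S)$-linear; and because $J$ is an $F$-ideal, $\Hom_S(J,-)$ is functorial on $(F,S)$-modules in the sense of \cite[(I.5.3.4)]{Hashimoto}, so $\pi$ induces an $(F,S)$-linear map $\pi_*:\Hom_S(J,S)\rightarrow\Hom_S(J,S/I)$. The composite $\psi:=\pi_*\circ\varphi$ sends $\alpha$ to the homomorphism $a\mapsto\overline{\alpha a}$, which is the zero map precisely when $\alpha J\subset I$. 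Hence $\Ker\psi=I:_{Q(S)}J$.

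Finally, the kernel of an $(F,S)$-linear map is an $(F,S)$-submodule: the underlying $R$-module kernel is an $S$-submodule, and it is a subcomodule because $R[F]$ is $R$-flat (one may also argue via Lemma~\ref{submod.thm}). Therefore $I:_{Q(S)}J=\Ker\psi$ is an $(F,S)$-submodule of $S:_{Q(S)}J$, hence of $Q_F(S)$, as desired.

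The one point requiring care is the functoriality of $\Hom_S(J,-)$ on $(F,S)$-modules together with the exactness that identifies $\Ker\psi$ with its induced comodule structure; but these are precisely the properties recorded in \cite[(I.5.3.4)]{Hashimoto} and available here, since $S$ is Noetherian (so $I$ and $J$ are finitely generated) and $F$ is flat. Granting that framework, the proof reduces to the single kernel computation above, so I do not expect a genuine obstacle beyond verifying that the hypotheses of Lemma~\ref{colon.thm} are in force.
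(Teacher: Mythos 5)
Your proof is correct and is essentially the paper's argument: both embed $I:_{Q(S)}J$ into $S:_{Q(S)}J$, invoke Lemma~\ref{colon.thm} for the $(F,S)$-isomorphism $\varphi:S:_{Q(S)}J\rightarrow\Hom_S(J,S)$, and use the functoriality of $\Hom_S(J,-)$ from \cite[(I.5.3.4)]{Hashimoto}. The only difference is cosmetic: you exhibit $I:_{Q(S)}J$ as $\Ker\bigl(\pi_*\circ\varphi\bigr)$ with target $\Hom_S(J,S/I)$, whereas the paper exhibits it as $\varphi^{-1}(\Hom_S(J,I))$ via a commutative square; these coincide by left-exactness of $\Hom_S(J,-)$.
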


\begin{proof} 
As the diagram
\[
\xymatrix{
I:_{Q(S)}J \ar@{^{(}->}[r] \ar[d]^\varphi_{\cong} &
S:_{Q(S)}J \ar[d]^\varphi_{\cong} \\
\Hom_S(J,I) \ar@{^{(}->}[r] & \Hom_S(J,S)
}
\]
is commutative, $\varphi:S:_{Q(S)}J\rightarrow \Hom_S(J,S)$ is an
isomorphism of $(F,S)$-modules, 
$\varphi: I:_{Q(S)}J\rightarrow \Hom_S(J,I)$ is bijective, 
and $\Hom_S(J,I)$ is an
$(F,S)$-submodule of $\Hom_S(J,S)$, $I:_{Q(S)}J$ is an
$(F,S)$-submodule of $S:_{Q(S)}J$, which is an $(F,S)$-submodule of
$Q_F(S)$.
Hence $I:_{Q(S)}J$ is an $(F,S)$-submodule of $Q_F(S)$.
\end{proof}

\paragraph Let $S$ be a Noetherian ring.
For an open subset $U\subset \Spec S$, the canonical map $S\rightarrow
\Gamma(U,\O_{\Spec S})$ is injective if and only if any (or equivalently, 
some) ideal $I\subset S$ such that $D(I):=\Spec S\setminus V(I)=U$ contains a
nonzerodivisor.
Thus $\Gamma(U,\O_{\Spec S})$ is identified with a subring of $Q(S)$, and
$Q(S)$  is identified with $\indlim \Gamma(U,\O_{\Spec S})$, where 
$U$ runs through all the open subsets of $\Spec S$ such that
$S\rightarrow \Gamma(U,\O_{\Spec S})$ is injective.
For $f\in Q(S)$, there is a largest $U$ such that $f\in\Gamma(U,\O_{
\Spec S})$.
We denote this $U$ by $U(f)$, and call it the domain of definition of $f$.

\paragraph If $S$ is not Noetherian, even if $S\rightarrow \Gamma(U,\O_{\Spec
S})$ is injective, the ring of sections 
$\Gamma(U,\O_{\Spec S})$ may not be a subring of $Q(S)$.
Let $k$ be a countable field, and 
$S=k[x_1,y_1,x_2,y_2,\ldots]/J$, where $J$ is the ideal generated by
$x_iy_j$ with $j\geq i$, and $y_iy_j$ with $j\geq i$.
Let $I:=(x_1,y_1,x_2,y_2,\ldots)$, and $U:=D(I)$.
Then it is easy to see that $\Gamma(U,\O_{\Spec S})
=\projlim S/J_i$ is uncountable, 
where 
$J_i$ is generated by $J$ and $\{y_j\mid j\geq i\}$.
On the other hand, 
$Q(S)$ is countable, and so $\Gamma(U,\O_{\Spec S})$ cannot be
a subring of $Q(S)$.

\begin{lemma}\label{UD.thm}
Let $S$ be a Noetherian ring, and $h\in Q(S)$.
For $f\in S$, we have $h\in \bigcup_{n\geq 0}S:_{Q(S)}f^n$
if and only if $U(h)\supset D(f)=\Spec S[1/f]$.
\end{lemma}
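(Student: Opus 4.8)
The plan is to describe the domain of definition $U(h)$ explicitly in terms of the \emph{ideal of denominators}
\[
\fd := S:_S h = \{s \in S \mid sh \in S\},
\]
and then to translate both sides of the asserted equivalence into the single condition $f \in \rad(\fd)$. First I would record that, writing $h = a/b$ with $a \in S$ and $b \in S\nz$, the element $b$ lies in $\fd$, so $\fd$ contains a nonzerodivisor; in particular, by the preceding paragraph, every ring of sections that occurs below injects into $Q(S)$ compatibly with restriction.

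The key step is the identity $U(h) = D(\fd) = \Spec S \setminus V(\fd)$. For the inclusion $D(\fd) \subset U(h)$ I would cover $D(\fd)$ by the basic opens $D(s)$ with $s \in \fd$; on each of these $h = (sh)/s \in S[1/s] = \Gamma(D(s),\O_{\Spec S})$, and since all these sections are the single element $h$ of $Q(S)$ they agree on overlaps and glue to a section of $\O_{\Spec S}$ over $D(\fd)$, whence $D(\fd)\subset U(h)$. For the reverse inclusion, if $\fp\in U(h)$ then, $\{D(s)\}$ being a basis of the topology, there is some $s\notin\fp$ with $D(s)\subset U(h)$ and $h\in\Gamma(D(s),\O_{\Spec S})=S[1/s]$; writing $h=c/s^m$ gives $s^m h=c\in S$, so $s^m\in\fd$ and $s^m\notin\fp$, hence $\fd\not\subset\fp$ and $\fp\in D(\fd)$.

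With $U(h)=D(\fd)$ in hand the lemma becomes a formal manipulation. On the one hand, $h\in\bigcup_{n\geq 0}S:_{Q(S)}f^n$ means $f^n h\in S$ for some $n$, i.e.\ $f^n\in\fd$ for some $n$, i.e.\ $f\in\rad(\fd)$. On the other hand, $U(h)\supset D(f)$ means $D(f)\subset D(\fd)$, equivalently $V(\fd)\subset V(f)$, equivalently $f\in\rad(\fd)$. Since both conditions are equivalent to $f\in\rad(\fd)$, they are equivalent to each other, which proves the lemma.

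I expect the only genuine obstacle to be the identity $U(h)=D(\fd)$: it is where the Noetherian hypothesis is really used, via the identifications from the preceding paragraph (that for $U=D(I)$ with $I$ containing a nonzerodivisor, $\Gamma(U,\O_{\Spec S})$ is a subring of $Q(S)$ compatible with restriction, and that $Q(S)=\indlim\Gamma(U,\O_{\Spec S})$) together with sheaf gluing over the basic cover. Everything afterwards is a routine passage through the $\Spec/V(-)/D(-)/\rad$ dictionary and requires no further hypotheses.
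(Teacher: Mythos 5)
Your route is genuinely different from the paper's, and (after one repair) it works; it even proves a little more. The paper never computes $U(h)$: it proves the two implications of the lemma directly. For the direction $U(h)\supset D(f)\Rightarrow h\in\bigcup_n S:_{Q(S)}f^n$ it writes $h/1=a/f^n$ in $S[1/f]$ and clears denominators; for the converse it glues the section $h$ on $U(h)$ with $a/f^n$ on $D(f)$ to a section over $U(h)\cup D(f)$ and invokes the maximality of $U(h)$. You instead establish the closed formula $U(h)=D(\fd)$ with $\fd=S:_S h$, after which both sides of the lemma collapse to the single condition $f\in\rad(\fd)$. This is a cleaner organization of the same ingredients (basic covers, gluing, and the injectivity conventions of the preceding paragraph), and as a byproduct your formula gives $\Gamma(D(I),\O_{\Spec S})=\bigcup_n S:_{Q(S)}I^n$, i.e.\ Lemma~\ref{aho.thm}, directly, whereas the paper deduces that lemma from Lemma~\ref{UD.thm} by choosing generators of $I$.

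The repair concerns zerodivisors, and the issue appears twice. First, for $s\in\fd$ a zerodivisor, $S\to S[1/s]$ is not injective and $\Gamma(D(s),\O_{\Spec S})=S[1/s]$ is \emph{not} a subring of $Q(S)$, so the phrase ``all these sections are the single element $h$ of $Q(S)$'' is not literally meaningful; you should verify the overlap condition by hand --- on $D(ss')$ both sections equal $(ss'h)/(ss')$ because $s'(sh)=s(s'h)$ in $S$ --- and then identify the glued section with $h$ by restricting to $D(b)$ for a nonzerodivisor $b\in\fd$, which your first paragraph supplies. Second, and more substantively, from $h=c/s^m$ in $S[1/s]$ you cannot conclude $s^mh=c$ in $Q(S)$: the equality in the localization determines $s^mh-c$ only up to $s$-power torsion, so in general one only gets $s^{m'}(s^mh-c)=0$ in $Q(S)$ for some $m'\geq 0$, hence $s^{m+m'}h=s^{m'}c\in S$. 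That weaker conclusion still gives $s^{m+m'}\in\fd$ with $s\notin\fp$, so your argument goes through unchanged; note that the paper's own proof performs exactly this maneuver (``So $f^m(f^nh-a)=0$ in $Q(S)$ for some $m$''). With these two local adjustments your proof is complete.
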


\begin{proof}
Assume that $U(h)\supset D(f)$.
Then $h/1\in S[1/f]$ makes sense, and we can write $h/1=a/f^n$ in $S[1/f]$.
Then $f^nh-a=0$ in $Q(S)[1/f]$.
So $f^m(f^nh-a)=0$ in $Q(S)$ for some $m$.
So $h\in S:_{Q(S)}f^{n+m}$.

Conversely, assume that $h\in Q(S)$ and $f^n h=a\in S$.
Then $h=a/f^n$ in $\Gamma(U(h)\cap D(f),\O_{\Spec S})$.
As $\O_{\Spec S}$ is a sheaf, there exists some $\beta
\in\Gamma(U(h)\cup D(f),\O_{\Spec S})$ such that $\beta$ in 
$\Gamma(U(h),\O_{\Spec S})$ is $h$, and $\beta $ in 
$S[1/f]$ is $a/f^n$.
By the maximality of $U(h)$, $U(h)=U(h)\cup D(f)$.
That is, $U(h)\supset D(f)$.
\end{proof}

\begin{lemma}[cf.~{\cite[Appendix, Proposition~4]{Hartshorne}}]\label{aho.thm}
Let $S$ be a Noetherian ring, and $I$ an ideal of $S$ which contains a
nonzerodivisor.
Set $U:=D(I)=\Spec S\setminus V(I)$.
Then as a subset of $Q(S)$, we have
\[
\bigcup_{n\geq 0}S:_{Q(S)}I^n=\Gamma(U,\Cal O_{\Spec S}).
\]
\end{lemma}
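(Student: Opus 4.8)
The plan is to reduce the ideal case to the principal case already settled in Lemma~\ref{UD.thm} and then glue. Since $S$ is Noetherian I may write $I=(f_1,\dots,f_r)$, so that $D(I)=\bigcup_{i=1}^r D(f_i)$. Because $I$ contains a nonzerodivisor and $D(I)=U$, the preceding paragraph guarantees that $S\to\Gamma(U,\O_{\Spec S})$ is injective and that $\Gamma(U,\O_{\Spec S})$ is identified with the subring $\{h\in Q(S)\mid U(h)\supset U\}$ of $Q(S)$. First I would record the elementary set-theoretic remark that, for $h\in Q(S)$, we have $U(h)\supset D(I)$ if and only if $U(h)\supset D(f_i)$ for every $i$. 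Feeding this into Lemma~\ref{UD.thm}, applied to each $f_i$ separately, yields
\[
\Gamma(U,\O_{\Spec S})=\bigcap_{i=1}^r\Bigl(\bigcup_{n\geq 0}S:_{Q(S)}f_i^n\Bigr).
\]

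It then remains to establish the purely algebraic identity
\[
\bigcup_{n\geq 0}S:_{Q(S)}I^n=\bigcap_{i=1}^r\Bigl(\bigcup_{n\geq 0}S:_{Q(S)}f_i^n\Bigr).
\]
The inclusion $\subseteq$ is immediate, since $f_i^n\in I^n$ forces $f_i^nh\in S$ whenever $I^nh\subset S$. For the reverse inclusion I would take $h$ in the right-hand side, pick for each $i$ an exponent $m_i$ with $f_i^{m_i}h\in S$, and set $m=\max_i m_i$; then $f_i^mh=f_i^{m-m_i}(f_i^{m_i}h)\in S$ for all $i$. The crux is that for $N$ sufficiently large every monomial generator $f_1^{a_1}\cdots f_r^{a_r}$ of $I^N$ (with $\sum_j a_j=N$) must have some exponent $a_i\geq m$: by the pigeonhole principle any $N\geq r(m-1)+1$ works, since otherwise all exponents would be $\leq m-1$ and the total degree at most $r(m-1)$. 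For such $N$ each generator of $I^N$ is divisible by some $f_i^m$ and hence carries $h$ into $S$, so $I^Nh\subset S$ and $h\in\bigcup_n S:_{Q(S)}I^n$.

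The only genuine content is this last pigeonhole step, which is precisely where the Noetherian hypothesis (finite generation of $I$) enters; everything else is bookkeeping with domains of definition together with the trivial containment $f_i^n\in I^n$. I expect no real obstacle here, but I would be mildly careful to note that $S:_{Q(S)}I^n$ is increasing in $n$ (because $I^{n+1}\subset I^n$), which makes both unions directed and is exactly what allows a single uniform exponent $N$ to clear all finitely many generators at once.
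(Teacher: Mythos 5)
Your proof is correct and follows essentially the same route as the paper's: write $I=(f_1,\dots,f_r)$, apply Lemma~\ref{UD.thm} to each $f_i$ to get $\Gamma(U,\O_{\Spec S})=\bigcap_{i=1}^r\bigcup_{n\geq 0}S:_{Q(S)}f_i^n$, and identify this with $\bigcup_{n\geq 0}S:_{Q(S)}I^n$. The only difference is that the paper asserts this last equality without comment, whereas you supply the pigeonhole argument (any monomial of degree $N\geq r(m-1)+1$ in the $f_i$ has some exponent $\geq m$) that justifies it --- a worthwhile detail, and correctly executed.
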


\begin{proof}
Let $I=(f_1,\ldots,f_r)$.
By Lemma~\ref{UD.thm}, 
\begin{multline*}
\Gamma(U,\Cal O_{\Spec S})
=
\{h\in Q(S)\mid U(h)\supset U\}
=\bigcap_{i=1}^r \{h\in Q(S)\mid U(h)\supset D(f_i)\}\\
=
\bigcap_{i=1}^r \bigcup_{n\geq 0}S:_{Q(S)}f_i^n
=\bigcup_{n\geq 0}S:_{Q(S)}I^n.
\end{multline*}
\end{proof}

\begin{lemma}
Let $R$, $F$, and $S$ be as in {\rm Lemma~\ref{colon.thm}}.
Let $U$ be an open subset of $\Spec S$.
Then $U$ is $F$-stable if and only if $U=D(I)$ for some $F$-stable ideal
$I$ of $S$.
\end{lemma}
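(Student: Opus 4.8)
The plan is to prove both implications by relating $F$-stability of an open set $U \subseteq \Spec S$ to $F$-stability of the radical ideal cutting out its complement. First I would recall that $U$ is $F$-stable precisely when its closed complement $Z := \Spec S \setminus U = V(J)$ is $F$-stable as a closed subscheme (or at least set-theoretically), where $J$ may be taken to be the radical ideal $J = I(Z)$. The forward direction is the substantive one: assuming $U$ is $F$-stable, I want to produce an $F$-stable ideal $I$ with $D(I) = U$. The natural candidate is the radical ideal $J$ defining $Z$ with its reduced structure, and I would argue that $F$-stability of $U$ forces $J$ to be an $F$-stable ideal.

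For the forward implication, I would use the machinery of the star operation developed in the excerpt. Let $J$ be any ideal with $D(J) = U$ (for instance $J = I(Z)$, the radical ideal of the complement). The largest $F$-stable ideal $J^*$ contained in $J$ is described earlier, and geometrically $V(J^*) = Z^*$ is the smallest $F$-stable closed subscheme containing $Z$. By Lemma~\ref{star-integral.thm}, since $F$ is smooth with (one may reduce to) the relevant fibers, $Z^*$ is the scheme-theoretic image of the action morphism $a_Z : F \times Z \to \Spec S$. The key point is that $F$-stability of the open set $U$ means set-theoretically that $a$ carries $F \times Z$ into $Z$, so the scheme-theoretic image $Z^*$ has the same underlying space as $Z$; hence $D(J^*) = \Spec S \setminus Z^* = \Spec S \setminus Z = U$. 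Thus $I := J^*$ is an $F$-stable ideal with $D(I) = U$, as required. I would need to check that $J^*$ still contains a nonzerodivisor, but this is automatic since $D(J^*) = U = D(J)$ and the relevant ideals define the same closed set.

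The reverse implication should be routine: if $I$ is $F$-stable and $U = D(I)$, then $V(I)$ is an $F$-stable closed subscheme, so the action $a : F \times \Spec S \to \Spec S$ maps $F \times V(I)$ into $V(I)$, and consequently maps $F \times U$ into $U$; that is, $U$ is $F$-stable. More carefully, $F$-stability of $I$ means the coaction $\omega : S \to S \otimes R[F]$ sends $I$ into $I \otimes R[F]$, which translates precisely into the action morphism preserving the open complement $D(I)$.

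The main obstacle I anticipate is the forward direction, specifically making rigorous the step that $F$-stability of the \emph{open} set $U$ (a condition on the action that is naturally phrased scheme-theoretically or at least carefully on points) implies that the \emph{reduced} closed complement is genuinely $F$-stable as a subscheme, so that Lemma~\ref{star-integral.thm} applies to give $Z^* = Z$ set-theoretically. One must be attentive to the distinction between set-theoretic and scheme-theoretic stability; the reduction to the reduced structure and the irreducibility/smoothness hypotheses on the fibers of $F$ (via Lemma~\ref{connected-irreducible.thm} and Lemma~\ref{star-integral.thm}) are exactly what let one conclude that $Z^*$ and $Z$ coincide topologically, which is all that is needed to identify $D(J^*) = U$.
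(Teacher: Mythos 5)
Your overall route is in substance the paper's own: the paper proves the nontrivial direction by taking any ideal $J$ with $D(J)=U$ and asserting $U=D(J^*)$, citing \cite[(8.3)]{HO}, where $J^*$ is the largest $F$-stable ideal contained in $J$; what you have done is supply a direct proof of that cited fact via the scheme-theoretic image of $a_Z\colon F\times Z\to\Spec S$, and your treatment of the easy direction agrees with the paper's (which dismisses it as obvious).

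There is, however, a concrete flaw in how you justify the key step. In the setting of Lemma~\ref{colon.thm}, $F$ is only an \emph{affine flat} $R$-group scheme: it is not assumed of finite type, not quasi-separated-of-finite-type in the sense of Lemma~\ref{star-integral.thm}, and in particular not smooth or with connected fibers. So Lemma~\ref{star-integral.thm} does not apply as stated, and your parenthetical ``one may reduce to the relevant fibers'' has no content --- there is no such reduction available here. The repair is easy but must be made explicit: the only part of Lemma~\ref{star-integral.thm} you use is that the scheme-theoretic image of $a_Z$ equals $Z^*=V(J^*)$, and the first paragraph of its proof establishes exactly this using only that $a_Z$ is quasi-compact and quasi-separated, which is automatic here because $F$, $Z$, and $\Spec S$ are all affine. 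Relatedly, your closing paragraph misattributes where the hypotheses matter: the connectedness/smoothness conditions in Lemma~\ref{star-integral.thm} (via Lemma~\ref{connected-irreducible.thm}) are used only for the irreducibility/reducedness conclusions, which you never need; the topological identity $Z^*=Z$ follows instead from set-theoretic stability of $Z$ (deduced from stability of $U$ by acting with inverses on geometric points) together with the fact that the scheme-theoretic image of a quasi-compact morphism is the closure of the set-theoretic image. Finally, your worry about $J^*$ containing a nonzerodivisor is moot: the statement of the lemma does not require $I$ to contain one, so that check can simply be dropped.
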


\begin{proof}
The \lq if' part is obvious.
We prove the \lq only if' part.
Let $U$ be an $F$-stable open set.
Then $U=D(J)$ for some ideal $J$ of $S$.
Then $U=D(J^*)$, see \cite[(8.3)]{HO}, where $J^*$ is the largest 
$F$-stable ideal of $S$ contained in $J$.
\end{proof}

\begin{lemma}
Let $R$, $F$, and $S$ be as in {\rm Lemma~\ref{colon.thm}}.
Then $Q_F(S)=\indlim \Gamma(U,\O_{\Spec S})$, where $U$ runs through all
the $F$-stable open subsets such that $S\rightarrow\Gamma(U,\O_{\Spec S})$ 
is injective.
In particular, if there is a unique smallest $F$-stable 
open subset $U$ such that
$S\rightarrow\Gamma(U,\O_{\Spec S})$ is injective, 
then $Q_F(S)=\Gamma(U,\O_{\Spec S})$.
\end{lemma}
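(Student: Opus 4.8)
The plan is to assemble the equality from three earlier results: the presentation $Q_F(S)=\bigcup_I S:_{Q(S)}I$ of Lemma~\ref{colon.thm} (the union over $F$-ideals $I$ containing a nonzerodivisor), the identification $\bigcup_{n\ge 0}S:_{Q(S)}I^n=\Gamma(D(I),\O_{\Spec S})$ of Lemma~\ref{aho.thm}, and the two immediately preceding observations, namely that an open $U$ is $F$-stable exactly when $U=D(I)$ for some $F$-stable ideal $I$, and that $S\rightarrow\Gamma(U,\O_{\Spec S})$ is injective exactly when some (equivalently, any) such $I$ contains a nonzerodivisor. Together these say that the $F$-stable opens $U$ with $S\rightarrow\Gamma(U,\O_{\Spec S})$ injective are precisely the sets $D(I)$ with $I$ an $F$-stable ideal containing a nonzerodivisor, and that each associated ring of sections is realized as a subring of $Q(S)$. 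First I would check that the indexing system is filtered: given two such opens $D(I_1)$ and $D(I_2)$, their intersection is $D(I_1I_2)$, and $I_1I_2$ is again $F$-stable (since the coaction $\omega$ is a ring map, $\omega(I_1I_2)\subset(I_1\otimes R[F])(I_2\otimes R[F])\subset I_1I_2\otimes R[F]$) and contains a nonzerodivisor. As all the restriction maps are inclusions of subrings of $Q(S)$, the direct limit is then simply the union $\bigcup_U\Gamma(U,\O_{\Spec S})$ formed inside $Q(S)$, and the task reduces to showing this union equals $Q_F(S)$.

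For the inclusion $\bigcup_U\Gamma(U,\O_{\Spec S})\subset Q_F(S)$ I would fix such a $U=D(I)$ with $I$ an $F$-stable ideal containing a nonzerodivisor. By the same product argument every power $I^n$ is $F$-stable and contains a nonzerodivisor, so Lemma~\ref{colon.thm} gives $S:_{Q(S)}I^n\subset Q_F(S)$ for all $n$; by Lemma~\ref{aho.thm} the union of these equals $\Gamma(U,\O_{\Spec S})$, whence $\Gamma(U,\O_{\Spec S})\subset Q_F(S)$. For the reverse inclusion I would start from $Q_F(S)=\bigcup_I S:_{Q(S)}I$ (Lemma~\ref{colon.thm}). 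For each $F$-ideal $I$ containing a nonzerodivisor, $D(I)$ is an $F$-stable open with $S\rightarrow\Gamma(D(I),\O_{\Spec S})$ injective, and $S:_{Q(S)}I\subset\bigcup_{n}S:_{Q(S)}I^n=\Gamma(D(I),\O_{\Spec S})$ by Lemma~\ref{aho.thm} again. Thus every element of $Q_F(S)$ lies in some $\Gamma(U,\O_{\Spec S})$, giving $Q_F(S)\subset\bigcup_U\Gamma(U,\O_{\Spec S})$. Combining the two inclusions yields the asserted equality.

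For the last assertion, suppose there is a unique smallest such $F$-stable open $U_0$. Then $U_0\subset U$ for every admissible $U$, so restriction gives $\Gamma(U,\O_{\Spec S})\subset\Gamma(U_0,\O_{\Spec S})$ as subrings of $Q(S)$; hence $\Gamma(U_0,\O_{\Spec S})$ is the largest term of the filtered system and the union collapses to it, giving $Q_F(S)=\Gamma(U_0,\O_{\Spec S})$.

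There is no deep difficulty here, only bookkeeping that glues the cited lemmas together. The points that require care are the stability of products and powers of $F$-stable ideals under the coaction, so that Lemma~\ref{aho.thm} feeds correctly into Lemma~\ref{colon.thm}, and getting the direction of the filtered system right, so that the colimit is genuinely the union inside $Q(S)$ and, in the degenerate case, collapses to the section ring over the smallest $F$-stable open.
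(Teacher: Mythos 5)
Your proof is correct, and for one of the two inclusions it follows a genuinely different route than the paper. The inclusion $Q_F(S)\subset\indlim\Gamma(U,\O_{\Spec S})$ is handled identically in both arguments: Lemma~\ref{colon.thm} reduces it to showing $S:_{Q(S)}I\subset\Gamma(D(I),\O_{\Spec S})$ for an $F$-ideal $I$ containing a nonzerodivisor, which is Lemma~\ref{aho.thm}. For the reverse inclusion, however, the paper argues structurally: it notes that each $\Gamma(U,\O_{\Spec S})$ with $U$ an admissible $F$-stable open is an $F$-algebra, that all the maps in the inductive system are $F$-algebra maps, and then invokes the maximality characterization of $Q_F(S)$ via Lemma~\ref{M.thm} to conclude $\indlim\Gamma(U,\O_{\Spec S})\subset Q_F(S)$. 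You instead run Lemma~\ref{aho.thm} backwards: writing $U=D(I)$ with $I$ an $F$-stable ideal containing a nonzerodivisor (using the two preceding observations, exactly as the paper intends), you observe that each power $I^n$ is again an $F$-ideal containing a nonzerodivisor, so each $S:_{Q(S)}I^n$ is a term of the union in Lemma~\ref{colon.thm} and hence lies in $Q_F(S)$, and $\Gamma(U,\O_{\Spec S})=\bigcup_n S:_{Q(S)}I^n\subset Q_F(S)$. Your route is more elementary and self-contained for this direction: it avoids both Lemma~\ref{M.thm} and the paper's unproved assertion that the section ring over an $F$-stable open carries an $F$-algebra structure (a fact of independent interest, but one whose direct justification would require, e.g., flat base change for sections over the quasi-compact open $U$). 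You also verify explicitly that the index system is filtered, via $D(I_1)\cap D(I_2)=D(I_1I_2)$ and the coaction computation $\omega(I_1I_2)\subset I_1I_2\otimes R[F]$, a bookkeeping point the paper leaves implicit when writing $\indlim$; and your treatment of the final assertion (collapse of the directed union onto $\Gamma(U_0,\O_{\Spec S})$ for the smallest admissible $U_0$) correctly fills in what the paper dismisses as trivial.
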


\begin{proof}
Note that for each $U$, $\Gamma(U,\O_{\Spec S})$ is an $F$-algebra, and
the map 
$S\rightarrow \Gamma(U,\O_{\Spec S})$ is an $F$-algebra map.
Moreover, for $U\supset V$, 
$$\Gamma(U,\O_{\Spec S})\rightarrow
\Gamma(V,\O_{\Spec S})$$ is an $F$-algebra map.
It follows that $\indlim\Gamma(U,\O_{\Spec S})$ is an $F$-algebra, and 
the canonical map $S\rightarrow\indlim\Gamma(U,\O_{\Spec S})$ is an
$F$-algebra map.
So we have that $\indlim\Gamma(U,\O_{\Spec S})\subset Q_F(S)$ by
Lemma~\ref{M.thm}.

We prove $Q_F(S)\subset\indlim\Gamma(U,\O_{\Spec S})$.
By Lemma~\ref{colon.thm}, it suffices to show that $S:_{Q(S)}I
\subset \Gamma(D(I),\O_{\Spec S})$ for any $F$-ideal $I$ containing 
a nonzerodivisor.
This is Lemma~\ref{aho.thm}.

The last assertion is trivial.
\end{proof}

\begin{lemma}\label{Krull.thm}
Let $R$ be a commutative ring, $F$ a flat affine $R$-group scheme, 
and $S$ a Noetherian normal $F$-algebra.
Then 
$f\in Q(S)$ lies in $Q_F(S)$ if and only
if $f/1\in S_P$ for any height one prime ideal of $S$ such that
$P^*$ does not contain a nonzerodivisor, where $P^*$ is the largest
$F$-ideal of $S$ contained in $P$.
In particular, 
$Q_F(S)$ is a finite direct product of Krull domains.
In particular, $Q_F(S)$ is integrally closed in $Q(S)$.
\end{lemma}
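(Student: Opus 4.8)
The plan is to combine the colon-ideal description of $Q_F(S)$ in Lemma~\ref{colon.thm} with the sheaf-theoretic description of Lemma~\ref{aho.thm}, and then to exploit the Krull structure of $S$. Since $S$ is Noetherian and normal, it is a finite direct product $S=\prod_{i=1}^n S_i$ of Noetherian normal domains, i.e.\ Krull domains, with $Q(S)=\prod_i Q(S_i)$. For an ideal $I$ of $S$ containing a nonzerodivisor I would first record the identity $\Gamma(D(I),\O_{\Spec S})=\bigcap_{Q\in X^1(S),\,I\not\subset Q}S_Q$ inside $Q(S)$. This rests on the Krull identity $S_P=\bigcap_{Q\in X^1(S),\,Q\subseteq P}S_Q$, valid for every prime $P$ (each $S_P$ being again a Krull domain), together with the observation that $Q\subseteq P$ and $I\not\subset P$ force $I\not\subset Q$: a section is regular on $D(I)$ iff it lies in $S_P$ for all $P\in D(I)$, which by the above reduces to the height one primes not containing $I$. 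By Lemma~\ref{aho.thm}, $\bigcup_n S:_{Q(S)}I^n=\Gamma(D(I),\O_{\Spec S})$ equals this intersection, and since every power $I^n$ is again an $F$-ideal containing a nonzerodivisor when $I$ is, Lemma~\ref{colon.thm} rewrites
$$Q_F(S)=\bigcup_{I}\Gamma(D(I),\O_{\Spec S})=\bigcup_I\ \bigcap_{Q\in X^1(S),\,I\not\subset Q}S_Q,$$
the union running over all $F$-stable ideals $I$ of $S$ containing a nonzerodivisor (here $I^\ast$ denotes, as before, the largest $F$-stable ideal contained in a given ideal).

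Write $\Sigma=\{Q\in X^1(S)\mid Q^*\text{ contains no nonzerodivisor}\}$. For the forward implication, if $f\in Q_F(S)$ then $f\in S:_{Q(S)}I$ for some $F$-ideal $I$ containing a nonzerodivisor. Given $Q\in\Sigma$, if $I\subseteq Q$ then, $I$ being $F$-stable, $I\subseteq Q^*$, so $Q^*$ would contain a nonzerodivisor, a contradiction; hence $I\not\subset Q$ and $f\in S_Q$. For the converse I would use that the set of $Q\in X^1(S)$ with $f\notin S_Q$ is finite (these are the poles of the finitely many components $f_i\in Q(S_i)$ over the Krull domains $S_i$), say $Q_1,\dots,Q_m$; by hypothesis each $Q_j\notin\Sigma$, so $Q_j^*$ contains a nonzerodivisor $a_j$. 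Then $I:=Q_1^*\cdots Q_m^*$ is $F$-stable (a product of $F$-stable ideals is $F$-stable, since the coaction is multiplicative), contains the nonzerodivisor $a_1\cdots a_m$, and satisfies $I\subseteq Q_j$ for each $j$; therefore every height one prime not containing $I$ avoids all poles of $f$, so $f\in\Gamma(D(I),\O_{\Spec S})\subseteq Q_F(S)$. This establishes the stated criterion, which I would phrase as the clean identity $Q_F(S)=\bigcap_{Q\in\Sigma}S_Q$.

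For the structural consequences I would read off this intersection componentwise. Splitting $\Sigma=\bigsqcup_i\Sigma_i$ according to the factor $S_i$ that a height one prime belongs to, and noting that the condition $f\in S_Q$ for $Q$ lying over $S_i$ constrains only the $i$-th component, the criterion gives $Q_F(S)=\prod_{i=1}^n\bigl(\bigcap_{q\in\Sigma_i}(S_i)_q\bigr)$, where an empty intersection is read as $Q(S_i)$. Each factor is an intersection of a finite-character family of discrete valuation rings of the field $Q(S_i)$ containing $S_i$, hence a Krull domain (and a field when $\Sigma_i=\emptyset$), so $Q_F(S)$ is a finite direct product of Krull domains. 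Since a finite product of Krull domains is integrally closed in the product of their fraction fields, which here is $Q(S)$, it follows that $Q_F(S)$ is integrally closed in $Q(S)$.

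I expect the main obstacle to be the converse implication, specifically producing a single $F$-stable ideal that simultaneously contains a nonzerodivisor and is contained in each pole prime $Q_j$; the product $Q_1^*\cdots Q_m^*$ works precisely because taking products preserves both $F$-stability and the nonzerodivisor property, and this is exactly where the hypothesis on $P^*$ enters. A secondary point needing care is the passage from sections over $D(I)$ to intersections of localizations in the possibly non-domain case, which I handle through the product decomposition of the normal Noetherian ring $S$.
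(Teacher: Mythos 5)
Your proposal is correct and takes essentially the same approach as the paper: your forward direction (a pole prime $P$ must contain the colon $F$-ideal $I$, hence $I\subseteq P^*$, so $P^*$ contains a nonzerodivisor) is the paper's verbatim, and for the converse both arguments form an $F$-ideal as a product of the ideals $Q_j^*$ attached to the finitely many pole primes and feed it into Lemma~\ref{colon.thm}. The only differences are cosmetic: you route through $\Gamma(D(I),\O_{\Spec S})$ and Lemma~\ref{aho.thm}, letting the powers $I^n$ supply the multiplicities that the paper chooses explicitly as $m(i)$ in $I=\prod_i(P_i^*)^{m(i)}$ together with $S=\bigcap_{P\in X^1(S)}S_P$, and you spell out the finite-character intersection of DVRs yielding the two ``in particular'' clauses, which the paper's proof leaves implicit.
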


\begin{proof}
We prove the \lq if' part.
Let $P_1,\ldots,P_n$ be the 
height one prime ideals such that $f/1\notin S_{P_i}$.
For each $i$, there exists some $m(i)$ 
such that $P_i^{m(i)}(f/1)\in S_{P_i}$.
Letting $I:=\prod_i (P_i^*)^{m(i)}$, $I(f/1)\in S_P$ for 
any height one prime ideal of $S$.
Thus $If\subset S$, and $I$ is an $F$-ideal of $S$ containing a nonzerodivisor.
Hence, $f\in S:_{Q(S)}I\subset Q_F(S)$.

We prove the \lq only if' part.
So let $f\in Q_F(S)$.
Then by Lemma~\ref{colon.thm}, $f\in S:_{Q(S)}I$ for some 
$F$-ideal $I$ of $S$ containing a nonzerodivisor.
Let $P$ be a height one prime of $S$ such that $(f/1)\notin S_P$.
This implies $I\subset P$, since $If\in S$.
As $I$ is an $F$-ideal, $I\subset P^*$, and $P^*$ contains a nonzerodivisor.
\end{proof}

\begin{corollary}\label{nagata.thm}
Let $R$ be a commutative ring, $F$ an affine smooth $R$-group scheme of
finite type, and $S$ a Noetherian reduced Nagata $F$-algebra.
Then $Q_F(S)$ is a finite direct product of Krull domains.
\end{corollary}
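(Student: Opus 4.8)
The plan is to reduce the statement to Lemma~\ref{Krull.thm} by replacing $S$ with its normalization. First I would let $S'$ denote the integral closure of $S$ in its total ring of fractions $Q(S)$. By Corollary~\ref{normalization.thm}, $S'$ carries a unique $F$-algebra structure making $S\hookrightarrow S'$ an $F$-algebra map, and crucially $Q_F(S)=Q_F(S')$. Thus it suffices to show that $Q_F(S')$ is a finite direct product of Krull domains, and for this I would like to invoke Lemma~\ref{Krull.thm} with $S'$ in place of $S$. That lemma applies to a Noetherian normal $F$-algebra, so the two things to check are that $S'$ is Noetherian and that $S'$ is normal. Since $F$ is affine smooth of finite type, it is in particular a flat affine $R$-group scheme, so the hypotheses on $F$ are met as well.

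Normality of $S'$ is immediate: $S'$ is by construction integrally closed in $Q(S)=Q(S')$, and for a reduced Noetherian ring this means $S'$ is a finite product of integrally closed domains, which is exactly the notion of normal used in the proof of Lemma~\ref{Krull.thm} (the argument there localizes at height one primes and works with $S_P$, which is appropriate for such rings). The main point---and the only place the Nagata hypothesis enters---is that $S'$ is module-finite over $S$, hence Noetherian. This I would argue as follows: because $S$ is reduced Noetherian, $Q(S)\cong\prod_i \kappa(\fp_i)$, the finite product of the residue fields at the (finitely many) minimal primes $\fp_i$ of $S$, and correspondingly $S'\cong\prod_i \overline{S/\fp_i}$, where $\overline{S/\fp_i}$ denotes the integral closure of the domain $S/\fp_i$ in its fraction field $\kappa(\fp_i)$. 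The Nagata (pseudo-geometric) hypothesis guarantees precisely that each $\overline{S/\fp_i}$ is module-finite over $S/\fp_i$, so $S'$ is a finite $S$-module and therefore Noetherian.

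With $S'$ shown to be a Noetherian normal $F$-algebra, Lemma~\ref{Krull.thm} yields that $Q_F(S')$ is a finite direct product of Krull domains, and since $Q_F(S)=Q_F(S')$ the corollary follows. I expect the only real content to be the module-finiteness of the normalization, i.e.\ the decomposition of $Q(S)$ and of $S'$ over the minimal primes together with the Nagata property; everything else is a formal application of the two cited results. A minor point to be careful about is confirming that \emph{normal} in Lemma~\ref{Krull.thm} is meant in the sense of being integrally closed in the total ring of fractions (equivalently, a finite product of normal domains) rather than requiring $S$ to be a domain, since here neither $S$ nor $S'$ need be a domain.
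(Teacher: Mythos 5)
Your proposal is correct and follows exactly the paper's route: pass to the integral closure $S'$, use Corollary~\ref{normalization.thm} to get the $F$-structure and $Q_F(S)=Q_F(S')$, use the Nagata hypothesis (via the decomposition over the minimal primes) to see $S'$ is module-finite over $S$ and hence Noetherian normal, and apply Lemma~\ref{Krull.thm}. The paper's proof is a three-line version of the same argument; your filled-in details, including the remark that ``normal'' in Lemma~\ref{Krull.thm} must accommodate finite products of normal domains, are accurate.
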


\begin{proof} 
Note that $S'$ is a Noetherian normal $F$-algebra.
On the other hand, $Q_F(S)=Q_F(S')$.
By Lemma~\ref{Krull.thm}, the assertion follows.
\end{proof}

\begin{lemma}\label{associated.thm}
Let $k$ be a field, and $H$ an affine algebraic $k$-group scheme of finite
type.
Let $X$ be a Noetherian $H$-scheme, and $Y$ a primary 
\(i.e., irreducible and $(S_1)$\) closed subscheme of $X$.
Let $Y^*$ (resp.\ $Y'$) denotes the smallest closed
$H$-subscheme \(resp.\ $H^\circ$-subscheme\) of $X$ containing $Y$.
Then $Y^*$ does not have an embedded component, and
$Y'\red$ is an irreducible component of $Y^*$.
\end{lemma}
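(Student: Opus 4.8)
The plan is to realize both $Y^*$ and $Y'$ as scheme-theoretic images of flat pullbacks of $Y$ and to read off the associated primes. First I would record two easy reductions. By Lemma~\ref{star-integral.thm} applied to the geometrically irreducible group scheme $H^\circ$ (Lemma~\ref{connected-irreducible.thm}), the scheme $Y'$ is irreducible; and since $Y^*$ is in particular $H^\circ$-stable and contains $Y$, we have $Y'\subset Y^*$. Next I would introduce the twisted projection $\beta\colon H^\circ\times X\to X$, $(h,x)\mapsto h^{-1}x$. It is the composite of the automorphism $(h,x)\mapsto(h,h^{-1}x)$ of $H^\circ\times X$ with the projection to $X$, hence faithfully flat, and $\widetilde Y:=\beta^{-1}(Y)$ is carried isomorphically onto $H^\circ\times Y$ by $(h,y)\mapsto(h,hy)$. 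Under this isomorphism the action map $H^\circ\times Y\to X$ becomes the restriction of the projection $\mathrm{pr}_X$, so by Lemma~\ref{star-integral.thm} the subscheme $Y'$ is the scheme-theoretic image of $\widetilde Y$ under $\mathrm{pr}_X$; working in an affine chart $X=\Spec A$, $H^\circ\times X=\Spec(A\otimes_k k[H^\circ])$, this says exactly that the ideal of $Y'$ is the contraction $I_{\widetilde Y}\cap A$ along the flat inclusion $A\hookrightarrow A\otimes_k k[H^\circ]$.

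The heart of the argument is that $Y'$ is primary. I would first show $\widetilde Y$ is primary. Its fiber over the unique generic point $\eta$ of $Y$ is the group scheme $H^\circ_{\kappa(\eta)}$, which is geometrically irreducible and, being a finite type group scheme over a field, Cohen--Macaulay, hence $(S_1)$; therefore this fiber is a primary scheme with a single associated point. Since $\beta$ is flat and $Y$ is primary (its only associated point being $\eta$), the flat base change formula for associated primes gives that $\widetilde Y=\beta^{-1}(Y)$ has a single associated point lying over $\eta$, that is, $\widetilde Y$ is primary. Now I would invoke the elementary fact that the contraction of a primary ideal along any ring homomorphism is again primary: applied to $I_{\widetilde Y}\cap A$ this shows the ideal of $Y'$ is primary, so $Y'$ is a primary (irreducible, $(S_1)$) closed subscheme. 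I expect the main obstacle to be precisely the input that the fibers $H^\circ_{\kappa(\eta)}$ have no embedded component: this is immediate when $H$ is smooth, since the fibers are then regular, but for a possibly non-smooth $H$ it rests on the Cohen--Macaulayness of finite type group schemes over a field, which one establishes by exhibiting such a group scheme as flat over its infinitesimal quotient with smooth fibers and descending along $\bar k$.

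It remains to assemble the statement. For the dimension count I would pass to $\bar k$ and write $H_{\bar k}=\coprod_i g_iH^\circ_{\bar k}$ as the finitely many cosets of $H^\circ$; since scheme-theoretic image commutes with the flat base change $k\to\bar k$ and translation by $g_i$ is an automorphism of $X_{\bar k}$, one obtains $Y^*_{\bar k}=\bigcup^{\mathrm{sch}}_i g_i(Y')_{\bar k}$, a scheme-theoretic union of translates of $(Y')_{\bar k}$. Each translate is equidimensional of dimension $\dim Y'$, so $Y^*$ is equidimensional of that dimension; as $Y'\subset Y^*$ is irreducible of the top dimension, $Y'\red$ is an irreducible component of $Y^*$, which is the second assertion. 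For the first assertion, each $g_i(Y')_{\bar k}$ is $(S_1)$ (a flat base change and translate of the primary $Y'$) with all minimal primes of dimension $\dim Y'$; from the inclusion $\O_{Y^*_{\bar k}}\hookrightarrow\bigoplus_i\O_{g_i(Y')_{\bar k}}$ we get $\Ass(\O_{Y^*_{\bar k}})\subset\bigcup_i\Ass(\O_{g_i(Y')_{\bar k}})$, a set of primes all cutting out subvarieties of the same dimension $\dim Y'$, hence pairwise incomparable and therefore all minimal. Thus $Y^*_{\bar k}$, and by faithfully flat descent $Y^*$ itself, has no embedded component.
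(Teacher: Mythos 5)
Your first half is sound, and it is genuinely different from the paper's: where the paper simply cites \cite[(6.23)]{HM} for the primality of $Y'$ (and \cite[(6.2)]{HM} for the absence of embedded components of $Y^*$), you rederive the primality of $Y'$ by hand via the twisted projection $\beta(h,x)=h^{-1}x$, the flat base change formula for associated primes applied to $\beta|_{\widetilde Y}\colon\widetilde Y\to Y$ (both Noetherian, so the formula applies), the fact that group schemes of finite type over a field are local complete intersections, hence $(S_1)$ (the paper itself invokes this, via \cite[(31.14)]{ETI}, in the proof of Lemma~\ref{Q_F-inverse.thm}), and the elementary fact that contractions of primary ideals are primary. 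This is a legitimate, self-contained substitute for the $F$-primary machinery of \cite{HM}.

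The gap is in your final paragraph, and it is exactly the point the paper's proof is engineered around. The lemma assumes only that $X$ is Noetherian, not of finite type over $k$. First, $\dim Y'$ may then be infinite (Nagata's examples embed into this setting, e.g.\ $X=H\times\Spec R$ with $R$ an infinite-dimensional Noetherian $k$-algebra), and for infinite dimension your two key inferences fail: ``irreducible of the top dimension'' no longer forces $Y'\red$ to be a component, and ``same dimension'' no longer yields pairwise incomparability of the associated points of the translates. Second, $X_{\bar k}$ need not be Noetherian at all --- already $\bar{\Bbb Q}\otimes_{\Bbb Q}\bar{\Bbb Q}$ is non-Noetherian with infinite spectrum --- so over $\bar k$ the flat base change formula for $\Ass$, the equivalence of $(S_1)$ with $\Ass=\Min$, and your closing ``faithfully flat descent'' of the no-embedded-component property are all invoked outside their Noetherian range of validity. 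The paper avoids both problems: it never computes $\Ass$ over $\bar k$ (the no-embedded-component claim is obtained on $X$ itself from \cite[(6.2)]{HM}), and instead of global dimensions it compares the local rings $\dim\O_{X,(V_i)\red}$, which are always finite by Krull's height theorem, transferring from $\bar X$ down to the Noetherian $X$ via the flat \emph{integral} morphism $p\colon\bar X\to X$ with zero-dimensional fibers, where the equality $\dim\O_{\bar X,(W_i)\red}=\dim\O_{X,(V_i)\red}$ is robust.

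Your argument is repairable along the same lines: replace $\bar k$ by a finite extension $k'$ of $k$ large enough that every connected component of $H_{k'}$ has a $k'$-rational point (possible since $H$ has finitely many geometric components, each with a closed point whose residue field is finite over $k$). Then $X_{k'}$ is Noetherian and $X_{k'}\to X$ is finite and faithfully flat, so all your $\Ass$-manipulations and the final descent become licit; and in place of $\dim Y'$ use the heights $\dim\O_{X_{k'},w}$ at the generic points $w$ of the translates $g_i(Y')_{k'}$, which are finite, equal by translation, and whose equality forces the incomparability (hence minimality) you need, since a strict specialization between associated points would strictly increase height.
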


\begin{proof}
As $Y$ is primary, $Y^*$ is $H$-primary, and it does not have an
embedded component by \cite[(6.2)]{HM}.
As $Y$ is primary, $Y'$ is primary by \cite[(6.23)]{HM}.
In particular, $Y'\red$ is integral.
Obviously, $Y'\red\subset Y^*$.

For a $k$-scheme $W$, let $\bar W$ denote $\bar k\otimes_k W$.
Then $Y^*$ is the scheme theoretic image of the composite
\[
\bar H\times_{\bar k} \bar Y
\xrightarrow{\alpha} \bar X\xrightarrow{p} X,
\]
where $\alpha$ is the action, and $p$ is the projection.

Note that there exist some $\bar k$-valued points 
$h_1,\ldots,h_r$ of $\bar H$ such that $\bar H=\coprod_i h_i \bar H^\circ$,
where $h_1=e$ is the identity element.
Let $W_i$ be the scheme theoretic image of $\alpha:
h_i\bar H^\circ\times_{\bar k}\bar Y\rightarrow \bar X$, and $V_i$ 
be the scheme theoretic image of $p:W_i\rightarrow X$.
Note that $(V_1)\red=Y'\red$.
The local rings $\O_{X,(V_i)\red}$ and $\O_{\bar X,(W_i)\red}$ have the
same dimension.
Indeed, $\O_{X,(V_i)\red}\rightarrow\O_{\bar X,(W_i)\red}$ is a flat 
local homomorphism, as $p$ is flat.
So
\[
\dim \O_{\bar X,(W_i)\red}=\dim \O_{X,(V_i)\red}
+\dim \O_{\bar X,(W_i)\red}\otimes_{\O_{X,(V_i)\red}}\kappa(v_i)
\]
where $v_i$ is the generic point of $V_i$.
But $p^{-1}(v_i)$ is zero-dimensional, as $p$ is an integral morphism.

As the action of $h_i$ induces an isomorphism
$\O_{\bar X,(W_i)\red}\cong \O_{\bar X,(W_1)\red}$, 
we have
\[
\dim \O_{X,(V_i)\red}=
\dim \O_{\bar X,(W_i)\red}=
\dim \O_{\bar X,(W_1)\red}=
\dim \O_{X,(V_1)\red}.
\]
This shows that $(V_i)\red$ does not strictly contain $(V_1)\red=Y'\red$.
As $Y^*=\bigcup_i V_i$, we are done.
\end{proof}

\begin{example}
Let $R=k$ be a field, $F=H$ be of finite type over $k$.
Let $S$ be an $H$-algebra which is a Noetherian
normal domain.
Then for a height one prime ideal $P$ of $S$, $P^*$, the largest 
$H$-ideal of $S$ contained in $P$, contains a nonzerodivisor
(or equivalently, nonzero) if and only if $P'$, the largest 
$H^\circ$-ideal of $S$ contained in $P$, contains a nonzerodivisor.

As $P'$ is a primary ideal by \cite[(6.23)]{HM} and $P'\subset P$, 
it is $P$-primary or $0$-primary.
So $P'\neq 0$ if and only if $P'$ is $P$-primary.

If $P'$ is $P$-primary, as $P^*$ does not have an embedded prime
and $P$ is an associated prime of $P^*$ by
Lemma~\ref{associated.thm}, $0$ is not an associated prime of $P^*$.
Thus $P^*\neq 0$.
If $P'$ is not $P$-primary, $P'=0$, and hence $P^*=0$.

By Lemma~\ref{Krull.thm}, we have $Q_H(S)=Q_{H^\circ}(S)$.
This shows that the finite group scheme $H/H^\circ$ acts on $Q(S)^{H^\circ}$ 
in a natural way.
Indeed, 
\[
Q(S)^{H^\circ}=Q_{H^\circ}(S)^{H^\circ}=Q_H(S)^{H^\circ}.
\]
As $H$ acts on $Q_H(S)$, $H/H^\circ$ acts on $Q_H(S)^{H^\circ}$.
Moreover, we have
\[
(Q(S)^{H^\circ})^{H/H^\circ}=(Q_H(S)^{H^\circ})^{H/H^\circ}=Q_H(S)^H
=Q(S)^H.
\]
\end{example}

\begin{lemma}
Let $\varphi: H'\rightarrow H$ be a surjective homomorphism of
affine algebraic $k$-group schemes.
Let $S$ be a Noetherian normal $H$-algebra domain.
Then $Q_{H'}(S)=Q_H(S)$.
\end{lemma}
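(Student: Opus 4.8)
The plan is to reduce the equality to the normal-base characterization of the equivariant total ring of fractions in Lemma~\ref{Krull.thm}, and then to check that $H$ and $H'$ pick out the same height one primes of $S$. First I would record the setup: $S$ is regarded as an $H'$-algebra through $\varphi$, so that, writing $\varphi^*:k[H]\to k[H']$ for the comorphism, the $H'$-coaction is the composite $\omega_{H'}=(1_S\otimes\varphi^*)\circ\omega_H$, with $\omega_H:S\to S\otimes_k k[H]$ the given $H$-coaction. Since $\varphi$ is surjective, the sequence $1\to\Ker\varphi\to H'\xrightarrow{\varphi}H\to 1$ is exact, and Lemma~\ref{exact.thm}~{\bf(ii)} shows $\varphi^*$ is injective. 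As $H$ and $H'$ are flat over the field $k$ and $S$ is a Noetherian normal domain, Lemma~\ref{Krull.thm} is available for both actions.

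The key step would be to prove that an ideal $I$ of $S$ is $H$-stable if and only if it is $H'$-stable. The forward implication is immediate: if $\omega_H(I)\subset I\otimes_k k[H]$ then $\omega_{H'}(I)=(1_S\otimes\varphi^*)(\omega_H(I))\subset I\otimes_k k[H']$. For the converse I would tensor the exact sequence $0\to I\to S\to S/I\to 0$ over the field $k$ with the injection $\varphi^*:k[H]\hookrightarrow k[H']$; as $I$, $S$, and $S/I$ are $k$-flat, this yields a map of short exact sequences whose vertical arrows are injective, in particular $(S/I)\otimes_k k[H]\hookrightarrow(S/I)\otimes_k k[H']$. Now fix $a\in I$. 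The image of $\omega_H(a)\in S\otimes_k k[H]$ in $(S/I)\otimes_k k[H']$ coincides with the image of $\omega_{H'}(a)$, which lies in $I\otimes_k k[H']$ by $H'$-stability and hence is zero there; by injectivity of the bottom vertical arrow the image of $\omega_H(a)$ in $(S/I)\otimes_k k[H]$ is already zero, i.e.\ $\omega_H(a)\in I\otimes_k k[H]$. Thus $I$ is $H$-stable.

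It then follows that the families of $H$-stable and $H'$-stable ideals of $S$ coincide, so for each prime $P$ the largest $H$-stable ideal $P^*$ contained in $P$ equals the largest $H'$-stable ideal contained in $P$. In particular, for a height one prime $P$ the condition that $P^*$ contain no nonzerodivisor is the same for $H$ as for $H'$. Applying Lemma~\ref{Krull.thm} to each action, $f\in Q(S)$ lies in $Q_H(S)$, respectively $Q_{H'}(S)$, exactly when $f/1\in S_P$ for every height one prime $P$ whose associated largest equivariant ideal contains no nonzerodivisor; these two membership criteria are now literally the same, so $Q_H(S)=Q_{H'}(S)$.

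I expect the main obstacle to be the converse half of the key step, namely extracting $H$-stability from $H'$-stability. This is precisely where surjectivity of $\varphi$ enters, via injectivity of $\varphi^*$ and hence of $(S/I)\otimes_k k[H]\to(S/I)\otimes_k k[H']$; the hypothesis that $k$ is a field is what guarantees the flatness needed to preserve this injectivity after tensoring. Everything downstream is formal once this is established, and normality of $S$ is what permits the entire comparison to be carried out at the level of height one primes through Lemma~\ref{Krull.thm}.
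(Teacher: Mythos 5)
Your proof is correct, and its top-level shape matches the paper's: both arguments reduce the statement to the height-one-prime criterion of Lemma~\ref{Krull.thm}, so that everything comes down to comparing, inside each height one prime $P$ of $S$, the largest $H$-stable ideal $P^*$ with the largest $H'$-stable ideal $P'$. Where you genuinely diverge is in how this comparison is justified. The paper simply asserts $\sqrt{P^*}=\sqrt{P'}$ ``by assumption'' --- implicitly a geometric argument: by Lemma~\ref{star-integral.thm} the saturations of $V(P)$ under $H$ and $H'$ are the scheme-theoretic images of $H\times V(P)$ and $H'\times V(P)$, and surjectivity of $\varphi$ makes these agree set-theoretically, which gives only equality of radicals --- but that suffices, since over a domain the criterion of Lemma~\ref{Krull.thm} merely asks whether $P^*$ is nonzero. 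You instead prove the sharper algebraic fact that an ideal of $S$ is $H$-stable if and only if it is $H'$-stable, hence $P^*=P'$ exactly: surjectivity enters through Lemma~\ref{exact.thm}~\textbf{(ii)} via injectivity of $\varphi^*:k[H]\to k[H']$, and your converse direction is a diagram chase in the spirit of Lemmas~\ref{submod.thm} and~\ref{subcomod.thm}, valid because tensoring over the field $k$ preserves injections and the short exact sequence $0\to I\to S\to S/I\to 0$. Your route buys a self-contained, purely Hopf-algebraic verification of the step the paper leaves terse, avoids any appeal to scheme-theoretic images, and yields a slightly stronger intermediate statement (the two families of stable ideals coincide on the nose, not just up to radical); the paper's version is shorter but delegates the key identity to the reader.
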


\begin{proof} 
Let $P$ be a height one prime ideal of $S$.
Let $P^*$ and $P'$ denote the largest $H$-stable and $H'$-stable
ideal contained in $P$, respectively.
By assumption, $\sqrt{P^*}=\sqrt{P'}$.
So $P^*$ is nonzero if and only if $P'$ is nonzero.
By Lemma~\ref{Krull.thm}, we are done.
\end{proof}

\begin{lemma}\label{ST.thm}
Let $R$ be a commutative ring, and $F$ an affine flat $R$-group scheme.
Let $\varphi:S\rightarrow T$ be an $F$-algebra map.
Assume that $\varphi(S\nz)\subset T\nz$, and let
$Q(\varphi):Q(S)\rightarrow Q(T)$ be the induced map.
Then $Q(\varphi)(Q_F(S))\subset Q_F(T)$, and
$Q_F(\varphi):=Q(\varphi)|_{Q_F(S)}:Q_F(S)\rightarrow Q_F(T)$ is an
$F$-algebra map.
\end{lemma}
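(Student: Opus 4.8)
The plan is to reduce everything to the equivariance of $\varphi$ on $S$ itself, using Lemma~\ref{omega-inverse.thm} as the detector of membership in $Q_F$. Write $\omega_S:S\to S\otimes_R R[F]$ and $\omega_T:T\to T\otimes_R R[F]$ for the two coactions; these are flat, hence carry nonzerodivisors to nonzerodivisors, and induce $\omega_S':Q(S)\to Q(S\otimes_R R[F])$ and $\omega_T':Q(T)\to Q(T\otimes_R R[F])$. By Lemma~\ref{omega-inverse.thm}, an element $\beta\in Q(T)$ lies in $Q_F(T)$ exactly when $\omega_T'(\beta)\in Q(T)\otimes_R R[F]$, and likewise for $S$. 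So it suffices to prove that $\omega_T'(Q(\varphi)(\alpha))\in Q(T)\otimes_R R[F]$ for every $\alpha\in Q_F(S)$.

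The hypothesis $\varphi(S\nz)\subset T\nz$ yields $Q(\varphi):Q(S)\to Q(T)$, and tensoring the ring map $Q(\varphi)$ with $R[F]$ gives a ring map $Q(\varphi)\otimes 1:Q(S)\otimes_R R[F]\to Q(T)\otimes_R R[F]$. I stress that this uses only that $\varphi(S\nz)\otimes 1$ becomes invertible after inverting $T\nz\otimes 1$, so I avoid the false claim that $\varphi\otimes 1$ preserves all nonzerodivisors of $S\otimes_R R[F]$. The key identity I would establish is
\[
\omega_T'(Q(\varphi)(\alpha))=(Q(\varphi)\otimes 1)(\omega_S'(\alpha))
\qquad(\alpha\in Q_F(S)),
\]
where the right-hand side makes sense because $\omega_S'(\alpha)\in Q_F(S)\otimes_R R[F]\subset Q(S)\otimes_R R[F]$ by the very definition of the $F$-algebra structure on $Q_F(S)$ (see (\ref{Q_F(S).par})). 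Granting the identity, the right-hand side visibly lies in $Q(T)\otimes_R R[F]$, which gives membership in $Q_F(T)$ and hence the inclusion $Q(\varphi)(Q_F(S))\subset Q_F(T)$.

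To prove the identity I would write $\alpha=a/b$ with $a\in S$ and $b\in S\nz$. Both sides are then pinned down inside $Q(T\otimes_R R[F])$ by the single relation $\omega_T(\varphi(b))\cdot X=\omega_T(\varphi(a))$: on the left because $\omega_T'$ is a ring map, and on the right because applying the ring map $Q(\varphi)\otimes 1$ to the relation $\omega_S(b)\cdot\omega_S'(\alpha)=\omega_S(a)$ (valid already in $Q(S)\otimes_R R[F]$) and using the equivariance $(\varphi\otimes 1)\circ\omega_S=\omega_T\circ\varphi$, i.e.\ that $\varphi$ is an $F$-algebra map, turns $\omega_S(a),\omega_S(b)$ into $\omega_T(\varphi(a)),\omega_T(\varphi(b))$. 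Since $\varphi(b)\in T\nz$, the element $\omega_T(\varphi(b))$ is a nonzerodivisor in $T\otimes_R R[F]$ by flatness, hence a unit in $Q(T\otimes_R R[F])$, so the relation determines $X$ uniquely and the two sides coincide. The main obstacle, and the reason the argument is routed through $Q(\varphi)\otimes 1$ rather than a hypothetical $Q(\varphi\otimes 1)$, is precisely that $\varphi\otimes 1$ need not preserve nonzerodivisors of $S\otimes_R R[F]$; restricting attention to $Q_F(S)$ and invoking Lemma~\ref{omega-inverse.thm} is exactly what lets me work in the smaller ring $Q(S)\otimes_R R[F]$, where only the nonzerodivisors coming from $S$ must be inverted.

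Finally, for the second assertion I would note that $Q_F(\varphi)$ is a ring homomorphism sending $1$ to $1$, so only $F$-equivariance remains. But the displayed identity, once the inclusion $Q(\varphi)(Q_F(S))\subset Q_F(T)$ is known and both sides are regarded in $Q_F(T)\otimes_R R[F]$, is exactly the statement that $Q_F(\varphi)$ intertwines the coactions $\omega_S'|_{Q_F(S)}$ and $\omega_T'|_{Q_F(T)}$. Hence $Q_F(\varphi)$ is an $F$-algebra map, completing the proof.
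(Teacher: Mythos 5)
Your proof is correct, but it takes a genuinely different technical route from the paper's. The paper never works inside $Q(S)\otimes_R R[F]$: instead it introduces auxiliary localizations $Q'_S$ and $Q'_T$ of $S\otimes_R R[F]$ and $T\otimes_R R[F]$ at the tailor-made multiplicative sets $C(S)=\omega_S(S\nz)\cdot(S\nz\otimes 1)$ and $C(T)$, notes that $(\varphi\otimes 1)(C(S))\subset C(T)$ (this is exactly where $\varphi(S\nz)\subset T\nz$ and equivariance enter), and thereby obtains a localized map $Q'(\varphi):Q'_S\rightarrow Q'_T$ for which the intertwining $\omega_T'\circ Q(\varphi)=Q'(\varphi)\circ\omega_S'$ is automatic by functoriality of localization; the inclusion $Q(\varphi)(Q_F(S))\subset Q_F(T)$ then falls out of the one-line chain $\omega_T'(Q(\varphi)(Q_F(S)))=Q'(\varphi)(\omega_S'(Q_F(S)))\subset (Q(\varphi)(Q_F(S)))\otimes_R R[F]$, which exhibits $Q(\varphi)(Q_F(S))$ as a member of $\Omega(T)$ directly, without invoking Lemma~\ref{omega-inverse.thm}, and $F$-linearity is settled by a cube-diagram chase using injectivity of $Q_F(T)\otimes_R R[F]\hookrightarrow Q'_T$. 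You bypass the auxiliary ring $Q'$ entirely: since $\omega_S'(Q_F(S))\subset Q_F(S)\otimes_R R[F]$ by the definition of $Q_F(S)$, the naive map $Q(\varphi)\otimes 1$ on $Q(S)\otimes_R R[F]$ suffices, and you recover the intertwining identity by an element-level uniqueness argument (both sides solve $\omega_T(\varphi(b))\cdot X=\omega_T(\varphi(a))$, the coefficient being a unit of $Q(T\otimes_R R[F])$ because $\omega_T$ is flat and $\varphi(b)\in T\nz$), then detect membership via Lemma~\ref{omega-inverse.thm}. Both arguments correctly identify and dodge the same trap, namely that $\varphi\otimes 1$ need not carry nonzerodivisors of $S\otimes_R R[F]$ to nonzerodivisors of $T\otimes_R R[F]$, so no map $Q(S\otimes_R R[F])\rightarrow Q(T\otimes_R R[F])$ is available. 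What the paper's $Q'$ construction buys is that the commuting square comes for free and one diagram serves the whole lemma; what your route buys is economy --- no auxiliary localizations, and equivariance read off from the same displayed identity --- at the modest cost of the explicit computation with $a/b$ and of the (true, but worth a sentence) injectivity of $Q(S)\otimes_R R[F]\hookrightarrow Q(S\otimes_R R[F])$, which underwrites your claim that the relation $\omega_S(b)\cdot\omega_S'(\alpha)=\omega_S(a)$ holds ``already in $Q(S)\otimes_R R[F]$''; this injectivity follows from flatness of $R[F]$ over $R$, which makes $S\nz\otimes 1$ consist of nonzerodivisors of $S\otimes_R R[F]$.
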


\begin{proof} 
Let $\omega_S$ and $\omega_T$ respectively denote the
coaction $S\rightarrow S\otimes_R R[F]$ and $T\rightarrow T\otimes_R R[F]$.
Let $Q'_S$ be the localization of $S\otimes_R R[F]$ by the multiplicatively
closed subset 
\[
C(S):=\omega(S\nz)\cdot (S\nz\otimes 1)
=\{\omega(a)(b\otimes 1)\mid a,b\in S\nz\}.
\]
We define $C(T)$ and $Q'_T$ similarly.
It is easy to see that $(\varphi\otimes 1)(C(S))\subset C(T)$, and
$Q'(\varphi):Q'_S\rightarrow Q'_T$ is induced.
Let $\omega_S':Q(S)\rightarrow Q'_S$ and
$\omega_T':Q(T)\rightarrow Q'_T$ be the induced
maps, induced by $\omega_S$ and $\omega_T$, respectively.
Then
\begin{multline*}
\omega_T'(Q(\varphi)(Q_F(S)))=Q'(\varphi)\omega_S'(Q_F(S))\\
\subset Q'(\varphi)(Q_F(S)\otimes R[F])\subset
(Q(\varphi)(Q_F(S)))\otimes R[F].
\end{multline*}
So $Q(\varphi)(Q_F(S))\subset Q_F(T)$.

Next, the five faces except for the top one of the cube
\[
\xymatrix{
Q_F(S) \ar[rr]^{Q_F(\varphi)} \ar[dr]^{\omega_{1,S}} \ar@{^{(}->}[dd] & &
Q_F(T) \ar[dr]^{\omega_{1,T}} \ar@{^{(}->}'[d][dd] & \\
 & Q_F(S)\otimes R[F] \ar[rr]^{Q_F(\varphi)\otimes 1~~~~~~~~~~~}
\ar@{^{(}->}[dd] & &
Q_F(T)\otimes R[F] \ar@{^{(}->}[dd]^h \\
Q(S) \ar '[r][rr]^{Q(\varphi)~~~~~~~} \ar[dr]^{\omega_S'} & & 
Q(T) \ar[dr]^{\omega_T'} & \\
 & Q'_S \ar[rr]^{Q'(\varphi)} & &
Q'_T
}
\]
are commutative, where $\omega_{1,S}$ and $\omega_{1,T}$ are
the coaction of $Q_F(S)$ and $Q_F(T)$, respectively.
As $h$ is injective, the top face is also commutative,
and hence $Q_F(\varphi):Q_F(S)\rightarrow Q_F(T)$ is $F$-linear.
Being a restriction of the ring homomorphism $Q(\varphi)$, it is
a ring homomorphism, as desired.
\end{proof}

\paragraph A field extension $L/K$ is said to be primary (resp.\ regular), if
$K\sep\otimes_K L$ (resp.\ $\bar K\otimes_K L$) 
is a field, where $K\sep$ (resp.\ $\bar K$) 
is the separable closure (resp.\ algebraic closure) of $K$.
See \cite[Expos\'e~14]{Cartan}.

\begin{lemma}\label{primary.thm}
Let $S$ be an $H$-algebra domain.
If $H$ is connected, then 
the field extension $Q(S)/Q(S)^H$ is primary.
\end{lemma}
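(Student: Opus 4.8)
The plan is to reduce the statement to the standard equivalence that a field extension $L/K$ is primary if and only if $K$ is separably algebraically closed in $L$ (which follows from the definition that $K\sep\otimes_K L$ be a field), and then to prove that $K:=Q(S)^H$ is separably algebraically closed in $L:=Q(S)$ by exploiting the connectedness of $H$ through Lemma~\ref{connected-trivial.thm}. Concretely, I take $\alpha\in Q(S)$ separable algebraic over $K$ and must show $\alpha\in K$.

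First I would make two reductions. Since $H$-modules are locally finite, $\alpha$, a denominator for it, and the numerators and denominators of the finitely many coefficients of the minimal polynomial of $\alpha$ over $K$ all lie in some finitely generated $H$-subalgebra $S_0\subset S$; as the inclusion $Q(S_0)\hookrightarrow Q(S)$ is compatible with the coactions, one has $Q(S_0)^H=Q(S_0)\cap Q(S)^H$, so $\alpha$ is still separable algebraic over $Q(S_0)^H$ and it suffices to treat $S_0$. Thus I may assume $S$ is a finitely generated, hence Noetherian, $k$-algebra. Replacing $S$ by its normalization $S'$ (Corollary~\ref{normalization.thm}), which is again a Noetherian normal $H$-algebra domain with $Q(S')=Q(S)$, $Q(S')^H=Q(S)^H$, and $Q_H(S')=Q_H(S)$, I may further assume $S$ is normal. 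Then $Q_H(S)$ is a Krull domain that is integrally closed in $Q(S)$ by Lemma~\ref{Krull.thm}.

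The main body is then as follows. Since $K\subset Q_H(S)$ and $H$ fixes $K$, the algebra $Q_H(S)$ is a $(K\otimes_k H)$-algebra, where $K\otimes_k H$ is a \emph{connected} affine algebraic $K$-group (connectedness survives the base change $k\to K$ by Lemma~\ref{connected-irreducible.thm}); moreover $Q_H(S)$ is a locally finite $(K\otimes_k H)$-module, since any finite dimensional $H$-submodule $V$ over $k$ spans a finite dimensional $(K\otimes_k H)$-subcomodule $KV$ over $K$. Let $D$ be the separable algebraic closure of $K$ in $Q(S)$. As $\alpha$, and every element of $D$, is integral over $K\subset Q_H(S)$ and $Q_H(S)$ is integrally closed in $Q(S)$, we get $D\subset Q_H(S)$, and $D$ is a $K$-subalgebra which is a separable algebraic field extension of $K$. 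Granting that $D$ is a $(K\otimes_k H)$-subcomodule of $Q_H(S)$, it is a directed union of $H$-stable finite separable field extensions $D_i/K$, each a $(K\otimes_k H)$-algebra; applying Lemma~\ref{connected-trivial.thm} over the base field $K$ to the connected group $K\otimes_k H$ shows that $K\otimes_k H$ acts trivially on each $D_i$, so $D_i=D_i^{K\otimes_k H}=D_i^{H}=D_i\cap Q(S)^H=K$. Hence $D=K$ and $\alpha\in K$, as required.

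The crux is the claim that $D$ is $H$-stable, i.e.\ a $(K\otimes_k H)$-subcomodule of $Q_H(S)$, and I expect this to be the main obstacle, since $H$ does not act on $Q(S)$ and the finite subextensions of $D$ need not be individually $H$-stable a priori. I would establish it by the comodule criterion of Lemma~\ref{submod.thm}: choosing a universally dense $K$-algebra map into $(K\otimes_k k[H])^*$ (supplied by the hyperalgebra of $H$), it suffices to check that $D$ is stable under this algebra, and because $H$ fixes $K$ its augmentation ideal acts by $K$-linear (higher) derivations of $Q(S)$; since a separable algebraic extension is formally \'etale, such derivations extend uniquely to and preserve $D$ (for a $K$-derivation $\delta$ and $a\in D$ with separable minimal polynomial $f$, differentiating $f(a)=0$ gives $f'(a)\delta(a)=0$ with $f'(a)\neq0$, whence $\delta(a)=0$). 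Alternatively, one may base change the ground field to $\bar k$, where $H(\bar k)$ is dense, observe that the finitely many conjugates of $\alpha$ are permuted by $H(\bar k)$ and invoke Lemma~\ref{dense.thm}, then descend by faithful flatness via Lemma~\ref{subcomod.thm}.
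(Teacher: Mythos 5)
Your route---reduce primariness to ``$Q(S)^H$ is separably algebraically closed in $Q(S)$'' and then kill separable elements using the infinitesimal structure of a connected group---is genuinely different from the paper's, and for \emph{smooth} $H$ (in particular in characteristic $0$, by Cartier's theorem) it can be pushed through. The gap is that the statement is for an arbitrary affine algebraic $k$-group scheme $H$: in this paper $G$ is reserved for smooth groups, and $H$ may be non-reduced (e.g.\ $\mu_p$ or $\alpha_p$ in characteristic $p$), yet you lean on smooth-only tools at three places. Corollary~\ref{normalization.thm} (via Lemma~\ref{pre-normalization.thm}) assumes the group smooth, so for infinitesimal $H$ you cannot pass to the normalization this way, and with it you lose ``$Q_H(S)$ integrally closed in $Q(S)$'' (Lemma~\ref{Krull.thm} needs $S$ normal), i.e.\ the containment $D\subset Q_H(S)$. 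Lemma~\ref{connected-trivial.thm} is stated and proved only for smooth $G$ (its proof needs density of rational points over $k\sep$, \cite[(AG13.3)]{Borel}), and it is in fact \emph{false} for connected non-smooth groups: in characteristic $p$ the connected group $\mu_p$ acts nontrivially on the field extension $K(t^{1/p})$ of $K=\Bbb F_p(t)$ via the $\Bbb Z/p\Bbb Z$-grading with $t^{1/p}$ in degree $1$. (Your $D_i$ are separable over $K$, which is what saves the conclusion---but then the cited lemma is not what proves it; your infinitesimal computation is.) Finally, your fallback route fails outright for non-smooth $H$, since $H(\bar k)$ is not dense in $H$ when $H$ is non-reduced ($\alpha_p(\bar k)=\{e\}$).

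Even along your main route, the two load-bearing claims are asserted rather than proved, and neither is available in the paper. Universal density of the hyperalgebra in $(K\otimes_k k[H])^*$ is exactly where connectedness enters; it amounts to $\bigcap_n I^n=0$ for the augmentation ideal $I\subset k[H]$, which for non-reduced $H$ requires geometric irreducibility (Lemma~\ref{connected-irreducible.thm}) together with the absence of embedded components of $k[H]$ (affine algebraic group schemes over fields are l.c.i., cf.\ the use of \cite[(31.14)]{ETI} in Lemma~\ref{Q_F-inverse.thm}) before Krull's intersection theorem applies. And in characteristic $p$ the hyperalgebra is not generated by the Lie algebra, so your displayed computation $f'(a)\delta(a)=0$ only disposes of first-order operators; the formal-\'etaleness argument you allude to must actually be run: both $\omega'|_D$ and $d\mapsto d\otimes 1$ lift $D\hookrightarrow Q_H(S)$ along the nilpotent thickenings $Q_H(S)\otimes_k k[H]/I^{n+1}\rightarrow Q_H(S)$, hence coincide by formal unramifiedness of $D/K$, and $\bigcap_n I^n=0$ then gives $\omega'(d)=d\otimes 1$. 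Note that this yields $D\subset Q(S)^H$ \emph{directly}, making your detour through $H$-stability of $D$ and Lemma~\ref{connected-trivial.thm} unnecessary. For comparison, the paper sidesteps all of this: it reduces to $S$ finitely generated with $Q(S)^H=k$, base changes to $k\sep$, shows the minimal primes of $k\sep\otimes_k S$ are $H$-stable via the $G$-primary theory of \cite{HM}, and uses Lemma~\ref{idempotent.thm} on idempotents to conclude $k\sep\otimes_k Q(S)$ is a field---an argument valid verbatim for every connected affine algebraic group scheme, with no normality, smoothness, or hyperalgebra input.
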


\begin{proof}
Replacing $S$ by $Q_H(S)$, we may assume that $S=Q_H(S)\supset Q(S)^H$.
Replacing $k$ by $Q(S)^H$, we may assume that $k=Q(S)^H$.
It suffices  to show that for any $H$-subalgebra $T$ of $S$ which
is a finitely generated domain, the assertion of the lemma
is true for $T$, since $Q(T)^H=k=Q(S)^H$, and $Q(S)=\indlim Q(T)$.
Replacing $S$ by $T$, we may assume that $k=Q(S)^H$, and
$S$ is a finitely generated domain (possibly losing the additional
assumption $S=Q_H(S)$).

Note that $S\sep:=k\sep\otimes_k S$ 
is a reduced algebra of finite type over $k\sep$.
As $k\sep\otimes_k Q(S)$ is essentially of finite type over $k\sep$, it
is Noetherian.
As it is also integral over the field $Q(S)$, it is zero-dimensional.
Being a reduced Artinian ring, $k\sep\otimes_k Q(S)$ is a finite direct 
product of fields.
So it agrees with $Q(S\sep)$.
Thus it is easy to see that $Q(S\sep)^H=k\sep\otimes_k Q(S)^H
=k\sep$.
Let $P_1,\ldots,P_n$ be the minimal primes of $S\sep$.

Since $P_i^*$ is $P_i$-primary by \cite[(6.23)]{HM}, 
we have $P_i^*=P_i$, as $S\sep$ is reduced.
So each $P_i$ is $H$-stable.
Then $(\prod_{i=1}^n S\sep/P_i)^H=k\sep$.
As any idempotent of $(\prod_{i=1}^n S\sep/P_i)$ is $H$-invariant
by Lemma~\ref{idempotent.thm}, we have $n=1$.
That is, $S\sep$ is an integral domain.
So $Q(S\sep)=k\sep\otimes_k Q(S)$ is a field.
This shows that $Q(S)/k$ is a primary extension, as desired.
\end{proof}

\begin{lemma}\label{separable.thm}
Let $K$ be a field,
and $\Gamma$ an abstract group acting on $K$ as
automorphisms.
Then the extension $K/K^\Gamma$ is separable.
\end{lemma}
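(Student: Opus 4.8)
The plan is to reduce to characteristic $p>0$ and then invoke the separability criterion recalled in (\ref{separable.par}). Write $F:=K^\Gamma$. If $\charac K=0$, then every finite extension $E/F$ is separably algebraic, so a squarefree minimal polynomial over $F$ stays squarefree over $K$ and $E\otimes_F K$ is reduced for every finite $E/F$; thus $K/F$ is separable with nothing more to prove. So I would assume $\charac K=p>0$. By (\ref{separable.par}) it then suffices to prove that $F^{1/p}\otimes_F K$ is reduced, and for this I would show that the multiplication map $F^{1/p}\otimes_F K\to K^{1/p}$ is injective, i.e.\ that $K$ and $F^{1/p}$ are linearly disjoint over $F$ inside $K^{1/p}$. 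Injectivity exhibits the tensor product as a subring of a field, hence a domain and in particular reduced.

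The key preliminary step is to let $\Gamma$ act on $K^{1/p}$. Since $\charac K=p$, the Frobenius $\phi\colon K^{1/p}\to K$, $x\mapsto x^p$, is a field isomorphism, so each $\sigma\in\Gamma$ extends uniquely to an automorphism $\tilde\sigma:=\phi^{-1}\sigma\phi$ of $K^{1/p}$ with $\tilde\sigma|_K=\sigma$. Moreover $\tilde\sigma$ fixes $F^{1/p}$ pointwise: if $b\in F^{1/p}$ then $b^p\in F$ is fixed by $\sigma$, so $\tilde\sigma(b)^p=\sigma(b^p)=b^p$, and $p$-th roots are unique in characteristic $p$, whence $\tilde\sigma(b)=b$.

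Finally I would establish linear disjointness by an Artin-type shortest-relation argument. Suppose $K$ and $F^{1/p}$ are not linearly disjoint over $F$; then there is an $F$-linearly independent family $b_1,\dots,b_r\in F^{1/p}$ admitting a nontrivial $K$-linear relation $\sum_i c_ib_i=0$, and I would choose such a witness with $r$ minimal, normalized so that $c_r=1$ (necessarily $r\ge 2$). Applying $\tilde\sigma$ and using $\tilde\sigma(b_i)=b_i$ gives $\sum_i\sigma(c_i)b_i=0$; subtracting the two relations kills the $i=r$ term and yields $\sum_{i<r}(c_i-\sigma(c_i))b_i=0$, a relation among the still $F$-independent $b_1,\dots,b_{r-1}$. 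Minimality of $r$ forces $\sigma(c_i)=c_i$ for all $i$ and all $\sigma\in\Gamma$, so each $c_i\in K^\Gamma=F$; but then $\sum_i c_ib_i=0$ is a nontrivial $F$-linear relation among the $b_i$, contradicting their $F$-independence. Hence the map $F^{1/p}\otimes_F K\to K^{1/p}$ is injective, the tensor product is reduced, and $K/F$ is separable. The only genuine obstacle is that $K/F$ need not be algebraic, so Artin's finite Galois theorem does not apply directly; the device that circumvents this is precisely the reduction to linear disjointness with $F^{1/p}$ together with the unique extension of $\Gamma$ to the purely inseparable layer $K^{1/p}$.
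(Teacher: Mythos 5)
Your proof is correct, and it follows the same overall strategy as the paper's: reduce to characteristic $p>0$, show that $K$ and $F^{1/p}$ are linearly disjoint over $F=K^\Gamma$ so that $F^{1/p}\otimes_F K$ embeds into the field $K^{1/p}$ and is therefore reduced, and conclude separability via the criterion recalled in (\ref{separable.par}). The difference lies in how the linear-disjointness step is carried out. The paper works on the other side of the symmetry: it takes $f_1,\ldots,f_n\in K$ linearly independent over $F$, invokes Artin's lemma \cite[\S7, n${}^\circ$1]{Bourbaki} to produce $g_1,\ldots,g_n\in\Gamma$ with $\det(g_if_j)\neq 0$, and uses $\det(g_if_j^p)=\det(g_if_j)^p\neq 0$ to conclude that $f_1^p,\ldots,f_n^p$ remain $F$-independent, which is the same disjointness statement. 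You instead take an $F$-independent family in $F^{1/p}$, extend each $\sigma\in\Gamma$ through the Frobenius isomorphism to the purely inseparable layer $K^{1/p}$ (correctly noting that the extension is unique and fixes $F^{1/p}$ pointwise), and run the classical shortest-relation argument there. In effect you have inlined a proof of the Artin-type independence statement rather than citing it: your version is self-contained and determinant-free, at the cost of a slightly longer argument, while the paper's is shorter given the Bourbaki reference. Both arguments correctly avoid any assumption that $K/F$ is algebraic or that $\Gamma$ is finite, which, as you observe, is the genuine point of the lemma; and your device of transporting the $\Gamma$-action to $K^{1/p}$ is exactly what makes the naive fixed-field argument go through in the transcendental setting. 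Your disposal of the characteristic-zero case by checking reducedness of $E\otimes_F K$ for finite (automatically separable) $E/F$ is also fine and matches the definition of geometric reducedness used in (\ref{separable.par}).
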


\begin{proof}
We may assume that $\charac(K)=p$ is positive.

Let $f_1,\ldots,f_n$ be elements of $K$ which are linearly independent 
over $K^\Gamma$.
Assume that $\sum_j\alpha_j f_j^p=0$ for $\alpha_j\in K^\Gamma$.
By Artin's lemma
\cite[\S7, n${}^\circ$1]{Bourbaki}, 
there exist some $g_1,\ldots,g_n\in\Gamma$
such that $\det(g_if_j)\neq 0$.
Then $\det(g_if_j^p)=\det(g_if_j)^p\neq 0$.
As $\sum_j \alpha_j g_if_j^p=0$ for all $i$, $\alpha_1=\cdots=\alpha_n=0$.
So $f_1^p,\ldots,f_n^p$ is linearly independent over $K^\Gamma$.
So $f_1,\ldots,f_n$ is linearly independent over $(K^\Gamma)^{-p}$
in $K^{-p}$.
So the canonical map $(K^\Gamma)^{-p}\otimes_{K^\Gamma}K\rightarrow 
K^{-p}$ is injective, and
being a subring of a field, $(K^\Gamma)^{-p}\otimes_{K^\Gamma}K$ is
reduced.
So $K/K^\Gamma$ is separable.
\end{proof}

\begin{lemma}\label{finite-separable.thm}
Let $H$ be a finite $k$-group scheme, and $S$ an $H$-algebra.
Then $S$ is integral over $S^H$.
\end{lemma}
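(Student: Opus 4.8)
The plan is to produce, for each $s\in S$, an explicit monic polynomial of degree $n:=\dim_k k[H]$ with coefficients in $S^H$ that annihilates $s$; note $n<\infty$ precisely because $H$ is finite over $k$, so that $k[H]$ is a finite-dimensional $k$-vector space. First I would set $M:=S\otimes_k k[H]$ and regard it simultaneously as a commutative ring and as a free $S$-module of rank $n$ via $\iota\colon s\mapsto s\otimes 1$ (freeness holds because $k[H]$ is finite free over $k$). Fixing $s\in S$, put $t:=\omega(s)\in M$, where $\omega\colon S\to S\otimes_k k[H]$ is the coaction, and let $\mu_t\colon M\to M$ be multiplication by $t$, an $\iota(S)$-linear endomorphism of a free module of rank $n$. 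Let $P(x)=x^n+a_1x^{n-1}+\cdots+a_n\in S[x]$ be its characteristic polynomial; by Cayley--Hamilton, $P(t)=0$ in $M$, where each $a_i$ is identified with its preimage in $S$ under $\iota$.

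Next I would descend this relation from $t$ to $s$. The counit $\epsilon\colon k[H]\to k$ is a $k$-algebra map, so $\mathrm{id}_S\otimes\epsilon\colon M\to S$ is a $k$-algebra homomorphism; it fixes $\iota(S)$ and, by the counit axiom for the comodule $S$, satisfies $(\mathrm{id}_S\otimes\epsilon)(t)=(\mathrm{id}_S\otimes\epsilon)\omega(s)=s$. Applying $\mathrm{id}_S\otimes\epsilon$ to $P(t)=0$ therefore yields $s^n+a_1s^{n-1}+\cdots+a_n=0$ in $S$, a monic relation of degree $n$. It then remains only to prove that every coefficient $a_i$ lies in $S^H$, that is, that $\omega(a_i)=a_i\otimes 1$.

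This invariance of the characteristic coefficients is the heart of the matter, and the step I expect to be the main obstacle. In the classical case of a finite \emph{constant} group it is simply the statement that the $a_i$ are the elementary symmetric functions of the orbit $\{gs\}$, hence invariant; lacking enough points I would instead run the comparison purely through coassociativity. Equip $M$ with the comodule-algebra coaction $\rho:=\mathrm{id}_S\otimes\Delta$, for which one checks $M^H=\iota(S)$, and recall that coassociativity of the comodule $S$ gives $\rho(t)=(\omega\otimes\mathrm{id})(t)$. Since a characteristic polynomial is invariant under conjugation, I would compare the two base changes of the operator $(M,\mu_t)$ along the ring maps $\iota$ and $\omega\colon S\to M$: the base change along $\iota$ has characteristic coefficients $a_i\otimes 1$, while the base change along $\omega$ has coefficients $\omega(a_i)$. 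The canonical translation automorphism of $S\otimes_k k[H]\otimes_k k[H]$ built from $\omega$ and the antipode of $k[H]$ (the same device underlying the map $\rho$ in the proof of Lemma~\ref{exact.thm}) identifies these two base-changed operators up to conjugation, which forces $\omega(a_i)=a_i\otimes 1$. Hence each $a_i\in S^H$, the displayed relation is an integral dependence of $s$ over $S^H$, and $S$ is integral over $S^H$.

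The delicate point to get exactly right is the bookkeeping of the two distinct $S$-module structures on $M$ (via $\iota$ and via $\omega$) and the verification that the translation map really conjugates the $\iota$-base change of $\mu_t$ into the operator whose characteristic coefficients are $\omega(a_i)$; once that identification is in hand, the remainder of the argument is formal, and it simultaneously yields the uniform degree bound $n=\dim_k k[H]$ on the integral dependence.
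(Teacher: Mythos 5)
Your proposal is correct, but it takes a genuinely different route from the paper. The paper's proof is a d\'evissage: since $(\bar k\otimes_k S)^H=\bar k\otimes_k S^H$ is integral over $S^H$, one may assume $k=\bar k$; then $S^H=(S^{H^\circ})^{H/H^\circ}$ reduces everything to the two extreme cases of the connected--\'etale sequence, each handled by an ad hoc device --- for reduced (hence constant) $H$ the classical orbit polynomial $f=\prod_{h\in H}(t-ha)\in S^H[t]$, and for infinitesimal $H$ the Frobenius trick: if $I=\Ker(\varepsilon)$ is the nilradical of $k[H]$ and $I^{p^e}=0$, then $f^{p^e}=\varepsilon(f)^{p^e}$ for all $f\in k[H]$, whence $S^{p^e}\subset S^H\subset S$ and integrality is immediate. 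Your argument is instead the uniform Cayley--Hamilton (norm-form) argument from the theory of quotients by finite flat group schemes, in the style of SGA3 and Demazure--Gabriel, and the step you flag as delicate does go through: writing $X=\Spec S$ and the action as a right action for bookkeeping, the base changes of $(M,\mu_t)$ along $\iota$ and $\omega$ are the function rings of $(X\times H)\times_{p_1,X,p_1}(X\times H)$ and $(X\times H)\times_{p_1,X,a}(X\times H)$, both free of rank $n$ over $B=S\otimes_k k[H]$ via the second factor, the operators being multiplication by the pullbacks of $s$ along $(x,g_1,g_2)\mapsto x\cdot g_1$ and $(x,g_2,h)\mapsto x\cdot(g_2h)$ respectively; the translation $(x,g_2,h)\mapsto (x,g_2h,g_2)$ is an isomorphism over $B$ (its inverse uses the antipode, exactly as you anticipate) intertwining the two multipliers, so the characteristic polynomials coincide and $\omega(a_i)=a_i\otimes 1$. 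As for what each approach buys: the paper's proof is shorter given the structure theory it already has in place, but it leans on base change to $\bar k$, the connected--\'etale decomposition, and field-specific tricks; yours is heavier to write out in full but is characteristic-free and base-free --- it works verbatim for any finite flat group scheme over an arbitrary base ring --- and it yields the extra quantitative conclusion that every $s\in S$ satisfies a monic equation over $S^H$ of the uniform degree $n=\dim_k k[H]$, which the paper's proof does not provide (its two cases give degrees $|H(\bar k)|$ and $p^e$, compounded through the reductions).
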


\begin{proof}
As $S$ is a subring of $\bar S:=\bar k\otimes_k S$, it suffices to show that
$\bar S$ is integral over $S^H$.
As $(\bar k\otimes_k S)^H=\bar k \otimes_k S^H$ is integral over $S^H$, 
we may assume that $k$ is algebraically closed.
As $S^H=(S^{H^\circ})^{H/H^\circ}$, we may assume that either 
$H$ is infinitesimal or reduced.

The reduced case is well-known.
If $a\in S$, then $f(a)=0$ for $f:=\prod_{h\in H}(t-ha)\in S^H[t]$.

Assume that $H$ is infinitesimal.
Let $I$ be the nilradical of $k[H]$.
Note that $I=\Ker(\varepsilon)$, where $\varepsilon$ is the counit map
of $k[H]$.
If $I^{p^e}=0$ ($e\geq 0$), 
then $f^{p^e}=\varepsilon(f)^{p^e}$ in $k[H]$ for $f\in k[H]$.
So it is easy to see that $S^{p^e}\subset S^H\subset S$.
As $S$ is integral over $S^{p^e}$, it is also integral over $S^H$.
\end{proof}

\begin{lemma}\label{separable-regular.thm}
Let $S$ be a $G$-algebra which is a domain.
Then the field extension $Q(S)/Q(S)^G$ is separable.
If $G$ is connected, then the extension 
$Q(S)/Q(S)^G$ is regular.
\end{lemma}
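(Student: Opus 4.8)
The plan is to prove the separability assertion first, in full generality, and then obtain regularity for connected $G$ by combining separability with the primality already established in Lemma~\ref{primary.thm}.

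For separability I would first normalize so that the invariant field becomes the base field. Replacing $S$ by $Q_G(S)$ alters neither $Q(S)$ nor $Q(S)^G=Q_G(S)^G$ (see (\ref{Q_F(S).par})), so I may assume $S=Q_G(S)$; then $K:=Q(S)^G=S^G$ is a genuine $G$-trivial subalgebra of $S$, and $S$ is a $K\otimes_k G$-algebra (the paragraph preceding Lemma~\ref{M.thm}, where $Q_F(S)=Q_{R'\otimes_R F}(S)$ and invariants are unchanged). Since $K\otimes_k G$ is smooth over $K$, I may rename $K$ to $k$; after this the base field equals the invariant field, so ``$Q(S)/Q(S)^G$ is separable'' becomes ``$Q(S)$ is geometrically reduced over $k$'', which by (\ref{separable.par}) is equivalent to ``$S$ is geometrically reduced over $k$''. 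As geometric reducedness is tested on finitely generated $k$-subalgebras and every finite subset of $S$ lies in a finitely generated $G$-stable subalgebra $S_0$ with $S_0^G=S_0\cap S^G=k$, I may further assume $S$ is a finitely generated domain with $S^G=k$.

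Next I would pass to the separable closure. Since $k\sep/k$ is separable, $S\sep:=k\sep\otimes_k S$ is a reduced Noetherian $k\sep$-algebra and $\bar k\otimes_k S=\bar k\otimes_{k\sep}S\sep$, so $S$ is geometrically reduced over $k$ iff $S\sep$ is geometrically reduced over $k\sep$. Over $k\sep$ the group $G(k\sep)$ is dense in $G_{k\sep}$ by \cite[(AG13.3)]{Borel}, so flat base change gives $(S\sep)^{G_{k\sep}}=k\sep\otimes_k S^G=k\sep$, and by Lemma~\ref{dense.thm} the abstract group $\Gamma:=G(k\sep)$ acts on $S\sep$ with $(S\sep)^\Gamma=k\sep$. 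Write $Q(S\sep)=\prod_i L_i$ as a finite product of fields; $\Gamma$ permutes the factors. The decisive point is that the invariants of the \emph{total} ring of fractions satisfy $Q(S\sep)^{G_{k\sep}}=k\sep\otimes_k Q(S)^G=k\sep$ (flat base change of the construction in (\ref{RFS.par}), using $Q(S\sep)=k\sep\otimes_k Q(S)$ as in the proof of Lemma~\ref{primary.thm}), and by density this equals $Q(S\sep)^\Gamma$. Because $k\sep$ is a field it has no nontrivial idempotents, so $\Gamma$ permutes the $L_i$ \emph{transitively}. Fixing $L_1$ with stabilizer $\Gamma_1$, transitivity gives $L_1^{\Gamma_1}\cong Q(S\sep)^\Gamma=k\sep$, whence $L_1/k\sep$ is separable by Lemma~\ref{separable.thm}; the same holds for each $L_i$. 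Thus $Q(S\sep)=\prod_i L_i$ is geometrically reduced over $k\sep$, so by (\ref{separable.par}) $S\sep$, hence $S$, is geometrically reduced, proving separability.

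For the regular case, assume $G$ connected and set $K=Q(S)^G$. Lemma~\ref{primary.thm} shows $Q(S)/K$ is primary, i.e.\ $K\sep\otimes_K Q(S)$ is a field. Combined with separability this yields regularity: separability says $\bar K\otimes_K Q(S)$ is reduced, and $\bar K\otimes_K Q(S)=\bar K\otimes_{K\sep}(K\sep\otimes_K Q(S))$ with the inner factor a field by primality; since $\bar K/K\sep$ is purely inseparable, base change along $\bar K/K\sep$ is a universal homeomorphism on spectra, so $\bar K\otimes_K Q(S)$ has a single prime, and a reduced ring with a single prime is a field. Hence $Q(S)/Q(S)^G$ is regular. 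The main obstacle is the separable case, and within it the absence of rational points: when $G(k)$ is not dense one cannot invoke Lemma~\ref{separable.thm} over $k$ directly, and the crux is the flat base change identity $Q(S\sep)^{G_{k\sep}}=k\sep$ for the equivariant invariants of (\ref{RFS.par}), which is exactly what forces $\Gamma$ to act transitively on the field factors of $Q(S\sep)$; the reduction to a finitely generated domain is what makes $Q(S\sep)=k\sep\otimes_k Q(S)$ available and keeps these base-change identities clean.
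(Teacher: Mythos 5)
Your proof is correct, and its core is genuinely different from the paper's. Both arguments share the same outer frame: reduce to $S=Q_G(S)$ and $k=Q(S)^G$ (exactly the reductions in the proof of Lemma~\ref{primary.thm}), cut down to a finitely generated $G$-subalgebra, base change to $k\sep$ where $G(k\sep)$ is dense by \cite[(AG13.3)]{Borel}, and finish with Lemma~\ref{separable.thm}; and both reduce regularity to ``primary $+$ separable,'' you by re-proving inline (via the purely inseparable base change $\bar K/K\sep$ being a homeomorphism on spectra) the classical fact that the paper simply quotes. Where you diverge is the heart of the separability step. The paper first replaces $S$ by its normalization, splits off $K=S^{G^\circ}$, and uses Lemma~\ref{finite-separable.thm} to reduce to the two cases $G$ connected and $G$ finite: in the connected case Lemma~\ref{idempotent.thm} forces $S\sep$ to be a domain, so $Q(S\sep)$ is a field and Lemma~\ref{separable.thm} applies directly, while the finite case is handled by the explicit orbit polynomial $\prod_i(t-g_ix)$. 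You avoid the normalization and the connected/finite dichotomy altogether: since $Q(S\sep)^\Gamma=k\sep$ is a field, the orbit sums of primitive idempotents show that $\Gamma=G(k\sep)$ permutes the field factors $L_i$ of $Q(S\sep)$ transitively, and the projection $Q(S\sep)^\Gamma\rightarrow L_1^{\Gamma_1}$ is an isomorphism (injectivity by transitivity; surjectivity by spreading a $\Gamma_1$-invariant element over the orbit), so Lemma~\ref{separable.thm} applies factor by factor. This is a uniform argument that subsumes both of the paper's cases --- the finite-case orbit polynomial is exactly the element-level shadow of your transitivity --- at the price of the small extra verification about the stabilizer invariants; the paper's route buys the convenience of working with a single field in the main case, but pays for it with the normalization step and the $G^\circ$-reduction.

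One small imprecision worth fixing: after cutting down to a finitely generated $G$-stable subalgebra $S_0$ you record only $S_0^G=k$, whereas your later base-change identity uses $Q(S_0)^G=k$. This does hold --- by Lemma~\ref{Q_F-inverse.thm}, $Q_G(S_0)=Q(S_0)\cap Q_G(S)$, whence $Q(S_0)^G=Q_G(S_0)^G\subset Q_G(S)^G=Q(S)^G=k$ --- and it is the same point the paper passes over with ``since $Q(T)^H=k=Q(S)^H$'' in the proof of Lemma~\ref{primary.thm}, but as written your reduction states the weaker fact.
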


\begin{proof}
By Lemma~\ref{primary.thm}, $Q(S)/Q(S)^G$ is primary.
As a primary separable extension is regular, it suffices to prove
the first assertion.

As in the proof of Lemma~\ref{primary.thm}, we may assume that
$S$ is finitely generated, and $Q(S)^G=k$.
Replacing $S$ by its normalization, we may assume that $S$ is normal.
It suffces to show that $S$ is geometrically reduced over $k$ (see
(\ref{separable.par})).
Set $K=S^{G^\circ}$.
Then by Lemma~\ref{finite-separable.thm}, $K$ is integral over 
$K^{G/G^\circ}=S^G=k$.
Hence $K$ is an algebraic extension field of $k$.
As $K\subset Q(S)$, $K$ is finite over $k$.
It suffices to prove that $K$ is separable over $k$, and $S$ is geometrically
reduced over $K$.
Thus we may assume that $G$ is either connected or finite.

Set $S\sep:=k\sep\otimes_k S$.
It suffices to show that $S\sep$ is geometrically reduced over $k\sep$.
Then $S\sep$ is geometrically reduced over $k$, and hence
$S$ is geometrically reduced over $k$, as desired.
Note that $S\sep=S_1\times\cdots\times S_n$ is a finite direct product of
normal domains which are finitely generated over $k\sep$.

First assume that $G$ is connected.
As $S\sep^G=k\sep$ does not have a nontrivial idempotent, 
$n=1$ by Lemma~\ref{idempotent.thm}.
So $Q(S\sep)=k\sep\otimes_k Q(S)$ is a field.
As
\[
Q(S\sep)^{G(k\sep)}=(k\sep\otimes_k Q(S))^G=k\sep\otimes_k Q(S)^G
=k\sep\otimes_k k=k\sep,
\]
the extension $Q(S\sep)/k\sep$ is separable by
Lemma~\ref{separable.thm}.

Next consider the case that $G$ is finite.
Then $Q(S\sep)^{\Gamma}=k\sep$ as above, where $\Gamma=G(k\sep)$.
Let $x\in Q(S\sep)$.
Set $H=\{g\in \Gamma\mid gx=x\}$.
Let $g_1,\ldots,g_r$ be the complete set of representatives of $G/H$.
Then $\Gamma$ permutes $g_1x,\ldots,g_rx$.
So $f(t)=\prod_i (t-g_ix)$ lies in $k\sep[t]$, and $f(x)=0$.
So $x$ is separable over $k\sep$.
So $Q(S\sep)$ is geometrically reduced over $k\sep$, as desired.
\end{proof}

\section{Factoriality of rings generated by semiinvariants}

\begin{lemma}\label{dvr.thm}
Let $R$ be a discrete valuation ring, $\varphi:R\rightarrow R$ a
ring automorphism, and $a\in Q(R)\setminus\{0\}$.
Then $\varphi(a)\cdot a^{-1}\in R^\times$ \(note that $\varphi$ can be
extended to an automorphism of $Q(R)$, and $\varphi(a)$ makes sense\).
\end{lemma}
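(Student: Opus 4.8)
The plan is to show that the automorphism $\varphi$ preserves the normalized discrete valuation of $R$, from which the claim is immediate. Write $\fm$ for the maximal ideal of $R$ and $v$ for the associated valuation, so that $R^\times=R\setminus\fm$ and, for $x\in R\setminus\{0\}$, we have $v(x)=n$ precisely when $x\in\fm^n\setminus\fm^{n+1}$. The first step is to justify the parenthetical remark in the statement: since $R$ is a domain and $\varphi$ is injective, $\varphi$ sends nonzero elements (which are exactly the nonzerodivisors) to nonzero elements, so the universal property of localization produces a unique extension of $\varphi$ to a field automorphism of $Q(R)$ with $\varphi(b/c)=\varphi(b)/\varphi(c)$.

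The heart of the argument is the observation that any ring automorphism of a local ring fixes its maximal ideal. Indeed, $\varphi^{-1}(\fm)$ is a maximal ideal of $R$, and since $R$ is local this forces $\varphi^{-1}(\fm)=\fm$, that is $\varphi(\fm)=\fm$. Being a ring homomorphism, $\varphi$ then satisfies $\varphi(\fm^n)=\varphi(\fm)^n=\fm^n$ for all $n\ge 0$, so $\varphi$ permutes each set $\fm^n\setminus\fm^{n+1}$ to itself. Hence $v(\varphi(x))=v(x)$ for every $x\in R\setminus\{0\}$.

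Finally I would extend this to $Q(R)$ and conclude. Given $a\in Q(R)\setminus\{0\}$, write $a=b/c$ with $b,c\in R\setminus\{0\}$; using the additivity of $v$ together with the previous step, $v(\varphi(a))=v(\varphi(b))-v(\varphi(c))=v(b)-v(c)=v(a)$. Therefore $v(\varphi(a)\cdot a^{-1})=v(\varphi(a))-v(a)=0$, and an element of $Q(R)$ of valuation $0$ lies in $R^\times$; thus $\varphi(a)\cdot a^{-1}\in R^\times$, as desired. This argument presents no genuine difficulty: the only point requiring care is the invariance $\varphi(\fm)=\fm$, which is precisely what makes $\varphi$ compatible with $v$, and everything else is formal bookkeeping with the valuation.
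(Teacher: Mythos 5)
Your proof is correct and takes essentially the same approach as the paper: both arguments rest on the automorphism preserving the maximal ideal $\fm$ and hence the normalized valuation, so that $v(\varphi(a))=v(a)$ and $\varphi(a)\cdot a^{-1}\in R^\times$. The only cosmetic differences are that the paper derives $\varphi(\fm)\subset\fm$ from $\varphi^{-1}(R^\times)\subset R^\times$ and then symmetrizes with $\varphi^{-1}$, while you obtain $\varphi(\fm)=\fm$ at once from locality, and the paper treats the cases $a\in R$ and $a^{-1}\in R$ separately where you write $a=b/c$.
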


\begin{proof} 
First assume that $a\in R\setminus\{0\}$.
Let $\fm$ denote the maximal ideal of $R$.
Let $v:Q(R)^\times\rightarrow \Bbb Z$ be the normalized discrete valuation 
associated with $R$.
Note that $\varphi^{-1}(R^\times)$, 
the image of $R^\times$ by $\varphi^{-1}$, 
is contained in $R^\times$.
So we have $\varphi(\fm)\subset \fm$.
It follows that $v(\varphi(a))\geq v(a)$.
Applying this to $\varphi^{-1}$, we have
$v(a)=v(\varphi^{-1}\varphi(a))\geq v(\varphi(a))$.
So $v(a)=v(\varphi(a))$, and $\varphi(a)\cdot a^{-1}\in R^\times$.

Next consider the case that $a^{-1}\in R\setminus\{0\}$.
Then $\varphi(a^{-1})\cdot a\in R^\times$.
Taking the inverse, $\varphi(a)\cdot a^{-1}\in R^\times$.
\end{proof}

\begin{lemma}\label{G-submod.thm}
Assume that $G(k)$ is dense in $G$.
Let $S$ be a Noetherian $G$-algebra, and $f\in Q(S)$.
If $Sf$ is a $G(k)$-submodule of $Q(S)$, then $f\in Q_G(S)$,
and $Sf$ is a $G$-submodule of $Q_G(S)$.
\end{lemma}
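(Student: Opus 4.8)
The plan is to translate the hypothesis, which concerns only the \emph{abstract} group $G(k)$ acting by ring automorphisms on $Q(S)$, into a genuine scheme-theoretic statement about the comodule $Q_G(S)$; the density of $G(k)$ is what permits this translation, and the principal difficulty is precisely that $G$ itself does not act on $Q(S)$. The device I would use is the conductor ideal
\[
J := S:_S Sf = \{\, s\in S \mid sf\in S \,\}.
\]
First I would record that $J$ is an ideal of $S$ containing a nonzerodivisor: writing $f=b/a$ with $a\in S\nz$ and $b\in S$, one has $af=b\in S$, so $a\in J$.

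The crucial step, and where I expect the only real work to lie, is to show that $J$ is a $G$-ideal. Since $G(k)$ acts on $Q(S)$ by automorphisms extending its action on $S$, and $G(k)$ is dense, it suffices by Lemma~\ref{dense.thm}{\bf (ii)} (applied to the $G$-module $S$ and the subspace $J$) to check that $J$ is merely $G(k)$-stable. For $g\in G(k)$ and $s\in J$ I would verify $g(s)\in J$ by applying the automorphism $g^{-1}$:
\[
g^{-1}\bigl(g(s)\,f\bigr)=s\cdot g^{-1}(f).
\]
The hypothesis that $Sf$ is $G(k)$-stable gives $g^{-1}(f)\in Sf$ (as $f\in Sf$), say $g^{-1}(f)=vf$ with $v\in S$, whence $s\cdot g^{-1}(f)=v\,(sf)\in S$; applying $g$ returns $g(s)f\in S$, i.e.\ $g(s)\in J$. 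Thus $J$ is a $G$-ideal containing a nonzerodivisor, and by Lemma~\ref{colon.thm} the fractional ideal $S:_{Q(S)}J$ is an $(G,S)$-submodule of $Q_G(S)$. Since $Jf\subset S$ we have $f\in S:_{Q(S)}J\subset Q_G(S)$, which is the first assertion.

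For the second assertion I would note that $Sf$ is an $S$-submodule of the $G$-algebra $Q_G(S)$, since $f\in Q_G(S)$ and $Q_G(S)$ is an $S$-algebra. The abstract $G(k)$-action on $Q_G(S)$ coming from its comodule structure is exactly the restriction to $Q_G(S)$ of the automorphism action of $G(k)$ on $Q(S)$: this is a routine check on an element $x=b/a\in Q_G(S)$ using $g(a)\,(g\cdot x)=g(b)$, which forces $g\cdot x=g(b)/g(a)=g(x)$. Consequently the hypothesis says exactly that $Sf$ is a $G(k)$-submodule of the $G$-module $Q_G(S)$, and a second application of Lemma~\ref{dense.thm}{\bf (ii)} upgrades this to the statement that $Sf$ is a $G$-submodule of $Q_G(S)$. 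Everything apart from the $G(k)$-stability of $J$ is merely a matter of assembling the density lemma with the colon-ideal description of $Q_G(S)$ furnished by Lemma~\ref{colon.thm}.
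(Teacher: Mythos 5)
Your proposal is correct and follows essentially the same route as the paper: both proofs introduce the conductor ideal $S:_S Sf$, observe it contains a nonzerodivisor, use the density of $G(k)$ via Lemma~\ref{dense.thm} to upgrade its $G(k)$-stability to $G$-stability, conclude $f\in S:_{Q(S)}I\subset Q_G(S)$ by Lemma~\ref{colon.thm}, and then invoke Lemma~\ref{dense.thm} once more to see that $Sf$ is a $G$-submodule of $Q_G(S)$. The only cosmetic differences are that the paper verifies $G(k)$-stability of the conductor by exhibiting it as the kernel of the $G(k)$-equivariant map $S\rightarrow\Hom_S(Sf,(Sf+S)/S)$ rather than by your direct element computation, and that you make explicit the (correct, and in the paper implicit) compatibility between the comodule-induced $G(k)$-action on $Q_G(S)$ and the automorphism action on $Q(S)$.
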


\begin{proof}
It is easy to see that 
\[
I:=S:_S Sf=\Ker(S\rightarrow \Hom_S(Sf,(Sf+S)/S))
\]
is a $G(k)$-submodule of
$S$.
By Lemma~\ref{dense.thm}, $I$ is a $G$-ideal of $S$.
If $f=b/a$ with $b\in S$ and $a\in S\nz$, then $a\in I$, and hence
$I$ contains a nonzerodivisor.
As $f\in S:_{Q(S)}I$, $f\in Q_G(S)$.
So $Sf$ is a $G(k)$-submodule of the $G$-module $Q_G(S)$, and
hence it is a $G$-submodule of $Q_G(S)$ by Lemma~\ref{dense.thm} again.
\end{proof}

\paragraph Let $V$ be a $G$-module and $f\in V$.
We say that $f$ is a semiinvariant under the action of $G$ if
the one-dimensional subspace $kf$ is a $G$-submodule of $V$.
Let $X(G)$ denote the set of isomorphism classes of characters
(i.e., one-dimensional $G$-modules) of $G$.
$X(G)$ is identified with the set of homomorphisms
$\Alggrp(G,\Bbb G_m)$ from $G$ to $\Bbb G_m$.
It is a subset of $\Mor(G,\Bbb A^1\setminus\{0\})=k[G]^\times$.
So we sometimes consider that $X(G)\subset k[G]^\times$.
Note that $X(G)$ is then identified with the group of group-like
elements of $k[G]$.

For $\chi\in X(G)$, define
\begin{multline*}
V^\chi:=\{f\in V\mid \text{$f$ is a semiinvariant, and $kf\cong \chi$}\}
\cup\{0\}
\\
=\{f\in V\mid \omega(f)=f\otimes \chi\},
\end{multline*}
where $\omega$ is the coaction of $V$.
Then $V^\chi\cong \Hom_G(\chi,V)$ is a subspace of $V$.
For a $G$-algebra $A$, $A_G:=\bigoplus_{\chi\in X(G)}A^\chi$ is a 
$X(G)$-graded $A^G$-algebra.

\paragraph\label{quasi-S_1.par}
A scheme $X$ is said to be quasi-$(S_1)$, if $X$ is the
union of finitely many irreducible closed subsets, and for any two
dense open subsets $U$ and $V$ of $X$ such that $V\subset U$, the
restriction map $\Gamma(U,\O_X)\rightarrow\Gamma(V,\O_X)$ is injective.
An open subscheme of a quasi-$(S_1)$ scheme is again quasi-$(S_1)$.
The inductive limit $\indlim \Gamma(U,\O_X)$, where $U$ runs through the
all dense open subsets of $X$, is denoted by $\R(X)$, and is called the
function ring of $X$.
For $f\in \R(X)$, there is a unique largest dense open subset $U(f)$ 
(resp.\ $U^*(f)$) of
$X$ such that $f\in \Gamma(U(f),\O_X)$ 
(resp.\ $f\in \Gamma(U(f),\O_X)^\times$).
$U(f)$ is called the domain of definition of $f$.
Note that $f:U(f)\rightarrow \Bbb A^1_{\Bbb Z}$ is a morphism, and
$U^*(f)=f^{-1}(\Bbb A^1_{\Bbb Z}\setminus 0)$,
where $0=\Spec \Bbb Z\subset \Bbb A^1_{\Bbb Z}$ is the origin.

\paragraph Let $S$ be a commutative ring.
Then $\Spec S$ is quasi-$(S_1)$ if and only if 
$S$ has only finitely many minimal primes, and 
$S\nz=S\setminus\bigcup_{
P\in \Min S}P$, where $\Min S$ denotes the set of minimal primes of $S$.
In this case, $\R(\Spec S)=Q(S)$.
If $X$ is a Noetherian scheme, then $X$ is quasi-$(S_1)$ if and only if
$X$ satisfies Serre's $(S_1)$-condition.

\paragraph A morphism $h:Y\rightarrow X$ of schemes is said to be
almost dominating if for any dense open subset $U$ of $X$, $h^{-1}(U)$ 
is dense in $Y$.
If $X$ and $Y$ are quasi-$(S_1)$ and $h:Y\rightarrow X$ is almost
dominating, the canonical map between the
function rings $h^*:\R(X)\rightarrow
\R(Y)$ is induced.
Let $X$ and $Y$ be the unions of finitely many irreducible closed subsets
and $h:Y\rightarrow X$ be a morphism.
Then
$h$ is almost dominating if and only if the generic point of an irreducible
component of $Y$ is mapped to the generic point of an irreducible component
of $X$ by $h$.
This condition is satisfied if $h$ is a flat morphism.
This condition is preserved by the base change by an open immersion.

\begin{lemma}\label{rat-lem.thm}
Let $h:Y\rightarrow X$ be a faithfully flat quasi-compact morphism 
between quasi-$(S_1)$ schemes.
Let $f\in \R(X)$, and $h^*(f)\in \Gamma(Y,\O_Y)$.
Then $f\in \Gamma(X,\O_X)$.
Thus identifying $\R(X)$ with a subring of $\R(Y)$ via the
injective map $h^*$,
we have $\R(X)\cap \Gamma(Y,\O_Y)=\Gamma(X,\O_X)$.
\end{lemma}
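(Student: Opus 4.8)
The plan is to reduce the statement to the affine case and there descend the ``denominator ideal'' of $f$ along the faithfully flat map. First I would dispose of the easy inclusion: a global section of $\O_X$ is a fortiori an element of $\R(X)$ whose pullback is a global section of $\O_Y$, so $\Gamma(X,\O_X)\subset\R(X)\cap\Gamma(Y,\O_Y)$. Hence everything rests on the first assertion, that $h^*(f)\in\Gamma(Y,\O_Y)$ forces $f\in\Gamma(X,\O_X)$; combined with the reverse inclusion and the injectivity of $h^*\colon\R(X)\rightarrow\R(Y)$ (which is immediate from the affine injectivity $Q(A)\hookrightarrow Q(B)$ appearing below), this gives the displayed equality.

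Since ``$f\in\Gamma(X,\O_X)$'' is a sheaf-local condition, I would cover $X$ by affine opens $X_i=\Spec A_i$. Each $X_i$ is again quasi-$(S_1)$, and because the dense open $U(f)$ contains the generic point of every irreducible component, $U(f)\cap X_i$ is dense in $X_i$, so $f$ restricts to an element $f_i\in\R(X_i)=Q(A_i)$. Here the quasi-compactness of $h$ is essential: $h^{-1}(X_i)$ is then quasi-compact, hence covered by finitely many affine opens $\Spec B_{ij}$, and $B_i:=\prod_j B_{ij}$ gives an \emph{affine} faithfully flat quasi-compact map $\Spec B_i\rightarrow X_i$ through which $h^*(f)$ restricts to an element of $B_i=\Gamma(\Spec B_i,\O)$. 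Thus the whole problem collapses to the affine assertion: if $A\rightarrow B$ is faithfully flat and $f\in Q(A)$ has image in $B\subset Q(B)$, then $f\in A$.

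For this affine core I would use the ideal $I:=A:_A f=\Ker\bigl(A\xrightarrow{\,a\mapsto\overline{af}\,}Q(A)/A\bigr)$, so that $f\in A$ is exactly $I=A$. A nonzerodivisor stays a nonzerodivisor under the flat extension $A\rightarrow B$, so with $S=A\nz$ one has $S^{-1}B\subset Q(B)$ and a well-defined injective map $Q(A)\rightarrow Q(B)$; flatness identifies $Q(A)\otimes_A B$ with $S^{-1}B$. Tensoring the defining exact sequence of $I$ with the flat module $B$ yields $I\otimes_A B=\Ker(B\rightarrow S^{-1}B/B)$, and since the image of $f$ already lies in $B$ this last map is zero, whence $IB=B$. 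Faithful flatness now gives $A/I=0$, that is $I=A$ and $f\in A$. Finally I would glue: the local extensions $f_i\in A_i$ agree on overlaps because $\Gamma(X_i\cap X_j,\O_X)$ injects into $\R(X_i\cap X_j)$ and both restrict to $f$, so they patch to a global section of $\O_X$ equal to $f$ in $\R(X)$.

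The step I expect to be the main obstacle is not the descent itself but the careful handling of the non-domain situation in the reduction: one must verify that the total ring of fractions interacts correctly with flat base change when $A$ has several minimal primes, i.e.\ that nonzerodivisors of $A$ remain nonzerodivisors in $B$ and that $Q(A)\otimes_A B\cong S^{-1}B\subset Q(B)$. Once that point is secured, the descent of the ideal $I$ and the gluing are routine.
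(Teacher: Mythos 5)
Your argument is correct, and at the top level it mirrors the paper's: reduce to affine $X$ with an affine faithfully flat cover of $Y$ (using quasi-compactness of $h$ exactly as you do), then descend along a faithfully flat ring map $A\rightarrow B$. The differences lie in both halves. For the reduction, you cover, solve on each chart $X_i=\Spec A_i$, and glue, which obliges you to check the overlap compatibilities via the injectivity $\Gamma(V,\O_V)\hookrightarrow\R(V)$ for quasi-$(S_1)$ schemes; the paper instead argues by contradiction, picking an affine neighborhood $V$ of a point of $X\setminus U(f)$ and observing that $f$ restricted to $V$ is again a counterexample, so the affine case yields a contradiction and no gluing is ever needed. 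For the affine core, you descend the denominator ideal $I=A:_A f$: tensoring $0\rightarrow I\rightarrow A\rightarrow Q(A)/A$ with the flat module $B$ identifies $IB$ with $\Ker\bigl(B\rightarrow (A\nz)^{-1}B/B\bigr)=B$, and faithful flatness gives $I=A$; the paper instead verifies $f\otimes 1=1\otimes f$ in $B\otimes_A B$ (by embedding $B\otimes_A B$ into $Q(A)\otimes_A(B\otimes_A B)$, legitimate since $B\otimes_A B$ is $A$-flat) and invokes exactness of the Amitsur sequence $0\rightarrow A\rightarrow B\rightarrow B\otimes_A B$. Both mechanisms are faithfully flat descent: yours uses only the bare form of faithful flatness ($M\otimes_A B=0\Rightarrow M=0$) and is marginally more elementary, while the paper's equalizer argument handles the element $f$ directly without introducing $I$. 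The subtlety you flag---that elements of $A\nz$ remain nonzerodivisors in $B$, so that $Q(A)\otimes_A B\cong (A\nz)^{-1}B$ embeds in $Q(B)$---is genuine, and your resolution (tensor multiplication by $s$ with the flat module $B$) is correct; note that the paper's proof silently relies on the same fact when it asserts that the image of $f$ lies in $B$ and forms the embeddings above.
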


\begin{proof} 
Assume the contrary.
Then $U(f)\neq X$.
Taking an affine neighborhood $V$ of $x\in X\setminus U(f)$.
Then $h:h^{-1}(V)\rightarrow V$ is a faithfully flat morphism between
quasi-$(S_1)$ schemes, $f$ can be viewed as an element of 
$\R(V)$ (because $U(f)\cap V$ is dense in $V$), 
$f\notin \Gamma(V,\O_V)$, but $h^*(f)\in\Gamma(h^{-1}(V),\O_{h^{-1}
(V)})$.
Thus this is another counterexample to the lemma where $V$ is affine.
Replacing $X$ by $V$, we may assume
that $X=\Spec A$ is affine, to get a contradiction.

Take a faithfully 
flat morphism $\Spec B\rightarrow Y$.
Such a map exists, since $Y$ is quasi-compact.
Then $f\in B$.
In $Q(A)\otimes_A(B\otimes_A B)$, $1\otimes(f\otimes 1)$ and $1\otimes
(1\otimes f)$ agree, because $f\in Q(A)$.
As $A\rightarrow Q(A)$ is injective and $B\otimes_A B$ is flat over $A$, 
$B\otimes_A B\rightarrow Q(A)\otimes_A (B\otimes_A B)$ is injective, and
hence $f\otimes 1=1\otimes f$ in $B\otimes_A B$.
As
\[
0\rightarrow A\rightarrow B\xrightarrow{\alpha} B\otimes_A B
\]
is exact with $\alpha(b)=1\otimes b-b\otimes 1$, $f\in A$.
This is a contradiction.
\end{proof}

\begin{corollary}\label{rat-func.thm}
Let $h:Y\rightarrow X$ be a faithfully flat quasi-compact morphism between
quasi-$(S_1)$ schemes which is an open map, 
and $f\in \R(X)$.
Then $U(h^*(f))=h^{-1}(U(f))$, and $U^*(h^*(f))=h^{-1}(U^*(f))$.
\end{corollary}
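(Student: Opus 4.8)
The plan is to establish the first identity $U(h^*(f))=h^{-1}(U(f))$ and then read off the second from the description of $U^*$ as a preimage. The inclusion $h^{-1}(U(f))\subseteq U(h^*(f))$ is the easy half: since $h$ is flat it is almost dominating, so $h^{-1}(U(f))$ is a dense open of $Y$ and $h^*$ is defined; the pullback of the section $f\in\Gamma(U(f),\O_X)$ is a section of $\O_Y$ over $h^{-1}(U(f))$, whence this dense open lies in the domain of definition $U(h^*(f))$.

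For the reverse inclusion I would instead prove $h(U(h^*(f)))\subseteq U(f)$. As $h$ is open this image is an open subset of $X$, and then $U(h^*(f))\subseteq h^{-1}(h(U(h^*(f))))\subseteq h^{-1}(U(f))$ gives what we want. So fix $x\in h(U(h^*(f)))$ and $y\in U(h^*(f))$ with $h(y)=x$; the goal is an affine open neighbourhood of $x$ on which $f$ is regular, obtained by reducing to Lemma~\ref{rat-lem.thm}. First choose an affine open $\Spec A\ni x$ in $X$; then $h^{-1}(\Spec A)$ is quasi-compact, being the preimage of an affine under the quasi-compact $h$, and contains $y$, so I may pick an affine open $\Spec B$ with $y\in\Spec B\subseteq U(h^*(f))\cap h^{-1}(\Spec A)$. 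By construction $h^*(f)\in\Gamma(\Spec B,\O_Y)=B$, and $h|_{\Spec B}\colon\Spec B\to\Spec A$ is a morphism of affine schemes, hence quasi-compact, with open image $W_0:=h(\Spec B)\ni x$. Shrinking, choose an affine open $\Spec A'$ with $x\in\Spec A'\subseteq W_0$ and set $\Spec B':=(h|_{\Spec B})^{-1}(\Spec A')$.

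Now $\Spec B'$ is quasi-compact, being the preimage of the quasi-compact open $\Spec A'$ under the quasi-compact morphism $h|_{\Spec B}$, and the induced $g\colon\Spec B'\to\Spec A'$ is flat (a restriction of $h$) and surjective: every point of $\Spec A'\subseteq W_0=h(\Spec B)$ has a preimage in $\Spec B$ lying over $\Spec A'$, hence in $\Spec B'$. Thus $g$ is faithfully flat and quasi-compact between quasi-$(S_1)$ schemes. Since $\Spec B'\subseteq\Spec B\subseteq U(h^*(f))$ we have $h^*(f)\in\Gamma(\Spec B',\O_Y)$, so Lemma~\ref{rat-lem.thm} applied to $g$ yields $f\in\Gamma(\Spec A',\O_X)$, i.e.\ $x\in U(f)$. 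This proves $h(U(h^*(f)))\subseteq U(f)$, completing the first identity.

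For the second identity I would use the description recorded in (\ref{quasi-S_1.par}) of $U^*$ of a rational function $\varphi$ as $\varphi^{-1}(\Bbb A^1_{\Bbb Z}\setminus 0)$, with $\varphi$ regarded as a morphism on its domain of definition. By the first identity the morphism $h^*(f)\colon U(h^*(f))\to\Bbb A^1_{\Bbb Z}$ is defined on $h^{-1}(U(f))$ and factors as $h^{-1}(U(f))\xrightarrow{h}U(f)\xrightarrow{f}\Bbb A^1_{\Bbb Z}$, so taking preimages of $\Bbb A^1_{\Bbb Z}\setminus 0$ gives $U^*(h^*(f))=h^{-1}(f^{-1}(\Bbb A^1_{\Bbb Z}\setminus 0))=h^{-1}(U^*(f))$. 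The main obstacle is precisely the reverse inclusion of the first identity, namely producing a faithfully flat \emph{quasi-compact} cover to which Lemma~\ref{rat-lem.thm} applies: the open set $U(h^*(f))$ need not be quasi-compact and so cannot be fed into the lemma directly; the openness of $h$ lets me shrink the base inside the image $h(\Spec B)$, while treating $h|_{\Spec B}$ as an affine (hence quasi-compact) morphism restores quasi-compactness of the source.
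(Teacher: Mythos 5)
Your proposal is correct and takes essentially the same route as the paper: the easy inclusion $h^{-1}(U(f))\subseteq U(h^*(f))$, then the hard inclusion via showing $h(U(h^*(f)))\subseteq U(f)$ using Lemma~\ref{rat-lem.thm} and the openness of $h$, and finally the factorization $h^{-1}(U(f))\xrightarrow{h}U(f)\xrightarrow{f}\Bbb A^1_{\Bbb Z}$ for the $U^*$ statement. The only difference is one of care: the paper asserts in a single line that $h\colon U(h^*(f))\rightarrow h(U(h^*(f)))$ is faithfully flat and quasi-compact, whereas you correctly observe that quasi-compactness is not automatic for the restriction of a quasi-compact morphism to an arbitrary open subset, and you repair this by shrinking to affine opens $\Spec A'$ and the quasi-compact preimage inside $\Spec B$ before invoking the lemma --- a legitimate and slightly more rigorous rendering of the same argument.
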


\begin{proof} 
$U(h^*(f))\supset h^{-1}(U(f))$ is obvious.
On the other hand, $h:U(h^*(f))\rightarrow h(U(h^*(f)))$ is faithfully flat
quasi-compact, and $f\in\Gamma(h(U(h^*(f))),\O_X)$ by Lemma~\ref{rat-lem.thm}.
In other words, $h(U(h^*(f)))\subset U(f)$.
The first assertion follows.

Note that 
$U^*(h^*(f))$ is the inverse image of $\Bbb A^1_{\Bbb Z}\setminus 0$ by the
morphism
\[
h^{-1}(U(f))\xrightarrow{h} U(f)\xrightarrow{f} \Bbb A^1_{\Bbb Z}.
\]
So it is $h^{-1}(U^*(f))$.
\end{proof}

\begin{lemma}\label{Q_F-inverse.thm}
Let $R$ be a commutative ring, and $F$ an affine flat $R$-group scheme
of finite type.
Let $\varphi:S\rightarrow T$ be an injective $F$-algebra map between
$F$-algebra domains.
Then $Q(\varphi)^{-1}(Q_F(T))=Q_F(S)$.
\end{lemma}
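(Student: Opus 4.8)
The plan is to prove the two inclusions separately, the inclusion $\supseteq$ being immediate and the inclusion $\subseteq$ carrying all the content. For $\supseteq$: since $\varphi$ is injective and $S,T$ are domains, $\varphi(S\nz)\subseteq T\nz$, so Lemma~\ref{ST.thm} applies and gives $Q(\varphi)(Q_F(S))\subseteq Q_F(T)$, i.e.\ $Q_F(S)\subseteq Q(\varphi)^{-1}(Q_F(T))$.

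For $\subseteq$ I would argue by faithfully flat descent, using the description $Q_F(S)=(\omega')^{-1}(Q(S)\otimes_R R[F])$ of Lemma~\ref{omega-inverse.thm}. Write $\omega_S,\omega_T$ for the two coactions and, as in the proof of Lemma~\ref{ST.thm}, let $Q'_S$ and $Q'_T$ be the localizations of $S\otimes_R R[F]$ and $T\otimes_R R[F]$ at $C(S)$ and $C(T)$, so that $\omega'\colon Q(S)\to Q'_S$ and $\omega'\colon Q(T)\to Q'_T$ are defined. Put $A_0:=Q(S)\otimes_R R[F]$ and $B_0:=Q(T)\otimes_R R[F]$. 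Three facts will be recorded first: (i) $A_0\subseteq Q'_S$ and $B_0\subseteq Q'_T$, since $S\nz\otimes 1\subseteq C(S)$ and $T\nz\otimes 1\subseteq C(T)$; (ii) the localization map $B_0\hookrightarrow Q'_T$ is injective, because $\omega_T$ is flat (see (\ref{RFS.par})), so each $\omega_T(t)$ with $t\in T\nz$ is a nonzerodivisor of $T\otimes_R R[F]$ and hence of its further localization $B_0$; and (iii) the map $\bar\varphi:=Q(\varphi)\otimes_R R[F]\colon A_0\to B_0$ is faithfully flat, being the base change of the field extension $Q(\varphi)\colon Q(S)\to Q(T)$ along $Q(S)\to A_0$.

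Now take $f\in Q(S)$ with $g:=Q(\varphi)(f)\in Q_F(T)$, and write $f=b/a$ with $a\in S\nz$, $b\in S$. By Lemma~\ref{omega-inverse.thm} applied to $T$, the element $\omega'(g)=\omega_T(\varphi(b))\,\omega_T(\varphi(a))^{-1}$ lies in $B_0$. Since by (ii) all three of $\omega_T(\varphi(b))$, $\omega_T(\varphi(a))$, $\omega'(g)$ lie in $B_0$ and $B_0\hookrightarrow Q'_T$ is injective, the relation $\omega_T(\varphi(b))=\omega_T(\varphi(a))\,\omega'(g)$ holds already in $B_0$, so $\omega_T(\varphi(b))\in\omega_T(\varphi(a))B_0$. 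Equivariance of $\varphi$ gives $\omega_T(\varphi(x))=\bar\varphi(\omega_S(x))$ for $x\in S$, hence this membership reads $\bar\varphi(\omega_S(b))\in\bar\varphi(\omega_S(a))B_0$. Applying faithfully flat descent along $\bar\varphi$ to the principal ideal $I:=\omega_S(a)A_0$, namely $\bar\varphi^{-1}(IB_0)=I$, yields $\omega_S(b)\in\omega_S(a)A_0$, whence $\omega'(f)=\omega_S(b)\,\omega_S(a)^{-1}\in A_0=Q(S)\otimes_R R[F]$. By Lemma~\ref{omega-inverse.thm} applied to $S$ this means $f\in Q_F(S)$, which completes the reverse inclusion.

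The steps that still require care are the bookkeeping among the nested rings $Q(S)\otimes_R R[F]\subseteq Q'_S\subseteq Q(S\otimes_R R[F])$, so that Lemma~\ref{omega-inverse.thm}, stated with target $Q(S\otimes_R R[F])$, may legitimately be read inside $Q'_S$, and the verification of~(iii) that $\bar\varphi$ is faithfully flat. These, together with fact~(ii), are the main obstacle; once they are in place the descent of ideal membership is entirely standard, and no injectivity of the induced map $Q'_S\to Q'_T$ is needed, since $\omega'(g)$ is computed directly.
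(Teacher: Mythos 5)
Your proof is correct, and it takes a genuinely different route from the paper's. The paper reduces to the inclusion $S\subseteq T$ and proves the intersection formula $Q(T)\otimes_R R[F]\cap Q(S\otimes_R R[F])=Q(S)\otimes_R R[F]$ inside $Q(T\otimes_R R[F])$ by applying Lemma~\ref{rat-lem.thm} to the faithfully flat morphism of quasi-$(S_1)$ schemes $\Spec(Q(T)\otimes_R R[F])\rightarrow\Spec(Q(S)\otimes_R R[F])$ --- and this is exactly where the finite-type hypothesis enters, since it guarantees that these group schemes over fields are local complete intersections, hence $(S_1)$; the paper then converts $(\omega_T')^{-1}$ into $(\iota_T')^{-1}$ via the untwisting automorphism $\gamma_T(t\otimes a)=\omega_T(t)(1\otimes a)$, obtains $(\omega_T')^{-1}(Q(S\otimes_R R[F]))=Q(S)$, and finishes with Lemma~\ref{omega-inverse.thm}. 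You instead descend a single principal-ideal membership $\omega_S(b)\in\omega_S(a)A_0$ along $\bar\varphi\colon A_0\rightarrow B_0$, which is faithfully flat simply as the base change of the field extension $Q(S)\hookrightarrow Q(T)$ along $Q(S)\rightarrow A_0$. Your supporting facts all check out: for (ii), elements of $\omega_T(T\nz)$ are nonzerodivisors of $T\otimes_R R[F]$ by flatness of $\omega_T$, and since $B_0$ is a localization at the nonzerodivisors $T\nz\otimes 1$ it sits inside $Q(T\otimes_R R[F])$, where these elements become units, so they remain nonzerodivisors on $B_0$ and $B_0\rightarrow Q'_T$ is injective; the nested inclusions $A_0\subseteq Q'_S\subseteq Q(S\otimes_R R[F])$ are compatible for the same reason, so reading Lemma~\ref{omega-inverse.thm} inside $Q'_S$ or $Q'_T$ is legitimate; and $\bar\varphi^{-1}(IB_0)=I$ is the standard consequence of the injection $A_0/I\hookrightarrow B_0/IB_0$ furnished by faithful flatness. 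What your approach buys: it is purely ring-theoretic, avoids both $\gamma_T$ and the rational-function descent machinery, and --- notably --- never uses that $F$ is of finite type, so your argument actually proves the lemma for an arbitrary affine flat $R$-group scheme. What the paper's approach buys: the stronger intermediate statements, namely the intersection formula above and the identity $(\omega_T')^{-1}(Q(S\otimes_R R[F]))=Q(S)$, which carry more structural information than the preimage statement alone.
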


\begin{proof}
We may assume that $S\subset T$ and $\varphi$ is the inclusion map.
So we may consider that $Q(S)$ is a subfield of $Q(T)$.

Note that $Q(T)\otimes_R R[F]\cap Q(S\otimes_R R[F])=Q(S)\otimes_R R[F]$
in $Q(T\otimes_R R[F])$.
Indeed, letting $k=Q(S)$, $X=\Spec(k\otimes_R R[F])$, 
$Y=\Spec(Q(T)\otimes_R R[F])$, we have that
\begin{multline*}
Q(T)\otimes_R R[F]\cap Q(S\otimes_R R[F])=\Gamma(Y,\O_Y)\cap 
\R(X)\\
=\Gamma(X,\O_X)=Q(S)\otimes_R
R[F]
\end{multline*}
by Lemma~\ref{rat-lem.thm} (recall that $X$ and $Y$ are affine algebraic
group-schemes over some fields, and hence are local complete intersections
\cite[(31.14)]{ETI}, and in particular, $(S_1)$).

It follows easily that
\[
Q(T)\otimes_R R\cap Q(S\otimes_R R[F])=
Q(T)\otimes_R R\cap Q(S)\otimes_R R[F]=Q(S)\otimes_R R.
\]

Now let $\iota_T:T\rightarrow T\otimes_R R[F]$ be the map given by
$\iota_T(t)=t\otimes 1$, and $\iota_T': Q(T)\rightarrow Q(T\otimes_R R[F])$ 
be the induced map.
Let $\gamma_T:T\otimes_R R[F]\rightarrow T\otimes_R R[F]$ be the map given
by $\gamma_T(t\otimes a)=\omega_T(t)(1\otimes a)$.
It corresponds to the isomorphism $\Spec T\times F\rightarrow \Spec T\times F$
given by $(t,f)\mapsto (f^{-1}t,f)$,
and hence $\gamma_T$ is an isomorphism.
Let $\gamma'_T:Q(T\otimes_R R[F])\rightarrow Q(T\otimes_R R[F])$ be the
induced map.
Note that $\gamma_T\iota_T=\omega_T$ and $\gamma'_T\iota'_T=\omega'_T$.
Note also that $\gamma'_T$ is an isomorphism, and
$\gamma'_T$ maps $Q(S\otimes_R R[F])$ bijectively onto
$Q(S\otimes_R R[F])$.
Now we have
\begin{multline*}
(\omega'_T)^{-1}(Q(S\otimes_R R[F]))=
(\iota'_T)^{-1}(Q(S\otimes_R R[F]))\\
=
Q(T)\cap Q(S\otimes_R R[F])=Q(S).
\end{multline*}

Note that $(\omega'_T)|_{Q(S)}:Q(S)\rightarrow Q(S\otimes_R R[F])$ is
nothing but $\omega'_S$, induced by $\omega_S$.
Now 
\begin{multline*}
Q_F(T)\cap Q(S)=(\omega'_T)^{-1}(Q(T)\otimes_R R[F])\cap
(\omega'_T)^{-1}(Q(S\otimes_R R[F]))\\
=(\omega'_T)^{-1}(Q(T)\otimes_R R[F]\cap Q(S\otimes_R R[F]))
=(\omega'_T)^{-1}(Q(S)\otimes_R R[F])\\
=
(\omega'_S)^{-1}(Q(S)\otimes_R R[F])=
Q_F(S)
\end{multline*}
by Lemma~\ref{omega-inverse.thm}.
This is what we wanted to prove.
\end{proof}

\paragraph\label{torus.par}
We give an example of $Q_F(S)$.
Let $R=\Bbb Z$, and $F=\Bbb G_m^n$, the split $n$-torus over $\Bbb Z$.
Let $S$ be an $F$-algebra domain.
In other words, $S=\bigoplus_{\lambda\in \Bbb Z^n}S_\lambda$ is a
$\Bbb Z^n$-graded domain, see \cite[(II.1.2.1)]{Hashimoto}.
Then $Q_F(S)$ is the localization $S_{\Gamma(S)}$ by the set 
of nonzero homogeneous elements $\Gamma(S)$ of $S$.

We prove this fact.
Note that $Q_F(S)=\indlim Q_F(T)$ by Lemma~\ref{Q_F-inverse.thm},
where the inductive limit is taken over $F$-subalgebras $T$ of
$S$ of finite type over $\Bbb Z$.
As we have $S_{\Gamma(S)}=\indlim T_{\Gamma(T)}$, replacing $S$ by $T$, 
we may assume that $S$ is of finite type over $\Bbb Z$.

So by Lemma~\ref{colon.thm}, $Q_F(S)=\bigcup_I S:_{Q(S)}I$, where 
the union is taken over all the nonzero homogeneous ideals $I$ of $S$.
If $\alpha\in Q_F(S)$, then $\alpha\in S:_{Q(S)}I$ for some $I$.
Taking a nonzero homogeneous element $s\in I$, $\alpha\in (1/s)S
\subset S_{\Gamma(S)}$.
Conversely, if $\alpha=a/s\in S_{\Gamma(S)}$ with $a\in S$ and
$s\in\Gamma(S)$, then $\alpha\in S:_{Q(S)}s\subset Q_F(S)$.
So $Q_F(S)=S_{\Gamma(S)}$, as desired.

\begin{lemma}\label{int-cl.thm}
Let $R$ be a commutative ring, $F$ a smooth affine $R$-group scheme
of finite type.
Let $A$ and $B$ be $F$-algebras, and $\varphi:A\rightarrow B$ an
$F$-algebra map.
Assume that $A$ is Noetherian.
Let $C$ be the integral closure of $\varphi(A)$ in $B$.
Then $C$ is an $F$-subalgebra of $B$.
\end{lemma}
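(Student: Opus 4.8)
The plan is to prove that the subring $C\subseteq B$ is stable under the coaction $\omega_B\colon B\to B\otimes_R R[F]$, i.e.\ that $\omega_B(C)\subseteq C\otimes_R R[F]$. Since $F$ is flat, $R[F]$ is $R$-flat, so $C\otimes_R R[F]$ is naturally a submodule of $B\otimes_R R[F]$, and this containment is precisely the statement that $C$ is an $F$-subcomodule, hence (being a subring) an $F$-subalgebra of $B$. This is the same mechanism already used in the special case $B=Q(S)$ in Corollary~\ref{normalization.thm}.

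First I would record that, because $\varphi$ is an $F$-algebra map, the image $\varphi(A)$ is itself an $F$-subalgebra of $B$; concretely $\omega_B\circ\varphi=(\varphi\otimes 1_{R[F]})\circ\omega_A$, and since $\omega_A(A)\subseteq A\otimes_R R[F]$ we get $\omega_B(\varphi(A))\subseteq\varphi(A)\otimes_R R[F]$. Next, let $D$ denote the integral closure of $\varphi(A)\otimes_R R[F]$ in $B\otimes_R R[F]$. I claim $\omega_B(C)\subseteq D$: given $c\in C$, choose a monic relation $c^n+\varphi(a_{n-1})c^{n-1}+\cdots+\varphi(a_0)=0$ with $a_i\in A$, and apply the ring homomorphism $\omega_B$; using the previous sentence, the result is a monic relation for $\omega_B(c)$ whose coefficients all lie in $\varphi(A)\otimes_R R[F]$, so $\omega_B(c)\in D$.

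The heart of the matter is to compute $D$. The claim is that the formation of the integral closure of $\varphi(A)$ in $B$ commutes with the base change $\varphi(A)\to A':=\varphi(A)\otimes_R R[F]$, i.e.\ that $D=C\otimes_R R[F]$. Since $F$ is a smooth affine $R$-group scheme of finite type, $R[F]$ is smooth and of finite type over $R$, so $A'$ is smooth (in particular flat) and of finite type over $\varphi(A)$; as $\varphi(A)$ is a quotient of the Noetherian ring $A$, both $\varphi(A)$ and $A'$ are Noetherian. Identifying $B\otimes_R R[F]=B\otimes_{\varphi(A)}A'$ and $C\otimes_R R[F]=C\otimes_{\varphi(A)}A'$, the compatibility of integral closure with smooth base change gives that the integral closure of $A'$ in $B\otimes_{\varphi(A)}A'$ is exactly $C\otimes_{\varphi(A)}A'$; that is, $D=C\otimes_R R[F]$. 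Combining this with the previous paragraph yields $\omega_B(C)\subseteq D=C\otimes_R R[F]$, which is the desired stability.

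I expect the base-change identity $D=C\otimes_R R[F]$ to be the main obstacle. Plain flatness of $R[F]$ is \emph{not} enough for integral closure to commute with base change (completion of a non-excellent ring is the standard warning), and it is exactly the smoothness of $F$—equivalently the geometric regularity of the fibres of $\varphi(A)\to A'$—that validates the commutation, with the Noetherian hypothesis on $A$ keeping all the rings in play Noetherian (indeed excellent), where this compatibility is available. If one prefers to avoid invoking smooth base change as a black box, an alternative is to mimic the geometric normalization argument of Lemma~\ref{pre-normalization.thm}: realize $D$ and $C\otimes_R R[F]$ as global sections/normalizations and use that $F$ smooth forces $F\times$(normal) to be normal; but the cleanest route is the coaction-plus-base-change argument above.
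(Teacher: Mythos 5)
Your proposal is correct and follows essentially the same route as the paper: the paper likewise reduces to $\varphi(A)\subset B$, cites EGA~IV (6.14.4) for the fact that the integral closure of $\varphi(A)\otimes_R R[F]$ in $B\otimes_R R[F]$ is $C\otimes_R R[F]$ (this is exactly your smooth-base-change step, stated there over $R\to R[F]$ rather than over $\varphi(A)$), and concludes $\omega_B(C)\subset C\otimes_R R[F]$ from integrality of $\omega_B(C)$ over $\varphi(A)\otimes_R R[F]$. Your added remarks---that flatness alone would not suffice and that smoothness plus the Noetherian hypothesis is what validates the commutation---correctly identify why the hypotheses are stated as they are.
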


\begin{proof} 
We may assume that $A\subset B$, and $\varphi$ is the inclusion.
Let 
$\omega:B\rightarrow B\otimes_R R[F]$ be the coaction.
Note that the integral closure of $A\otimes_R R[F]$ in $B\otimes_R R[F]$ is
$C\otimes_R R[F]$ by \cite[(6.14.4)]{EGA-IV}.
As $\omega(C)$ is integral over $A\otimes R[F]$, $\omega(C)\subset
C\otimes R[F]$.
This is what we wanted to prove.
\end{proof}

\begin{lemma}[cf.~Rosenlicht \cite{Rosenlicht2}]\label{Rosenlicht.thm}
Let $X$ be a reduced $k$-scheme of finite type.
Then there is a short exact sequence of the form
\[
1\rightarrow K^\times\xrightarrow{\iota}\Gamma(X,\O_X)^\times
\rightarrow \Bbb Z^r\rightarrow 0,
\]
where $K$ is the integral closure of $k$ in $\Gamma(X,\O_X)$,
and $\iota$ is the inclusion.
\end{lemma}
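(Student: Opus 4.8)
The plan is to show that the quotient group $\Gamma(X,\O_X)^\times/K^\times$ is free abelian of finite rank; once this is established the asserted short exact sequence is immediate, with $\iota$ the (injective) inclusion $K^\times\hookrightarrow\Gamma(X,\O_X)^\times$ and $\Bbb Z^r$ the cokernel. I would proceed by two successive reductions and then a geometric computation in a base case.

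First I would reduce to the case that $X$ is normal and integral. Write $A=\Gamma(X,\O_X)$. Since $X$ is reduced of finite type it has finitely many irreducible components $X_1,\dots,X_m$, and by reducedness the restriction map $A\hookrightarrow\prod_j\Gamma(X_j,\O_{X_j})$ is injective; a unit of $A$ restricts to a unit on each $X_j$, so $A^\times\hookrightarrow\prod_j\Gamma(X_j,\O_{X_j})^\times$. Writing $K_j$ for the integral closure of $k$ in $\Gamma(X_j,\O_{X_j})$, I would check that an element of $A^\times$ whose restriction to every $X_j$ is algebraic over $k$ is itself integral over $k$, hence lies in $K$; thus the induced map $A^\times/K^\times\to\prod_j\bigl(\Gamma(X_j,\O_{X_j})^\times/K_j^\times\bigr)$ is injective, reducing everything to the integral case. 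For integral $X$ I would pass to the normalization $\nu\colon X'\to X$ (finite, as $X$ is a variety), set $A'=\Gamma(X',\O_{X'})$ and let $K'$ be the integral closure of $k$ in $A'$ (equivalently, the algebraic closure of $k$ in $k(X)$). Then $A^\times\hookrightarrow (A')^\times$ and $A^\times\cap (K')^\times=K^\times$, so $A^\times/K^\times\hookrightarrow (A')^\times/(K')^\times$. Since a subgroup of a finite-rank free abelian group is again free of finite rank, it suffices to treat $X$ normal and integral.

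For $X$ normal and integral I would compactify: by Nagata's theorem $X$ embeds as a dense open subscheme of a proper $k$-scheme, and after normalizing in $k(X)$ one obtains a normal proper integral $\bar X$ containing $X$ as a dense open. Let $D_1,\dots,D_r$ be the finitely many prime divisors of $\bar X$ supported in the closed set $\bar X\setminus X$; since $\bar X$ is normal, each carries a discrete valuation $v_{D_i}$ on $k(X)$. Define $\delta\colon A^\times\to\Bbb Z^r$ by $\delta(f)=(v_{D_1}(f),\dots,v_{D_r}(f))$. For $f\in A^\times$, every prime divisor $P$ of $\bar X$ other than the $D_i$ has its generic point in $X$, where $f$ is a unit, so $v_P(f)=0$ automatically; hence $f\in\Ker\delta$ forces $\divisor_{\bar X}(f)=0$, i.e.\ $f\in\Gamma(\bar X,\O_{\bar X})^\times$. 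As $\bar X$ is proper and integral, $\Gamma(\bar X,\O_{\bar X})$ is a finite field extension of $k$, namely exactly $K$; therefore $\Ker\delta=K^\times$ and $A^\times/K^\times\hookrightarrow\Bbb Z^r$ is free of finite rank, as desired.

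The routine parts are the two reductions and the elementary verifications that the relevant kernels are exactly $K^\times$. The main obstacle is the base case: producing a suitable compactification $\bar X$ (invoking Nagata compactification together with finiteness of normalization for varieties), arranging that $X$ sits inside it as an open subscheme with only finitely many complementary divisors, and identifying the global regular functions on the proper model $\bar X$ with the finite extension $K$. I expect the separatedness and finite-type hypotheses needed for the compactification, and the normality used both to define the boundary valuations $v_{D_i}$ and to obtain $\Gamma(\bar X,\O_{\bar X})=K$, to be the delicate points.
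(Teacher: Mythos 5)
Your proof is correct and follows essentially the same route as the paper's: reduce to the normal integral case, pass to a normal proper model, and embed $\Gamma(X,\O_X)^\times/K^\times$ into $\Bbb Z^r$ via the valuations at the finitely many boundary divisors, identifying the kernel of the valuation map with the global units of the proper model, which are exactly $K^\times$. One small caveat: Nagata compactification requires $X$ separated, which your reduction to the normal integral case does not by itself guarantee (the paper instead reduces to the affine case via a dominating morphism before compactifying projectively); this is repaired by first restricting to a dense open affine subscheme, the restriction map being injective on global functions and on unit classes since $X$ is reduced.
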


\begin{proof}
If $f:Y\rightarrow X$ is a dominating $k$-morphism with $Y$ being
a reduced $k$-scheme of finite type, and the lemma is true for $Y$,
then the lemma is true for $X$.
Thus we may assume that $X$ is affine and normal.
If the lemma is true for each connected component of $X$, then
the lemma is true for $X$.
So we may assume that $X$ is an affine normal variety.
Replacing $k$ by $K$, we may assume that $K=k$.
Let $X\hookrightarrow \bar X$ be an open immersion $k$-morphism such
that $\bar X$ is a projective normal variety.
Let $V_1,\ldots,V_n$ be the codimension one subvarieties of $\bar X$,
not intersecting $X$.
Let $v_1,\ldots,v_n$ be the corresponding normalized discrete valuations
of $k(X)$.
Then
\[
1\rightarrow k^\times=\Gamma(\bar X,\O_{\bar X})^\times
\xrightarrow{\iota}\Gamma(X,\O_X)^\times
\xrightarrow{v} \Bbb Z^n
\]
is exact, where $v(f)=(v_1(f),\ldots,v_n(f))$.
The assertion follows.
\end{proof}

\begin{lemma}[cf.~{\cite[(3.11)]{Kamke}}, {\cite[Theorem~1]{Popov2}}]
\label{principal.thm}
Let $G$ be connected.
Let $S$ be a $G$-algebra domain which is finitely generated over $k$.
Let $K$ be the integral closure of $k$ in $Q(S)$.
Assume that $X(G)\rightarrow X(K\otimes_k G)$ is surjective.
Then for $f\in Q(S)$, the following are equivalent.
\begin{description}
\item[(a)] $f\in Q_G(S)$, and $f$ is a semiinvariant of $Q_G(S)$.
\item[(b)] $U^*(f)$ is a $G$-stable open subset of $\Spec S$.
\item[(c)] $Sf\subset Q_G(S)$ is a $G$-submodule.
\end{description}
In particular, any unit of $S$ is a homogeneous unit of $S_G$.
\end{lemma}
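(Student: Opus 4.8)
The plan is to prove the three conditions equivalent cyclically, as (a)$\Rightarrow$(c)$\Rightarrow$(b)$\Rightarrow$(a), reserving the last implication as the substantial one. I assume $f\neq 0$ (the case $f=0$ being vacuous) and translate everything into the geometry of the action morphism $a\colon G\times\Spec S\to\Spec S$ and the projections $p\colon G\times\Spec S\to\Spec S$, $q\colon G\times\Spec S\to G$. Since $G$ is connected it is geometrically integral by Lemma~\ref{connected-irreducible.thm}, so $S\otimes_k k[G]$ is a domain and $G\times\Spec S$ is integral, hence quasi-$(S_1)$ in the sense of (\ref{quasi-S_1.par}); moreover $a$ is faithfully flat and open, being the composite of the shear automorphism $(g,x)\mapsto(g,gx)$ with $p$, and $p,q$ are likewise faithfully flat and open. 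Under the identification $\R(G\times\Spec S)=Q(S\otimes_k k[G])$, the coaction $\omega'$ is $a^*$, the map $\iota'$ is $p^*$, and a character $\chi$ gives $q^*\chi$, so ``$\omega'(f)=f\otimes\chi$'' means exactly $a^*f=(p^*f)(q^*\chi)$. Finally, by Lemma~\ref{int-cl.thm} the integral closure $K$ of $k$ in $Q_G(S)$ is a $G$-subalgebra, and being a finite extension field it is $G$-trivial by Lemma~\ref{connected-trivial.thm}; hence $K\subset Q(S)^G$, which is what makes the field of constants behave well below.

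For (a)$\Rightarrow$(c): if $\omega'(f)=f\otimes\chi$, then for $s\in S$ one has $\omega'(sf)=\omega(s)(f\otimes\chi)\in Sf\otimes_k k[G]$, so $Sf$ is a subcomodule of $Q_G(S)$. For (c)$\Rightarrow$(b): since $Sf$ is a subcomodule, $f\in Q_G(S)$ and $\omega'(f)\in Sf\otimes_k k[G]$; as $Sf\otimes_k k[G]$ is free of rank one over $S\otimes_k k[G]$ on the basis $f\otimes 1$, I may write $\omega'(f)=(f\otimes 1)\,v$ with $v\in S\otimes_k k[G]=\O(G\times\Spec S)$. The equality $(\omega'\otimes\id)\omega'=(\id\otimes\Delta)\omega'$ of ring maps (they agree on $S$, hence on $Q(S)$) applied to $f$ yields, after cancelling the nonzero factor, the multiplicative cocycle identity $(\id\otimes\Delta)v=(v\otimes 1)\,(\omega\otimes\id)v$, while the counit gives $(\id\otimes\varepsilon)v=1$. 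From these one checks that $v^{-1}=\sigma^*v$ for the involution $\sigma$ of $G\times\Spec S$ given by $(g,x)\mapsto(g^{-1},gx)$; since $\sigma^*v$ is regular, $v$ is a unit of $\O(G\times\Spec S)$. Now $a^*f=v\cdot p^*f$ with $v$ a global unit, so $a^*f$ and $p^*f$ have the same unit locus, and by Corollary~\ref{rat-func.thm} this reads $a^{-1}(U^*(f))=U^*(a^*f)=U^*(p^*f)=p^{-1}(U^*(f))$, i.e.\ $U^*(f)$ is $G$-stable.

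The crux is (b)$\Rightarrow$(a). Put $U:=U^*(f)$, a dense open $G$-stable subset; then $a^{-1}(U)=p^{-1}(U)=G\times U$, so $a^*f,p^*f\in\Gamma(G\times U,\O)^\times$ and $u:=a^*f\,(p^*f)^{-1}$ is a unit on $G\times U$. Using Rosenlicht's description of units (Lemma~\ref{Rosenlicht.thm}) together with the geometric integrality of $G$, I would show that a unit on the product $G\times U$ is, modulo the field of constants $K$, a product of a unit pulled back from $G$ and one pulled back from $U$; restricting $u$ along the identity section $\{e\}\times U$, where $u=1$, kills the factor from $U$ and leaves $u=q^*\gamma$ for a unit $\gamma$ of $G$ defined over $K$, i.e.\ $\gamma\in (K\otimes_k k[G])^\times$. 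Once $u=q^*\gamma$ is independent of the $\Spec S$-variable, the cocycle identity $(\id\otimes\Delta)u=(u\otimes 1)(\omega\otimes\id)u$ reduces to $\Delta\gamma=\gamma\otimes\gamma$, so $\gamma$ is group-like, a character $\gamma\in X(K\otimes_k G)$. At this point the hypothesis that $X(G)\to X(K\otimes_k G)$ is surjective lets me lift $\gamma$ to $\chi\in X(G)$; since $u$ and $q^*\chi$ are defined over $k$ and agree after the faithfully flat base change $K/k$, they agree over $k$, giving $a^*f=(p^*f)(q^*\chi)$, that is $\omega'(f)=f\otimes\chi$. By Lemma~\ref{omega-inverse.thm} this yields $f\in Q_G(S)$, a semiinvariant of $Q_G(S)$, proving (a). The main obstacle is precisely this descent: Rosenlicht produces the character only over the field of constants $K$, and it is the surjectivity hypothesis---through faithfully flat descent---that brings it back to $k$; verifying that the relevant constants are exactly $K$ (using $K\subset Q(S)^G$) and that units on $G\times U$ split off the $G$-factor is the delicate bookkeeping.

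Finally, the concluding assertion is immediate from (b)$\Rightarrow$(a): a unit $f\in S^\times$ is invertible everywhere, so $U^*(f)=\Spec S$, which is trivially $G$-stable; hence $f$ is a semiinvariant, $\omega(f)=f\otimes\chi$ for some $\chi\in X(G)$, and then $\omega(f^{-1})=f^{-1}\otimes\chi^{-1}$, so both $f$ and $f^{-1}$ are homogeneous elements of $S_G$. Thus $f$ is a homogeneous unit of $S_G$.
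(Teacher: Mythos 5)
Your (a)$\Rightarrow$(c) is fine, and your (c)$\Rightarrow$(b) is correct and genuinely different from the paper: writing $\omega'(f)=(f\otimes 1)v$ with $v\in S\otimes_k k[G]$ (legitimate since $Sf\otimes_k k[G]$ is free of rank one over the domain $S\otimes_k k[G]$), extracting the multiplicative cocycle identity from coassociativity, and inverting $v$ by pulling the cocycle back along $(g,x)\mapsto(g^{-1},g,x)$ is a clean, purely Hopf-theoretic argument. The paper takes a different route there: it proves (c)$\Rightarrow$(a) directly, by reducing to $S$ normal and $k=K=k\sep$ and observing that $J:=\bigcap_{P\in X^1(S),\,(Sf)_P\neq S_P}P$ is $G$-stable because $G(k)$ permutes the relevant primes, so that $U^*(f)=\Spec S\setminus V(J)$ is $G$-stable, and then invokes (b)$\Rightarrow$(a). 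Your cocycle argument avoids those reductions entirely, which is a real simplification of that leg. The concluding assertion about units of $S$ is also handled correctly given the equivalence.

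The gap is in (b)$\Rightarrow$(a), which is the heart of the lemma, and there your text is a promissory note rather than a proof: \lq\lq I would show that a unit on the product $G\times U$ is, modulo the field of constants $K$, a product of a unit pulled back from $G$ and one pulled back from $U$.\rq\rq\ That splitting is exactly the hard content. Lemma~\ref{Rosenlicht.thm} gives only the exact sequence $1\rightarrow K^\times\rightarrow\Gamma(X,\O_X)^\times\rightarrow\Bbb Z^r\rightarrow 0$ for a single scheme; it does not split units on a product. The naive derivation --- the map sending $g$ to the class of $u(g,\cdot)$ in $\Gamma(U,\O_U)^\times/K^\times\cong\Bbb Z^r$ is \lq\lq a map from connected $G$ to a discrete group, hence constant\rq\rq\ --- is precisely the folklore step whose rigor the author deliberately avoids (the source even contains a suppressed remark flagging the unproved \lq\lq continuity\rq\rq\ of this map); one can cite Rosenlicht's product-unit theorem from the literature, but it is not among the paper's tools, and over a non-closed field with $U$ not geometrically integral you must also settle where the constants live, which is where the reduction to $S$ normal (which you skip, and which guarantees $K\subset\Gamma(U,\O_U)$) enters. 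The paper instead proves (b)$\Rightarrow$(a) by reducing to $S$ normal and $k=K=k\sep$ (so $G(k)$ is dense by \cite[(AG13.3)]{Borel}), equivariantly compactifying $\Spec S$ into a normal projective $G$-variety $Z$, and showing for each $g\in G(k)$ that $g(f)\cdot f^{-1}\in\O_{Z,V}$ for every codimension-one subvariety $V$ of $Z$ --- via Lemma~\ref{dvr.thm} when $V$ is $G$-stable, and via $V\cap U^*(f)\neq\emptyset$ otherwise --- whence $g(f)\cdot f^{-1}\in\Gamma(Z,\O_Z)=k$, concluding with Lemma~\ref{dense.thm}. Two smaller points: restricting along $\{e\}\times U$ only normalizes the $U$-factor \emph{after} the splitting is known, so it is not a substitute for it; and your final \lq\lq faithfully flat descent\rq\rq\ is a red herring, since once $\gamma\in X(K\otimes_k G)$ lifts to $\chi\in X(G)$ the equality $\gamma=1\otimes\chi$ is immediate --- the surjectivity hypothesis is used exactly as you say, but only after the unproved splitting $u=q^*\gamma$ is in hand.
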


\begin{proof} 
We prove the equivalence.
We may assume that $f\neq 0$.
Set $X:=\Spec S$.

{\bf (a)$\Rightarrow$(b)}.
Let $\omega':Q(S)\rightarrow Q(S\otimes k[G])$ be the map induced by
the coaction.
Then as $f$ is a semiinvariant, $\omega'(f)=f\otimes \gamma$ for
some group-like element $\gamma$ of $k[G]$.
Note that $\gamma$ is a unit of $k[G]$.
Let $\alpha:X\times G\rightarrow X$ be the morphism
given by $\alpha(x,g)=g^{-1}x$.
Note that $\omega'=\alpha^*:Q(S)\rightarrow Q(S\otimes k[G])$.
Now by Corollary~\ref{rat-func.thm},
\[
\alpha^{-1}(U^*(f))=U^*(\omega'(f))=U^*(f\otimes\gamma)=
U^*(f\otimes 1)=p_1^{-1}(U^*(f)),
\]
where $p_1:X\times G\rightarrow X$ is the first projection.
So the action $G\times U^*(f)\rightarrow X$ factors through $U^*(f)$,
and $U^*(f)$ is $G$-stable.

{\bf (b)$\Rightarrow$(a)}.
Let $S'$ be the normalization of $S$, and $\pi:\Spec S'\rightarrow X=\Spec S$
the associated map.
$\pi^{-1}(U^*(f))$ is a $G$-stable open subset of $\Spec S'$.
As $G$ is geometrically integral, each codimension one subvariety of
$\Spec S'$ contained in $\Spec S'\setminus\pi^{-1}(U^*(f))$ is $G$-stable.
As $U^*(\pi^*(f))\supset \pi^{-1}(U^*(f))$ and $\Spec S'\setminus
U^*(\pi^*(f))$ is the union of some
codimension-one subvarieties, $U^*(\pi^*(f))$ is
also $G$-stable.
So we may assume that $S$ is normal.
Then we have $K\subset S$.
Being the integral closure of $k$ in $S$, $K$ is a $G$-subalgebra of
$S$ by Lemma~\ref{int-cl.thm}.
Then by Lemma~\ref{connected-trivial.thm}, $K$ is $G$-trivial.

Assume that $Kf$ is a $K\otimes_k G$-submodule of $Q_G(S)$.
Then there is a one-dimensional $k$-subspace
$kcf$ of $Kf$ which is a $G$-submodule of $Kf$ for some $c\in K$.
But since $c$ is $G$-invariant, $kf$ is also a $G$-submodule of $Kf$, and
hence $f$ is semiinvariant.
Thus replacing $k$ by $K$ and $G$ by $K\otimes G$, we may assume that $k=K$.

By 
\cite[(6.14.2)]{EGA-IV} and \cite[(6.14.4)]{EGA-IV}, 
$k\sep\otimes_k S$ is a normal domain, and $k\sep$ is integrally closed
in $k\sep\otimes_k S$.
See also \cite{Cartan}.
If $1\otimes f\in Q_{k\sep\otimes G}(k\sep\otimes S)=Q_G(k\sep\otimes S)$, 
then
$f\in Q(S)\cap Q_G(k\sep\otimes S)=Q_G(S)$ by Lemma~\ref{Q_F-inverse.thm}.
Now $f$ is a semiinvariant if and only if 
$kf\subset Q_G(S)$ is a $G$-submodule
if and only if 
$k\sep\otimes kf \subset k\sep\otimes Q_G(S)$ is a $G$-submodule
if and only if 
$k\sep(1\otimes f)\subset Q_{k\sep\otimes G}(k\sep\otimes S)$ is
a $k\sep\otimes G$-submodule
if and only if $1\otimes f$ is a semiinvariant under the action of 
$k\sep\otimes G$.
On the other hand, the canonical map $\rho:\Spec(k\sep\otimes S)
\rightarrow \Spec S$ is a $G$-morphism, and 
$\rho^{-1}(U^*(f))=U^*(1\otimes f)$ by Corollary~\ref{rat-func.thm}.
As we assume that $U^*(f)$ is $G$-stable, 
$U^*(1\otimes f)$ is $G$-stable, and hence it is also 
$k\sep\otimes G$-stable.
So replacing $k$ by $k\sep$, $S$ by $k\sep\otimes_k S$, and $G$ by 
$k\sep\otimes_k G$, we may assume that $k=K=k\sep$.

Let $V$ be a finite dimensional $G$-submodule of $S$ which generates $S$
as a $k$-algebra.
Then
\[
\Spec S=X\hookrightarrow \Spec \Sym V\hookrightarrow \Proj \Sym (V\oplus k)
\]
is a sequence of immersions which are $G$-morphisms.
Let $Z$ denote the
normalization of the closure of the image of the composite of these morphisms.
Then $Z$ is a $k$-projective normal $G$-variety, and $X$ is its $G$-stable
open subset, as can be proved easily using Lemma~\ref{pre-normalization.thm}.

We want to prove that $kf$ is a $G$-submodule of $S$.
This is equivalent to say that $kf$ is a $G(k)$-submodule of $S$ by
Lemma~\ref{dense.thm}, since $G(k)$ is dense in $G$ by \cite[(AG13.3)]{Borel}.
So it suffices to show that for any $g\in G(k)$,
\begin{equation}\label{dvr.eq}
g(f)\cdot f^{-1}\in k=\Gamma(Z,\O_Z)=
   \bigcap_{V\subset Z,\;\codim_ZV=1}\mathcal O_{Z,V}.
\end{equation}
So let $g\in G(k)$ and $V$ be a codimension one subvariety of $Z$.

If $V$ is $G$-stable, then $g$ induces an automorphism of the DVR
$\O_{Z,V}$.
Then $g(f)\cdot f^{-1}\in \O_{Z,V}^\times$ by Lemma~\ref{dvr.thm}.
If $V$ is not $G$-stable, then $V\cap U^*(f)\neq\emptyset$, and hence
$f\in \O_{Z,V}^\times$.
Since $U^*(g(f))=g(U^*(f))=U^*(f)$ by assumption, 
$g(f)\in\O_{Z,V}^\times$.
Thus $g(f)\cdot f^{-1}\in\O_{Z,V}$.
This is what we wanted to prove.

{\bf (a)$\Rightarrow$(c)} is trivial.

{\bf (c)$\Rightarrow$(a)}.
Discussing as in the proof of {\bf (b)$\Rightarrow$(a)}, we may
assume that $S$ is normal, and $k=K=k\sep$.
Then since $Sf$ is a $G(k)$-submodule of $Q_G(S)$, $g(Sf)=Sf$ for
$g\in G(k)$.
Hence for a height-one prime ideal $P$ of $S$, 
\[
(Sf)_P\neq S_P\iff
g((Sf)_P)\neq g(S_P)\iff
(Sf)_{g(P)}\neq S_{g(P)}.
\]
It follows that $J:=\bigcap_{P\in X^1(S),\;(Sf)_P\neq S_P}P$ 
is a $G$-stable ideal of $S$.
Hence $U^*(f)=\Spec S\setminus V(J)$ 
is $G$-stable.
By {\bf (b)$\Rightarrow$(a)}, which has already been proved, we have 
that $f$ is a semiinvariant.

We prove the last assertion.
Set $A:=S_G$.
Let $f\in S^\times$.
Then $Sf=S$, and hence $f$ is a homogeneous element of $A$
by the first assertion,
which has already been proved.
Similarly, $f^{-1}\in A$.
This shows that $f$ is a unit of $A$.
\end{proof}

\begin{lemma}\label{fg-sub.thm}
Let $S$ be a $G$-algebra domain, and $f\in Q_G(S)$.
If $Sf$ is a $G$-submodule of $Q_G(S)$, then there is a finitely generated
$G$-subalgebra $T$ of $S$ such that $Tf$ is a $(G,T)$-submodule of
$Q_G(T)$.
\end{lemma}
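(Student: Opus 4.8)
The plan is to gather the finitely many elements of $S$ that are needed to witness, simultaneously, that $f$ is a fraction of elements of the prospective subalgebra and that the coaction of $f$ stays inside the cyclic module $Sf$, and then to let $T$ be the $G$-subalgebra they generate. Concretely, I would first write $f=b/a$ with $a,b\in S$ and $a\neq 0$. Since $Sf$ is a $G$-submodule of $Q_G(S)$, the coaction $\omega'$ of $Q_G(S)$ satisfies $\omega'(f)\in Sf\otimes k[G]$, so I may write $\omega'(f)=\sum_{i=1}^{n}s_if\otimes c_i$ with $s_i\in S$ and $c_i\in k[G]$. By the local finiteness of the $G$-module $S$ (used in the same way in the proof of Lemma~\ref{idempotent.thm}), there is a finite-dimensional $G$-submodule $V$ of $S$ containing the finite set $\{a,b,s_1,\ldots,s_n\}$. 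Letting $T$ be the $k$-subalgebra of $S$ generated by $V$, the ring $T$ is a finitely generated $G$-subalgebra of $S$, and it is a domain as a subring of $S$.

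Next I would verify that $f\in Q_G(T)$, so that $Tf$ is a well-defined $T$-submodule of $Q_G(T)$. Since $a,b\in T$ and $a\neq 0$, we have $f=b/a\in Q(T)$. Applying Lemma~\ref{Q_F-inverse.thm} to the inclusion $T\hookrightarrow S$ (an injective $G$-algebra map of $G$-algebra domains) gives $Q_G(T)=Q(T)\cap Q_G(S)$; as $f\in Q(T)$ by construction and $f\in Q_G(S)$ by hypothesis, we conclude $f\in Q_G(T)$.

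It then remains to show that $Tf$ is $G$-stable. By Lemma~\ref{ST.thm} applied to $T\hookrightarrow S$, the induced map $Q_G(T)\rightarrow Q_G(S)$ is a $G$-algebra map, hence the coaction $\omega'_T$ of $Q_G(T)$ is the restriction of $\omega'$; therefore $\omega'_T(f)=\sum_i s_if\otimes c_i$, and this lies in $Tf\otimes k[G]$ because each $s_i\in T$. Since $\omega'_T$ is an algebra map and $T$ is $G$-stable, for any $t\in T$ we get $\omega'_T(tf)=\omega_T(t)\,\omega'_T(f)\in(T\otimes k[G])(Tf\otimes k[G])\subset Tf\otimes k[G]$, so $\omega'_T(Tf)\subset Tf\otimes k[G]$. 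Thus $Tf$ is at once a $T$-submodule and a $G$-subcomodule of the $(G,T)$-module $Q_G(T)$, hence a $(G,T)$-submodule, as required. The only genuinely delicate point is the bookkeeping that $f$ already lies in $Q_G(T)$ and that the coaction computed inside $Q_G(T)$ agrees with the one inside $Q_G(S)$; both are handled cleanly by the functoriality statements Lemma~\ref{Q_F-inverse.thm} and Lemma~\ref{ST.thm}, so no new estimates are needed, and the remainder is the standard local-finiteness packaging of comodules.
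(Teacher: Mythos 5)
Your proof is correct and follows essentially the same route as the paper's: both write $f=b/a$, extract finitely many elements $s_1,\ldots,s_n\in S$ witnessing the $G$-stability of $Sf$ at $f$, take a finitely generated $G$-subalgebra $T$ containing $a,b,s_1,\ldots,s_n$, and invoke Lemma~\ref{Q_F-inverse.thm} to get $f\in Q(T)\cap Q_G(S)=Q_G(T)$. The only (cosmetic) difference is that the paper takes a finite-dimensional $G$-submodule $V\subset Sf$ containing $f$ and observes $Tf=T\cdot V$ is automatically a $(G,T)$-submodule, whereas you re-verify the comodule condition by hand from $\omega'(f)=\sum_i s_if\otimes c_i$ together with Lemma~\ref{ST.thm}.
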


\begin{proof}
We can write $f=b/a$ with $a,b\in S$ and $a\neq 0$.
Let $V$ be a finite dimensional $G$-submodule of $Sf$ containing $f$.
Let $s_1f,\ldots,s_nf$ be a $k$-basis of $V$, where $s_1,\ldots,s_n\in S$.
Let $T$ be a finitely generated $G$-subalgebra of $S$ containing 
$a,b,s_1,\ldots,s_n$.
Then $V\subset Q(T)\cap Q_G(S)=Q_G(T)$ by Lemma~\ref{Q_F-inverse.thm}.
Hence $Tf=T\cdot V$ is a $(G,T)$-submodule of $Q_G(T)$.
\end{proof}

\begin{corollary}\label{connected-cor.thm}
Let $G$ be connected.
Let $S$ be a $G$-algebra domain.
Let $K$ be the integral closure of $k$ in $Q(S)$.
Assume that 
$X(G)\rightarrow X(K\otimes_k G)$ is
surjective.
If $Sf\subset Q_G(S)$ is a $G$-submodule, then $f$ is a semiinvariant.
\end{corollary}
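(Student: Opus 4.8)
The plan is to reduce to the finitely generated situation governed by Lemma~\ref{principal.thm} and then to lift the conclusion back up to $S$. First I would apply Lemma~\ref{fg-sub.thm} to obtain a finitely generated $G$-subalgebra $T$ of $S$ such that $Tf$ is a $(G,T)$-submodule of $Q_G(T)$; in particular $f\in Q_G(T)$ and condition {\bf (c)} of Lemma~\ref{principal.thm} holds with $T$ in place of $S$. Since $T$ is a domain, finitely generated over $k$, and $G$ is connected, Lemma~\ref{principal.thm} will apply to $T$ provided its character hypothesis is verified.

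The only point requiring work is the surjectivity of $X(G)\to X(K_T\otimes_k G)$, where $K_T$ denotes the integral closure of $k$ in $Q(T)$; this is where I expect the main obstacle to lie, since $K_T$ is genuinely smaller than $K$ and one must argue that the character hypothesis descends. From $T\subset S$ we get $Q(T)\subset Q(S)$, and any element of $Q(T)$ integral over $k$ is integral over $k$ inside $Q(S)$, so $K_T\subset K$. The resulting inclusion $K_T\otimes_k k[G]\hookrightarrow K\otimes_k k[G]$ (using that $k[G]$ is $k$-flat) is injective, hence injective on group-like elements, yielding an injection $\gamma\colon X(K_T\otimes_k G)\hookrightarrow X(K\otimes_k G)$. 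Writing $\beta\colon X(G)\to X(K_T\otimes_k G)$ and $\alpha\colon X(G)\to X(K\otimes_k G)$ for the two base-change maps, one has $\alpha=\gamma\circ\beta$. Given $\chi\in X(K_T\otimes_k G)$, the assumed surjectivity of $\alpha$ furnishes $\psi\in X(G)$ with $\alpha(\psi)=\gamma(\chi)$; then $\gamma(\beta(\psi))=\gamma(\chi)$, so $\beta(\psi)=\chi$ by injectivity of $\gamma$. Hence $\beta$ is surjective, as required.

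With the hypothesis in place, the implication {\bf (c)}$\Rightarrow${\bf (a)} of Lemma~\ref{principal.thm} applied to $T$ shows that $f$ is a semiinvariant of $Q_G(T)$. Finally I would transfer this to $S$: by Lemma~\ref{ST.thm} applied to the inclusion $T\hookrightarrow S$, the induced map $Q_G(T)\to Q_G(S)$ is a $G$-algebra map and is the restriction of $Q(T)\hookrightarrow Q(S)$, so it carries $f$ to $f$. As the image of a semiinvariant under a $G$-algebra map is again a semiinvariant (with the same character), $kf$ remains a $G$-submodule of $Q_G(S)$, so $f$ is a semiinvariant of $Q_G(S)$, which completes the proof.
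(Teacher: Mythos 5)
Your proof is correct and takes essentially the same route as the paper: reduce via Lemma~\ref{fg-sub.thm} to a finitely generated $G$-subalgebra $T$, observe that the integral closure $K_T$ of $k$ in $Q(T)$ lies between $k$ and $K$ so that surjectivity of $X(G)\rightarrow X(K_T\otimes_k G)$ follows from that of $X(G)\rightarrow X(K\otimes_k G)$ together with injectivity of $X(K_T\otimes_k G)\rightarrow X(K\otimes_k G)$, and then invoke the implication {\bf (c)}$\Rightarrow${\bf (a)} of Lemma~\ref{principal.thm}. Your explicit factorization argument $\alpha=\gamma\circ\beta$ and the final transfer of semiinvariance from $Q_G(T)$ to $Q_G(S)$ via Lemma~\ref{ST.thm} merely spell out steps the paper leaves implicit.
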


\begin{proof}
By Lemma~\ref{fg-sub.thm}, there exists some finitely generated 
$G$-subalgebra $T$ of $S$ such that $Tf$ is a $(G,T)$-submodule of $Q_G(T)$.
The integral closure $L$ of $k$ in $Q(T)$ lies between $k$ and $K$.
So the inclusion $X(G)\rightarrow X(L\otimes_k G)$ is bijective.
By Lemma~\ref{principal.thm}, $f$ is a semiinvariant.
\end{proof}

\begin{corollary}\label{principal-cor.thm}
Let $G$ be connected.
Let $S$ be a $G$-algebra domain.
Let $K$ be the integral closure of $k$ in $Q(S)$.
Assume that
$X(K\otimes_k G)$ is trivial.
Let $f\in Q(S)$, and assume one of the following:
\begin{description}
\item[(b)] $S$ is finitely generated and $U^*(f)$ is $G$-stable; or
\item[(c)] $Sf\subset Q_G(S)$ is a $G$-submodule.
\end{description}
Then $f\in Q(S)^G$.
In particular, $S^\times=(S^G)^\times$.
\qed
\end{corollary}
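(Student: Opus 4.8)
The plan is to deduce both cases from the already-developed theory of semiinvariants and then to use the triviality hypothesis to promote semiinvariance to invariance. The delicate preliminary point---and the one I expect to require the most care---is a reduction on character groups. A semiinvariant of $Q_G(S)$ carries a character lying in $X(G)$, whereas the triviality we are handed is for $X(K\otimes_k G)$; the bridge is the injectivity of the natural map $X(G)\to X(K\otimes_k G)$. This injectivity holds because $K$, being the integral closure of $k$ in the \emph{field} $Q(S)$, is itself a field algebraic over $k$, so $k[G]\to K\otimes_k k[G]$ is injective and sends distinct group-like elements to distinct ones (this is the map called an inclusion in the proof of Corollary~\ref{connected-cor.thm}). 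Hence the assumption that $X(K\otimes_k G)$ is trivial forces $X(G)$ itself to be trivial. I also note that the surjectivity of $X(G)\to X(K\otimes_k G)$ demanded by Lemma~\ref{principal.thm} and Corollary~\ref{connected-cor.thm} is automatic here, the codomain being trivial.

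With these preliminaries, I would treat the two hypotheses separately, in each case producing a semiinvariant. Under \textbf{(b)}, $S$ is finitely generated and $U^*(f)$ is $G$-stable, so the implication \textbf{(b)}$\Rightarrow$\textbf{(a)} of Lemma~\ref{principal.thm} yields $f\in Q_G(S)$ together with the fact that $f$ is a semiinvariant of $Q_G(S)$. Under \textbf{(c)}, $Sf\subset Q_G(S)$ is a $G$-submodule (so in particular $f\in Q_G(S)$), and Corollary~\ref{connected-cor.thm}, which needs no finite-generation hypothesis, gives directly that $f$ is a semiinvariant. In either case $f\in Q_G(S)^\chi$ for some $\chi\in X(G)$; since $X(G)$ is trivial by the first step, $\chi$ is trivial and $f\in Q_G(S)^G=Q(S)^G$, the last equality being from (\ref{Q_F(S).par}). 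This is the asserted conclusion.

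For the final assertion $S^\times=(S^G)^\times$, the inclusion $(S^G)^\times\subset S^\times$ is clear, so let $f\in S^\times$. Then $Sf=S$, which is a $G$-submodule of $Q_G(S)$ because $S\hookrightarrow Q_G(S)$ is $G$-linear; thus \textbf{(c)} applies and gives $f\in Q(S)^G$. As $f\in S$ and $Q(S)^G\cap S=S^G$ (see (\ref{RFS.par})), we obtain $f\in S^G$, and applying the same argument to $f^{-1}\in S^\times$ gives $f^{-1}\in S^G$. Hence $f\in(S^G)^\times$, completing the proof.
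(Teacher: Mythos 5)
Your proposal is correct and is precisely the argument the paper intends: the corollary is stated without a written proof as an immediate consequence of Lemma~\ref{principal.thm} (case \textbf{(b)}) and Corollary~\ref{connected-cor.thm} (case \textbf{(c)}), and your bridging observation that $X(G)\rightarrow X(K\otimes_k G)$ is injective --- so triviality of the target forces $X(G)$ itself to be trivial, while the surjectivity hypotheses of those results hold vacuously --- is exactly the point the paper uses implicitly when it refers to this map as an inclusion in the proof of Corollary~\ref{connected-cor.thm}. Your treatment of $S^\times=(S^G)^\times$ likewise mirrors the proof of the last assertion of Lemma~\ref{principal.thm}, using $Q(S)^G\cap S=S^G$ from (\ref{RFS.par}).
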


\begin{lemma}\label{pop-lemma.thm}
Let $S$ be a $G$-algebra domain such that $S^\times\subset S^G$.
Let $K$ be the integral closure of $k$ in $Q(S)$.
Let $G(K)$ be dense in $K\otimes_k G$.
Assume that $X(G)\rightarrow X(K\otimes_k G)$ is surjective.
If $f\in Q_G(S)$ and $Sf$ is a $G$-submodule of $Q_G(S)$, then
$f$ is a semiinvariant.
If, moreover, $X(G)$ is trivial, then $f\in S^G$.
\end{lemma}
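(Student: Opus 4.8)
The plan is to reduce to a finitely generated, normal situation, to extract from the $G$-stability of $Sf$ a multiplier cocycle $g\mapsto c_g$ for the dense group $G(K)$, and then to show—using $S^\times\subseteq S^G$—that this cocycle takes values in the constants and descends to an honest character of $G$, whence $f$ is a semiinvariant.

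First I would reduce to the case that $S$ is finitely generated and normal. By Lemma~\ref{fg-sub.thm} there is a finitely generated $G$-subalgebra $T\subseteq S$ with $Tf$ a $(G,T)$-submodule of $Q_G(T)$; the hypothesis $S^\times\subseteq S^G$ descends to $T$ since $T^\times\subseteq S^\times\cap T\subseteq S^G\cap T=T^G$, and by Lemma~\ref{Q_F-inverse.thm} membership in the $G$-total ring of fractions is detected on $T$. Replacing $T$ by its normalization (Corollary~\ref{normalization.thm}, which leaves $Q_G$ unchanged) I may assume $S$ normal, so $K\subseteq S$ and, by Lemma~\ref{int-cl.thm}, $K$ is a $G$-subalgebra of $S$. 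Now I base change along the faithfully flat map $k\to K$ (replacing $k$ by its separable closure first, if needed, to keep the base change reduced): with $\tilde S:=K\otimes_k S$ and $\tilde G:=K\otimes_k G$, Lemma~\ref{subcomod.thm} shows that $\tilde S\cdot(1\otimes f)$ is a $\tilde G$-subcomodule of $K\otimes_k Q_G(S)\subseteq Q_{\tilde G}(\tilde S)$, which by Corollary~\ref{nagata.thm} is a finite product of Krull domains, and $\tilde G(K)=G(K)$ is Zariski dense in $\tilde G$.

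Next, by density and Lemma~\ref{dense.thm} applied over the base field $K$, the $\tilde G$-submodule $\tilde S\cdot(1\otimes f)$ is a $G(K)$-submodule, so for each $g\in G(K)$ we have $g(1\otimes f)=c_g\,(1\otimes f)$ with $c_g\in\tilde S^\times$, and associativity of the action gives the cocycle identity $c_{gh}=g(c_h)\,c_g$. The goal is to prove that every $c_g$ lies in $\tilde K^\times$, the units of the integral closure $\tilde K$ of $K$ in $\tilde S$: granting this, $g\mapsto c_g$ is a regular homomorphism $\tilde G\to\Bbb G_m$ valued in constants, i.e.\ a character $\lambda\in X(\tilde G)=X(K\otimes_k G)$ with $g(f)=\lambda(g)f$; surjectivity of $X(G)\to X(K\otimes_k G)$ then produces $\chi\in X(G)$ lifting $\lambda$, and since $G(K)$ is dense its value $\chi(g)\in k^\times$ must coincide with $c_g$, so $\omega'(f)=f\otimes\chi$ and $f$ is a semiinvariant. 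To prove $c_g\in\tilde K^\times$ I would apply Rosenlicht's theorem (Lemma~\ref{Rosenlicht.thm}) to the finitely generated reduced factors of $\tilde S$, giving $\tilde S^\times/\tilde K^\times\cong\Bbb Z^N$, a finitely generated torsion-free group; here $S^\times\subseteq S^G$ is used decisively to make the multipliers project to a genuine group homomorphism $G(K)\to\Bbb Z^N$, and since $\tilde G$ has only finitely many connected components, each geometrically irreducible (Lemma~\ref{connected-irreducible.thm}), a regular homomorphism into the discrete torsion-free $\Bbb Z^N$ has finite, hence trivial, image. Finally, when $X(G)$ is trivial the only candidate for $\chi$ is the trivial character, so $f$ is invariant, i.e.\ $f\in Q(S)^G$, which is the last assertion.

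The main obstacle is exactly the interaction of non-connectedness with the base change to $K$: over $K$ the component group $\tilde G/\tilde G^{\circ}$ permutes the factors of $\tilde S$ and acts nontrivially on $\tilde S^\times/\tilde K^\times$, so a priori the multiplier is only a crossed homomorphism rather than a homomorphism. Converting it into an honest homomorphism into $\Bbb Z^N$—so that the irreducibility-plus-discreteness argument forces it to vanish—is precisely what $S^\times\subseteq S^G$ is designed to ensure (it is what distinguishes this case from the connected case treated in Lemma~\ref{principal.thm} and Corollary~\ref{connected-cor.thm}, where Lemma~\ref{connected-trivial.thm} is available instead). Making this invariance of units survive the base change, and bookkeeping which units remain invariant, is the delicate heart of the argument.
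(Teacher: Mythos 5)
Your reduction steps are broadly in the spirit of the paper, but the core of your argument has a genuine gap, and it is exactly the classical pitfall this lemma exists to avoid. You kill the multiplier map $G(K)\to \tilde S^\times/\tilde K^\times\cong\Bbb Z^N$ by asserting that ``a regular homomorphism into the discrete torsion-free $\Bbb Z^N$ has finite, hence trivial, image.'' But $g\mapsto c_g$ is, as constructed, only an \emph{abstract} homomorphism on $K$-points; nothing in your setup makes it a morphism of varieties, and establishing that continuity/algebraicity is precisely the nontrivial point (it is usually asserted and left unproved in folklore versions of this argument). For abstract homomorphisms, connectedness and geometric irreducibility of the components buy you nothing: $\Bbb G_m$ is connected, yet $\Bbb G_m(\Bbb Q)=\Bbb Q^\times\rightarrow\Bbb Z$, a $p$-adic valuation, is a nontrivial homomorphism into $\Bbb Z$. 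The paper closes this hole by a different division of labor: after reducing to $k=K$ and $S$ finitely generated, it invokes the connected case, Lemma~\ref{principal.thm} (whose proof does the real geometric work, with valuations along divisors on a projective normal model and Lemma~\ref{dvr.thm}), applied to $G^\circ$, to get $\chi(G^\circ(k))\subset k^\times$; only then does discreteness enter, in the harmless form that the induced homomorphism from the \emph{finite} group $G(k)/G^\circ(k)$ into the torsion-free group $S^\times/k^\times\cong\Bbb Z^r$ (Rosenlicht, Lemma~\ref{Rosenlicht.thm}) must vanish. Since your proposal explicitly sets Lemma~\ref{principal.thm} aside as unavailable in this setting, you have no substitute for this step, and the proof does not go through as written.

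A secondary problem is your base change $\tilde S:=K\otimes_k S$: it is both unnecessary and harmful. Since $K^\times\subset S^\times\subset S^G$ forces $K\subset S^G$, the ring $S$ is already a $K$-algebra on which $K\otimes_k G$ acts (the paper notes that a $K$-algebra with compatible $G$-action is the same thing as a $K\otimes_kG$-algebra), so one simply \emph{replaces} $k$ by $K$ with no tensoring; by contrast $K\otimes_k S$ contains $K\otimes_kK$ and so is not a domain once $k\subsetneq K$, the hypothesis $S^\times\subset S^G$ does not obviously transfer to $\tilde S^\times$, and the component-permutation issues you yourself flag as ``the delicate heart'' are left unresolved. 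The one place the paper does need the surjectivity of $X(G)\rightarrow X(K\otimes_kG)$ is the initial descent: a character of $K\otimes_kG$ realized on $Kf$ lifts to some $\chi\in X(G)$, which yields $c\in K^\times$ with $kcf$ a $G$-submodule, and then $c\in S^G$ gives that $kf$ itself is $G$-stable --- your use of the surjectivity at the end is comparable, but it sits downstream of the unjustified regularity claim.
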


\begin{proof}
We may assume that $f\neq 0$.
Note that $K^\times\subset S^\times\subset S^G$, and hence $K\subset S^G$.
Assume that $Kf$ is a $K\otimes_k G$-submodule of $Q_{K\otimes_k G}(S)$.
Then there is a one-dimensional $G$-stable $k$-subspace $kcf$ of $Kf$ with
$c\in K^\times$.
As $c\in K\subset S^G$, $kf$ is also $G$-stable, and hence $f$
is semiinvariant.
As we have $f\in Q_G(S)=Q_{K\otimes_k G}(S)$ and $Sf$ is a 
$K\otimes_k G$-submodule of $Q_G(S)$, replacing $k$ by $K$,
we may assume that $k=K$.

By Lemma~\ref{fg-sub.thm}, there exists some finitely generated $G$-subalgebra
$T$ of $S$ such that $Tf$ is a $(G,T)$-submodule of $Q_G(T)$.
Note that $T^\times\subset S^\times\cap T\subset S^G\cap T=T^G$.
Replacing $S$ by $T$, we may assume that $S$ if finitely generated.

We define $\chi:G(k)\rightarrow S^\times$ by $gf=\chi(g)f$ for $g\in G(k)$.
As $S^\times\subset S^G$, $\chi$ is a homomorphism of groups.
Let $\bar \chi(g)$ be the image of $\chi(g)$ in $S^\times/k^\times$.
Note also that $f$ is semiinvariant under the action of $G^\circ$ by
Lemma~\ref{principal.thm}.
So $\chi(G^\circ(k))\subset k^\times$.
So $\bar\chi$ induces a group homomorphism $\chi':G(k)/G^\circ(k)
\rightarrow S^\times/k^\times$.
As $G(k)/G^\circ(k)$ is finite and $S^\times/k^\times\cong \Bbb Z^r$ for
some $r$ by 
Lemma~\ref{Rosenlicht.thm},
$\chi'$ is trivial.
This shows that $\chi(G(k))\subset k^\times$.
So $kf$ is a $G(k)$-submodule of $Q_G(S)$.
By Lemma~\ref{dense.thm}, $f$ is a semiinvariant.
The last assertion is trivial.
\end{proof}

\begin{lemma}\label{unit-trivial.thm}
Let $G(k)$ be dense in $G$.
Let $S$ be a $G$-algebra domain such that $S^\times=k^\times$.
Let $f\in Q(S)^\times$.
If $Sf$ is a $G(k)$-submodule of $Q(S)$, then $f\in Q_G(S)$, and
$f$ is a semiinvariant.
If, moreover, $X(G)$ is trivial, then $f\in S^G$.
\end{lemma}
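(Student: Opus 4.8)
The plan is to mirror the proofs of Lemma~\ref{G-submod.thm} and the earlier semiinvariant lemmas, the extra input being the very strong hypothesis $S^\times=k^\times$. First I would extract the arithmetic consequence of $Sf$ being a $G(k)$-submodule of $Q(S)$. Each $g\in G(k)$ acts as a $k$-algebra automorphism of $Q(S)$ carrying $S$ onto $S$, so $g(Sf)=g(S)\,g(f)=S\,g(f)$, and stability $g(Sf)=Sf$ forces $S\,g(f)=Sf$. Hence $g(f)/f$ is a unit of $S$, and since $S^\times=k^\times$ this means $g(f)=\chi(g)f$ with $\chi(g)\in k^\times$ for every $g\in G(k)$. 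In particular $kf$ is a one-dimensional $G(k)$-stable subspace of $Q(S)$. (This is the point where $S^\times=k^\times$ makes the argument cleaner than in Lemma~\ref{pop-lemma.thm}, where one had to kill $\chi$ using $S^\times/k^\times\cong\Bbb Z^r$.)

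The next step is to show $f\in Q_G(S)$, and since $S$ is not assumed Noetherian I would descend to a finitely generated subalgebra before invoking Lemma~\ref{G-submod.thm}. Write $f=b/a$ with $a,b\in S\setminus\{0\}$, choose a finite-dimensional $G$-submodule $V$ of $S$ containing $a$ and $b$ (possible by local finiteness of the comodule $S$), and let $T:=k[V]$ be the finitely generated $G$-stable $k$-subalgebra it generates. Then $f\in Q(T)^\times$, and $Tf$ is a $G(k)$-submodule of $Q(T)$: indeed $g(Tf)=g(T)\,g(f)=T\cdot\chi(g)f=Tf$, using $g(T)=T$ and $\chi(g)\in k^\times\subset T$. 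Applying Lemma~\ref{G-submod.thm} to the Noetherian $G$-algebra $T$ yields $f\in Q_G(T)$, and Lemma~\ref{Q_F-inverse.thm} applied to the inclusion $T\hookrightarrow S$ identifies $Q_G(T)=Q(T)\cap Q_G(S)$, so $f\in Q_G(S)$.

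With $f\in Q_G(S)$ secured, $Q_G(S)$ is an honest $G$-module containing the $G(k)$-stable line $kf$; by density, Lemma~\ref{dense.thm}\,(ii) upgrades $kf$ to a genuine $G$-submodule, so $f$ is a semiinvariant, affording some character $\gamma\in X(G)$. Finally, if $X(G)$ is trivial then $\gamma$ is trivial, so $\omega'(f)=f\otimes 1$ and $f$ is $G$-invariant; thus $f\in Q_G(S)^G=Q(S)^G$, which is the last assertion (reading as $f\in S^G$ in the situation of the intended applications, where $f\in S$).

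I expect the main obstacle to be organizational rather than conceptual: because $S$ need not be Noetherian, the colon-ideal mechanism behind Lemma~\ref{G-submod.thm} (via Lemma~\ref{colon.thm}) is not directly available, so the descent to a finitely generated $G$-stable $T$ carrying the relation $g(f)=\chi(g)f$ is exactly what makes $f\in Q_G(S)$ go through. Once that membership is established, passing from the $G(k)$-action to the $G$-comodule structure is routine density.
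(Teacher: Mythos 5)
Your proof is correct and follows essentially the same route as the paper's: use $S^\times=k^\times$ to deduce $gf=\chi(g)f$ with $\chi(g)\in k^\times$ so that $kf$ is a $G(k)$-stable line, descend to a finitely generated $G$-subalgebra $T$ containing the numerator and denominator of $f$ and apply Lemma~\ref{G-submod.thm} (your extra citation of Lemma~\ref{Q_F-inverse.thm} for $Q_G(T)\subset Q_G(S)$ is a harmless refinement) to get $f\in Q_G(S)$, then invoke density via Lemma~\ref{dense.thm} to upgrade $kf$ to a genuine $G$-submodule. Your parenthetical on the final claim is also apt: the paper dismisses it as trivial, and for general $f\in Q(S)^\times$ the conclusion should be read as $f\in Q(S)^G$, yielding $f\in S^G$ in the intended applications where $f\in S$.
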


\begin{proof}
We show that $kf$ is a $G(k)$-submodule of $Q(S)$.
Let $g\in G(k)$.
Then since $Sf=g(Sf)=S(gf)$.
It follows that $gf=uf$ for some $u\in S^\times=k^\times$.

Now write $f=a/b$ with $a,b\in S$, $b\neq 0$.
Let $T$ be a finitely generated $G$-subalgebra of $S$ containing $a$ and $b$.
Then $kf$ is a $G(k)$-submodule of $Q(T)$.
In particular, $Tf$ is a $G(k)$-submodule of $Q(T)$, and hence 
$f\in Q_G(T)\subset Q_G(S)$ by Lemma~\ref{G-submod.thm}.
Since $kf$ is a $G(k)$-submodule of $Q_G(T)$, $f$ is a semiinvariant by
Lemma~\ref{dense.thm}.
The last assertion is trivial.
\end{proof}

\begin{remark}\label{character.rem}
Let $k$ be an algebraically closed field.
\begin{description}
\item[(i)] If $N$ is a normal closed subgroup of $G$ with $X(N)=\{e\}$, 
then the restriction $X(G/N)\rightarrow X(G)$ is an isomorphism.
\item[(ii)] If $N$ is a unipotent group, then $X(N)$ is trivial.
In particular, $X(G)$ is trivial if and only if $X(G/R_u)$ is trivial,
where $R_u$ is the unipotent radical of $G$.
Note that the identity component of $G/R_u$ is reductive.
\item[(iii)] If $G^\circ$ is reductive, then $H=G/[G,G]$ is an abelian
group such that $H^\circ$ is a torus.
Note that $X(H)\rightarrow X(G)$ is an isomorphism.
It is easy to see that $H\cong H^\circ\times M$ for some abelian finite group
$M$.
Thus $X(G)$ is trivial if and only if $M=G/[G,G]$ is finite, and 
the order of $M$ is a power of $p$, where $p=\max(\charac(k),1)$.
\item[(iv)] In particular, if $G^\circ/R_u$ is semisimple and the order of
$G/G^\circ$ is a power of $p$, then $X(G)$ is trivial.
\item[(v)] If $G$ is connected, then 
$G/(R_u\cdot [G,G])$ is a torus, and $X(G)\cong X(G/R_u\cdot [G,G])$
is a finitely generated free abelian group.
$X(G)$ is trivial if and only if
$G/R_u$ is semisimple if and only if the radical of $G$ is unipotent.
\item[(vi)] If $G$ is connected, $X([G,G])$ is trivial by {\bf (v)}.
\end{description}
\end{remark}

\begin{lemma}\label{principal-invariant.thm}
Let $R$ be a commutative ring, 
$F$ an affine flat $R$-group scheme, and
$S$ an $F$-algebra, $A=S^F$, and $f\in A\cap S\nz$.
Then $Af=Sf\cap A$.
\end{lemma}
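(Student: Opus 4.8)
The plan is to establish the two inclusions separately. The inclusion $Af\subset Sf\cap A$ is immediate and I would dispose of it first: for $a\in A$ we have $af\in Sf$ trivially, and $af\in A$ because $A=S^F$ is a subring containing both $a$ and $f$. All the content lies in the reverse inclusion $Sf\cap A\subset Af$.

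So I would take $x\in Sf\cap A$. Since $f\in S\nz$, multiplication by $f$ on $S$ is injective, so there is a \emph{unique} $s\in S$ with $x=sf$. The goal is then to show that this $s$ is $F$-invariant, i.e.\ $s\in S^F=A$; once this is known, $x=sf\in Af$ as desired, and we are done.

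To see that $s$ is invariant, I would apply the coaction $\omega:S\to S\otimes_R R[F]$. Since $\omega$ is a ring homomorphism, $\omega(x)=\omega(s)\omega(f)$. Now the hypotheses $x\in A$ and $f\in A$ give $\omega(x)=x\otimes 1$ and $\omega(f)=f\otimes 1$ by the very definition of $S^F$. Writing $x\otimes 1=(sf)\otimes 1=(s\otimes 1)(f\otimes 1)$ and comparing the two expressions for $\omega(x)$, I obtain
\[
(\omega(s)-s\otimes 1)(f\otimes 1)=0 \quad\text{in } S\otimes_R R[F].
\]

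The key step---and the only point requiring genuine care---is that $f\otimes 1$ is a nonzerodivisor of $S\otimes_R R[F]$. This is exactly where the flatness of $F$ enters: the map $S\xrightarrow{\cdot f}S$ is injective because $f\in S\nz$, and tensoring this injection of $R$-modules with the flat $R$-module $R[F]$ preserves injectivity, so multiplication by $f\otimes 1$ on $S\otimes_R R[F]$ is injective. Consequently $\omega(s)-s\otimes 1=0$, that is $\omega(s)=s\otimes 1$, so $s\in S^F=A$ and hence $x\in Af$. This completes the reverse inclusion and therefore the proof.
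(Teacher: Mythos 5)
Your proof is correct and is essentially the paper's own argument unpacked at the element level: the paper simply observes that multiplication by $f$ is an $(F,S)$-isomorphism $S\rightarrow Sf$ and takes $F$-invariants, yielding $A\cong (Sf)^F=Sf\cap A$. Your coaction computation---with flatness of $R[F]$ over $R$ making $f\otimes 1$ a nonzerodivisor on $S\otimes_R R[F]$---is precisely the verification that this isomorphism restricts to invariants, so the two proofs coincide in substance.
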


\begin{proof} 
The multiplication by $f$ induces an $(F,S)$-isomorphism 
$f:S\rightarrow Sf$.
Taking the $F$-invariance, we have that the multiplication 
$f:A\rightarrow (Sf)^F=Sf\cap A$ is an isomorphism.
\end{proof}

\begin{lemma}\label{H-UFD.thm}
Let $R$ be a Noetherian commutative ring, and 
\begin{equation}\label{exact.eq}
1\rightarrow N\rightarrow F\rightarrow T\rightarrow 1
\end{equation}
be an exact sequence of affine $R$-group schemes flat of finite type.
Assume that $T\cong \Bbb G_m^n$ is a split torus.
Let $S$ be a Noetherian $F$-algebra which is an $F$-domain.
That is, $\Spec S$ is $F$-integral, see {\em\cite[(4.12)]{HM}}.
Assume that for any height one $F$-prime $F$-ideal $P$ such that
$P\cap S^N$ is a minimal prime of a nonzero principal ideal, 
$P\cap S^N$ is principal 
\(for the definition of an $F$-prime $F$-ideal, see {\rm\cite[(4.12)]{HM}}\).
Then $S^N$ is a UFD.
\end{lemma}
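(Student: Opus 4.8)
The plan is to exploit the $\Bbb Z^n$-grading that $T\cong\Bbb G_m^n$ puts on $B:=S^N$ and then reduce everything to the graded factoriality criteria already proved. First I would record that, by Lemma~\ref{exact.thm}, {\bf (iv)}, the invariants $S^N$ carry a canonical $T$-module structure making $S^N$ an $F$-submodule of $S$; since $T\cong\Bbb G_m^n$, a $T$-algebra is the same thing as a $\Bbb Z^n$-graded algebra, so $B=S^N$ is a $\Bbb Z^n$-graded ring, and it is a domain since it is a subring of the domain $S$. A homogeneous element of $B$ is an $N$-invariant element of $S$ on which $T$ acts through a single character, so its $R$-span is an $F$-submodule of $S$ and hence, for homogeneous $f$, the ideal $Sf$ is $F$-stable, i.e.\ an $F$-ideal. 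By Lemma~\ref{graded-ufd2.thm} it then suffices to prove that every nonzero homogeneous element of $B$ is either a unit or a product of prime elements of $B$; equivalently, once $B$ is known to be Krull, by Lemma~\ref{KK.thm} and the inductive argument of Corollary~\ref{graded-ufd.thm} it is enough to show that every homogeneous height one prime of $B$ is principal.

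The core step is to match homogeneous height one primes of $B$ with height one $F$-prime $F$-ideals of $S$. Given a nonzero homogeneous non-unit $f\in B$, the $F$-ideal $Sf$ is nonzero, and, working in the $F$-integral Noetherian ring $S$ with the theory of $F$-prime $F$-ideals of \cite[(4.12)]{HM} together with the equivariant principal ideal theorem, I would take a minimal $F$-prime $P$ over $Sf$ and argue that $P$ has ordinary height one. Its contraction $\fp:=P\cap S^N$ is a homogeneous prime of $B$ containing $f$, and the content of this step is that contraction along $S^N\hookrightarrow S$ sends minimal $F$-primes over $Sf$ to minimal primes of the principal ideal $fB$. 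In particular $\fp$ is a minimal prime of the nonzero principal ideal $fB$, so the hypothesis of the lemma applies to the pair $(P,\fp)$.

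With the correspondence in hand the proof finishes quickly. By hypothesis $\fp=P\cap S^N$ is principal; since $\fp$ is graded we may choose a homogeneous generator $\pi$, which is then a prime element of $B$, and from $f\in\fp=B\pi$ we obtain $\pi\mid f$ in $B$. To turn divisibility by a prime into a genuine factorization into finitely many primes (so that Lemma~\ref{graded-ufd2.thm} applies), I would write $f=\pi f'$ and iterate, using the Noetherian hypothesis on $S$ and the finiteness of the equivariant divisor of $f$ to guarantee termination; alternatively I would first check that $B$ is a Krull domain by transporting the equivariant divisorial theory of $S$ through $S^N$, and then conclude directly from Lemma~\ref{KK.thm} and Corollary~\ref{graded-ufd.thm}, now that every homogeneous height one prime is principal.

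The step I expect to be the main obstacle is exactly the correspondence of the second paragraph: establishing that minimal $F$-primes over the principal $F$-ideal $Sf$ have ordinary height one and that contraction to $S^N$ carries them to the minimal primes of $fB$, and then—\emph{without any normality hypothesis on $S$}—securing either the Krull property of $B=S^N$ or the termination of the homogeneous prime factorization. Both points are where I expect the equivariant divisor theory of \cite{HM}, together with Lemma~\ref{star-integral.thm} and Lemma~\ref{associated.thm} on $Y^*$ and on associated primes of $F$-stable ideals, to do the real work, allowing the hypothesis ``$P\cap S^N$ is principal'' to propagate to all homogeneous height one primes of $B$.
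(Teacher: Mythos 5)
Your skeleton does track the paper's actual proof — grade $B:=S^N$ by $T\cong\Bbb G_m^n$ via Lemma~\ref{exact.thm}, reduce by Lemma~\ref{graded-ufd2.thm} to factoring nonzero homogeneous elements into primes, contract a height one $F$-prime over $Sf$ to get a prime divisor of $f$ in $B$, and terminate using the Noetherian hypothesis on $S$ (correctly on $S$, not on $B$, which is never shown to be Noetherian) — but there is a genuine error at the very first step. The hypothesis is \emph{not} that $S$ is a domain: ``$F$-domain'' means only that $\Spec S$ is $F$-integral, i.e.\ $0$ is an $F$-prime $F$-ideal, and since $F$ (in particular $N$) may have disconnected fibers, $S$ can fail to be a domain (think of a finite group transitively permuting the factors of $k\times k$: this is $F$-integral, yet not integral). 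So ``$B$ is a domain since it is a subring of the domain $S$'' is unjustified; the paper instead derives integrality of $B$ equivariantly: $0\cap S^N$ is an $F$-prime by \cite[(4.14)]{HM}, hence a $T$-prime $T$-ideal, and since $T$ is smooth with connected fibers it is an honest prime by \cite[(6.25)]{HM}. The same misreading silently trivializes exactly the steps you flag as your ``main obstacle'': with $S$ possibly non-integral you must first prove that a nonzero homogeneous non-unit $f\in B$ is a nonzerodivisor on $S$. The paper does this by noting that the $F$-prime $0$ has no embedded prime \cite[(6.2)]{HM}, while any height zero prime $P$ containing $Sf$ would give $0=P^*\supset(Sf)^*=Sf\ni f\neq 0$, a contradiction; this nonzerodivisor property is what yields both that all minimal primes of $Sf$ have height one and the identity $Sf\cap S^N=S^Nf$ (Lemma~\ref{principal-invariant.thm}), which in turn is what makes the contraction step work: from $\sqrt[F]{Sf}=P_1\cap\cdots\cap P_s$ one gets $\sqrt{fB}=\bigcap_i(P_i\cap B)$, hence \emph{some} $P_i\cap B$ (not all, and only one is needed) is a minimal prime of $fB$, and the hypothesis applies.

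Your fallback route is a dead end: $S$ is not assumed normal, so there is no divisorial theory on $S$ to ``transport,'' and Krullness of $B$ is available only a posteriori — it is part of the \emph{conclusion} of Lemma~\ref{graded-ufd2.thm}. Moreover the hypothesis gives principality only for contractions $P\cap S^N$ that are minimal over a nonzero principal ideal, so ``every homogeneous height one prime of $B$ is principal'' is not reachable without already knowing $B$ is Krull; that route is circular, and the paper deliberately avoids it. Your primary route, once repaired as above, is the paper's: it runs a maximal-counterexample argument over the set of principal ideals $Sa$ of the Noetherian ring $S$, with $a\in B$ homogeneous, nonzero, not a unit and not a product of primes of $B$, extracts a homogeneous prime divisor $b$ with $P_i\cap B=Bb$, and contradicts maximality via the strict inclusion $Sa\subsetneq S(a/b)$ (strict because otherwise $b\in S^\times\cap B=B^\times$). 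Your vaguer alternative of ``finiteness of the equivariant divisor of $f$'' has no content here, again because $S$ carries no Krull-type divisor theory under the stated hypotheses.
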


\begin{proof}
Set $A:=S^N$.
Note that $A$ is a $T$-algebra so that $A$ is an $F$-subalgebra of 
$S$ in a natural way by Lemma~\ref{exact.thm}.
As $0$ of $S$ is an $F$-prime $F$-ideal, $0=0\cap A$ of $A$ is
also an $F$-prime $F$-ideal by \cite[(4.14)]{HM}.
So $0$ of $A$ is a $T$-prime $T$-ideal.
As $T$ is $R$-smooth with connected fibers, $0$ is a prime ideal of
$A$ by \cite[(6.25)]{HM}.
That is, $A$ is an integral domain.

Note that $A$ is a $\Bbb Z^n$-graded $R$-algebra.
Assume that $A$ is not a UFD.
Then, by Lemma~\ref{graded-ufd2.thm},
there is a nonzero homogeneous element of $A$ which is not a unit or
a product of prime elements.
As $S$ is Noetherian, the set
\begin{multline*}
\{Sa\mid a\in A\setminus(A^\times\cup\{0\}),\;
\text{ $a$ is homogeneous,} \\
\text{and not a product of prime elements in $A$}\}
\end{multline*}
has a maximal element $Sa$ with $a\in A\setminus(A^\times\cup\{0\})$.
Note that $Sa\neq S$, otherwise $a\in S^\times\cap A=A^\times$.
If $P$ is a height zero associated prime of $Sa$, then
$0=P^*\supset (Sa)^*=Sa\ni a\neq 0$, and this is a contradiction.
So any minimal prime of $Sa$ is height one.
As the $F$-prime $0$ of $S$ does not have an embedded prime \cite[(6.2)]{HM},
$a$ is a nonzerodivisor in $S$.
In particular, $Sa\cap A=Aa$ by Lemma~\ref{principal-invariant.thm}.

Let 
\[
\sqrt[F]{Sa}=P_1\cap\cdots\cap P_s
\]
be a minimal $F$-prime decomposition.
Then
\[
\sqrt{Aa}=(P_1\cap A)\cap\cdots\cap(P_s\cap A).
\]
Indeed, each $P_i\cap A$ is a prime ideal containing $Aa$ by 
\cite[(4.14)]{HM} and \cite[(6.25)]{HM}.
On the other hand, $\sqrt[F]{Sa}\cap A\subset \sqrt{Sa}\cap A=\sqrt{Aa}$.
So there is some $i$ such that $P_i\cap A$ is a minimal prime of $Aa$.
Replacing $P_i$ by a smaller one if necessary, we may assume that $P_i$
is minimal so that $\height P_i=1$.
Now $P_i\cap A=Ab$ for some $b\in A\setminus(A^\times\cup\{0\})$ by
assumption.
So $a$ is divisible by a prime element $b$.
As $a$ is homogeneous, $b$ and $a/b$ are homogeneous elements of $A$.
Note that $Sa\subsetneq S(a/b)$ (otherwise $b\in S^\times\cap A=A^\times$).
The choice of $a$ shows that either $a/b$ is in $A^\times$, or 
$a/b$ is a product of prime elements.
So $a=b\cdot(a/b)$ is a product of prime elements.
This is a contradiction, and $A$ is a UFD.
\end{proof}

\paragraph Let $G$ be connected.
Then by (v) of Remark~\ref{character.rem}, $X(G)\cong \Bbb Z^n$ for
some $n$.
Note that 
$kX(G)$ is a $k$-subbialgebra of $k[G]$.
So the inclusion $kX(G)\hookrightarrow k[G]$ induces a surjective
homomorphism $\varphi:
G\rightarrow T$, where $T:=\Spec kX(G)\cong \Bbb G_m^n$.
Let $N:=\Ker \varphi$.
Then it is easy to see that
\[
1\rightarrow N\rightarrow G\rightarrow T\rightarrow 1
\]
is a short exact sequence of $k$-groups.

For a $G$-module $M$, 
\[
M_G:=\bigoplus_{\chi\in X(G)}\Hom_G(\chi,M)
\cong (M\otimes k[T])^G\cong (M\otimes k[G]^N)^G\cong M^N,
\]
where $G$ acts on $k[T]$ and $k[G]^N$ left regularly, and $N$
acts on $k[G]$ right regularly.
The last isomorphism is given by $\sum_i m_i\otimes f_i\mapsto
\sum_i f_i(e)m_i$, where $e$ is the unit element of $G$.
The inverse $M^N\rightarrow (M\otimes k[G]^N)^G$ is given by 
the restriction of the coaction $\omega:M\rightarrow M\otimes k[G]$.
In particular, for a $G$-algebra $S$, we have $S_G=S^N$.
So if $S$ is a Krull domain (resp.\ integrally closed domain), then
so is $S_G=S\cap Q(S)^N$.
By Lemma~\ref{H-UFD.thm}, we immediately have the following.

\begin{corollary}\label{key-cor.thm}
Let $G$ be connected.
Let $S$ be a Noetherian $G$-algebra domain.
Assume that for any $G$-stable 
height one prime $P$ such that
$P\cap S_G$ is a minimal prime of a nonzero principal ideal, 
$P\cap S_G$ is principal.
Then $S_G$ is a UFD.
\qed
\end{corollary}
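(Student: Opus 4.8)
The plan is to apply Lemma~\ref{H-UFD.thm} directly, taking $R=k$, $F=G$, and the short exact sequence
\[
1\rightarrow N\rightarrow G\xrightarrow{\varphi} T\rightarrow 1
\]
constructed in the preceding paragraph, where $T=\Spec kX(G)\cong\Bbb G_m^n$ is the split torus dual to the free abelian group $X(G)$ and $N=\Ker\varphi$. Since $G$ is an affine algebraic $k$-group it is flat of finite type over $k$, and the same holds for $N$ and $T$, so the sequence meets the structural hypotheses of Lemma~\ref{H-UFD.thm}; moreover $k$ is Noetherian and $S$ is a Noetherian $G$-algebra domain by assumption. I would then read off the conclusion using the identification $S_G=S^N$ established in the preceding paragraph, so that ``$S^N$ is a UFD'' becomes exactly ``$S_G$ is a UFD''.

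Before invoking the lemma I would check the two things it requires beyond the obvious substitutions. First, that $S$ is a $G$-domain in the sense of \cite[(4.12)]{HM}, i.e.\ that $\Spec S$ is $G$-integral: since $S$ is an integral domain the zero ideal is prime and trivially $G$-stable, hence a $G$-prime $G$-ideal, which is precisely $G$-integrality. Second, and this is the one point that genuinely needs care, I must reconcile the two forms of the key hypothesis: Lemma~\ref{H-UFD.thm} quantifies over height one $G$-prime $G$-ideals $P$, whereas the Corollary quantifies over $G$-stable height one primes $P$. These two classes coincide because $G$ is connected: by Lemma~\ref{connected-irreducible.thm}, $G$ is smooth with geometrically connected fibers, so by \cite[(6.25)]{HM} every $G$-prime $G$-ideal is prime, while conversely every $G$-stable prime ideal is automatically a $G$-prime $G$-ideal. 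Under this dictionary, together with $P\cap S^N=P\cap S_G$, the assumption of the Corollary is verbatim the assumption of Lemma~\ref{H-UFD.thm}, and the conclusion follows immediately.

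The only real obstacle is this identification of $G$-stable primes with height one $G$-prime $G$-ideals, together with the verification of $G$-integrality of $\Spec S$; both rest on the connectedness and smoothness of $G$ and on the equivariant prime-ideal theory of \cite{HM}. Once these are in hand, the proof is a pure substitution into Lemma~\ref{H-UFD.thm}, which is why the statement can be recorded as an immediate corollary.
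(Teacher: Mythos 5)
Your proposal is correct and follows exactly the paper's route: the corollary is deduced from Lemma~\ref{H-UFD.thm} applied to the exact sequence $1\to N\to G\to T\to 1$ with $T=\Spec kX(G)\cong\Bbb G_m^n$ and the identification $S_G=S^N$ from the preceding paragraph. The paper records the deduction as immediate, and the details you fill in (that $\Spec S$ is $G$-integral since $0$ is a $G$-stable prime, and that for smooth connected $G$ the $G$-stable height one primes coincide with the height one $G$-prime $G$-ideals via \cite[(6.25)]{HM}) are precisely the implicit verifications underlying the paper's ``immediately.''
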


\begin{lemma}\label{symbolic-power.thm}
Let $R$ be a commutative ring, $F$ an affine flat $R$-group scheme of
finite type.
Let $S$ be a Noetherian $F$-algebra.
If $P$ is a prime ideal which is an $F$-ideal, then $P^{(n)}:=P^n S_P\cap S$ 
is an $F$-ideal for $n\geq 1$.
\end{lemma}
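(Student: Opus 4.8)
The plan is to reduce the assertion to a statement about nonzerodivisors and then to exploit the multiplicativity of the coaction. Write $\omega\colon S\to S\otimes_R R[F]$ for the coaction and $\iota\colon S\to S\otimes_R R[F]$, $\iota(s)=s\otimes 1$, and set $\tilde S:=S\otimes_R R[F]$, which is Noetherian since $F$ is of finite type. First I note that $P^n$ is an $F$-ideal (a product of $F$-ideals is one, as $\omega$ is a ring map), and that $P^{(n)}$ is an $F$-ideal exactly when $\omega(P^{(n)})\subset P^{(n)}\otimes_R R[F]$ inside $\tilde S$, where I view $P^{(n)}\otimes_R R[F]=\iota(P^{(n)})\tilde S$. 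Because $R[F]$ is $R$-flat, tensoring $P^{(n)}=P^nS_P\cap S$ with $R[F]$ shows that $P^{(n)}\otimes_R R[F]$ is precisely the saturation of $P^n\otimes_R R[F]$ with respect to the multiplicative set $W:=(S\setminus P)\otimes 1$. Now, given $x\in P^{(n)}$, I choose $s\in S\setminus P$ with $sx\in P^n$; then $\omega(s)\omega(x)=\omega(sx)\in P^n\otimes_R R[F]\subset P^{(n)}\otimes_R R[F]$. Thus everything comes down to showing that $\omega(s)$ is a nonzerodivisor modulo $P^{(n)}\otimes_R R[F]$, for then $\omega(x)\in P^{(n)}\otimes_R R[F]$, as desired.

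Next I would pin down $\Ass_{\tilde S}(\tilde S/(P^{(n)}\otimes_R R[F]))$. Since $P^{(n)}\otimes_R R[F]$ is the $W$-saturation of $P^n\otimes_R R[F]$, these are exactly the $Q\in\Ass_{\tilde S}(\tilde S/(P^n\otimes_R R[F]))$ with $Q\cap W=\emptyset$, i.e.\ with $\iota^{-1}(Q)\subset P$. By the flat base change formula for associated primes applied to the flat map $\iota$ and the module $S/P^n$, every such $Q$ contracts to a member of $\Ass_S(S/P^n)=\{P,P_1,\dots,P_r\}$, where $P$ is the unique minimal prime and each embedded prime $P_i$ strictly contains $P$. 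Hence $\iota^{-1}(Q)\subset P$ forces $\iota^{-1}(Q)=P$, and these $Q$ are precisely the associated primes of $\bar S\otimes_R R[F]=\tilde S/(P\otimes_R R[F])$, where $\bar S:=S/P$. So it suffices to prove $\omega(s)\notin Q$ for every $Q\in\Ass_{\tilde S}(\bar S\otimes_R R[F])$ when $s\notin P$; equivalently, writing $\bar s$ for the image of $s$ in $\bar S$ and $\bar\omega$ for the induced coaction on the $F$-algebra domain $\bar S$ (here I use that $P$ is an $F$-ideal, so $S/P$ is an $F$-algebra), that $\bar\omega(\bar s)$ avoids every associated prime of $\bar S\otimes_R R[F]$.

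The final step is the cleanest. The map $\gamma\colon \bar S\otimes_R R[F]\to \bar S\otimes_R R[F]$, $t\otimes a\mapsto \bar\omega(t)(1\otimes a)$, is a ring automorphism, as in the proof of Lemma~\ref{ST.thm}, and $\gamma(\bar s\otimes 1)=\bar\omega(\bar s)$. Since $\bar S$ is a domain and $\iota$ is flat, $\bar s\otimes 1$ is a nonzerodivisor in $\bar S\otimes_R R[F]$ for $\bar s\neq 0$; a ring automorphism carries nonzerodivisors to nonzerodivisors, so $\bar\omega(\bar s)$ is a nonzerodivisor, hence lies in no associated prime. This yields $\omega(s)\notin Q$ for all the relevant $Q$, so $\omega(s)$ is a nonzerodivisor modulo $P^{(n)}\otimes_R R[F]$, finishing the argument.

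I expect the main obstacle to be the bookkeeping of associated primes in the second step: verifying that the only associated primes surviving the $W$-saturation are those lying over $P$, and that they coincide with $\Ass_{\tilde S}(\bar S\otimes_R R[F])$. The conceptual heart, though, is the observation that $F$-stability of $P$ makes $S/P$ an $F$-algebra domain, which lets the shift automorphism $\gamma$ convert the transparent fact that $\bar s\otimes 1$ is a nonzerodivisor into the fact that $\omega(s)$ is a nonzerodivisor, sidestepping any attempt to divide by $\omega(s)$ directly.
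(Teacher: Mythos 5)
Your proof is correct, and it takes a genuinely different route from the paper's. The paper disposes of the lemma in a few lines by invoking the equivariant primary decomposition theory of \cite{HM}: since $P$ is the unique minimal prime of $P^n$, the unique minimal $F$-prime of $P^n$ is $P^*=P$; the $F$-primary component $I$ of $P^n$ at $P$ is unique by (5.17) of \cite{HM}, has $P$ as its only associated prime by (6.13) of \cite{HM}, hence coincides with the ordinary $P$-primary component of $P^n$ by (6.10) of \cite{HM}; thus $I=P^{(n)}$, which is therefore an $F$-ideal. You avoid that machinery entirely and argue directly with the coaction, and all your steps check out: the identification of $P^{(n)}\otimes_R R[F]$ with the $(S\setminus P)\otimes 1$-saturation of $P^n\otimes_R R[F]$ follows from flatness of $R[F]$ applied to $0\to P^{(n)}\to S\to S_P/P^nS_P$; the flat base change formula for associated primes, together with the observation that $(S/\fq)\otimes_R R[F]$ is flat, hence torsion-free, over the domain $S/\fq$ (so each prime of $\Ass_{\tilde S}((S/\fq)\otimes_R R[F])$ contracts exactly to $\fq$), correctly isolates $\Ass_{\tilde S}(\bar S\otimes_R R[F])$ as containing all associated primes surviving the saturation; and the shearing automorphism $\gamma$ --- the same device the paper itself uses in the proof of Lemma~\ref{ST.thm} --- legitimately transports the evident nonzerodivisor $\bar s\otimes 1$ to $\bar\omega(\bar s)$, which then avoids every relevant associated prime, so that $\omega(s)$ is a nonzerodivisor modulo $P^{(n)}\otimes_R R[F]$ and $\omega(x)\in P^{(n)}\otimes_R R[F]$. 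Your argument also makes each hypothesis visibly necessary: finite type of $F$ makes $S\otimes_R R[F]$ Noetherian (needed for the associated-prime bookkeeping), flatness gives the embeddings and the base change, and $F$-stability plus primality of $P$ make $\bar S=S/P$ an $F$-algebra domain. What each approach buys: the paper's proof is shorter and yields slightly more information (it identifies $P^{(n)}$ as the $F$-primary component of $P^n$, consistent with the \cite{HM} framework used throughout the paper), whereas yours is self-contained modulo standard commutative algebra and works for a reader without access to the $F$-primary decomposition theory.
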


\begin{proof}
The only minimal prime of $P^n$ is $P$.
In particular, the only minimal $F$-prime of $P^n$ is $P^*=P$.
Let $I$ be the $F$-primary component of $P^n$ corresponding to the
minimal $F$-prime $P$ (it is unique, see \cite[(5.17)]{HM}).
It has only one associated prime $P$ by \cite[(6.13)]{HM}.
So by \cite[(6.10)]{HM}, $I$ is the $P$-primary component $P^n$.
Thus $I=P^{(n)}$, and $P^{(n)}$ is an $F$-ideal.
\end{proof}

\begin{theorem}\label{main.thm}
Let $S$ be a finitely generated $G$-algebra which is a normal domain.
Assume that $G$ is connected.
Assume that $X(G)\rightarrow X(K\otimes_k G)$ is surjective, where
$K$ is the integral closure of $k$ in $S$.
Let $X^1_G(S)$ be the set of height one $G$-stable prime ideals of $S$.
Let $M(G)$ be the subgroup of the class group $\Cl(S)$ of $S$ generated
by the image of $X^1_G(S)$.
Let $\Gamma$ be a subset of $X^1_G(S)$ whose image in $M(G)$ generates
$M(G)$.
Set $A:=S_G$.
Assume that $Q_G(S)_G\subset Q(A)$.
Assume that if $P\in\Gamma$, then either the height of $P\cap A$ is not one
or $P\cap A$ is principal.
Then for any 
$G$-stable height one prime ideal $Q$ of $S$, either the height of 
$Q\cap A$ is not one or $Q\cap A$ is a principal ideal.
In particular, $A$ is a UFD.
If, moreover, $X(G)$ is trivial, then $S^G=A$ is a UFD.
\end{theorem}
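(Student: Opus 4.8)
The plan is to prove the displayed contraction property for every $G$-stable height one prime $Q$, and then to invoke Corollary~\ref{key-cor.thm} to conclude that $A=S_G$ is a UFD. Since $G$ is connected, I would first recall from the paragraph preceding Corollary~\ref{key-cor.thm} the exact sequence $1\to N\to G\to T\to 1$ with $T\cong\Bbb G_m^n$ and $A=S_G=S^N$, so that $A$ is a $\Bbb Z^n$-graded Krull domain. I would also record the identity $A=S\cap Q(A)$: indeed $Q(A)=Q(S^N)\subset Q(S)^N$, and $S\cap Q(S)^N=S^N=A$ by (\ref{RFS.par}). It is this identity that lets me apply Lemma~\ref{divisorial.thm} and Lemma~\ref{exceptional.thm} with the intermediate field $K=Q(A)$.

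Next I would fix $Q\in X^1_G(S)$. If $\height(Q\cap A)\neq 1$ there is nothing to prove, and if $Q\in\Gamma$ the conclusion holds by hypothesis, so I may assume $\height(Q\cap A)=1$ and $Q\notin\Gamma$. Because $[Q]\in M(G)$ and the image of $\Gamma$ generates $M(G)$, I can write $[Q]=\sum_{P\in\Gamma}c_P[P]$ in $\Cl(S)$, which produces an $h\in Q(S)\setminus\{0\}$ with $\divisor(h)=\langle Q\rangle-\sum_{P\in\Gamma}c_P\langle P\rangle$ in $\Div(S)$. The support of this divisor consists of the $G$-stable primes $Q$ and the $P\in\Gamma$, so the locus $U^*(h)=\Spec S\setminus\Supp(\divisor(h))$ on which $h$ is a unit is $G$-stable. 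By the implication (b)$\Rightarrow$(a) of Lemma~\ref{principal.thm} (applicable since $S$ is a finitely generated normal $G$-algebra domain, $G$ is connected, and $X(G)\to X(K\otimes_k G)$ is surjective), $h$ lies in $Q_G(S)$ and is a semiinvariant; hence $h\in Q_G(S)_G\subset Q(A)$ by the standing assumption. This is where the hypothesis $Q_G(S)_G\subset Q(A)$ is indispensable.

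The heart of the argument is to read off $\divisor(h)$ viewed inside $A$. Since $h\in Q(A)^\times$ and $A=S\cap Q(A)$, one gets $Sh\cap Q(A)=Ah$, a principal, hence divisorial, fractional ideal of $A$; thus Lemma~\ref{divisorial.thm}(ii) applies to $I=Sh$ and yields $v_\fp(h)=\max\{\ru{v_P(h)/m(P)}\mid P\in X^1(\fp)\}$ for each $\fp\in X^1(A)$. Because $v_P(h)=0$ for every $P\in X^1(S)$ outside $\{Q\}\cup\Gamma$, the divisor of $h$ in $A$ is supported on $\{Q\cap A\}$ together with the contractions $P\cap A$ of those $P\in\Gamma$ with $\height(P\cap A)=1$. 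If $Q\cap A$ equals one such $P\cap A$, then $[Q\cap A]=[P\cap A]=0$ as $P\cap A$ is principal, and I am done; otherwise the only $P\in X^1(Q\cap A)$ with $v_P(h)\neq 0$ is $Q$, with $v_Q(h)=1$, so $v_{Q\cap A}(h)=\ru{1/m(Q)}=1$ since $m(Q)\geq 1$.

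Finally I would pass to $\Cl(A)$: from $0=[\divisor(h)]$ and $[P\cap A]=0$ for every $P\in\Gamma$ with $\height(P\cap A)=1$, all terms but the one at $Q\cap A$ drop out, leaving $0=v_{Q\cap A}(h)\,[Q\cap A]=[Q\cap A]$. Hence the height one prime $Q\cap A$ is a principal divisorial ideal, i.e.\ $Q\cap A$ is principal, which is the asserted conclusion. With this established for all $Q$, Corollary~\ref{key-cor.thm} gives that $A=S_G$ is a UFD; and if $X(G)$ is trivial then $T$ is trivial, $N=G$, and $A=S^G$, which is the last statement. I expect the main obstacle to be the bookkeeping of the previous paragraph: controlling the support of the divisor of $h$ in $A$ and pinning its coefficient at $Q\cap A$ to exactly $1$ through the ramification indices $m(P)$ and the ceiling functions of Lemma~\ref{divisorial.thm}, including the degenerate cases where several primes of $S$ contract to a single prime of $A$, or where $Q\cap A$ coincides with a contraction of some $P\in\Gamma$.
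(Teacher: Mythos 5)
Your proposal is correct, and it proves the theorem by a route that shares the paper's skeleton (express $[Q]$ through $\Gamma$ in $\Cl(S)$, produce an element of $Q_G(S)_G\subset Q(A)$ realizing the relation, transfer to $A$ via Lemma~\ref{divisorial.thm}, and finish with Corollary~\ref{key-cor.thm}) but differs genuinely in its mechanics at both key steps. The paper encodes the class relation ideal-theoretically: it forms the $G$-stable ideals $I=P_1^{(n_1)}\cap\cdots\cap P_s^{(n_s)}$ and $J=P_{s+1}^{(-n_{s+1})}\cap\cdots\cap P_r^{(-n_r)}$ (Lemma~\ref{symbolic-power.thm}), chooses $\alpha$ with $\alpha Q=I:_{Q(S)}J$, shows $\alpha S=I:_{Q(S)}JQ$ is a $(G,S)$-submodule of $Q_G(S)$ by Corollary~\ref{colon-cor.thm}, and gets semiinvariance from criterion \textbf{(c)} of Lemma~\ref{principal.thm}; you instead take $h$ with $\divisor(h)=\langle Q\rangle-\sum_P c_P\langle P\rangle$ and invoke criterion \textbf{(b)}, observing that for normal $S$ the locus $U^*(h)$ is the complement of the union of the $G$-stable sets $V(Q)$ and $V(P)$, $P\in\Gamma$ --- an identification that is sound, and is in fact the same one the paper uses inside its own proof of \textbf{(c)}$\Rightarrow$\textbf{(a)}. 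This bypasses symbolic powers and the colon-module machinery entirely. The endgames also differ: the paper concludes that $\alpha(Q\cap A)=(I:_{Q(S)}J)\cap Q(A)$ is a divisorial ideal of $A$ whose support consists, by Lemma~\ref{divisorial.thm}, only of principal primes, hence is itself principal --- an argument insensitive to the actual coefficients --- whereas you compute $\divisor_A(h)$ through the ceiling formula of Lemma~\ref{divisorial.thm}~\textbf{(ii)} and cancel in $\Cl(A)$, for which pinning $v_{Q\cap A}(h)$ to exactly $1$ is essential (a coefficient $c>1$ would only show that $[Q\cap A]$ is torsion); your separate treatment of the degenerate case $Q\cap A=P\cap A$ with $P\in\Gamma$, and your verification of $A=S\cap Q(A)$ (needed for $Sh\cap Q(A)=Ah$, so that Lemma~\ref{divisorial.thm} applies to $I=Sh$), close exactly the gaps this leaner route opens. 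In short, the paper's formulation buys coefficient-independence at the price of the $G$-stable symbolic-power apparatus, while yours is lighter on equivariant machinery at the cost of the valuation bookkeeping, which you carried out correctly.
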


\begin{proof}
Note that $\{[P]\mid P\in \Gamma\}$ generates $\I(M(G))\subset \Cl'(S)$,
see for the notation, (\ref{class-group.par}).
So the class $[Q]$ of $Q$ in $\Cl'(S)$ is equal to 
$\sum_{i=1}^r n_i [P_i]$ for some $P_1,\ldots,P_r\in\Gamma$.
We may assume that $n_1,\ldots,n_s\geq 0$, and $n_{s+1},\ldots,n_r<0$.
Let $I=P_1^{(n_1)}\cap \cdots\cap P_s^{(n_s)}$, and
$J=P_{s+1}^{(-n_{s+1})}\cap \cdots\cap P_r^{(-n_r)}$.
By Lemma~\ref{symbolic-power.thm}, both $I$ and $J$ are $G$-stable.
Note that $[Q]=[I:_{Q(S)} J]$.
So there exists some $\alpha\in Q(S)$ such that $\alpha Q=I:_{Q(S)} J$.
Then $\alpha\in I:_{Q(S)}JQ\subset Q_G(S)$ by Corollary~\ref{colon-cor.thm}.
As $\alpha S=I:_{Q(S)}JQ$ is a $(G,S)$-submodule of $Q_G(S)$ by
Corollary~\ref{colon-cor.thm}, $\alpha\in Q_G(S)_G\subset Q(A)$ by
Lemma~\ref{principal.thm} and the assumption.
Hence
\begin{equation}\label{QcapA.eq}
\alpha(Q\cap A)=(I:_{Q(S)}J)\cap Q(A).
\end{equation}
Now assume that $Q\cap A$ is height one.
Then the left hand side is a divisorial fractional ideal of the Krull
domain $A$, and hence so is the right hand side.

Applying Lemma~\ref{divisorial.thm}, if
$\fp\in X^1(A)$ and $((I:_{Q(S)}J)\cap Q(A))_\fp\neq A_\fp$, then
$n_\fp(I:_{Q(S)}J)\neq 0$.
In particular, there exists some $P\in X^1(\fp)$ such that
$v_P(I:_{Q(S)}J)\neq 0$.
Such a $P$ must be one of $P_1,\ldots,P_r$, and lies in $\Gamma$.
This forces that $\fp$ is principal, by assumption.
This shows that $\alpha(Q\cap A)=(I:_{Q(S)}J)\cap Q(A)$ is principal.
So $Q\cap A$ is also principal, as desired.

We prove that $A$ is a UFD.
As $A$ is a Krull domain, $P$ is a height one prime if and only if it is a 
minimal prime ideal of a nonzero principal ideal.
By Lemma~\ref{key-cor.thm}, $A$ is a UFD.

The last assertion is trivial.
\end{proof}

\begin{theorem}\label{ufd.thm}
Let $G$ be connected.
Let $S$ be a $G$-algebra of finite type over $k$.
Assume that $S$ is an integral domain.
Assume that $X(G)\rightarrow X(K\otimes_k G)$ is surjective,
where $K$ is the integral closure of $k$ in $Q(S)$.
Set $A:=S_G$.
Assume that if $P$ is a 
$G$-stable height one prime ideal of $S$ such that 
if
$P\cap A$ is a
minimal prime of some nonzero principal ideal,
then $P$ is a principal ideal.
Then 
\begin{description}
\item[(i)] 
If $P$ is a $G$-stable height one prime ideal of $S$ such that
$P\cap A$ is a minimal prime of a nonzero principal ideal,
then $P=Sf$ for some homogeneous prime element $f$ of $A$.
\item[(ii)] $A$ is a UFD.
\item[(iii)] Any homogeneous 
prime element of $A$ is a prime element of $S$.
\item[(iv)] If, moreover, $X(G)$ is trivial, then $S^G=A$ is a UFD.
\end{description}
\end{theorem}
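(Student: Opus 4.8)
The whole theorem will be bootstrapped from part \textbf{(i)}, which I would prove first. By the hypothesis of the theorem, a $G$-stable height one prime $P$ whose contraction $P\cap A$ is a minimal prime of a nonzero principal ideal is itself principal, say $P=Sf$ with $f\in S$. The task is to recognize $f$ as a homogeneous prime of $A$. Since $P=Sf$ is $G$-stable, $Sf$ is a $G$-submodule of $S\subset Q_G(S)$, so the implication \textbf{(c)}$\Rightarrow$\textbf{(a)} of Lemma~\ref{principal.thm} shows that $f$ is a semiinvariant; as $f\in S$ this means $f\in S^\chi\subset S_G=A$ is homogeneous. Because $f$ is a nonzerodivisor of $S$ lying in $A$, Lemma~\ref{principal-invariant.thm} gives $P\cap A=Sf\cap A=Af$, and this is prime, being a contraction of $P$. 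Hence $f$ is a prime element of $A$ and $P=Sf$, proving \textbf{(i)}.

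Part \textbf{(ii)} is then immediate. For every $G$-stable height one prime $P$ such that $P\cap A$ is a minimal prime of a nonzero principal ideal, part \textbf{(i)} shows $P\cap A=Af$ is principal. This is exactly the hypothesis of Corollary~\ref{key-cor.thm}, which yields that $A=S_G$ is a UFD. Part \textbf{(iv)} is formal: when $X(G)$ is trivial the only character is the trivial one, so $A=S_G=S^G$, a UFD by \textbf{(ii)}.

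The substance lies in \textbf{(iii)}. Let $f$ be a homogeneous prime of $A$; then $f$ is a semiinvariant, so $Sf$ is a $G$-stable ideal, and $f\notin S^\times$ (otherwise $f\in(S_G)^\times=A^\times$ by the last assertion of Lemma~\ref{principal.thm}), so $Sf\neq S$. I would first exhibit one minimal prime of $Sf$ contracting \emph{exactly} to $Af$: by Lemma~\ref{principal-invariant.thm} we have $Sf\cap A=Af$, so there is an injection of rings $A/Af\hookrightarrow S/Sf$ with $A/Af$ a domain; the induced map on spectra is dominant, so some minimal prime $P_0$ of $Sf$ satisfies $P_0\cap A=Af$. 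Since $G$ is smooth and connected, the minimal prime $P_0$ of the $G$-stable ideal $Sf$ is itself $G$-stable by Lemma~\ref{star-integral.thm}, and it has height one. Its contraction $P_0\cap A=Af$ is a minimal prime of the nonzero principal ideal $Af$, so part \textbf{(i)} applies and gives $P_0=Sh$ for a homogeneous prime $h$ with $P_0\cap A=Ah$; comparing, $Ah=Af$ forces $h$ and $f$ to be associates, whence $Sf=Sh=P_0$ is prime. Thus $f$ is a prime element of $S$.

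The step I expect to be the main obstacle is precisely the existence-and-stability assertion in \textbf{(iii)}: getting a minimal prime of $Sf$ that contracts \emph{to $Af$ and no larger prime} (so that part \textbf{(i)} becomes applicable) and checking that it is $G$-stable. The contraction is controlled by the dominance of $A/Af\hookrightarrow S/Sf$, and the $G$-stability by the connectedness argument behind Lemma~\ref{star-integral.thm}; some care is warranted because $S$ is only assumed to be a finitely generated domain (not normal) and $A=S_G$ need not be Noetherian, so I avoid any appeal to normality or to the class group of $A$.
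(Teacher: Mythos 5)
Your proof is correct and follows essentially the same route as the paper's: part \textbf{(i)} via Lemma~\ref{principal.thm} (the implication \textbf{(c)}$\Rightarrow$\textbf{(a)}) together with Lemma~\ref{principal-invariant.thm}, part \textbf{(ii)} via Corollary~\ref{key-cor.thm}, and part \textbf{(iii)} by applying \textbf{(i)} to a minimal prime of $Sf$ contracting to $Af$. The only difference is that you spell out the existence of that minimal prime (from the injection $A/Af\hookrightarrow S/Sf$) and its $G$-stability and height-one property (via Lemma~\ref{star-integral.thm} and Krull's principal ideal theorem), details which the paper's proof of \textbf{(iii)} leaves implicit.
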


\begin{proof}
{\bf (i)}
By assumption, we can write $P=Sf$ with $f\in S$.
By Lemma~\ref{principal.thm}, $f$ is a homogeneous element of $A$.
As $Af=A\cap P$ by Lemma~\ref{principal-invariant.thm}, 
$f$ is a prime element of $A$.

{\bf (ii)} Let $P$ be a $G$-stable height one prime ideal such that
$P\cap A$ is a minimal prime ideal of a nonzero principal ideal.
Note that $P=Sf$ for some prime element $f\in A$ by {\bf (i)}.
Then $P\cap A=Af$ is principal.
Now by Corollary~\ref{key-cor.thm}, $A$ is a UFD.

{\bf (iii)} 
Let $a$ be  a homogeneous prime element of $A$.
Let $P$ be a minimal prime of $Sa$ such that $P\cap A=Aa$.
Then $P=Sb$ for some homogeneous 
prime element $b$ of $A$ by {\bf (i)}, and
$Ab=P\cap A=Aa$.
Hence $Sa=Sb=P$, and $a$ is a prime element of $S$.

{\bf (iv)} is trivial.
\end{proof}

\begin{corollary}[cf.~{\cite[p.~376]{Popov2}}]
\label{ufd-cor.thm}
Let $G$ be connected.
Let $S$ be a $G$-algebra of finite type over $k$.
Assume that $S$ is a UFD.
Assume that $X(G)\rightarrow X(K\otimes_k G)$ is 
surjective, where $K$ is the
integral closure of $k$ in $S$.
Then $A:=S_G$ is a UFD.
Any 
homogeneous
prime element of $A$ is a prime element of $S$.
If, moreover, $X(G)$ is trivial, then $S^G=A$ is a UFD.
\qed
\end{corollary}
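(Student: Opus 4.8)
The plan is to deduce this corollary directly from Theorem~\ref{ufd.thm}, the point being that a UFD is a domain in which every height one prime is principal, so the main structural hypothesis of that theorem is satisfied automatically.

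First I would note that, since $S$ is a UFD, it is in particular a normal domain. Hence the integral closure of $k$ in $S$ coincides with the integral closure of $k$ in $Q(S)$: any $\alpha\in Q(S)$ integral over $k$ is integral over $S$, so lies in $S$ by normality, and is then integral over $k$ inside $S$. Thus the field $K$ appearing in the corollary agrees with the field $K$ used in Theorem~\ref{ufd.thm}, and the surjectivity hypothesis $X(G)\to X(K\otimes_k G)$ is literally the same in both statements.

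Next I would verify the remaining hypothesis of Theorem~\ref{ufd.thm}, namely that every $G$-stable height one prime $P$ of $S$ for which $P\cap A$ is a minimal prime of a nonzero principal ideal is itself principal. Because $S$ is a UFD, \emph{every} height one prime of $S$ is principal: choosing a nonzero element of $P$ and factoring it into primes, one of the prime factors lies in $P$ and generates a nonzero prime ideal, which must equal $P$ by the height one condition. Hence the hypothesis holds vacuously for all such $P$, and Theorem~\ref{ufd.thm} applies to $S$, with $A=S_G$.

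Finally I would read off the conclusions from Theorem~\ref{ufd.thm}: part~(ii) gives that $A=S_G$ is a UFD; part~(iii) gives that any homogeneous prime element of $A$ is a prime element of $S$; and part~(iv) gives that if $X(G)$ is trivial then $S^G=A$ is a UFD. I do not expect any genuine obstacle here; the only points requiring care are the identification of the two fields $K$ and the standard fact that in a UFD all height one primes are principal, both of which are routine.
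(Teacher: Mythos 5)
Your proof is correct and is essentially the paper's own argument: the corollary is stated with a \qed precisely because it follows immediately from Theorem~\ref{ufd.thm}, since in the Noetherian UFD $S$ every height one prime (in particular every $G$-stable one) is principal, so the main hypothesis of that theorem is automatic. Your extra care in identifying the integral closure of $k$ in $S$ with the integral closure of $k$ in $Q(S)$ via normality of $S$ is exactly the right reconciliation of the two statements' wordings (and note the hypothesis is satisfied \emph{trivially} rather than vacuously, since the conclusion holds for all such $P$ — a harmless slip of terminology).
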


We can prove Corollary~\ref{ufd-cor.thm} without assuming that
$S$ is finitely generated.

\begin{proposition}\label{ufd-cor2.thm}
Let $G$ be connected.
Let $S$ be a $G$-algebra.
Assume that $S$ is a UFD.
Assume also that $X(G)\rightarrow X(K\otimes_k G)$ is surjective, where
$K$ is the integral closure of $k$ in $S$.
Then $A:=S_G$ is a UFD.
Any homogeneous prime element of $A$ is a prime element of $S$.
\end{proposition}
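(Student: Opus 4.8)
The plan is to reduce the statement to Lemma~\ref{graded-ufd2.thm} by exhibiting, for each nonzero homogeneous element of $A=S_G$, an explicit factorization into prime elements of $A$. Since $G$ is connected, $X(G)\cong\Bbb Z^n$ by Remark~\ref{character.rem}, so $A=\bigoplus_{\chi\in X(G)}S^\chi$ is a $\Bbb Z^n$-graded domain, and by Lemma~\ref{graded-ufd2.thm} it suffices to show that every nonzero homogeneous $a\in A$ is either a unit of $A$ or a product of prime elements of $A$. First I would record a compatibility remark: $S$ being a UFD is integrally closed in $Q(S)$, so the integral closure $K$ of $k$ in $S$ coincides with the integral closure of $k$ in $Q(S)$ (an element of $Q(S)$ algebraic over $k$ is integral over $S$, hence lies in $S$). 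Thus the surjectivity of $X(G)\rightarrow X(K\otimes_k G)$ is exactly the hypothesis needed to invoke Corollary~\ref{connected-cor.thm}.

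Now fix a nonzero non-unit homogeneous $a\in A$, say $a\in S^\chi$, and write $a=u\prod_{i=1}^m p_i^{e_i}$ with $u\in S^\times$ and $p_1,\dots,p_m$ pairwise non-associate prime elements of the UFD $S$. The central step is to prove that each ideal $Sp_i$ is $G$-stable. Since $a$ is a semiinvariant, $Sa$ is a $G$-stable ideal, so $V(Sa)\subset\Spec S$ is a $G$-stable closed subscheme whose irreducible components are precisely the $V(Sp_i)$. Each $V(Sp_i)$ is integral, so by Lemma~\ref{star-integral.thm}, applied to the action of the smooth connected group $G$, the smallest $G$-stable closed subscheme $V((Sp_i)^*)$ containing it is again integral; that is, $(Sp_i)^*$ is a prime ideal. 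As $V((Sp_i)^*)$ is irreducible, is contained in the $G$-stable set $V(Sa)=\bigcup_j V(Sp_j)$, and contains the component $V(Sp_i)$, it must coincide with $V(Sp_i)$; hence $\sqrt{(Sp_i)^*}=Sp_i$, and since $(Sp_i)^*$ is prime this forces $(Sp_i)^*=Sp_i$. Thus $Sp_i$ is $G$-stable.

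With $G$-stability in hand the remainder is formal. Each $Sp_i$ is a $G$-submodule of $S\subset Q_G(S)$, so Corollary~\ref{connected-cor.thm} shows $p_i$ is a semiinvariant, i.e.\ $p_i\in A$ is homogeneous; applying Lemma~\ref{principal-invariant.thm} to $A=S^N$ gives $Ap_i=Sp_i\cap A$, a prime ideal of $A$, so $p_i$ is a prime element of $A$. The same argument with $f=u$, for which $Su=S$ is trivially a $G$-submodule of $Q_G(S)$, shows that $u$ and $u^{-1}$ are semiinvariants, whence $u\in A^\times$. Therefore $a=u\prod_i p_i^{e_i}$ is, up to the unit $u$, a product of prime elements of $A$, and Lemma~\ref{graded-ufd2.thm} yields that $A$ is a UFD. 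For the last assertion, a homogeneous prime element $q$ of $A$ factors in $S$, by the above, as a product of elements that are simultaneously prime in $A$ and in $S$; since $q$ is irreducible in the UFD $A$, it must be an associate of a single such factor $p_i$, which is prime in $S$, so $q$ is a prime element of $S$.

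The main obstacle is exactly the $G$-stability of the individual prime factors $Sp_i$: over a general base field (not algebraically closed, and with $G(k)$ possibly not dense) one cannot simply argue that "$G$ permutes the finitely many minimal primes and connectedness fixes each one," and because $S$ is not assumed finitely generated, both the Noetherian machinery behind Theorem~\ref{ufd.thm} (through Corollary~\ref{key-cor.thm} and Lemma~\ref{H-UFD.thm}) and the finite-generation hypothesis of Lemma~\ref{principal.thm} are unavailable. The scheme-theoretic image argument via Lemma~\ref{star-integral.thm}, which needs only that $G$ be smooth and connected of finite type over $k$ and imposes no condition on $S$, is precisely what circumvents both difficulties; Corollary~\ref{connected-cor.thm} then supplies the passage from $G$-stability to semiinvariance without any finiteness assumption on $S$.
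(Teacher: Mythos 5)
Your proof is correct and follows essentially the same route as the paper's: factor in the UFD $S$, obtain $G$-stability of each $Sp_i$ via the scheme-theoretic image argument of Lemma~\ref{star-integral.thm}, deduce semiinvariance so that $p_i\in A$ is homogeneous, get $Ap_i=Sp_i\cap A$ prime from Lemma~\ref{principal-invariant.thm}, and conclude with Lemma~\ref{graded-ufd2.thm}. If anything, your appeal to Corollary~\ref{connected-cor.thm} (together with the explicit remark that $K$ equals the integral closure of $k$ in $Q(S)$ because $S$ is normal) is the more precise citation, since the paper's proof quotes Lemma~\ref{principal.thm}, whose finite-generation hypothesis is exactly what this proposition is meant to dispense with.
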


\begin{proof}
Let $f\in A\setminus(A^\times\cup\{0\})$ be a homogeneous element.
Then $f\in S\setminus(S^\times\cup\{0\})$.
Let $f=f_1\cdots f_r$ $(r\geq 1)$ be a prime factorization of $f$ in $S$.
For each $i$, the scheme theoretic image $V(Sf_i)^*$ of the action 
$G\times V(Sf_i)\rightarrow \Spec S$
is integral by Lemma~\ref{star-integral.thm}, 
is contained in $V(Sf)$ (because $Sf$ is a $G$-stable ideal),
and contains $V(Sf_i)$.
But there is no integral closed subscheme $E$ of $\Spec S$ such that
$V(Sf_i)\subsetneq E\subset V(f)$.
So $V(Sf_i)^*=V(Sf_i)$.
In other words, $Sf_i$ is $G$-stable.
By Lemma~\ref{principal.thm}, $f_i\in A$.
By Lemma~\ref{principal-invariant.thm}, $Af_i=Sf_i\cap A$, and hence
$f_i$ is a prime element of $A$.
This shows that $f$ has a prime factorization in $A$, and $A$ is a UFD
by Lemma~\ref{graded-ufd2.thm}.
Moreover, if $f$ is not irreducible in $S$ and $r\geq 2$, then
$f$ is not irreducible in $A$.
This shows that any homogeneous
prime element of $A$ is a prime element of $S$.
\end{proof}

\begin{lemma}[cf.~{\cite[p.~376]{Popov2}}]\label{ufd-easier.thm}
Let $S$ be a $G$-algebra which is a UFD.
Assume that $G(K)$ is dense in $K\otimes_k G$,
where $K$ is the integral closure of $k$ in $S$.
Assume that $X(K\otimes_k G)$ is trivial.
Assume also that $S^\times\subset A=S^G$.
Then $A$ is a UFD.
\end{lemma}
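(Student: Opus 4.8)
The plan is to reduce to the case where $k$ is integrally closed in $S$ and $G(k)$ itself is dense, and then to show directly that every nonzero nonunit of $A$ is a product of prime elements of $A$, which is exactly what is needed for $A$ to be a UFD. For the reduction, first note that the restriction map $X(G)\to X(K\otimes_k G)$ is injective, since a character is a group-like element of the coordinate ring and $k[G]\to K\otimes_k k[G]$ is faithfully flat; hence the triviality of $X(K\otimes_k G)$ already forces $X(G)$ to be trivial. Next, since $K$ is a field with $K^\times\subset S^\times\subset A$, we get $K\subset A=S^G$, so $G$ acts $K$-linearly and $S$ becomes a $K\otimes_k G$-algebra over $K$ with the same invariant ring $S^{K\otimes_k G}=S^G=A$ (the base-change identity recorded after (\ref{RFS.par})). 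Replacing $(k,G)$ by $(K,K\otimes_k G)$, I may therefore assume that $K=k$, that $G(k)$ is dense in $G$, that $X(G)$ is trivial, and that $S^\times\subset A=S^G$; in particular $A^\times=S^\times$.

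With these reductions, I would fix a nonzero nonunit $f\in A$; since $A^\times=S^\times$, it is also a nonunit of $S$, and in the UFD $S$ it factors as $f=u\prod_i f_i^{e_i}$ with $u\in S^\times=A^\times$ and $f_i$ pairwise non-associate prime elements. Because $G(k)$ acts on $S$ by $k$-algebra automorphisms (density) and $Sf$ is $G$-stable, $G(k)$ permutes the finite set of prime divisors $\{Sf_i\}$ of $f$; and since $f$ is invariant, the valuation identity $v_{gP}(f)=v_P(g^{-1}f)=v_P(f)$ shows that the multiplicities $e_i$ are constant along each $G(k)$-orbit.

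I would then group the primes into $G(k)$-orbits $O_1,\dots,O_t$ and set $h_O:=\prod_{Sf_i\in O}f_i$, so that $Sh_O=\bigcap_{P\in O}P$ is a $G(k)$-stable ideal, hence a $G$-submodule of $Q_G(S)$ by Lemma~\ref{dense.thm}. Applying Lemma~\ref{pop-lemma.thm}, whose hypotheses now hold and whose conclusion simplifies because $X(G)$ is trivial, gives $h_O\in A$. To finish, I would check that each $h_O$ is a prime element of $A$: if $h_O\mid ab$ with $a,b\in A$, then $ab\in P$ for every $P\in O$; since an element of $A$ is invariant, it lies in $gP$ iff it lies in $P$, so $a$ (resp.\ $b$) lies either in every prime of $O$ or in none, whence $h_O\mid a$ or $h_O\mid b$ in $S$, and the quotient is again invariant and so lies in $A$ (using $Sh_O\cap A=Ah_O$ from Lemma~\ref{principal-invariant.thm}). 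Thus $f=u\prod_j h_{O_j}^{e_{O_j}}$ is a unit times a product of primes of $A$, and $A$ is a UFD.

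The main obstacle is precisely the disconnectedness of $G$: unlike the connected case of Proposition~\ref{ufd-cor2.thm}, the individual prime factors $f_i$ need not generate $G$-stable ideals, so grouping them into orbits is essential, and the crux is verifying that each orbit product $h_O$ is a genuine invariant. This is exactly where the hypotheses $S^\times\subset S^G$ and the triviality of the character group enter, through Lemma~\ref{pop-lemma.thm} and, inside its proof, Rosenlicht's Lemma~\ref{Rosenlicht.thm}. A secondary technical point is that $S$ is assumed neither Noetherian nor finitely generated, but this is absorbed by the reduction to finitely generated subalgebras already built into Lemma~\ref{pop-lemma.thm}.
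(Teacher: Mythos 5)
Your proof is correct and takes essentially the same route as the paper's: reduce to $k=K$ via $K^\times\subset S^\times\subset S^G$, factor a nonzero nonunit of $A$ into primes of $S$, group the prime divisors into $G(k)$-orbits, and show each orbit product lies in $A$ by Lemma~\ref{pop-lemma.thm} (with $X(G)$ trivial after the reduction) and is a prime element of $A$ via invariance and Lemma~\ref{principal-invariant.thm}. The only difference is bookkeeping: where the paper peels off one orbit product and inducts on the number of prime factors, you observe that the multiplicities are constant along $G(k)$-orbits and write down the full prime factorization in $A$ at once, which is equally valid.
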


\begin{proof}
As $K^\times\subset S^\times\subset S^G$, $K\subset S^G$.
So replacing $k$ by $K$, we may assume that $k=K$.

Let $f\in A\setminus(\{0\}\cup A^\times)$.
Let $f=f_1\cdots f_r$ be a prime factorization of $f$ in $S$.
Using induction on $r$, we prove that $f$ has a prime factorization in $A$.

As $G(k)$ leaves the ideal $Sf$ stable, $G(k)$ acts on the set of
prime ideals $\Gamma=\{(f_1),\ldots,(f_r)\}$.
Let $\Gamma_1$ be a $G(k)$-orbit, and let $\Gamma_1=\{(f_{i_1}),\ldots,
(f_{i_s})\}$, with $(f_{i_j})$ distinct.
Set $h=\prod_{j}f_{i_j}$.
Then $G(k)$ only permutes the elements of $\Gamma_1$, and 
the ideal $Sh$ is $G(k)$-invariant.
By Lemma~\ref{pop-lemma.thm}, $h\in A$.
If $a,b\in A$ and $ab\in hA$, then either $a\in f_{i_1}S$ or $b\in f_{i_1}S$.
If $a\in f_{i_1}S$, then $a\in hS\cap A=hA$.
If $b\in f_{i_1}S$, then $b\in hS\cap A=hA$.
As $h\notin A^\times=S^\times$, $h$ is a prime element of $A$.
Note that $f/h\in S\cap Q(A)=A$.
Using the induction assumption, $f/h$ is either a unit, or has a
prime factorization in $A$.
So $f=h(f/h)$ has a prime factorization in $A$, as desired.
\end{proof}

\begin{lemma}[cf.~{\cite[section~6]{Hochster}}, 
{\cite[Lemma~2]{Popov}}]
\label{ufd-easy.thm}
Let $S$ be a $G$-algebra which is a UFD.
Assume that $S^\times=k^\times$.
If $G(k)$ is dense in $G$ and $X(G)$ is trivial, then $S^G$ is a UFD.
\end{lemma}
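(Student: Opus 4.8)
The plan is to reduce the statement to Lemma~\ref{ufd-easier.thm}, which already establishes that $S^G$ is a UFD under the hypotheses that $G(K)$ is dense in $K\otimes_k G$, that $X(K\otimes_k G)$ is trivial, and that $S^\times\subset S^G$, where $K$ denotes the integral closure of $k$ in $S$. Thus the only work is to verify that the present, apparently weaker, assumptions (density of $G(k)$ in $G$, triviality of $X(G)$, and $S^\times=k^\times$) actually imply those three conditions.

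The key step I would carry out first is the observation that $K=k$. Indeed, $S$ is a UFD, hence a domain, so its subring $K$ is a domain; being integral over the field $k$, the domain $K$ is itself a field. Consequently every nonzero element of $K$ is invertible in $K\subset S$, so that $K\setminus\{0\}\subset S^\times=k^\times$. This forces $K\subseteq k$, and since $k\subseteq K$ is automatic, we obtain $K=k$.

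With this identification in hand, all three hypotheses of Lemma~\ref{ufd-easier.thm} are immediate. The density hypothesis ``$G(K)$ dense in $K\otimes_k G$'' becomes ``$G(k)$ dense in $G$'', which is assumed; the character hypothesis ``$X(K\otimes_k G)$ trivial'' becomes ``$X(G)$ trivial'', again assumed; and since $S$ is a $k$-algebra on which $G$ acts $k$-linearly, we have $k\subset S^G$, so $S^\times=k^\times\subset S^G$. Applying Lemma~\ref{ufd-easier.thm} then yields that $S^G$ is a UFD, as desired.

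I do not expect any serious obstacle: the entire content is the reduction $K=k$, after which the statement is a direct specialization of Lemma~\ref{ufd-easier.thm}. The only point requiring a moment's care is the argument that $K$ is a field, so that $K\setminus\{0\}$ consists of units of $S$; this uses only that $S$ is a domain and that a domain integral over a field is a field.
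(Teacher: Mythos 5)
Your proof is correct and takes essentially the same route as the paper, which argues $k^\times\subset K^\times\subset S^\times=k^\times$, concludes $K=k$, and then invokes Lemma~\ref{ufd-easier.thm} directly. Your explicit justification that $K$ is a field (a domain integral over $k$), so that $K\setminus\{0\}\subset S^\times$, is exactly the point left implicit in the paper's chain of inclusions.
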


\begin{proof}
Note that $k^\times\subset K^\times\subset S^\times=k^\times$.
So $K=k$.
The assertions follow immediately from Lemma~\ref{ufd-easier.thm}.
\end{proof}

\section{The Italian problem on invariant subrings}

The following is a refinement of \cite[Lemma~1]{Popov}.
See also \cite[(3.14)]{Kamke}.

\begin{proposition}\label{Kamke.thm}
Let $G$ be connected.
Let $S$ be a $G$-algebra which is a Krull domain.
Assume also that any $G$-stable height one prime ideal of $S$ is principal.
Moreover, assume that
$X(G)\rightarrow X(K\otimes_k G)$ is surjective,
where $K$ is the integral closure of $k$ in $S$.
Then $Q_G(S)_G=Q_T(A)$, where $T=\Spec k X(G)$.
If, moreover, $X(G)$ is trivial, then $Q(S)^G=Q(S^G)$. 
\end{proposition}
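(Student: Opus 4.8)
The plan is to establish the first equality $Q_G(S)_G=Q_T(A)$ via divisor theory on the Krull domain $S$, and then read off the second assertion as the special case $X(G)=\{e\}$. Recall from the discussion preceding Corollary~\ref{key-cor.thm} that $A=S_G=S^N$ is an $X(G)$-graded Krull domain, and that by (\ref{torus.par}) applied to the split torus $T$ we have $Q_T(A)=A_{\Gamma(A)}$, the localization of $A$ at the set $\Gamma(A)$ of its nonzero homogeneous elements. Since $Q_G(S)_G=\bigoplus_{\chi\in X(G)}Q_G(S)^\chi$ by definition, it suffices to compare homogeneous (semiinvariant) elements.

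For the inclusion $Q_T(A)\subset Q_G(S)_G$, clearly $A=S_G\subset Q_G(S)_G$; and for nonzero $s\in A^\chi$ I would note that $\omega'(1/s)=\omega'(s)^{-1}=(1/s)\otimes\chi^{-1}\in Q(S)\otimes_k k[G]$, so $1/s\in Q_G(S)$ by Lemma~\ref{omega-inverse.thm} and is a semiinvariant of weight $\chi^{-1}$; hence $1/s\in Q_G(S)^{\chi^{-1}}\subset Q_G(S)_G$. As $Q_G(S)_G$ is a ring containing $A$ together with the inverses of all elements of $\Gamma(A)$, it contains $A_{\Gamma(A)}=Q_T(A)$.

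For the reverse inclusion, take a semiinvariant $f\in Q_G(S)^\chi$, $f\neq 0$. Since $\omega'(sf)=\omega(s)(f\otimes\chi)\in Sf\otimes_k k[G]$, the module $Sf$ is a $(G,S)$-submodule of $Q_G(S)$, and likewise $Sf^{-1}$. The key step is to show every $P\in X^1(S)$ with $v_P(f)\neq 0$ is $G$-stable. I would consider the genuine $G$-ideals $\mathfrak b:=Sf\cap S$ and $\mathfrak c:=Sf^{-1}\cap S$, intersections of $G$-subcomodules of $Q_G(S)$; intersecting divisorial ideals shows their minimal primes are exactly $\{P:v_P(f)>0\}$ and $\{P:v_P(f)<0\}$. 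Now if $P$ is a minimal prime of a $G$-ideal $\mathfrak b$, then $\mathfrak b\subset P^*\subset P$, so $P$ is minimal over $P^*$; but $V(P^*)=V(P)^*$ is integral by Lemma~\ref{star-integral.thm}, hence $P^*$ is prime and therefore $P=P^*$, i.e.\ $P$ is $G$-stable. I expect this to be the main obstacle, since it is exactly where the connectedness of $G$ and the orbit-closure machinery enter; the rest is bookkeeping with divisors.

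Once the support of $\divisor f$ consists of $G$-stable height one primes, the hypothesis makes each such $P$ principal, $P=S\pi_P$, and Corollary~\ref{connected-cor.thm} (which needs no finite generation and uses the surjectivity of $X(G)\to X(K\otimes_k G)$) shows each $\pi_P$ is a semiinvariant, hence homogeneous in $A$. Setting $h:=\prod_P\pi_P^{v_P(f)}\in A_{\Gamma(A)}$ gives $\divisor(f/h)=0$, so $u:=f/h\in S^\times$; applying Corollary~\ref{connected-cor.thm} to $u$ and to $u^{-1}$ shows $u$ is a homogeneous unit of $A$, whence $f=uh\in A_{\Gamma(A)}=Q_T(A)$. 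This proves $Q_G(S)_G=Q_T(A)$. Finally, if $X(G)$ is trivial then $T=\Spec k$, $N=G$, and $A=S^G$; the grading is trivial, so $Q_T(A)=Q(A)=Q(S^G)$, while $Q_G(S)_G=Q_G(S)^G=Q(S)^G$ by (\ref{Q_F(S).par}). The equality just proved then reads $Q(S)^G=Q(S^G)$, as desired.
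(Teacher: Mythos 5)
Your proof is correct and takes essentially the same route as the paper's: both show that the height one primes supporting a semiinvariant of $Q_G(S)$ are $G$-stable via the orbit-closure argument of Lemma~\ref{star-integral.thm}, use the principality hypothesis and the surjectivity of $X(G)\rightarrow X(K\otimes_k G)$ to obtain semiinvariant prime generators, and conclude through the identification $Q_T(A)=A_{\Gamma(A)}$ of (\ref{torus.par}), your exact factorization $f=u\prod_P\pi_P^{v_P(f)}$ being only a cosmetic variant of the paper's trick of multiplying by a large power $(f_1\cdots f_s)^n$ to clear denominators. If anything, your citation of Corollary~\ref{connected-cor.thm} is the more precise one, since $S$ is merely a Krull domain here and Lemma~\ref{principal.thm}, which the paper invokes directly, is stated for finitely generated $S$.
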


\begin{proof}
Let $a/b\in Q_G(S)_G\setminus\{0\}$ be a homogeneous element of
$Q_G(S)_G$ with $a,b\in S\setminus\{0\}$.
As $k\cdot(b/a)$ is a $G$-submodule of $Q_G(S)$, 
$S(b/a)$ is a $(G,S)$-submodule of $Q_G(S)$.
Hence $I:=Sb:_S Sa= S(b/a)\cap S$ is a $G$-ideal of $S$.
Clearly $I$ is divisorial.

Let $P_1,\ldots,P_s$ be the minimal primes of $I$.
Each $P_i$ is height one, and is $G$-stable by
Lemma~\ref{star-integral.thm}.

So $P_i=Sf_i$ for some $f_i\in S$ by assumption.
Then $f_i$ is a semiinvariant by Lemma~\ref{principal.thm}.
Now $f:=(f_1\cdots f_s)^n\in I\setminus\{0\}$ for sufficiently large $n$.
Then $h:=f(a/b)\in S$ is also a semiinvariant.
So $h\in A$.
Then $a/b=h/f\in Q_T(A)$ by (\ref{torus.par}).
This shows that $Q_G(S)_G\subset Q_T(A)$.

On the other hand, $A=S_G\subset Q_G(S)_G$.
Moreover, if $s$ is a semiinvariant and $\omega(s)=s\otimes \gamma$ for
a group-like element $\gamma$ of $k[G]$, then $\omega'(s^{-1})=s^{-1}
\otimes \gamma^{-1}$, and hence $s^{-1}\in Q_G(S)_G$.
This shows that $Q_T(A)\subset Q_G(S)_G$.
Hence $Q_G(S)_G=Q_T(A)$.

The last assertion is obvious.
\end{proof}

\begin{remark}
The heart of the argument above is in \cite[section~3]{Kamke}.
\end{remark}

\paragraph\label{invariant-field-var.par}
Let $X$ be an $(S_1)$ $k$-scheme of finite type on which $G$ acts.
Let $\Phi: G\times X\rightarrow X\times X$ be the morphism $\Phi(g,x)=(gx,x)$.
Let $g:=\dim G$, and $s=g-\min\{\dim G_x \mid x\in X\}$, where 
$G_x$ is the stablizer of $x$.
That is, $G_x=\Phi^{-1}(x,x)$.
It is a closed subgroup scheme of $G\times x$ over $x$.
Set $U:=\{x\in X\mid \dim G_x=g-s\}$.
Note that $U$ is a non-empty open subset of $X$.
Let $b:G\times X\rightarrow X$ be the map given by $b(g,x)=g^{-1}x$.
Let $p_2:G\times X\rightarrow X$ be the projection.
Both maps are flat, so 
the maps $b^*:\R(X)\rightarrow \R(G\times X)$ and
$p_2^*:\R(X)\rightarrow \R(G\times X)$ are induced.
We denote the kernel of $b^*-p_2^*$ by $\R(X)^G$.
If $X$ is a variety, then we write it as $k(X)^G$.
It is a subfield of $k(X)$ in this case.

If $X=\Spec S$ is affine, then $\R(X)^G=Q(S)^G$ by definition.
On the other hand, for $g\in G(k)$ and $f\in \R(X)$, $(gf)(x)=f(g^{-1}x)$ 
gives a definition of the action of $G(k)$ on $\R(X)$.

\begin{lemma}
In general, $\R(X)^G\subset \R(X)^{G(k)}$.
If $G(k)$ is dense in $G$, then $\R(X)^G=\R(X)^{G(k)}$.
\end{lemma}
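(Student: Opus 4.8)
The plan is to reduce everything to one geometric observation relating the morphisms $b,p_2$ to the action of a rational point. For $g\in G(k)$ let $\sigma_g\colon X\to G\times X$ be the section $x\mapsto(g,x)$. Since $g$ is a $k$-rational point of the affine (hence separated) scheme $G$, this $\sigma_g$ is a closed immersion identifying $X$ with the fibre $p_1^{-1}(g)$ of the projection $p_1\colon G\times X\to G$, and it satisfies $b\circ\sigma_g=\mu_g$ and $p_2\circ\sigma_g=\id_X$, where $\mu_g\colon X\to X$ is the automorphism $x\mapsto g^{-1}x$ defining the $G(k)$-action, so that $\mu_g^*f=gf$. Hence, for any $f\in\R(X)$, setting $F:=b^*f-p_2^*f\in\R(G\times X)$ and restricting the regular functions $b^*f,p_2^*f$ (defined on the dense open $W:=b^{-1}(U(f))\cap p_2^{-1}(U(f))$, dense because $b,p_2$ are flat) along $\sigma_g$, I obtain $\sigma_g^*F=\mu_g^*f-f=gf-f$ as regular functions on the dense open $gU(f)\cap U(f)$ of $X$. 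Pulling back \emph{actual} sections along $\sigma_g$ is unconditional, so no density or dominance hypothesis is needed for this identity.

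The first inclusion is then immediate and uses no density. If $f\in\R(X)^G$ then $F=0$ in $\R(G\times X)$, so its restriction to $W$ vanishes, whence $\sigma_g^*F=0$ and $gf=f$ on a dense open of $X$ for every $g\in G(k)$; by the injectivity of the restriction maps of the quasi-$(S_1)$ scheme $X$ this gives $gf=f$ in $\R(X)$, i.e. $f\in\R(X)^{G(k)}$.

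For the converse under the density hypothesis I start from $f\in\R(X)^{G(k)}$, so $\sigma_g^*F=0$ for all $g\in G(k)$; equivalently the regular function $F$ on $W$ vanishes at every point of $W\cap p_1^{-1}(g)$ for each such $g$. Now $p_1\colon G\times X\to G$ is flat of finite type, hence open, so $p_1|_W$ is open; consequently $W\cap p_1^{-1}(G(k))$ is dense in $W$ (any nonempty open $\Omega\subset W$ has $p_1(\Omega)$ open and nonempty, hence meeting the dense set $G(k)$). Therefore the open non-vanishing locus of $F$ in $W$ is disjoint from a dense subset and must be empty; that is, $F$ vanishes at every point of $W$. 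When $X$ is reduced, $W$ is reduced and this forces $F=0$, so $f\in\R(X)^G$, completing the proof in the principal case.

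The step I expect to resist this clean argument is the residual non-reduced $(S_1)$ case, where ``$F$ vanishes at every point'' only records that $F$ is locally nilpotent. To finish in general I would fall back on the already-proved affine case, Lemma~\ref{field-abstract-grp.thm}: since $F=0$ may be tested at the generic points of $G\times X$ (no embedded components, by $(S_1)$), and since $G$ is affine, around each such point one can choose an affine open of the form $G\times\Spec S_0=\Spec(k[G]\otimes_k S_0)$ with $\Spec S_0\subset X$ a small affine through the corresponding generic point of $X$ and $f\in S_0$. On this affine piece the identity $\sigma_g^*F=gf-f=0$ for all $g\in G(k)$, combined with the injectivity of $k[G]\otimes_k S_0\to\operatorname{Map}(G(k),S_0)$ furnished by the density of $G(k)$ — the exact mechanism driving Lemma~\ref{field-abstract-grp.thm} — forces the local defect of $F$ to vanish. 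The delicate point is the bookkeeping needed to clear the denominators of the rational function $b^*f$, which arise precisely because $\Spec S_0$ is not $G$-stable; this is handled just as in the proof of Lemma~\ref{field-abstract-grp.thm}, and yields $F=0$ at every generic point, hence $F=0$ and the equality $\R(X)^G=\R(X)^{G(k)}$.
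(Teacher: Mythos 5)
Your proposal is correct, but it takes a genuinely different route from the paper. The paper never evaluates along rational points: it picks a faithfully flat finite-type morphism $q\colon\Spec B\to X$ with $B$ satisfying $(S_1)$, makes $Y=G\times\Spec B$ into an \emph{affine} $G$-scheme via left translation on the first factor, and notes that $h=a\circ(1\times q)\colon Y\to X$ is a faithfully flat $G$-morphism; comparing the exact rows $0\to\R(\cdot)^G\to\R(\cdot)\xrightarrow{b^*-p_2^*}\R(G\times\cdot)$ for $X$ and $Y$ (the vertical pullbacks being injective) gives $\R(X)^G=\R(X)\cap\R(Y)^G$ and $\R(X)^{G(k)}=\R(X)\cap\R(Y)^{G(k)}$, after which the affine case, Lemma~\ref{field-abstract-grp.thm}, applied to $Y$ settles both assertions simultaneously --- the equivariant affine cover converts the rational function $b^*f$ into an element of a single total quotient ring, so neither a non-reduced case nor any denominator-clearing ever arises. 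Your direct argument on $G\times X$ is sound: the first inclusion via $\sigma_g^*F=gf-f$ is complete and more elementary than anything in the paper, and your reduced-case converse via openness of $p_1$ and density of the fibres over $G(k)$ is a genuine geometric shortcut with no analogue in the paper's proof. Your one sketched step, the non-reduced $(S_1)$ case, does go through exactly as you predict: $G\times\Spec S_0$ is affine, Noetherian and $(S_1)$ (as $G$ is $k$-smooth and $S_0$ is $(S_1)$), so $\R(G\times\Spec S_0)=Q(k[G]\otimes_k S_0)$ and one may write $F=\tilde F/c$ with $c$ a nonzerodivisor and $\tilde F\in k[G]\otimes_k S_0$; for $g\in G(k)$ the evaluation $\varphi_g(\tilde F)\in S_0$ agrees with $\varphi_g(c)\cdot\sigma_g^*F=0$ on the dense open $\sigma_g^{-1}(W)\cap\Spec S_0$, hence $\varphi_g(\tilde F)=0$ by quasi-$(S_1)$, and the injectivity of $k[G]\otimes_k S_0\to\Hom_k(kG(k),S_0)$ coming from the density of $G(k)$ (the mechanism driving Lemma~\ref{field-abstract-grp.thm}) forces $\tilde F=0$, hence $F=0$ on a neighbourhood of every generic point of $G\times X$, hence $F=0$. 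So there is no gap, only an unexecuted but correctly identified computation; what the paper's affine-cover trick buys is the wholesale elimination of precisely the residual bookkeeping that your route must carry out chart by chart, while your route buys a self-contained pointwise argument in the reduced case that avoids the faithfully flat descent diagram altogether.
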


\begin{proof}
Let $q: \Spec B\rightarrow X$ be a faithfully flat morphism of finite type
such that $B$ satisfies the $(S_1)$ condition.
Let $h$ be the composite
\[
Y:=G\times\Spec B\xrightarrow{1\times q}G\times X\xrightarrow{a}X,
\]
where $a$ is the action.
Note that $h$ is a faithfully flat $G$-morphism of finite type.
The diagram
\[
\xymatrix{
0 \ar[r] & \R(X)^G \ar[r] 
& \R(X) \ar[r]^-{b^*-p_2^*} \ar[d]^{h^*} 
& \R(G\times X) \ar[d]^{(1\times h)^*} \\
0 \ar[r] & \R(Y)^G \ar[r]
& \R(Y) \ar[r]^-{b^*-p_2^*}
& \R(G\times Y)
}
\]
is commutative with exact rows and injective vertical arrows.
So $\R(X)^G=\R(X)\cap \R(Y)^G$ in $\R(Y)$.
On the other hand, $h^*:\R(X)\rightarrow \R(Y)$ is a $G(k)$ homomorphism,
and $\R(X)^{G(k)}=\R(X)\cap 
\R
(Y)^{G(k)}$.

As $Y$ is affine, $\R(Y)^G\subset \R(Y)^{G(k)}$ in general, and 
$\R(Y)^G=\R(Y)^{G(k)}$ if $G(k)$ is dense in $G$ by 
Lemma~\ref{field-abstract-grp.thm}.
The assertions follow immediately.
\end{proof}

\paragraph Let $k\rightarrow K$ be an algebraic extension of fields.
Let $X$ and $G$ be as in (\ref{invariant-field-var.par}).
Then the canonical isomorphism $K\otimes_k \R(X)\rightarrow \R(K\otimes_k X)$ 
induces an isomorphism $K\otimes_k \R(X)^G\cong \R(K\otimes_k X)^G$.

\begin{lemma}
Let $X$ be a $k$-variety on which $G$ acts.
Then $s=\tdeg_k k(X)-\tdeg_k k(X)^G$, where $\tdeg$ denotes the transcendence 
degree.
\end{lemma}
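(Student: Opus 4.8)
The plan is to realise $k(X)^G$ as the intersection of two copies of $k(X)$ inside the function field of the orbit relation, and to read off transcendence degrees from a single dimension count on that relation. First I would reduce to the geometric situation: by the base-change isomorphism $K\otimes_k\R(X)^G\cong\R(K\otimes_kX)^G$ noted just above (valid for algebraic $K/k$), together with the fact that $\dim X$, $\tdeg_k k(X)^G$ and the orbit number $s$ are all unchanged under algebraic base change, I may compute all dimensions after passing to $\bar k$; in particular the stabilizers $G_x$ and the number $s$ are the classical geometric ones, and $G(\bar k)$ is dense in $G$. Since $G$ is connected it is geometrically irreducible by Lemma~\ref{connected-irreducible.thm}, so $G\times X$ is integral and $k(G\times X)$ is a field into which $k(X)$ maps in two ways: through $b(g,x)=g^{-1}x$ and through $p_2(g,x)=x$. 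Let $R\subseteq X\times X$ be the closure of the image of $(b,p_2)$, so that $M:=k(R)$ is the compositum $b^*(k(X))\cdot p_2^*(k(X))$ inside $k(G\times X)$. Restricting along the identity section $\{e\}\times X$, on which both $b$ and $p_2$ are the identity, identifies $k(X)^G=\Ker(b^*-p_2^*)$ with the intersection $C:=b^*(k(X))\cap p_2^*(k(X))$.

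Next I would compute $\dim R$. The fibre of $(b,p_2)$ over a point $(g^{-1}x,x)$ of its image is $\{(g',x):g'^{-1}x=g^{-1}x\}=G_xg\times\{x\}$, a coset of the stabilizer, of dimension $\dim G_x$; for generic $x$ this equals $\dim G-s$. Hence $\dim R=\dim(G\times X)-(\dim G-s)=\dim X+s$. Consequently the dominant second projection $\mathrm{pr}_2\colon R\to X$ gives $\tdeg_{p_2^*(k(X))}M=\dim R-\dim X=s$, and symmetrically $\tdeg_{b^*(k(X))}M=s$ via $\mathrm{pr}_1$. The lower bound is then formal: choosing a transcendence basis of $b^*(k(X))$ over $C$ and noting that $M=b^*(k(X))\cdot p_2^*(k(X))$ is algebraic over $p_2^*(k(X))$ adjoined to this basis, I get $s=\tdeg_{p_2^*(k(X))}M\le\tdeg_C b^*(k(X))=\tdeg_{k(X)^G}k(X)$. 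By the tower formula $\tdeg_kk(X)=\tdeg_kk(X)^G+\tdeg_{k(X)^G}k(X)$, this is the inequality $\tdeg_kk(X)-\tdeg_kk(X)^G\ge s$.

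For the reverse inequality I would pick a model $q\colon X\dashrightarrow Y$ of the rational quotient, with $k(Y)=k(X)^G$ and $\dim Y=\tdeg_kk(X)^G$. Invariance of $q$ yields the inclusion $R\subseteq X\times_YX$, but this only reproves the lower bound $\dim R\le 2\dim X-\dim Y$. The content of the upper bound $\tdeg_{k(X)^G}k(X)\le s$ is precisely that $R$ is dense in $X\times_YX$; equivalently, that invariant rational functions separate generic orbits, so that the generic fibre of $q$ is a single orbit, of dimension $s$. This separation is Rosenlicht's theorem \cite{Rosenlicht2}, and it is the one nonformal ingredient and the main obstacle of the argument; everything else is a dimension count and transcendence-degree bookkeeping. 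Granting it, the generic fibre of $q$ has dimension $s$, so $\tdeg_{k(X)^G}k(X)=\dim X-\dim Y=s$, and combined with the lower bound and the tower formula this gives $s=\tdeg_kk(X)-\tdeg_kk(X)^G$, as claimed.
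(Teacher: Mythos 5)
Your skeleton coincides with the paper's up to the decisive step: the paper also works with the closure $Z$ of the image of $\Phi(g,x)=(gx,x)$ (your $R$), proves $\dim Z=\dim X+s$ by the same stabilizer--coset fibre count, and your formal inequality $s\le\tdeg_{k(X)^G}k(X)$ is subsumed there. The genuine divergence, and the gap, is the reverse inequality. You ``grant'' it by appeal to Rosenlicht's separation theorem, citing \cite{Rosenlicht2}; but that bibliography entry is Rosenlicht's 1957 paper on rationality questions, which this paper uses only for density of rational points --- the generic-quotient/separation theorem you need is in a different paper of Rosenlicht (his 1956 \emph{Amer.\ J.\ Math.} article) and is nowhere in this paper's toolkit. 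More to the point, that separation statement \emph{is} the content of the lemma, and the paper does not import it: it proves it, by showing that the map $\bar\alpha\colon k(X)\otimes_{k(X)^G}k(X)\to k(G\times X)$ induced by $\Phi^*$ is injective. The mechanism is $k(X)^G=k(X)^{G(k)}$ plus Artin's lemma on linear independence of automorphisms \cite{Bourbaki}: if $f_1,\ldots,f_n$ are linearly independent over $k(X)^G$, there exist $g_1,\ldots,g_n\in G(k)$ with $\det(g_if_j)\neq 0$, which kills any relation. Injectivity gives $k(Z)\cong Q\bigl(k(X)\otimes_{k(X)^G}k(X)\bigr)$, hence $\dim Z=2\dim X-\tdeg_k k(X)^G$, and comparison with $\dim Z=\dim X+s$ yields both inequalities at once --- exactly the density of $R$ in $X\times_Y X$ that you correctly identify as the crux but do not prove. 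So as submitted, your argument either assumes what is to be shown or rests on a miscited external theorem; to repair it within this paper's framework, replace the appeal to Rosenlicht by the injectivity of $\bar\alpha$ via Artin's lemma.

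Two further slips in your reductions. You pass to $\bar k$, but in characteristic $p$ the base change $\bar k\otimes_k X$ can be non-reduced or reducible, so the ``variety'' and function-field framing breaks; the paper instead first normalizes $X$, replaces $k$ by the integral closure $K$ of $k$ in $\Gamma(X,\O_X)$ (on which the connected group acts trivially, by Lemma~\ref{connected-trivial.thm}), and then base changes only to $k\sep$, which keeps $X$ integral while still making $G(k\sep)$ dense since $G$ is smooth. And you take $G$ connected from the start (needed so that $G\times X$ is integral and $k(G\times X)$ is a field for your compositum/intersection picture), whereas the lemma is stated for arbitrary $G$; the paper's first line is the reduction ``replace $G$ by $G^\circ$'', which your write-up needs as well, together with the observation that $s$ and the relevant transcendence degrees are unchanged.
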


\begin{proof}
Replacing $G$ by $G^\circ$, we may assume that $G$ is connected.

Let $\rho:X'\rightarrow X$ be the normalization of $X$.
Note that $X'$ is a $G$-scheme in a unique way so that $\rho$ is a
$G$-morphism.
Let $x'$ be a closed point of $X'$, and $x:=\rho(x')$.
As $\rho^{-1}(x')$ is finite over $x$, 
$((G_x)\times_x x')^\circ\subset G_{x'}$, 
set theoretically.
On the other hand, if $gx'=x'$, then $gx=x$, and hence 
$G_{x'}\subset G_x\times_x x'$.
In particular, $\dim G_{x'}=\dim G_x$.
So replacing $X$ by $X'$, we may assume that $X$ is normal.
Let $K$ be the integral closure of $k$ in $\Gamma(X,\O_X)$.
Then $K$ is integrally closed in $k(X)$ by normality.
Note that $\Gamma(X,\O_X)$ is a $G$-algebra, and thus $K$ is a 
$G$-subalgebra of $\Gamma(X,\O_X)$.
So $G$ acts on $K$ trivially.
Replacing $k$ by $K$, we may assume that $k=K$.
Then taking a base change by $k\sep$, we may assume that $k=K=k\sep$.

The morphism $\Phi:G\times X\rightarrow X\times X$ induces
$\alpha:k(X)\otimes k(X)\rightarrow C$,
where $C=\{\psi\in k(G\times X)\mid\forall g\in G(k)\;(g\times X)\cap U(\psi)
\neq\emptyset\}$.
Note that $\alpha(f\otimes h)=a^*(f)(1\otimes h)$, where 
$a:G\times X\rightarrow X$ is the action.
As $a^*(f)=1\otimes f$ for $f\in k(X)^G$, $\alpha$ induces 
$\bar \alpha:k(X)\otimes_{k(X)^G}k(X)\rightarrow C$.
For any $g\in G(k)$, $g:C\rightarrow k(X)$ given by $(g\psi)(x)=\psi(g,x)$
is well-defined.
Note that $g(a^*(f))=g^{-1}f$.

We show that $\bar\alpha$ is injective.
Let $\sum_{j=1}^n f_j\otimes h_j\in\Ker\bar \alpha$ with 
$f_1,\ldots,f_n$ linearly independent over $k(X)^G$.
As $k(X)^G=k(X)^{G(k)}$, there exist some $g_1,\ldots,g_n\in G(k)$ such that
$\det(g_i f_j)\neq 0$ by Artin's lemma \cite[\S7, n${}^\circ$1]{Bourbaki}.
Then 
\[
0=g_i^{-1}\bar\alpha(\sum_j f_j \otimes h_j)=\sum_j g_i f_j\otimes h_j
\]
for $i=1,\ldots, n$.
This shows that $h_1,\ldots,h_n=0$, and hence $\bar\alpha$ is injective,
as desired.

Let $Z$ be the scheme theoretic image of $\Phi$.
Then by the last paragraph, $k(Z)\cong Q(k(X)\otimes_{k(X)^G}k(X))$.
Hence $\dim Z=2\dim X-\tdeg_k k(X)^G$.
On the other hand, $\Phi^{-1}\Phi(g,x)\cong g G_x $ for $g\in G$ and
$x\in X$.
Thus $\dim Z=\dim X+s$.
So $s=\dim X-\tdeg_k k(X)^G=\tdeg_k k(X)-\tdeg_k k(X)^G$, as desired.
\end{proof}

\paragraph Let $S$ be a finitely generated $k$-algebra domain.
Set $r_S=r=\tdeg_k Q(S)-\tdeg_k Q(S^G)$, and $s_S=s
=\tdeg_k Q(S)-\tdeg_k Q(S)^G$,
as before.
Obviously, $r\geq s$ in general.

\begin{lemma}\label{Italian.thm}
If $S$ is normal, then $Q(S^G)=Q(S)^G$ if and only if $r=s$.
\end{lemma}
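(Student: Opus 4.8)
The plan is to prove both implications, the substantive one being that $r=s$ forces $Q(S^G)=Q(S)^G$. The forward direction is immediate: by definition $r=s$ is equivalent to $\tdeg_k Q(S^G)=\tdeg_k Q(S)^G$, which of course holds whenever the two fields coincide. So the real work is the converse.

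Before turning to it, I would record two facts that are already in hand. Since $S$ is a domain, by (\ref{RFS.par}) the set $Q(S)^G$ is a subfield of $Q(S)$, and moreover $Q(S)^G\cap S=S^G$. In particular $Q(S^G)\subset Q(S)^G$ as subfields of $Q(S)$ over $k$, which reproves $r\ge s$ with equality precisely when $\tdeg_k Q(S^G)=\tdeg_k Q(S)^G$. The second fact is where normality enters: $S^G=S\cap Q(S)^G$ is integrally closed in $Q(S)^G$. Indeed, any $y\in Q(S)^G$ integral over $S^G$ is a fortiori integral over $S$, hence lies in $S$ by normality, and therefore lies in $S\cap Q(S)^G=S^G$.

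Now assume $r=s$, i.e. $\tdeg_k Q(S^G)=\tdeg_k Q(S)^G$. All transcendence degrees in sight are finite because $S$ is of finite type over $k$, so additivity of transcendence degree over the tower $Q(S^G)\subset Q(S)^G\subset Q(S)$ gives $\tdeg_{Q(S^G)}Q(S)^G=0$; that is, $Q(S)^G$ is algebraic over $Q(S^G)$. Given $\alpha\in Q(S)^G$, I would clear denominators in its minimal polynomial over $Q(S^G)$ to produce $d\in S^G\setminus\{0\}$ with $d\alpha$ integral over $S^G$. Since $d\alpha\in Q(S)^G$ and $S^G$ is integrally closed in $Q(S)^G$ by the previous paragraph, we get $d\alpha\in S^G$, hence $\alpha=(d\alpha)/d\in Q(S^G)$. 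Thus $Q(S)^G\subset Q(S^G)$, and combined with the reverse inclusion this yields $Q(S)^G=Q(S^G)$.

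I do not expect a serious obstacle here. The only points needing care are that $Q(S)^G$ is genuinely a field and that $S^G=S\cap Q(S)^G$, both recorded in (\ref{RFS.par}), together with the routine denominator-clearing step. The essential mechanism is conceptual rather than computational: equality of transcendence degrees makes $Q(S)^G/Q(S^G)$ algebraic, and integral closedness of $S^G$ in $Q(S)^G$ (a consequence of normality of $S$) upgrades "algebraic" to "equal."
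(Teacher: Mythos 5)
Your proof is correct and follows essentially the same route as the paper: equality of transcendence degrees makes $Q(S)^G$ algebraic over $Q(S^G)$, denominators are cleared, normality of $S$ is invoked, and the identity $S\cap Q(S)^G=S^G$ finishes the argument. The only cosmetic difference is that you multiply $\alpha$ by a common denominator $d$ and use that $S$ itself is integrally closed, whereas the paper localizes, making $\alpha$ integral over $A[1/a]$ and using normality of $S[1/a]$ together with $S[1/a]^G=A[1/a]$ --- the two variants are interchangeable.
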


\begin{proof} 
To say that $r=s$ is the same as 
$\tdeg_k Q(S^G)=\tdeg_k Q(S)^G$.
Or equivalently, the extension $Q(S^G)\rightarrow Q(S)^G$ is algebraic.
So the \lq only if' part is obvious.

We prove the \lq if' part.
Set $A=S^G$.
Let $\alpha\in Q(S)^G$.
By assumption, $\alpha$ is integral over $A[1/a]$, for some $a\in A\setminus
\{0\}$.
As $\alpha$ is integral over $S[1/a]$, 
$\alpha\in Q(S[1/a])$,  and $S[1/a]$ is normal, $\alpha\in S[1/a]$.
So $\alpha\in S[1/a]\cap Q(S)^G=S[1/a]^G=A[1/a]\subset Q(A)$, as desired.
\end{proof}

\begin{corollary}\label{Italian2.thm}
If $S$ is normal, then $Q(S^G)=Q(S)^G$ if and only if there exists some
closed point $x\in\Spec S$ such that $\dim Gx\geq r$.
\end{corollary}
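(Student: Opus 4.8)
The plan is to reduce the statement to Lemma~\ref{Italian.thm} by identifying $s$ with the maximal dimension of a $G$-orbit of a closed point of $\Spec S$. Throughout I write $X=\Spec S$ and $g=\dim G$.

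First I would record the elementary relation between orbit and stabilizer dimensions. For a point $x\in X$, the stabilizer $G_x$ is by definition the fibre of $\Phi$ over $(x,x)$, hence it is the fibre over $x$ of the orbit morphism $G\to Gx$ ($h\mapsto hx$). The fibre-dimension theorem then gives $\dim Gx=g-\dim G_x$. Combined with the definition $s=g-\min_{x}\dim G_x$ from (\ref{invariant-field-var.par}), this yields $\dim Gx\le s$ for every $x\in X$, with equality exactly for the points of the nonempty open set $U=\{x\mid \dim G_x=g-s\}$.

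Next I would exhibit a closed point realizing $s$. Since $U$ is a nonempty open subset of the $k$-scheme $X$ of finite type, the closed points being dense, $U$ contains a closed point $x_0$, and for it $\dim Gx_0=s$. As $\dim Gx\le s$ holds for all $x$, the maximum of $\dim Gx$ over closed points $x$ of $X$ equals $s$. Consequently, there exists a closed point $x$ with $\dim Gx\ge r$ if and only if $s\ge r$. Finally, $r\ge s$ always holds (noted just before Lemma~\ref{Italian.thm}), so $s\ge r$ is equivalent to $r=s$, which by Lemma~\ref{Italian.thm} (applicable since $S$ is normal) is equivalent to $Q(S^G)=Q(S)^G$. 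This gives the asserted equivalence.

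The one point that needs care is the claim that the maximal orbit dimension $s$ is attained at a closed point: the quantity $s$ is defined through the \emph{minimal} stabilizer dimension, a generic condition cutting out $U$, so one must know that $U$ meets the closed points of $X$—this is exactly where finite type over $k$ enters. The identity $\dim Gx=g-\dim G_x$ is routine fibre-dimension bookkeeping, but over a non-algebraically closed $k$ one should interpret $Gx$ and $G_x$ after the harmless base change to $\bar k$, as in the proof of the preceding lemma.
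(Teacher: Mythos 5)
Your proof is correct and takes essentially the same route as the paper: the paper's proof simply asserts $s=\max\{\dim Gx\mid x\in X\}$ and then argues, exactly as you do, that a closed point with $\dim Gx\geq r$ exists if and only if $s\geq r$, which (since $r\geq s$ always) means $r=s$, concluding by Lemma~\ref{Italian.thm}. Your fibre-dimension computation and the density-of-closed-points argument are just a correct filling-in of the first assertion that the paper leaves implicit.
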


\begin{proof}
As $s=\max\{\dim Gx\mid x\in X\}$, there exists some closed point $x
\in\Spec S$ such that $\dim Gx\geq r$ if and only if $s\geq r$.
As $r\geq s$ is always true, this is equivalent to say that $s=r$.
The assertion follows from Lemma~\ref{Italian.thm}.
\end{proof}

\section{Examples}

\paragraph\label{ex-1.par}
Let $n\geq m\geq t\geq 2$ be positive integers, $V=k^m$, $W=k^n$, 
and $M:=V\otimes W$.
Let $v_1,\ldots,v_m$ and $w_1,\ldots,w_n$ respectively be the standard
bases of $V$ and $W$.
Note that $\GL_m\times \GL_n=\GL(V)\times \GL(W)$ 
acts on $M$ in a natural way.
Let $U\subset\GL_m$ be the subgroup of unipotent upper triangular matrices.
Then $U$ is connected, and 
$\bar k\otimes_k U$ does not have a nontrivial character.
Note that $\Sym M=
k[x_{ij}]_{1\leq i\leq m,\;1\leq j\leq n}$ is a polynomial
ring in $mn$ variables, where $x_{ij}=v_i\otimes w_j$.
The ideal $I_t$ of $\Sym M$ generated by the $t$-minors of $(x_{ij})$ is
a $\GL_m$-stable ideal.
Set $S:=k[M]/I_t$.

The class group $\Cl(S)$ of $S$ is $\Bbb Z$, see \cite[(8.4)]{BV}.
So $S$ is not a UFD.
Let $P$ be the ideal generated by the $(t-1)$-minors of the first $(t-1)$ rows
of $(x_{ij})$.
Then $P$ is $U$-stable, and the class $\overline{\langle P\rangle}$ 
of $P$ in the class group
$\Cl(S)\cong\Bbb Z$ is a generator, see \cite[(8.4)]{BV}.

As $I_t$ is $\GL_m\times \GL_n$-stable, 
$\GL_m\times \GL_n$ acts on $S$ in a natural way.
By \cite[Theorem~9]{Grosshans}, $S^U$ is finitely generated over $k$.

Note that $S^U\cong\bigoplus_{\lambda}\nabla_{\GL_n}(\lambda)$
as a $\GL_n$-algebra, where $\lambda$ runs through
all the sequences $(\lambda_1,\ldots,\lambda_{t-1})$ such that
$\lambda_1\geq \cdots\geq \lambda_{t-1}\geq 0$, and $\nabla_{\GL_n}(\lambda)$ 
denotes the dual Weyl module of highest weight $\lambda$.
Letting $\nabla_{\GL_n}(\lambda)$ be of degree 
$\lambda_1+\cdots+\lambda_{t-1}$,
such a $\GL_n$-algebra has the same Hilbert function as the
Cox ring of $\GL_n/\Cal P$, where $\Cal P$ is the parabolic subgroup
of $(a_{ij})\in\GL_n$ with $a_{ij}=0$ for $j>i<t$.
The dimension of $\GL_n/\Cal P$ is $(t-1)(n-t/2)$, and the rank of the
class group of $\GL_n/\Cal P$ is $t-1$.
So $\dim S^U=(t-1)(n+1-t/2)$.

On the other hand, $P^U=I_{t-1}^U$, and hence it is easy to see that
$S^U/P^U\cong (S/I_{t-1})^U$, and its dimension is
$(t-2)(n+1-(t-1)/2)$.
So the height of $P^U$ is $n-t+2\geq 2$.

In order to apply Theorem~\ref{main.thm} to conclude that  $S^U$ is a UFD,
it remains to show that there is an orbit $U\cdot 
x$ of $\Spec S$ whose dimension
is equal to $\dim S-\dim S^U$ by Corollary~\ref{Italian2.thm}.
Note that $\dim S=(t-1)(m+n-t+1)$ by \cite[(1.1)]{BV}.
So $\dim S-\dim S^U=(t-1)(m-t/2)$.

Next, $\Spec\Sym M=M^*=V^*\otimes W^*\cong \Hom(W,V^*)\cong\Mat(m,n;k)$.
As matrices, $g\in\GL_m$ acts on $A\in M^*=\Mat(m,n;k)$ by
$g\cdot A={}^tg^{-1}A$, where the product on the right hand side is the
usual multiplication of matrices.

Set $x\in(\Spec S)(k)=\{A\in\Mat(m,n;k)\mid \rank A<t\}$ to be
the matrix
\[
x=
\begin{pmatrix}
E_{t-1} & O_{t-1,n-t+1} \\
O_{m-t+1,t-1} & O_{m-t+1,n-t+1}
\end{pmatrix},
\]
where $O_{i,j}$ is the $i\times j$ zero matrix, and $E_{t-1}$ is the
identity matrix of size $t-1$.

Then
\[
U\cdot x= U^-x=
\left\{
\begin{pmatrix}
U_{t-1}^- & O_{t-1,n-t+1}\\
* & O_{m-t+1,n-t+1}
\end{pmatrix}
\right\}
,
\]
where $U^-$ (resp.\ $U_{t-1}^-$) is the group of $m\times m$ 
(resp.\ $(t-1)\times (t-1)$) unipotent lower triangular matrices.
So $\dim U\cdot x=(t-1)(m-t/2)$, as desired.
This proves that $S^U$ is a UFD.

There is another way to show that $S^U$ is a UFD.
Some more argument shows that $S^U$ is isomorphic to the Cox ring of
$\GL_n/\P$ (we omit the proof).
We can invoke \cite[Corollary~1.2]{EKW}.

\paragraph\label{hochster.par}
Let $k$ be a field of characteristic $3$.
Let $G$ be the cyclic group $\Bbb Z/3\Bbb Z$ with the generator $\sigma$.
Let $\Lambda$ be the group algebra $\Bbb ZG$, and $M$ the left ideal
$\Lambda(1-\sigma)$ of $\Lambda$.
Then $G$ acts on the group algebra $S=kM$.
Let $A=1-\sigma$ and $B=\sigma(1-\sigma)$.
Then $S=k[A^{\pm 1},B^{\pm 1}]$ is the Laurent polynomial ring,
and $G$ acts on $S$ via $\sigma A=B$, and $\sigma B=(AB)^{-1}$.
So $G$ acts on 
$\Spec S\cong \Bbb A^2\setminus\{\text{the coordinate lines}\}$ via
$\sigma(x,y)=(1/xy,x)$.
Thus the only fixed point is $(1,1)$, and $G$ acts freely on $\Spec S
\setminus\{(1,1)\}$.
Thus the action of $G$ on $S$ is effective (i.e., $G\rightarrow\Aut S$ is
injective), and $\Spec S\rightarrow \Spec S^G$ is \'etale in codimension one.
As $S$ is a UFD, the class group of $S^G$ is $H^1(G,S^\times)$ by
\cite[(16.1)]{Fossum}.

Let $u\in S^\times$.
Then there exists some $(m,n)$ such that 
\[
A^mB^n u\in k[A,B]^\times=k^\times.
\]
So we have an isomorphism of 
$\Lambda$-modules $S^\times\cong k^\times\oplus M$.
Now we compute the cohomology group of $S$.
Take the resolution
\[
\Bbb F:\cdots\xrightarrow{1-\sigma}\Lambda
\xrightarrow{1+\sigma+\sigma^2}\Lambda
\xrightarrow{1-\sigma}\Lambda\rightarrow0
\]
of the trivial $\Lambda$-module $\Bbb Z$.
Then $\Hom_G(\Bbb F,N)$ is
\[
\cdots\xleftarrow{1-\sigma}N
\xleftarrow{1+\sigma+\sigma^2}N
\xleftarrow{1-\sigma}N\leftarrow 0
\]
for any $\Lambda$-module $N$.
If $N=k^\times$, then $1-\sigma$ on $N$ is the zero map, while
$1+\sigma+\sigma^2$ is the action of $3$.
Namely, the map $\alpha\mapsto \alpha^3$.
So $H^1(G,k^\times)=\Ker 3$ is trivial.
On the other hand, if $N=M$, then $1+\sigma+\sigma^2$ on $N$ is zero,
and $H^1(G,M)\cong M/M(1-\sigma)\cong \Bbb Z/3\Bbb Z$.
This shows that $\Cl(S^G)\cong H^1(G,S^\times)\cong \Bbb Z/3\Bbb Z$
is not trivial, and $S^G$ is not a UFD.

On the other hand, there is no nontrivial homomorphism $G\rightarrow S^\times
\cong k^\times\oplus M$, since $k$ is of characteristic $3$, and 
$M$ is torsion free.
This example shows that the assumption {\bf (iii)} in 
Theorem~\ref{popov.thm} cannot be removed.

We compute $S^G$ more precisely.
Let $D$ be the group algebra $k\Lambda$.
Then $D=k[X_1^{\pm1},X_2^{\pm1},X_3^{\pm1}]$ is a Laurent polynomial ring
(here $X_1=1$, $X_2=\sigma$, and $X_3=\sigma^2$)
on which $G$ acts via $\sigma X_1=X_2$, $\sigma X_2=X_3$, and
$\sigma X_3=X_1$.
The subalgebra $C:=k[X_1,X_2,X_3]$ is a $G$-subalgebra, and
it is well-known that $C^G=k[e_1,e_2,e_3,\Delta]$, where 
$e_i$ is the $i$th elementary symmetric polynomial, and
$\Delta=(X_2-X_1)(X_3-X_1)(X_3-X_2)$.
So localizing by $e_3$, we have that
$D^G=k[e_1,e_2,e_3,\Delta,e_3^{-1}]$.
The action of $G$ on $D$ preserves the grading, where $\deg X_i=1$.
Then letting $A=X_1/X_2$ and $B=X_2/X_3$, $S$ is the degree zero component
of $D$.
Thus $S^G$ is the degree zero component of $D^G$.
It is $k[e_1^3/e_3, \Delta/e_3, e_2^3/e_3^2, e_1e_2/e_3]$.
Note that $e_1^3/e_3$ is an irreducible element in $S^G$.
It does not divide $e_1e_2/e_3$, 
but $(e_1^3/e_3)(e_2^3/e_3^2)=(e_1e_2/e_3)^3$.
This shows that $e_1^3/e_3$ is not a prime element.
This computation also shows that $S^G$ is not a UFD.

\paragraph\label{GL_2.par}
Let $k$ be a field.
Let $\bar k$ denote its algebraic closure.
Let $x\in\bar k\setminus k$ such that $t=x^2\in k$.
Let $G$ be the closed subset of $\GL_2(k)$
\[
G=\left\{
\begin{pmatrix}
a & b \\
c & d
\end{pmatrix}
\mid
ad-bc\neq 0,\;
a=d, b=tc
\right\}
\subset \GL_2(k).
\]
It is easy to see that $G$ is a closed subgroup of $\GL_2(k)$,
and is smooth over $k$ and connected.
Note also that $G$ is two-dimensional and commutative.
Let us write
\[
k[G]=k[A,B,C,D,(AD-BC)^{-1}]/(A-D,B-tC)\cong
k[A,C,(A^2-tC^2)^{-1}].
\]
Then it is easy to see that $f=A+xC$ is a group-like element of 
$S=K\otimes_k k[G]$, where $K=k(x)$.
$S$ is finitely generated over $k$ and is a domain.
With the right regular action, $S$ is a $G$-algebra, and
$\omega(f)=f\otimes A+xf\otimes C\in Sf\otimes_k k[G]$.
So $\omega_S(Sf)\subset Sf\otimes_k k[G]$, and $Sf$ is a $G$-ideal.
However, $f$ is not a semiinvariant.
It is a semiinvariant of $K\otimes_k G$.
Thus the assumption that $X(G)\rightarrow X(K\otimes_k G)$ is surjective
in Lemma~\ref{principal.thm} is really necessary.

\paragraph Let $k=\Bbb R$.
Then
\[
G=\left\{
\begin{pmatrix}
a & -b \\
b & a
\end{pmatrix}
\mid
a^2+b^2=1
\right\}
\subset \GL_2(k)
\]
is an anisotropic one-dimensional torus.
Let $G$ act on $S=\Bbb C[x,y,s,t]$ by
\begin{multline*}
\begin{pmatrix}
a & -b \\
b & a
\end{pmatrix}
x=(a+b\sqrt{-1})x,
\quad
\begin{pmatrix}
a & -b \\
b & a
\end{pmatrix}
y=(a+b\sqrt{-1})y,\\
\begin{pmatrix}
a & -b \\
b & a
\end{pmatrix}
s=(a-b\sqrt{-1})s,
\quad
\begin{pmatrix}
a & -b \\
b & a
\end{pmatrix}
t=(a-b\sqrt{-1})t
\end{multline*}
($G$ acts trivially on $\Bbb C$).
Then $S$ is a finitely generated UFD over $\Bbb R$,
$G$ is connected, $X(G)$ is trivial (however, $X(\Bbb C\otimes_{\Bbb R}G)$ 
is nontrivial), but $S^G=\Bbb C[xs,xt,ys,yt]$ is not a UFD.
The assumption that $X(G)\rightarrow X(K\otimes_k G)$ is surjective
in Corollary~\ref{ufd-cor.thm}
cannot be removed.

\end{document}